\newcounter{zlist}
\newenvironment{zlist}{\begin{list}{{\rm(\arabic{zlist})}}{
\usecounter{zlist}\leftmargin2.5em\labelwidth2em\labelsep0.5em
\topsep0.6ex\itemsep0.3ex plus0.2ex minus0.3ex
\parsep0.3ex plus0.2ex minus0.1ex}}{\end{list}}
\newcounter{blist}
\newenvironment{blist}{\begin{list}{{\rm(\alph{blist})}}{
\usecounter{blist}\leftmargin2.5em\labelwidth2em\labelsep0.5em
\topsep0.6ex \itemsep0.3ex plus0.2ex minus0.3ex
\parsep0.3ex plus0.2ex minus0.1ex}}{\end{list}}
\newcounter{rlist}
\newenvironment{rlist}{\begin{list}{{\rm(\roman{rlist})}}{
\usecounter{rlist}\leftmargin2.5em\labelwidth2em\labelsep0.5em
\topsep0.6ex\itemsep0.3ex plus0.2ex minus0.3ex
\parsep0.3ex plus0.2ex minus0.1ex}}{\end{list}}
\newtheorem{theorem}{Theorem}[section]
\newtheorem{lemma}[theorem]{Lemma}
\newtheorem{thm}[theorem]{}
\newtheorem{proposition}[theorem]{Proposition}
\newtheorem{definition}[theorem]{Definition}
\newtheorem{remark}[theorem]{Remark}
\numberwithin{equation}{section}
\newcommand{\bA}{{\mathcal{A}}}
\newcommand{\bB}{{\mathcal{B}}}
\newcommand{\bC}{{\mathcal{C}}}
\newcommand{\bD}{{\mathcal{D}}}
\newcommand{\bF}{{\mathcal{F}}}
\newcommand{\bG}{{\mathcal{G}}}
\newcommand{\bH}{{\mathcal{H}}}
\newcommand{\bT}{{\mathcal{T}}}
\newcommand{\bS}{{\mathcal{S}}}
\newcommand{\bL}{{\mathcal{L}}}
\newcommand{\A}{\mathbb{A}}
\newcommand{\B}{\mathbb{B}}
\newcommand{\D}{\mathbb{D}}
\newcommand{\bP}{\mathcal{P}}
\newcommand{\bR}{\mathcal{R}}
\newcommand{\bQ}{\mathcal{Q}}
\newcommand{\V}{\mathcal{V}}
\newcommand{\FF}{\mathcal{F}^\lambda}
\newcommand{\oF}{F^\lambda}
\newcommand{\m}{m^\lambda}
\newcommand{\e}{e^\lambda}
\newcommand{\z}{\cdot}
\newcommand{\M}{\mathbb{M}}
\newcommand{\bAp}{\mathcal{A}^\tau}
\newcommand{\oK}{\overline{K}}
\newcommand{\oR}{\overline{R}}
\newcommand{\la}{\lambda}
\newcommand{\lra}{\longrightarrow}
\newcommand{\ve}{\varepsilon}
\newcommand{\xra}{\xrightarrow}
\newcommand{\ot}{\otimes}
\newcommand{\Hom}{{\rm Hom}}
\newcommand{\End}{{\rm End}}
\newcommand{\dia}{\diamond}
\newcommand{\LRa}{\Leftrightarrow}
\newcommand{\Ra}{\Rightarrow}
\newcommand{\wt}{\widetilde}
\newcommand{\ol}{\overline}
\newcommand{\dF}{R}
\newcommand{\ev}{{\rm ev}}
\newcommand{\db}{{\rm db}}
\begin{document}

\title{Azumaya monads and comonads}

\author[B. Mesablishvili and R. Wisbauer]{Bachuki Mesablishvili, Tbilisi\\ and \\ Robert Wisbauer, D\"usseldorf}

\begin{abstract} The definition of Azumaya algebras over commutative rings $R$ require
the tensor product of modules over $R$ and the twist map for the tensor product of
any two $R$-modules. Similar constructions are available in braided monoidal categories
and Azumaya algebras were defined in these settings.
Here we introduce Azumaya monads on any category $\A$ by considering a monad $\bF$ on $\A$
endowed with a distributive law $\lambda: FF\to FF$ satisfying the Yang-Baxter
equation (BD-law).
This allows to introduce an {\em opposite monad} $\bF^\la$ and a monad structure on $FF^\la$.
For an {\em Azumaya monad} we impose the condition that the canonical comparison functor induces
an equivalence between the category $\A$ and the category of $\bF\bF^\la$-modules.
Properties and characterisations of these monads are studied, in particular for the
case when $F$ allows for a right adjoint functor.
Dual to Azumaya monads we define {\em Azumaya comonads}
and investigate the interplay between these notions.

In braided categories $(\V,\ot,I,\tau)$, for any $\V$-algebra $A$,
the braiding induces a BD-law $\tau_{A,A}:A\ot A\to A\ot A$ and $A$ is
called left (right) Azumaya, provided the monad $A\ot-$ (resp. $-\ot A$) is Azumaya.
If $\tau$ is a symmetry, or if the category $\V$ admits equalisers and coequalisers,
the notions of left and right Azumaya algebras coincide.
The general theory provides the definition of coalgebras in $\V$.
Given a cocommutative $\V$-coalgebra $\bD$, coalgebras $\bC$
over $\bD$ are defined as coalgebras in the monoidal category of $\bD$-comodules and
we describe when these have the Azumaya property.
In particular, over commutative rings $R$, a coalgebra  $C$ is Azumaya
if and only if the dual $R$-algebra $C^*=\Hom_R(C,R)$ is an Azumaya algebra.
\end{abstract}

\keywords{Azumaya algebras, category equivalences, monoidal categories, (co)monads}

\maketitle

\tableofcontents

\section*{Introduction}

Azumaya algebras $\bA=(A,m,e)$
over a commutative ring $R$ are characterised by the fact that the functor $A\ot_R-$
induces an equivalence between the category of $R$-modules and the category of
$(A,A)$-bimodules.
In this situation Azumaya algebras are separable algebras, that is,
the multiplication $A\ot_R A\to A$ splits as $(A,A)$-bimodule map.

Braided monoidal categories allow for similar constructions as module categories over
commutative rings and so - with some care - Azumaya monoids (algebras) and Brauer groups
can be defined for such categories. For finitely bicomplete categories this was
worked out by J. Fisher-Palmquist in \cite{F-P}, for symmetric monoidal categories
it was investigated by B. Pareigis in \cite{P}, and for braided monoidal categories
the theory was outlined by F. van Oystaeyen and Y. Zhang in \cite{VO-Z} and B. Femi\'c
 in \cite{F}. It follows from the observations in \cite{P} that - even in symmetric
monoidal categories - the category equivalence requested for an Azumaya monoid $A$
does not imply separability of $A$ (defined as for $R$-algebras).

In our approach to Azumaya (co)monads we focus on properties of monads and comonads on
any category $\A$ inducing equivalences between certain related categories.
Our main tools are distributive laws between monads (and comonads)
as used in the investigations of Hopf monads in general categories
(see \cite{MW}, \cite{MW1}).

We begin by recalling basic facts about the related theory - including Galois functors -
in Section \ref{prel}.
Then, in Section \ref{azu}, we consider monads $\bF=(F,m,e)$ on any category
$\A$ endowed with a distributive law
$\lambda: FF\to FF$ satisfying the Yang Baxter equation ({\em BD-laws}).
The latter enables the definition
of a monad $\bF^\lambda =(F^\lambda,m^\lambda,e^\la)$ where
$F^\lambda=F$, $m^\la= m\cdot \lambda$, and
$e^\la=e$. Furthermore, $\lambda$ can be considered as distributive law
$\lambda: F^\la F\to F F^\la$ and this allows to define a monad structure on
$FF^\la$.
Then, for any object $A\in \A$, $F(A)$ allows for an  $\bF\bF^\la$-module structure, thus
inducing a comparison functor $K:\A \to \A_{\bF\bF^\la}$.
We call $\bF$ an {\em Azumaya monad} (in \ref{def-azu})
if this functor is an equivalence of categories.
Properties and characterisations of such monads are given, in particular for the case
that they allow for a right adjoint functor (Theorem \ref{th.1}).

These notions lead to an intrinsic definition of {\em Azumaya comonads} as
 outlined in Section \ref{co-azu} where also
 the relationship between the Azumaya properties of a monad $\bF$ and a right
 adjoint comonad $\bR$ is investigated (Proposition \ref{Galois-Galois}).
It turns out that for a Cauchy complete category $\A$, $\bF$ is an Azumaya monad and
 $FF^\la$ is a separable monad
if and only if $\bR$ is an Azumaya comonad and $G^\kappa G$ is a separable comonad
 (Theorem \ref{Azu-mon-comon}).

In Section \ref{azu-braid}, our theory is applied to study Azumaya algebras in braided
monoidal categories $(\V,\ot,I,\tau)$. Then, for any $\V$-algebra $A$, the braiding
induces a distributive law $\tau_{A,A}:A\ot A\to A\ot A$, and
$A$ is called left (right) Azumaya if the monad $A\ot-:\V\to \V$
(resp. $-\ot A:\V\to \V$) is Azumaya.
In \cite{VO-Z}, $\V$-algebras which are both left and right Azumaya
are used to define the Brauer group of $\V$. We will get various
characterisations for such algebras but will not pursue their role for
the Brauer group. In braided monoidal categories with equalisers and coequalisers,
the notions of left and right Azumaya algebras coincide (Theorem \ref{th.1.Alg.1}).

The results from Section \ref{co-azu} provide
an extensive theory of {\em  Azumaya coalgebras} in braided categories $\V$ and
the basics for this
are described in Section \ref{azu-braid-co}.
Besides the formal transfer of results known for algebras, we introduce coalgebras
$\bC$ over cocommutative coalgebras $\bD$
and for this, Section \ref{co-azu} provides conditions which make them Azumaya.
This extends the corresponding
notions studied for coalgebras over cocommutative coalgebras in vector space
categories by B. Torrecillas, F. van Oystaeyen and Y. Zhang in \cite{Tor-Brauer}.
Over a commutative ring $R$, Azumaya coalgebras $\bC$ turn out to be coseparable
and are characterised by the fact that
the dual algebra $C^*=\Hom(C,R)$ is an Azumaya $R$-algebra.
Notice that coalgebras with the latter property  were first studied by
K. Sugano in \cite{Sugano}.

\section{Preliminaries}\label{prel}

Throughout this section $\A$
will stand for any category.

\begin{thm}\label{mod-comod}{\bf Modules and comodules.} \em
For a monad $\bT=(T, m, e)$ on
$\A$, we write $\A_\bT$ for the Eilenberg-Moore category of $\bT$-modules and
denote the corresponding forgetful-free adjunction by
$$\eta_{\bT}, \varepsilon_{\bT}: \phi_{\bT} \dashv U_{\bT} : \A_{\bT} \to \A.$$

Dually, if $\bG =(G, \delta, \varepsilon)$ is a comonad on $\A$, we write
$\A^\bG$ for the Eilenberg-Moore category of $\bG$-comodules and
denote the corresponding forgetful-cofree adjunction by
$$\eta^{\bG},\varepsilon^{\bG}:U^{\bG} \dashv \phi^{\bG} : \A \to \A^{\bG}.$$

For any monad $\bT=(T, m, e)$ and an adjunction $\overline\eta,\overline\ve:T\dashv R$,
 there is a comonad $\bR=(R,\delta,\ve)$, where $m\dashv \delta$, $\ve\dashv e$ (mates)
and there is an isomorphism of categories (e.g. \cite{MW})
\begin{equation}\label{Psi}
 \Psi:\A_\bT \to \A^\bR,\quad (A,h)\;\mapsto
\;(A,\, A\xra{\overline\eta} RT(A)\xra{R(h)} R(A)).
\end{equation}
Note that, for any $(A,\theta) \in \A^{\bR}$, $\Psi^{-1}(A, \theta)=(A, T(A) \xra{F(\theta)}TR(A) \xra{\overline{\ve}_A} A)$.
\end{thm}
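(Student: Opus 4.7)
The plan is to build everything from the mate calculus for the adjunction $\overline\eta,\overline\ve:T\dashv R$. First I would produce the comonad structure: the mate of $m:TT\to T$ (under $TT\dashv RR$ versus $T\dashv R$) is a natural transformation $\delta:R\to RR$, and the mate of $e:1\to T$ is the counit $\ve:R\to 1$, given explicitly by $\ve_A=\overline\ve_A\cdot e_{R(A)}$. Since the mate correspondence is a bijection that converts pasting composites into pasting composites, the associativity and unitality diagrams for $(m,e)$ translate directly into the coassociativity and counit diagrams for $(\delta,\ve)$, yielding the comonad $\bR=(R,\delta,\ve)$.

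Next I would define $\Psi$ on objects by the formula in the statement, $\Psi(A,h)=(A,\theta_h)$ with $\theta_h:=R(h)\cdot\overline\eta_A$, and check the $\bR$-comodule axioms. The counit axiom $\ve_A\cdot\theta_h=1_A$ follows from the unit law $h\cdot e_A=1_A$ together with the triangle identity $\overline\ve T\cdot T\overline\eta=1_T$, while the coassociativity $\delta_A\cdot\theta_h=R(\theta_h)\cdot\theta_h$ is precisely the mate-translation of the associativity $h\cdot m_A=h\cdot T(h)$. For a $\bT$-module morphism $f:(A,h)\to(A',h')$ one has $f\cdot h=h'\cdot T(f)$; applying $R$ and precomposing with $\overline\eta$ then gives $R(f)\cdot\theta_h=\theta_{h'}\cdot f$, so $f$ is automatically a $\bR$-comodule morphism and $\Psi$ extends to a functor.

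For the inverse, the dual assignment $\Psi^{-1}(A,\theta)=(A,\overline\ve_A\cdot T(\theta))$ yields $\bT$-modules by an analogous (dual) argument, and the triangle identities $R\overline\ve\cdot\overline\eta R=1_R$ and $\overline\ve T\cdot T\overline\eta=1_T$ imply $\Psi\circ\Psi^{-1}=\mathrm{id}$ and $\Psi^{-1}\circ\Psi=\mathrm{id}$ on objects after a short diagram chase; on morphisms both composites act as the identity of the underlying $\A$-arrow. The main obstacle is the careful bookkeeping in the second paragraph: extracting coassociativity of $\theta_h$ from associativity of $h$ requires simultaneously using the naturality of $\overline\eta$, the definition of $\delta$ as the mate of $m$, and a triangle identity, with all substitutions correctly aligned; once that is carried out, the remaining axiom checks and the inverse computation are formal consequences of the triangle identities and naturality.
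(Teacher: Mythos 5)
Your proposal is correct: the paper gives no proof of this statement, citing the literature instead, and your mate-calculus argument (defining $\delta$ and $\ve$ as the mates of $m$ and $e$, transporting the (co)monad axioms along the mate bijection, and checking that $\Psi$ and $\Psi^{-1}$ are mutually inverse via the triangle identities) is exactly the standard argument the citation points to; it is also consistent with the explicit formula for $\delta$ that the paper later uses in the proof of Proposition \ref{Galois-Galois}. The only incidental remark is that your $T(\theta)$ silently corrects the paper's typo $F(\theta)$ in the displayed formula for $\Psi^{-1}$.
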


\begin{thm}\label{DL}{\bf Monad distributive laws.} \em
Given two monads $\bT =(T, m, e)$ and $\bS=(S,m', e')$ on $\A$,
a natural transformation $\lambda: TS \to ST$ is
a \textit{(monad) distributive law} of $\bT$ over $\bS$ if it induces
commutativity of the diagrams
$$\xymatrix{
& S \ar[ld]_{e S} \ar[rd]^{S e } &\\
TS \ar[rr]^{\lambda} && ST\\
& T \ar[ru]_{e' T} \ar[lu]^{T e'}&,} \qquad \xymatrix{
TSS \ar[d]_{Tm'} \ar[r]^{\lambda S}& STS \ar[r]^{S\lambda} &SST \ar[d]^{m' T}\\
TS \ar[rr]^{\lambda}&& ST\\
TTS \ar[u]^{m S} \ar[r]_{T \lambda} & TST \ar[r]_{\lambda T}& STT \ar[u]_{Sm}.}$$
Given a distributive law $\lambda: T S\to S T$, the triple
$ \bS\bT=(ST, m' m  \cdot S
\lambda T, e' e)$ is a monad on $\A$ (e.g. \cite{B}, \cite{W-AC}).
Notice that the monad structure on $\bS\bT$ depends on $\la$ and if the choice of $\la$
needs to be specified we write $(\bS\bT)_\la$.

Furthermore, a distributive law $\lambda$ corresponds to a monad
$\widehat{\bS}_\la=(\widehat{S},\widehat{m},\widehat{e})$
on $\A_{\bT}$ that is \emph{lifting} of $\bS$ to $\A_{\bT}$ in the sense that
\begin{center}
$U_{\bT} \widehat{S}=S U_{\bT}$, \;
$U_{\bT} \widehat{m}= m' U_{\bT}$ \; and \; $U_{\bT} \widehat{e}=e' U_{\bT}$.
\end{center}

This defines the Eilenberg-Moore category
$(\A_ {\bT})_{\widehat{\bS}_\la}$ of $\widehat{\bS}_\la$-modules
whose objects are triples $((A,t), s)$, with $(A,t) \in \A_{\bT}$,
$(A,s) \in \A_{{\bS}}$  with a commutative diagram
\begin{equation}\label{mon-mon}
\xymatrix{TS(A) \ar[rr]^{\la_A} \ar[d]_{T(s)}&& ST(A) \ar[d]^{S(t)}\\
T(A)\ar[r]_{t}&A& S(A)\ar[l]^{s}.}
\end{equation}

There is an isomorphism of categories
$\mathcal{P}_\la:\A_{(\bS\bT)_\la} \to (\A_{\bT})_{\widehat{\bS}_\la}$
 by the assignment
$$ (A,S T(A)\xra{ \varrho} A) \; \mapsto \; ((A,\,T(A)\xra{e'_{T(A)}}ST(A) \xra{\varrho}A), S(A) \xra{Se_A }ST(A)\xra{\varrho} A),$$
and for any $((A,t),s) \in (\A_{\bT})_{\widehat{S}_{\la}}$,
$$\mathcal{P}_\la^{-1}((A,t),s)=(A, ST (A) \xra{S(t)} S(A) \xra{s} A).$$
\end{thm}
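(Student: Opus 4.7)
The plan is to verify four separate pieces: (a) $\mathcal{P}_\la$ sends an object of $\A_{(\bS\bT)_\la}$ to a genuine object of $(\A_\bT)_{\widehat{\bS}_\la}$; (b) the displayed formula for $\mathcal{P}_\la^{-1}$ yields a genuine $(\bS\bT)_\la$-module; (c) the two assignments are mutually inverse on objects; (d) both extend to functors giving a bijection on morphisms. Because $\mathcal{P}_\la$ acts as the identity on underlying $\A$-arrows, piece (d) will drop out of (a) and (b) once the module-morphism conditions on either side are matched up, so the real content lies in (a) and (b).

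For (a), given $(A,\varrho)$ I set $t:=\varrho\cdot e'_{T(A)}$ and $s:=\varrho\cdot S(e_A)$. The two module-unit axioms both reduce to the single $\bS\bT$-module unit $\varrho\cdot(e'e)_A=1_A$, using naturality of $e'$ to identify $e'_{T(A)}\cdot e_A$ with $S(e_A)\cdot e'_A$. The associativity of $t$ follows from $\bS\bT$-associativity $\varrho\cdot\mu^{ST}_A=\varrho\cdot ST(\varrho)$ after pre-composing with a suitable $Te'$-insertion and using the distributive-law unit axiom $\la\cdot Te'=e'T$ to collapse the $S\la T$ factor in $\mu^{ST}_A$; the associativity of $s$ is symmetric via $\la\cdot eS=Se$. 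The compatibility square (\ref{mon-mon}) then unfolds, after substituting the definitions of $t$ and $s$, into a third application of $\bS\bT$-associativity combined with both unit axioms of $\la$.

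The main obstacle is (b): starting from $((A,t),s)$, I must show $\varrho:=s\cdot S(t)$ is an $(\bS\bT)_\la$-action. The unit $\varrho\cdot(e'e)_A=s\cdot S(t\cdot e_A)\cdot e'_A=s\cdot e'_A=1_A$ is immediate. The serious calculation is associativity $\varrho\cdot\mu^{ST}_A=\varrho\cdot ST(\varrho)$. My strategy is to apply the compatibility square (\ref{mon-mon}) at the object $T(A)$ to trade the distinguished factor $\la_{T(A)}$ appearing inside $\mu^{ST}_A=m'm\cdot S\la T$ for a rectangle built from $T(s)$ and $S(t)$; after this substitution both sides of the associativity equation become parallel composites whose equality reduces to the monad-associativities of $t$ and $s$ separately. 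This is the unique step where the interaction of $\la$ with the multiplications (and not just the units) of $\bS$ and $\bT$ is used in full.

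For (c) the round-trip $\mathcal{P}_\la^{-1}\mathcal{P}_\la(A,\varrho)$ produces $(\varrho\cdot S(e_A))\cdot S(\varrho\cdot e'_{T(A)})$, which collapses to $\varrho$ by one more use of $\bS\bT$-associativity together with the two monad units; the reverse round-trip is immediate from the defining unit axioms of $t$ and $s$. For (d), a morphism $f:(A,\varrho)\to(A',\varrho')$ of $(\bS\bT)_\la$-modules yields the two separate conditions $f\cdot t=t'\cdot T(f)$ and $f\cdot s=s'\cdot S(f)$ by pre-composing $f\cdot\varrho=\varrho'\cdot ST(f)$ with the appropriate units; conversely these recombine as $f\cdot\varrho=f\cdot s\cdot S(t)=s'\cdot S(f)\cdot S(t)=s'\cdot S(t')\cdot ST(f)=\varrho'\cdot ST(f)$, giving the bijection on morphisms and completing the proof.
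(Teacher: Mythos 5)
Your verification is correct, and all the key computations go through as you describe: the unit axioms of $\la$ ($\la\cdot Te'=e'T$ and $\la\cdot eS=Se$) are exactly what collapses the $S\la T$ factor of $\mu^{ST}$ when you insert units, the compatibility square \eqref{mon-mon} is precisely what is needed to prove associativity of $\varrho=s\cdot S(t)$, and the round-trip and morphism checks are as routine as you claim. Note, however, that the paper does not prove this statement at all: \ref{DL} is recalled background material, cited to Beck and to \cite{W-AC}, so there is no "paper proof" to compare against --- your argument is the standard direct verification from the literature. One small imprecision worth fixing: in part (b) you say you apply the square \eqref{mon-mon} "at the object $T(A)$" to handle the factor $\la_{T(A)}$ inside $\mu^{ST}_A$. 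The square is only available at $A$ (it is a condition on the triple $((A,t),s)$, involving $\la_A$); what you actually need is first naturality of $\la$ applied to $t:T(A)\to A$, which converts $SST(t)\cdot S(\la_{T(A)})$ into $S(\la_A)\cdot STS(t)$ (up to the surrounding $S$), and only then the square at $A$. With that wording corrected, the proof is complete.
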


When no confusion can occur, we shall just write $\widehat{\bS}$ instead of $\widehat{\bS}_\la$.

\begin{thm}\label{la-iso} {\bf Proposition.} In the setting of \ref{DL}, let
$\la:TS\to ST$ be an invertible monad distributive law.

\begin{zlist}
\item  $\la^{-1}: ST\to TS$ is again a monad distributive law;
\item  $\la:TS\to ST$ can be seen as a monad isomorphism
 $(\bT\bS)_{\la^{-1}} \to (\bS\bT)_\la$ defining a category isomorphism
$$\A_\la: \A_{(\bS\bT)_\la}\to \A_{(\bT\bS)_{\la^{-1}}}, \quad
  (A, ST(A)\xra{\,\varrho\,} A) \mapsto  (A, TS(A)\xra{\,\la\,} ST(A)\xra{\,\varrho\,} A) ;$$

\item $\la^{-1}$ induces
 a lifting $\widehat \bT_{\la^{-1}} :\A_\bS\to \A_\bS$ of $\bT$ to $\A_\bS$ and
 an isomorphism of categories
 $$\varPhi: (\A_\bT)_{\widehat \bS_\la }\to (\A_\bS)_{\widehat \bT_{\la^{-1}}},\quad  ((A,t),s) \mapsto ((A,s),t),$$
 leading to the commutative diagram
$$\xymatrix{
  \A_{(\bS\bT)_\la}\ar[r]^{\bP_{\la}} \ar[d]_{\A_\la} &  (\A_\bT)_{\widehat{\bS}_\la}
\ar[d]^{\varPhi} \\
 \A_{(\bT\bS)_{\la^{-1}}} \ar[r]_{\bP_{\la^{-1}}} &  (\A_\bS)_{\widehat {\bT}_{\la^{-1}}} .}
$$
\end{zlist}
\end{thm}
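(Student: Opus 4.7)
My plan is to derive all three parts from invertibility of $\la$ by standard diagram rewriting, with the only non-trivial calculation being the multiplicativity check in part (2); everything else reduces to the observation that postcomposing a given diagram with $\la^{-1}$ at an appropriate place turns it into the desired one.

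For part (1), I would transform each of the four axioms for $\la:TS\to ST$ into the analogous axiom for $\la^{-1}:ST\to TS$ viewed as a distributive law of $\bS$ over $\bT$. The two unit triangles $\la\cdot eS=Se$ and $\la\cdot Te'=e'T$ rearrange immediately to $eS=\la^{-1}\cdot Se$ and $Te'=\la^{-1}\cdot e'T$. For the multiplication hexagons, I use the elementary identities $(H\alpha)^{-1}=H(\alpha^{-1})$ and $(\alpha H)^{-1}=(\alpha^{-1})H$ for a functor $H$ and invertible $\alpha$, together with $(\beta\gamma)^{-1}=\gamma^{-1}\beta^{-1}$, to invert both sides of each hexagon; the resulting identities are precisely the two hexagon axioms required of $\la^{-1}$.

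For part (2), showing that $\la$ is a monad morphism $(\bT\bS)_{\la^{-1}}\to(\bS\bT)_\la$ splits into unit and multiplication compatibility. The unit equation $\la\cdot ee'=e'e$ is immediate from the triangles in \ref{DL}. The key identity is
\[\la\cdot(mm'\cdot T\la^{-1} S)\;=\;(m'm\cdot S\la T)\cdot(ST\la\cdot\la TS)\]
on $TSTS$; this is the main obstacle, being the one place where nontrivial bookkeeping is required. My strategy is to expand the right-hand side using naturality of $\la$ and the two hexagon axioms, pushing $\la$-whiskerings across $T\la^{-1} S$ via invertibility so that the remaining $m$ and $m'$ can be assembled to the left of $\la$; this is essentially the classical calculation showing that an invertible distributive law yields a monad isomorphism of the two composite monads. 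Once multiplicativity is in hand, $\la$ is an isomorphism of monads, and the induced functor $\A_\la$ on Eilenberg--Moore categories is transport of structure along $\la^{-1}$; the stated formula $(A,\varrho)\mapsto(A,\varrho\cdot\la_A)$ is then immediate.

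For part (3), the lifting $\widehat{\bT}_{\la^{-1}}$ is supplied by part (1) via the construction recalled in \ref{DL}. For $\varPhi$, I observe that the compatibility diagram (\ref{mon-mon}) for $((A,t),s)$ as a $\widehat{\bS}_\la$-module asserts $s\cdot S(t)\cdot\la_A=t\cdot T(s)$, and postcomposing with $\la^{-1}_A$ rewrites it as $s\cdot S(t)=t\cdot T(s)\cdot\la^{-1}_A$, which is exactly the compatibility condition for $((A,s),t)$ as a $\widehat{\bT}_{\la^{-1}}$-module; hence the assignment $((A,t),s)\mapsto((A,s),t)$ is well-defined and tautologically an isomorphism. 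To verify commutativity of the square on $(A,\varrho)$, going down then right produces $((A,\varrho\cdot\la_A\cdot e_{S(A)}),\varrho\cdot\la_A\cdot T(e'_A))$, and by the unit axioms $\la\cdot eS=Se$ and $\la\cdot Te'=e'T$ this collapses to $((A,\varrho\cdot Se_A),\varrho\cdot e'_{T(A)})$, which is precisely $\varPhi\circ\bP_\la$ applied to $(A,\varrho)$; morphism-wise there is nothing further to check since all four functors act as the identity on underlying arrows.
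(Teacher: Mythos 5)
Your proposal is correct. Note that the paper does not actually prove this proposition: it cites \cite[Lemma 4.2]{KaLaVi} for (1) and (2) and \cite[Remark 3.4]{BS} for (3), so your direct verification is in effect supplying the details the paper outsources to the literature, and it is the standard argument those sources contain. Parts (1) and (3) are handled exactly right: the unit triangles and hexagons for $\la^{-1}$ are obtained by inverting those for $\la$, the compatibility square (\ref{mon-mon}) is converted by composing with $\la^{-1}_A$, and the square in (3) commutes because $\la_A\cdot e_{S(A)}=S(e_A)$ and $\la_A\cdot T(e'_A)=e'_{T(A)}$, with all functors acting as the identity on underlying arrows. The only place you gesture rather than compute is the multiplicativity identity in (2), but your strategy does go through: writing $mm'=mS\cdot TTm'$, the hexagon $\la\cdot mS=Sm\cdot\la T\cdot T\la$ followed by the whiskered hexagons $T(\la\cdot Tm')$ and $(\la\cdot Tm')T$ produces a factor $T\la S$ that cancels against $T\la^{-1}S$, and the interchange law turns $Sm\cdot m'TT$ into $m'm$ and $\la ST\cdot TS\la$ into $ST\la\cdot\la TS$, giving exactly $(m'm\cdot S\la T)\cdot(ST\la\cdot\la TS)$. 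One small observation worth recording: invertibility of $\la$ enters (2) only through the existence of the source monad $(\bT\bS)_{\la^{-1}}$ and the cancellation just mentioned; the remainder of the computation uses only naturality and the two hexagons.
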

\begin{proof}
 (1), (2) follow by \cite[Lemma 4.2]{KaLaVi}, (3) is outlined in \cite[Remark 3.4]{BS}.
\end{proof}

\begin{thm}\label{co-DL}{\bf Comonad distributive laws.} \em
Given comonads $\bG =(G,\delta,\ve)$ and $\bH=(H,\delta', \ve')$ on $\A$,
 a natural transformation $\kappa: HG \to GH$ is
a \textit{(comonad) distributive law} of $\bG$ over $\bH$ if it induces
commutativity of the diagrams
$$\xymatrix{
& H    &\\
HG \ar[rr]^{\kappa} \ar[ru]^{H\ve}   \ar[rd]_{\ve' G} && GH\ar[lu]_{\ve H} \ar[ld]^{G\ve'}\\
& G &,} \qquad
\xymatrix{
HGG   \ar[r]^{\kappa G}& GHG\ar[r]^{G\kappa} &GGH  \\
HG \ar[rr]^{\kappa} \ar[u]^{H\delta} \ar[d]_{\delta' G} && GH\ar[u]_{\delta H} \ar[d]^{G\delta' }    \\
HHG   \ar[r]_{H \kappa} & HGH \ar[r]_{\kappa H}& GHH  .}$$
Given this, the triple
$(\bH\bG)_\kappa=(HG, H\kappa G \cdot\delta'\delta , \ve' \ve)$ is a comonad on $\A$
(e.g. \cite{B}, \cite{W-AC}).

Also, the distributive law $\kappa$ corresponds to
a lifting  of the comonad $\bH$ to a comonad $\widetilde\bH_\kappa:\A^\bG\to \A^\bG$,
 leading to the Eilenberg-Moore category $(\A^\bG)^{\widetilde{\bH}_\kappa}$ of
$\widetilde\bH_\kappa$-comodules whose objects are triples $((A,g),h)$
with $(A,g)\in \A^\bG$ and $(A,h)\in \A^\bH$ with commutative diagram
$$\xymatrix{ H(A)\ar[d]_{H(g)} & \ar[l]_{\quad h} A \ar[r]^{g\quad} & G(A)\ar[d]^{G(h)} \\
 HG(A) \ar[rr]_{\kappa_A} && GH(A) .}
$$

There is an isomorphism of categories $\bQ_\kappa:\A^{(\bH\bG)_\kappa}\to (\A^\bG)^{\widetilde\bH_\kappa}$
given by
$$(A, A\xra{\rho} HG(A)) \; \mapsto \; (A, A\xra{\,\rho\,} HG(A)\xra{\ve'_{G(A)}} G(A)),
  A \xra{\,\rho\,} HG(A) \xra{H(\ve_A)} H(A)),$$
and for any $((A,g),h)\in (\A^\bG)^{\widetilde\bH_\kappa}$,
$$\bQ_\kappa^{-1}((A,g),h) = (A, A\xra{\, h\, } H(A) \xra{H(g)} HG(A)).$$
\end{thm}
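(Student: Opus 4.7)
The plan is to dualize Theorem~\ref{DL}. Passing from $\A$ to $\A^{{\rm op}}$ turns each comonad on $\A$ into a monad on $\A^{{\rm op}}$, converts a comonad distributive law into a monad one (with the direction of $\kappa$ reversed), and identifies the category of comodules over a comonad with the opposite of the category of modules over the resulting monad. Under this correspondence the three assertions in \ref{co-DL}---that $(HG,\, H\kappa G\cdot\delta'\delta,\, \ve'\ve)$ is a comonad, that $\kappa$ induces a lifting $\widetilde\bH_\kappa$ of $\bH$ to $\A^\bG$, and that $\bQ_\kappa$ is a category isomorphism---become the opposite-category versions of the corresponding statements in \ref{DL} applied to $\kappa^{{\rm op}}$. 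One could cite that, but since the diagrammatic bookkeeping has to be done once anyway, I would verify the claims directly.

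First, to establish the comonad structure on $HG$, I would chase the counit axioms by composing $H\kappa G\cdot\delta'\delta$ with $\ve'\ve$ on either side, invoking the two triangle axioms for $\kappa$ together with the counit axioms of $\bG$ and $\bH$. Coassociativity combines coassociativity of $\delta$ and $\delta'$ with the two hexagonal axioms for $\kappa$; this is the arrow-reversed analogue of associativity for $\bS\bT$ in \ref{DL}.

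Second, the lifting is prescribed on objects by
$$\widetilde\bH_\kappa(A,g)=\bigl(H(A),\; H(A)\xra{H(g)}HG(A)\xra{\kappa_A}GH(A)\bigr),$$
on morphisms by $H$ itself, with structure transformations $\delta'$ and $\ve'$. Naturality of $\kappa$ and the hexagon axioms render the new coaction coassociative and counital and make $\delta'_A,\ve'_A$ into $\bG$-comodule maps, so $\widetilde\bH_\kappa$ is indeed a comonad on $\A^\bG$ covering $\bH$ along the forgetful functor. Third, for the isomorphism $\bQ_\kappa$, the essential point is that from a $(\bH\bG)_\kappa$-coaction $\rho:A\to HG(A)$, the two composites $\ve'_{G(A)}\cdot\rho$ and $H(\ve_A)\cdot\rho$ are a $\bG$- and an $\bH$-coaction fitting into the compatibility square of \ref{co-DL}; this is an immediate consequence of naturality of $\kappa$ and the two counit axioms. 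The stated $\bQ_\kappa^{-1}$ sends $((A,g),h)$ to the coaction $H(g)\cdot h$, and both round trips collapse to the identity via the counit axiom of $\bH$ on the $H$-side and of $\bG$ on the $G$-side.

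The main obstacle is not conceptual but notational: a fair number of arrow composites must be tracked, and at each step one must select the correct $\kappa$-axiom (triangle or hexagon, left or right variant). I would exploit the evident left/right symmetry of the statement in $\bG$ and $\bH$ to verify just one axiom in each pair directly, and deduce the other by swapping the roles of $\bG$ and $\bH$.
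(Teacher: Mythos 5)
Your proposal is correct and follows essentially the route the paper itself takes: the paper offers no proof of \ref{co-DL}, presenting it as the formal dual of \ref{DL} with references to the standard literature (Beck, Wisbauer), and your opening move of dualizing \ref{DL} via $\A^{\rm op}$ is exactly that. Your supplementary direct verification identifies the right axioms at each step; the only slight understatement is in the round-trip check for $\bQ_\kappa$, where showing $\bQ_\kappa^{-1}\bQ_\kappa=1$ needs the coassociativity of the coaction $\rho$ and the triangle axioms for $\kappa$ in addition to the counit axioms, but this is the standard computation and poses no obstacle.
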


The following observations are dual to \ref{la-iso}.

\begin{thm}\label{ka-iso} {\bf Proposition.} In the setting of \ref{co-DL}, let
$\kappa:HG\to GH$ be an invertible comonad distributive law.

\begin{zlist}
\item  $\kappa^{-1}: GH\to HG$ is again a comonad distributive law of $\bH$ over $\bG$;
\item $GH$ allows for a comonad structure $(\bG \bH)_{\kappa^{-1}}$ and $\kappa:HG\to GH$ is a comonad isomorphism
$ (\bH \bG)_\kappa \to (\bG \bH)_{\kappa^{-1}}$ defining a category equivalence
$$\A^\kappa: \A^{(\bH\bG)_\kappa}\to \A^{(\bG\bH)_{\kappa^{-1}}}, \quad
  (A, A\xra{\,\rho\,} HG(A)) \mapsto  (A, A\xra{\,\rho\,} HG(A)\xra{\,\kappa\,} GH(A) ;$$

\item $\kappa^{-1}$ induces a lifting  $\widetilde \bG_{\kappa^{-1}} :\A^\bH\to \A^\bH$
of $\bG$ to $\A^\bH$ and an equivalence of categories
 $$\varPhi': (\A^\bG)^{\widetilde \bH_\kappa}\to (\A^\bH)^{\widetilde \bG_{\kappa^{-1}}},\quad
  ((A,g),h) \mapsto ((A,h),g),$$
 leading to the commutative diagram
$$\xymatrix{
\A^{(\bH\bG)_\kappa}  \ar[r]^{\bQ_\kappa} \ar[d]_{\A_\kappa} &(\A^\bG)^{\widetilde \bH_\kappa}
         \ar[d]^{\varPhi'} \\
 \A^{(\bG\bH)_{\kappa^{-1}}} \ar[r]_{\bQ_\kappa^{-1}} &  (\A^\bH)^{\widetilde \bG_{\kappa^{-1}}} .}
$$
\end{zlist}
\end{thm}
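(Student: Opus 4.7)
The plan is to derive this proposition as a formal dual of Proposition \ref{la-iso} via the standard monad--comonad duality between $\A$ and its opposite category $\A^{\mathrm{op}}$. Under this passage, the comonads $\bG$ and $\bH$ on $\A$ correspond to monads on $\A^{\mathrm{op}}$, the comonad distributive law $\kappa:HG\to GH$ corresponds to a monad distributive law of $\bH$ over $\bG$ in $\A^{\mathrm{op}}$, and the Eilenberg--Moore category $\A^\bG$ is identified with the category of modules over the associated monad on $\A^{\mathrm{op}}$. Invertibility is preserved, so each of the three claims translates into the corresponding statement of \ref{la-iso}.

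For \textbf{(1)}, I would apply \ref{la-iso}(1) in $\A^{\mathrm{op}}$: the inverse $\kappa^{-1}$ is a monad distributive law there, which, read back in $\A$, is precisely the assertion that $\kappa^{-1}:GH\to HG$ is a comonad distributive law of $\bH$ over $\bG$. Alternatively, one can verify the two triangles and the two hexagons of \ref{co-DL} directly by whiskering the corresponding diagrams for $\kappa$ with $\kappa^{-1}$ on the appropriate sides, which is the routine computation carried out in \cite[Lemma 4.2]{KaLaVi}.

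For \textbf{(2)}, dualising \ref{la-iso}(2) equips $GH$ with the comonad structure $(\bG\bH)_{\kappa^{-1}}$ and identifies $\kappa$ as a comonad isomorphism $(\bH\bG)_\kappa\to(\bG\bH)_{\kappa^{-1}}$; composing the induced equivalence on comodule categories with the description of $\bQ_\kappa$ in \ref{co-DL} produces the explicit formula for $\A^\kappa$. For \textbf{(3)}, the dual of \ref{la-iso}(3) yields the lifting $\widetilde\bG_{\kappa^{-1}}:\A^\bH\to\A^\bH$ together with the functor
$$\varPhi':(\A^\bG)^{\widetilde\bH_\kappa}\to(\A^\bH)^{\widetilde\bG_{\kappa^{-1}}},\quad ((A,g),h)\mapsto((A,h),g),$$
which is an equivalence because it has an evident inverse given by the same formula. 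Commutativity of the final square then reduces to tracing an arbitrary object $(A,\rho)\in\A^{(\bH\bG)_\kappa}$ through both composites and appealing to the explicit form of $\bQ_\kappa$ and $\bQ_{\kappa^{-1}}$ from \ref{co-DL}.

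The only delicate point I anticipate is the well-definedness of $\varPhi'$: one must check that if $((A,g),h)$ satisfies the compatibility square of \ref{co-DL} with respect to $\kappa$, then $((A,h),g)$ satisfies the analogous square with respect to $\kappa^{-1}$. This is obtained by inverting $\kappa$ on both sides of the given square and is precisely where invertibility of $\kappa$ (secured by part (1)) is essential. Beyond this bookkeeping, the argument is a mechanical transcription of the proof of \ref{la-iso} via duality, so I expect no genuine obstacle.
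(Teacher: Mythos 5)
Your proposal is correct and matches the paper's own treatment: the paper gives no separate argument for \ref{ka-iso}, simply declaring it dual to Proposition \ref{la-iso} (whose proof in turn cites \cite[Lemma 4.2]{KaLaVi} and \cite[Remark 3.4]{BS}), which is exactly the dualisation-via-$\A^{\mathrm{op}}$ route you take. Your extra remarks on the well-definedness of $\varPhi'$ and the diagram chase are fine but go beyond what the paper records.
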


\begin{thm}\label{MixDL}{\bf Mixed distributive laws.} \em
Given a monad $\bT=(T, m, e)$ and a comonad $\bG=(G,\delta, \varepsilon)$
on $\A$, a \emph{mixed distributive law} (or {\em entwining}) from $\bT$ to
$\bG$ is a
natural transformation $\omega : TG \to GT$ with commutative diagrams
$$
\xymatrix{ & G \ar[dl]_{e  G} \ar[rd]^{G e }& \\
T G \ar[rd]_{T \varepsilon } \ar[rr]_{\omega}&&
G T \ar[ld]^{\varepsilon T}\\
& T& } \qquad
\xymatrix{
 TT G \ar[d]_{m  G} \ar[r]^{T \omega} & T G T \ar[r]^{\omega T} & G T
T \ar[d]^{G m }\\
TG \ar[d]_{T \delta }\ar[rr]_{\omega} && G T \ar[d]^{ \delta  T}\\
TGG \ar[r]_{\omega G} & GTG \ar[r]_{G \omega} & GGT. }
$$

Given a mixed distributive law $\omega : TG \to GT$ from the
monad $\bT$ to the comonad $\bG$,
we write $\widehat{\bG}_\omega=(\widehat{G} ,\widehat{\delta} ,\widehat{\ve} )$
for a comonad on $\A_{\bT}$ lifting $\bG$ (e.g. \cite[Section 5]{W-AC}).

It is well-known that for any object $(A,h)$ of $\A_{\bT}$,
\begin{center}
$ \widehat{G} (A, h)=(G(A), G(h) \cdot \omega_A)$ ,
\;
$(\widehat{\delta} )_{(A, h)}=\delta_A$,
\;
$(\widehat{\varepsilon})_{(A, h)}=\varepsilon_A ,$
\end{center}
and the objects of $(\A_{\bT})^{\widehat{\bG}}$ are triples $(A, h, \vartheta)$, where $(A, h) \in
\A_{\bT}$ and $(A,\vartheta) \in \A^{\bG}$ with commuting diagram
\begin{equation}\label{mix-mod}
\xymatrix{
T(A) \ar[r]^-{h} \ar[d]_-{T(\vartheta)}& A \ar[r]^-{\vartheta}& G(A) \\
TG(A) \ar[rr]_-{\omega_A}&& GT(A). \ar[u]_-{G(h)}}
\end{equation}
\end{thm}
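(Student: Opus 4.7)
The statement packages together two claims that need verification: first, that the formulas $\widehat{G}(A,h)=(G(A),\,G(h)\cdot\omega_A)$, $(\widehat\delta)_{(A,h)}=\delta_A$, $(\widehat\ve)_{(A,h)}=\ve_A$ really define a comonad $\widehat{\bG}_\omega$ on $\A_\bT$ lifting $\bG$; and second, that its Eilenberg--Moore category of comodules is identified with triples $(A,h,\vartheta)$ satisfying diagram \eqref{mix-mod}. The plan is to work through these two claims, using the four pentagon/triangle axioms of $\omega$ in \ref{MixDL} (two with the monad $\bT$, two with the comonad $\bG$) as the only substantive input, and otherwise relying on naturality of $\omega$, $\delta$, $\ve$.

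For the first claim, I would begin by checking that $(G(A),\,G(h)\cdot\omega_A)$ is a $\bT$-module whenever $(A,h)$ is. The unit axiom reduces, via the triangle $\omega\cdot eG=Ge$, to $G(h\cdot e_A)=\mathrm{id}$, which is the $\bT$-unit law for $h$. The associativity square becomes
\[
G(h)\cdot\omega_A\cdot T\!\bigl(G(h)\cdot\omega_A\bigr)=G(h)\cdot\omega_A\cdot m_{G(A)},
\]
and after expanding $T(G(h)\cdot\omega_A)=TG(h)\cdot T\omega_A$ and pushing the first factor through $\omega$ by naturality, this reduces to the hexagon axiom $\omega\cdot mG=Gm\cdot\omega T\cdot T\omega$ combined with the associativity of $h$. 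Functoriality of $\widehat G$ on morphisms $f:(A,h)\to(A',h')$ is a one-line computation using naturality of $\omega$ and functoriality of $G$. Next I verify that $\delta_A$ and $\ve_A$ are $\bT$-module morphisms when regarded with codomain $\widehat G\widehat G(A,h)$ and $(A,h)$ respectively; the former uses the pentagon $\delta T\cdot\omega=G\omega\cdot\omega G\cdot T\delta$ plus naturality of $\delta$, the latter uses the triangle $\ve T\cdot\omega=T\ve$ plus naturality of $\ve$. Naturality of $\widehat\delta$ and $\widehat\ve$ follow from those of $\delta,\ve$, and the comonad axioms for $\widehat{\bG}_\omega$ are inherited from those for $\bG$ because $U_\bT$ is faithful and $U_\bT\widehat G=GU_\bT$, $U_\bT\widehat\delta=\delta U_\bT$, $U_\bT\widehat\ve=\ve U_\bT$ hold by construction (this also records the lifting property).

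For the second claim, an object of $(\A_\bT)^{\widehat{\bG}_\omega}$ is a pair $((A,h),\vartheta)$ with $\vartheta:(A,h)\to\widehat G(A,h)$ a morphism in $\A_\bT$ satisfying the coassociativity and counit identities for the comonad $\widehat{\bG}_\omega$. Since $\widehat\delta$ and $\widehat\ve$ coincide as arrows in $\A$ with $\delta$ and $\ve$, these identities are exactly the $\bG$-coaction axioms on $\vartheta:A\to G(A)$; so $(A,\vartheta)\in\A^{\bG}$. The remaining content is that $\vartheta$ is a $\bT$-morphism from $(A,h)$ to $(G(A),G(h)\cdot\omega_A)$, which unfolds to $\vartheta\cdot h=G(h)\cdot\omega_A\cdot T(\vartheta)$, i.e.\ precisely diagram \eqref{mix-mod}. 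This gives the claimed bijective description on objects, and morphisms go across unchanged since a morphism of $\widehat{\bG}_\omega$-comodules is just a morphism that is simultaneously a $\bT$-module map and a $\bG$-comodule map.

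The main obstacle is not conceptual but organizational: there are four compatibility axioms in $\omega$ and each of the four verifications (module unit, module associativity, counit is $\bT$-linear, comultiplication is $\bT$-linear) must use exactly one of them in the right position. To keep the bookkeeping clean I would perform each verification as a short pasting diagram in which the only non-naturality cell is the corresponding axiom of $\omega$. Since the dual statement used earlier in \ref{DL} for monad-monad liftings follows the same pattern, I would mirror its presentation and, for the second claim, model the argument on the isomorphism $\bP_\la$ of \ref{DL} (recording the comodule case rather than the module case).
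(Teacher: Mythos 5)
Your verification is correct and is the standard direct check: one triangle and one square of the entwining axioms give the $\bT$-module structure on $G(A)$, the other two give $\bT$-linearity of $\ve_A$ and $\delta_A$, the comonad identities descend along the faithful $U_\bT$, and unwinding $\bT$-linearity of a $\widehat{\bG}_\omega$-coaction yields exactly diagram (\ref{mix-mod}). The paper itself supplies no proof here — it records these facts as well known and points to the literature — and your argument is precisely the one those references contain, so there is nothing to compare beyond noting that your bookkeeping (which axiom of $\omega$ enters which of the four verifications) is accurate.
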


\begin{thm}\label{DisAd}{\bf Distributive laws and adjoint functors.} \em
Let $\lambda: TS \to ST$ be a distributive law of a monad
$\bT=(T,m,e)$ over a monad $\bS=(S,m',e')$ on $\A$.
If $\bT$ admits a right adjoint comonad
$\bR  $ (with $\overline{\eta},\overline{\varepsilon}: T\dashv R $),
then the composite
$$\xymatrix{\lambda_\dia : SR
\ar[r]^-{\overline{\eta} SR  }& R  TSR   \ar[r]^-{R   \lambda R  }& R  STR
\ar[r]^-{R   S \overline{\varepsilon}}& R   S }$$
is a mixed distributive law from  $\bS$ to  $\bR  $ (e.g. \cite{BBW}, \cite{MW})
and the assignment
$$ (A,\, \nu:ST(A) \to A)\;\mapsto \;
(A,\, h_\nu:S(A) \to A,\, \vartheta_\nu: A \to R  (A)), \; \mbox{with}$$
 \begin{center}
$h_\nu:S(A) \xrightarrow{S(e_A)}ST(A) \xrightarrow{\;\nu\;}A$, \;
 $\vartheta_\nu:
 A \xrightarrow{\;\overline{\eta}_A\;}R  T(A) \xrightarrow{R  (e'_{T(A)})}R  ST(A)
\xrightarrow{R  (\nu)}R  (A),$
\end{center}
yields an isomorphism of categories
$\mathbb{A}_{({\bS} {\bT})_\la}\simeq
(\mathbb{A}_{\bS})^{\widehat{\bR}_{\lambda^\dia}}.$
\end{thm}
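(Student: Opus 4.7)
The plan is to deduce both claims by combining the two correspondences already at hand in \ref{DL} and \ref{mod-comod}, rather than constructing everything from scratch. For the first claim, that $\lambda^\dia=RS\overline\ve\cdot R\la R\cdot\overline\eta SR$ is a mixed distributive law, I would verify the four axioms of \ref{MixDL} by direct diagram chase: the two unit-type diagrams follow from the triangular identities $R\overline\ve\cdot\overline\eta R=\mathrm{id}$ and $\overline\ve T\cdot T\overline\eta=\mathrm{id}$ together with the unit axioms for $\la$, and the two hexagonal axioms follow by rewriting each hexagon defining $\la$ after pre- and post-composing with strings of $\overline\eta$ and $\overline\ve$, using naturality and the mate correspondence $m\dashv\delta$ to convert the multiplication $m$ of $\bT$ into the comultiplication $\delta$ of $\bR$; what results are precisely the hexagons required for $\lambda^\dia$.

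For the isomorphism, my plan is to exhibit the displayed assignment as the composite of two known isomorphisms. By \ref{DL} there is an isomorphism $\bP_\la:\A_{(\bS\bT)_\la}\xra{\sim}(\A_\bT)_{\widehat{\bS}_\la}$ sending $(A,\nu)$ to $((A,t_\nu),h_\nu)$ with $t_\nu=\nu\cdot e'_{T(A)}$ and $h_\nu=\nu\cdot S(e_A)$. By \ref{mod-comod} the isomorphism $\Psi:\A_\bT\xra{\sim}\A^\bR$ replaces $(A,t)$ by $(A,R(t)\cdot\overline\eta_A)$, so applying $\Psi$ to the first component of $\bP_\la(A,\nu)$ turns $t_\nu$ into $\vartheta_\nu=R(\nu)\cdot R(e'_{T(A)})\cdot\overline\eta_A$, exactly the formula in the statement. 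It remains to check that the compatibility condition \eqref{mon-mon}, $h\cdot S(t)\cdot\la_A=t\cdot T(h)$, is equivalent under the identification $t=\overline\ve_A\cdot T(\vartheta)$ to the mixed-entwining condition \eqref{mix-mod} with $\omega=\lambda^\dia$, namely $R(h)\cdot\lambda^\dia_A\cdot S(\vartheta)=\vartheta\cdot h$.

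This I would verify by transposing both sides of \eqref{mon-mon} across $T\dashv R$. The right transpose of $t\cdot T(h)$ is $R(t)\cdot RT(h)\cdot\overline\eta_{S(A)}=R(t)\cdot\overline\eta_A\cdot h=\vartheta\cdot h$ by naturality of $\overline\eta$. For the other side, substituting $t=\overline\ve_A\cdot T(\vartheta)$ into $R(h\cdot S(t)\cdot\la_A)\cdot\overline\eta_{S(A)}$ and applying the naturality square of $\la$ at $\vartheta:A\to R(A)$ (to rewrite $RST(\vartheta)\cdot R\la_A$ as $R\la_{R(A)}\cdot RTS(\vartheta)$), followed by naturality of $\overline\eta$ at $S(\vartheta)$, collapses it to $R(h)\cdot RS(\overline\ve_A)\cdot R(\la_{R(A)})\cdot\overline\eta_{SR(A)}\cdot S(\vartheta)=R(h)\cdot\lambda^\dia_A\cdot S(\vartheta)$. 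Since transposition is a bijection, the two compatibility conditions coincide, so the composite of $\bP_\la$ with $\Psi$ on the $\bT$-component restricts to the desired bijection on objects; functoriality on morphisms is automatic because $\bP_\la$ and $\Psi$ are isomorphisms of categories.

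The main obstacle is precisely this transposition chase: threading $\la$ through $\overline\eta$ and $\overline\ve$ while keeping the two $S$-legs in the right position and landing exactly on $\lambda^\dia$ rather than on a merely isomorphic natural transformation. Once it is organised cleanly, the mixed-distributive-law axioms in the first step amount to the same bookkeeping applied one level up, so the two verifications reinforce each other and no independent construction of the inverse functor is needed.
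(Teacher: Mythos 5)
Your argument is correct. Note that the paper itself gives no proof of \ref{DisAd}: it is stated as a known fact with references to \cite{BBW} and \cite{MW}, so there is no in-text argument to compare against. Your proof supplies exactly the standard route one would expect from those sources: $\lambda^\dia$ is the mate of $\lambda$ under $T\dashv R$, and the category isomorphism factors as $\bP_\la$ from \ref{DL} followed by applying $\Psi$ from (\ref{Psi}) to the $\bT$-module component, with the only substantive point being that the compatibility square (\ref{mon-mon}) transposes across $T\dashv R$ precisely to the entwined-module square (\ref{mix-mod}) for $\lambda^\dia$ --- a computation you carry out correctly (naturality of $\la$ at $\vartheta$, then naturality of $\overline\eta$ at $S(\vartheta)$, landing on $R(h)\cdot\lambda^\dia_A\cdot S(\vartheta)$ on the nose). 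The formulas $h_\nu=\nu\cdot S(e_A)$ and $\vartheta_\nu=R(\nu)\cdot R(e'_{T(A)})\cdot\overline\eta_A$ match the statement, and your remark that functoriality is inherited from the isomorphisms $\bP_\la$ and $\Psi$ disposes of the morphism-level check. The only part left at the level of a sketch is the verification that $\lambda^\dia$ satisfies the four mixed-distributive-law axioms; your plan (triangular identities for the unit conditions, mate of $m$ against $\delta$ for the pentagons) is the right one and presents no obstacle.
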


\begin{thm}\label{InDisAd}{\bf Invertible distributive laws and adjoint functors.} \em
Let $\lambda: TS \to ST$ be an invertible distributive law of a monad
$\bT=(T,m,e)$ over a monad $\bS=(S,m',e')$ on $\A$. Then $\la^{-1}: ST \to TS$ is a distributive law
of the monad $\bS$ over the monad $\bT$ (\ref{la-iso}), and if $\bS$ admits a right adjoint comonad $\mathcal{H}$
(with $\overline{\eta}, \overline{\ve}: S \dashv H$),
then the previous construction can be repeated with $\la$ replaced by $\la^{-1}$. Thus
the composite
$$(\la^{-1})_\dia : TH \xra{\overline{\eta} \,TH} HSTH \xra{H \la^{-1} H} HTSH
\xra{HT \overline{\ve}} HT$$
is a mixed distributive law from the monad $\bS$ to the comonad $\mathcal{H}$.
Moreover, there is an adjunction $\alpha, \beta: \widehat{\bS}_\la \dashv \widehat{\mathcal{H}}_{(\la^{-1})_\dia}:\A_\bT \to \A_\bT,$
where $\widehat\bS_\la$ is the lifting of $\bS$ to $\A_\bT$ considered in \ref{DL}
(e.g. \cite[Theorem 4]{J}) and the canonical isomorphism $\Psi$ from (\ref{Psi})
 yields the commutative diagram
\begin{equation}\label{l-inverse}
\xymatrix{ (\A_\bT)_{\widehat\bS_\la} \ar[r]^{\Psi\quad}
  \ar[d]_{U_{\widehat\bS_\la}} &
 (\A_{\bT})^{\widehat{\mathcal{H}}_{(\la^{-1})_\dia}} \ar[d]^{U^{\widehat{\mathcal{H}}_{(\la^{-1})_\dia}}} \\
   \A_{\bT} \ar[r]_= & \A_{\bT}. }
\end{equation}

Note that $U_\bT(\alpha)=\overline{\eta}$ and $U_\bT(\beta)=\overline{\ve}$.
\end{thm}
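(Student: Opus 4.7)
The plan is to verify the three assertions in \ref{InDisAd} in sequence: (a) that $(\la^{-1})_\dia:TH\to HT$ is a mixed distributive law (in the sense of \ref{MixDL}); (b) the existence of the adjunction $\alpha,\beta:\widehat\bS_\la\dashv\widehat{\mathcal{H}}_{(\la^{-1})_\dia}$ with $U_\bT(\alpha)=\overline\eta$ and $U_\bT(\beta)=\overline\ve$; and (c) the commutativity of diagram \textup{(\ref{l-inverse})}.

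Claim (a) I would obtain by invoking \ref{DisAd} after interchanging the roles of the two monads. By \ref{la-iso}(1), $\la^{-1}:ST\to TS$ is a distributive law of $\bS$ over $\bT$, and by hypothesis $\bS$ carries a right adjoint comonad $\mathcal{H}$. The construction of \ref{DisAd}, applied verbatim under the substitution $(\bT,\bS,\la,\bR)\mapsto(\bS,\bT,\la^{-1},\mathcal{H})$, produces precisely the three-step composite displayed for $(\la^{-1})_\dia$ and certifies it as a mixed distributive law.

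For (b), the key observation is that both liftings are compatible with the forgetful functor $U_\bT$: namely $U_\bT\widehat\bS_\la=SU_\bT$ by \ref{DL}, and $U_\bT\widehat{\mathcal{H}}_{(\la^{-1})_\dia}=HU_\bT$ by \ref{MixDL}. Hence for each $(A,h)\in\A_\bT$ the underlying object of $\widehat{\mathcal{H}}_{(\la^{-1})_\dia}\widehat\bS_\la(A,h)$ is $HS(A)$ and that of $\widehat\bS_\la\widehat{\mathcal{H}}_{(\la^{-1})_\dia}(A,h)$ is $SH(A)$. I would therefore define $\alpha_{(A,h)}$ and $\beta_{(A,h)}$ to be $\overline\eta_A$ and $\overline\ve_A$ at the level of underlying arrows. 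Granted that these are morphisms in $\A_\bT$, naturality is inherited from that of $\overline\eta$ and $\overline\ve$, and the triangular identities for $(\alpha,\beta)$ follow from those of $(\overline\eta,\overline\ve)$ by the faithfulness of $U_\bT$.

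The main obstacle is therefore the verification that $\overline\eta_A$ and $\overline\ve_A$ are indeed $\bT$-module morphisms. Using $\widehat\bS_\la(A,h)=(S(A),S(h)\cdot\la_A)$ from \ref{DL} and $\widehat{\mathcal{H}}_{(\la^{-1})_\dia}(A,h)=(H(A),H(h)\cdot((\la^{-1})_\dia)_A)$ from \ref{MixDL}, the required identities read
\[
\overline\eta_A\cdot h \;=\; H(S(h))\cdot H(\la_A)\cdot((\la^{-1})_\dia)_{S(A)}\cdot T(\overline\eta_A),
\]
\[
h\cdot T(\overline\ve_A) \;=\; \overline\ve_A\cdot S(H(h))\cdot S(((\la^{-1})_\dia)_A)\cdot \la_{H(A)}.
\]
Each reduces to a diagram chase that substitutes the three-step definition of $(\la^{-1})_\dia$ in terms of $\overline\eta$, $\la^{-1}$, and $\overline\ve$, and then exploits the naturality of these transformations together with the triangular identities of $S\dashv H$ to cancel the $\overline\eta/\overline\ve$ pairs against the outer occurrences. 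Finally, claim (c) is immediate from the construction of $\Psi$ in \ref{mod-comod}, which leaves the underlying object unchanged: since both forgetful functors $U_{\widehat\bS_\la}$ and $U^{\widehat{\mathcal{H}}_{(\la^{-1})_\dia}}$ send an object to its underlying $\bT$-module, we have $U^{\widehat{\mathcal{H}}_{(\la^{-1})_\dia}}\circ\Psi=U_{\widehat\bS_\la}$ as required.
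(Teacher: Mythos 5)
Your proposal is correct, but it takes a more hands-on route than the paper, which offers no written proof for this item: part (a) is there obtained, exactly as you do, by applying \ref{DisAd} with the substitution $(\bT,\bS,\la,\bR)\mapsto(\bS,\bT,\la^{-1},\mathcal{H})$ (note in passing that the resulting law $(\la^{-1})_\dia:TH\to HT$ is a mixed distributive law from the monad $\bT$ to the comonad $\mathcal{H}$ --- the paper's ``from the monad $\bS$'' is a slip, and your substitution implicitly corrects it); but for part (b) the paper simply invokes Johnstone's adjoint lifting theorem \cite[Theorem 4]{J}, whereas you construct the unit and counit explicitly as the lifts of $\overline\eta$ and $\overline\ve$ and verify the two module-morphism identities by diagram chase. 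I checked both identities: each does collapse, after substituting the three-step definition of $(\la^{-1})_\dia$ and using naturality of $\overline\eta$, $\overline\ve$ and $\la^{-1}$, to a triangle identity for $S\dashv H$ followed by the cancellation $\la\cdot\la^{-1}=1$, so your computation goes through; and faithfulness of $U_\bT$ then delivers the triangular identities for $(\alpha,\beta)$ as you say. Your approach is more self-contained and has the advantage of making the paper's final remark $U_\bT(\alpha)=\overline\eta$, $U_\bT(\beta)=\overline\ve$ true by construction rather than something to be extracted from Johnstone's proof; the citation buys brevity. The only point you gloss over is that, for the isomorphism $\Psi$ of (\ref{Psi}) to apply as stated, the lifted comonad structure on $\widehat{\mathcal{H}}_{(\la^{-1})_\dia}$ must be the mate of the lifted monad structure on $\widehat\bS_\la$ under your adjunction $(\alpha,\beta)$; this follows by the same faithfulness argument, since $U_\bT$ sends the lifted (co)multiplications to $m'$ and $\delta'$, which are mates under $S\dashv H$, so it is a routine supplement rather than a gap.
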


\begin{thm}\label{DisAd-E}{\bf Entwinings and adjoint functors.} \em
For a monad $\bT=(T,m,e)$ and a comonad  $\bG=(G,\delta, \varepsilon)$, consider
an entwining  $\omega:TG \to GT$.
If $\bT$ admits a right adjoint comonad
$\bR$ (with $\overline{\eta},\overline{\varepsilon}: T\dashv R $),
then the composite
$$\xymatrix{\omega^\dia : G R
\ar[r]^-{\overline{\eta} G R  }& R  TG R   \ar[r]^-{R   \omega R }& R  G TR
\ar[r]^-{R   G  \overline{\varepsilon}}& R   G  }$$
is a comonad distributive law of $\bG $ over  $\bR  $ (e.g. \cite{BBW}, \cite{MW})
inducing a lifting $\widetilde \bG_\omega$ of $\bG$  to $\A^\bR$ and thus an Eilenberg-Moore
category $(\A^\bR)^{\widetilde \bG_\omega}$ of $\widetilde\bG_\omega$-comodules whose objects are
triples $((A,d),g)$ with commutative diagram
\begin{equation}\label{bi-comod}
\xymatrix {G(A) \ar[d]_{d} & \ar[l]_{\quad g} A \ar[r]^{d\quad } & R (A) \ar[d]^{Rg} \\
GR(A) \ar[rr]^{\omega^\dia_A}  & & R G(A) .}
\end{equation}
\end{thm}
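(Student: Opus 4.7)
The plan is to recognise $\omega^\dia$ as the \emph{mate} of $\omega$ under the adjunction $T\dashv R$, and then to derive each of the axioms for a comonad distributive law from the corresponding entwining axiom via the standard mate calculus. This is entirely parallel to the argument establishing $\la_\dia$ as a mixed distributive law in \ref{DisAd}; the only change is that $\omega$ is a mixed, rather than a monad, distributive law.

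First I would point out that the defining three-step composite for $\omega^\dia: GR\to RG$ is precisely the mate of $\omega: TG\to GT$ under $T\dashv R$, and thus fits into a bijection $\Hom(TG,GT)\cong \Hom(GR,RG)$. I would also recall, as already noted in \ref{mod-comod}, that the comonad structure $(R,\delta_\bR,\ve_\bR)$ itself arises from the monad structure on $\bT$ by the same mate construction: $\ve_\bR$ is the mate of $e$ and $\delta_\bR$ is the mate of $m$. These identifications are precisely what allow the translation between axioms involving $\bT$ and axioms involving $\bR$.

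Next I would verify the four distributive law axioms of \ref{co-DL} for $\omega^\dia$ by taking the mate of each of the four entwining axioms for $\omega$. Unit compatibility $\omega\cdot eG = Ge$ yields $\ve_\bR$-compatibility of $\omega^\dia$; counit compatibility $\ve T\cdot \omega = T\ve$ (where $\ve$ is the counit of $\bG$) yields the $\ve$-compatibility of $\omega^\dia$; the axiom relating $\omega$ to $m$ yields $\delta_\bR$-compatibility; and the axiom relating $\omega$ to $\delta$ yields $\delta$-compatibility. Each translation is a short diagram chase combining naturality of $\omega$, $\overline{\eta}$, and $\overline{\ve}$, the triangle identities $R\overline{\ve}\cdot \overline{\eta}R=1_R$ and $\overline{\ve}T\cdot T\overline{\eta}=1_T$, and the relevant entwining axiom.

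Once $\omega^\dia$ is certified as a comonad distributive law, the existence of the lifting $\widetilde\bG_\omega$ of $\bG$ to $\A^\bR$ and the description of $(\A^\bR)^{\widetilde\bG_\omega}$ as triples $((A,d),g)$ satisfying (\ref{bi-comod}) follow at once from the general mechanism recalled in \ref{co-DL}. The only part that demands real care is the mate bookkeeping in the multiplication and comultiplication axioms, where one must track the pairings $e\dashv\ve_\bR$ and $m\dashv\delta_\bR$ applied to the appropriate factors of each composite; no genuine conceptual obstacle is encountered.
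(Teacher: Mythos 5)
Your proposal is correct and follows exactly the standard mate-calculus argument: the paper itself states \ref{DisAd-E} without proof, delegating to the cited references, and those sources (as well as the parallel construction $\la_\dia$ in \ref{DisAd}) establish the claim precisely by recognising $\omega^\dia$ as the mate of $\omega$ under $T\dashv R$ and transporting each entwining axiom to the corresponding comonad-distributive-law axiom using the mate pairings $e\dashv\ve_\bR$, $m\dashv\delta_\bR$, naturality, and the triangle identities. The concluding appeal to \ref{co-DL} for the lifting $\widetilde\bG_\omega$ and the description of $(\A^\bR)^{\widetilde\bG_\omega}$ is likewise exactly what is intended.
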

The following notions will be of use for our investigations.

\begin{thm}\label{monadic}{\bf Monadic and comonadic functors.} \em
Let $\eta,\ve:F\dashv R:\B\to \A$ be an adjoint pair of functors.
Then the composite $RF$ allows for a monad structure $\bR \bF$ on $\A$ and
the composite $FR$ for a comonad structure $\bF \bR$ on $\B$.
By definition, $R$ is {\em monadic} and $F$ is {\em comonadic}
provided the respective comparison functors are equivalences,
 $$K_R:\B \to \A_{\bR \bF},\quad B\mapsto (R(B),R(\ve_B)),$$
 $$K_F:\A \to \B^{\bF \bR},\quad A\mapsto (F(A),F(\eta_A)).$$
\end{thm}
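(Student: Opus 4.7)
The plan is to verify the two pieces separately: first that $(RF, R\ve F, \eta)$ is a monad on $\A$ and $(FR, F\eta R, \ve)$ is a comonad on $\B$, and then that the two comparison functors $K_R$ and $K_F$ land, as claimed, in the Eilenberg-Moore categories $\A_{\bR\bF}$ and $\B^{\bF\bR}$ respectively. Both halves are purely formal consequences of the triangle identities $\ve F \cdot F\eta = \id_F$ and $R\ve \cdot \eta R = \id_R$ together with naturality of $\eta$ and $\ve$.

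For the monad on $\A$, I would take multiplication $m := R\ve F : RFRF \to RF$ and unit $e := \eta : 1_\A \to RF$. Associativity $m \cdot Rm F = m \cdot m RF$ reduces, after cancelling the outer $R$ and inner $F$, to $\ve F \cdot FR\ve F = \ve F \cdot \ve FRF$, which is one naturality square for $\ve$ applied to $\ve F$. The two unit axioms $m \cdot eRF = \id_{RF} = m \cdot RFe$ unwind to $R\ve F \cdot \eta RF = \id_{RF}$ and $R\ve F \cdot RF\eta = \id_{RF}$, which are precisely $R$ applied to one triangle identity and the other triangle identity precomposed with $F$. The comonad $(FR, F\eta R, \ve)$ on $\B$ is handled by the dual argument, exchanging $\eta \leftrightarrow \ve$ and $F \leftrightarrow R$.

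Next, I would check that $K_R$ and $K_F$ are well-defined. For $B \in \B$, the pair $(R(B), R(\ve_B))$ is an $\bR\bF$-module precisely when the associativity square $R(\ve_B) \cdot R\ve_{FR(B)} = R(\ve_B) \cdot RFR(\ve_B)$ and the unit equation $R(\ve_B) \cdot \eta_{R(B)} = \id_{R(B)}$ both hold. The first is naturality of $\ve$ at the morphism $\ve_B : FR(B) \to B$, with $R$ applied; the second is the triangle identity. Functoriality of $K_R$ on morphisms is immediate from naturality of $\ve$. Again $K_F$ is handled dually, using $\eta$ and the other triangle identity. The definition of monadicity (resp. comonadicity) as equivalence of $K_R$ (resp. $K_F$) is then simply recorded.

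There is no real obstacle here: the statement is the classical Eilenberg-Moore/comparison construction, and the only thing to be careful about is bookkeeping, namely which side of $F$ or $R$ each natural transformation is whiskered on, so that each invocation of a triangle identity or of naturality is applied in exactly the right position. In the sequel I will use this construction silently when passing between adjoint functors and their induced (co)monads, as already done implicitly in \ref{mod-comod}.
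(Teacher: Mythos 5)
Your verification is correct and is exactly the classical Eilenberg--Moore argument that the paper takes for granted: \ref{monadic} is stated as a recollection/definition with no proof given, the monad structure $(RF,\, R\ve F,\, \eta)$, the comonad structure $(FR,\, F\eta R,\, \ve)$, and the well-definedness of $K_R$ and $K_F$ all following from the triangle identities and naturality just as you describe. One purely notational slip: the whiskering $Tm$ for $T=RF$ should be written $RFm$ rather than $RmF$, though the identity you display after cancelling the outer $R$, namely $\ve F\cdot FR\ve F=\ve F\cdot \ve FRF$, is the correct one.
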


For an endofunctor we have, under some conditions on the category:

 \begin{lemma}\label{lem-mon} Let $F:\A\to \A$ be a functor that allows for
a left and a right adjoint functor
and assume $\A$ to have equalisers and coequalisers.
Then the following are equivalent:
\begin{blist}
\item $F$ is conservative;
\item $F$ is monadic;
\item $F$ is comonadic.
\end{blist}
If $\bF=(F,m,e)$ is a monad, then the above are also equivalent to
\begin{blist}
\setcounter{blist}{3}
\item the free functor $\phi_\bF:\A\to \A_\bF$ is comonadic.
\end{blist}
\end{lemma}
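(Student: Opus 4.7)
The plan is to apply Beck's (co)monadicity theorem, exploiting that $F$ has adjoints on both sides together with the assumed (co)completeness of $\A$.

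For (a)$\Leftrightarrow$(b), I would view $F$ as the right adjoint of an adjunction $L\dashv F$. Beck's theorem then says that $F$ is monadic exactly when it is conservative and $\A$ admits, and $F$ preserves, coequalisers of $F$-split pairs. Since $F$ also has a right adjoint it preserves all colimits, and since $\A$ has coequalisers by hypothesis, the coequaliser clause is automatic; thus (b)$\Leftrightarrow$(a). The equivalence (a)$\Leftrightarrow$(c) is strictly dual: applying the dual of Beck's theorem to $F$ as the left adjoint in $F\dashv G$, comonadicity of $F$ reduces to conservativity, using that $F$ preserves all equalisers (having a left adjoint) and that $\A$ has equalisers.

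For (a)$\Leftrightarrow$(d), I turn to the adjunction $\phi_\bF\dashv U_\bF$. Because $U_\bF$ is faithful and reflects isomorphisms and $F=U_\bF\phi_\bF$, one has $\phi_\bF$ conservative if and only if $F$ is conservative. Beck's dual theorem applied to the left adjoint $\phi_\bF$ asserts that it is comonadic precisely when $\phi_\bF$ is conservative and $\A$ has, and $\phi_\bF$ preserves, equalisers of $\phi_\bF$-split pairs. The decisive step is to verify that $\phi_\bF$ preserves every equaliser that exists in $\A$. Given an equaliser $e:E\to A$ of $u,v:A\to A'$, the functor $U_\bF$ creates the equaliser of $\phi_\bF(u),\phi_\bF(v)$ in $\A_\bF$; since $F$ preserves equalisers, the underlying object of this equaliser is $F(E)$ with inclusion $F(e)$. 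Naturality of $m$ applied to $e$ gives $m_A\cdot FF(e)=F(e)\cdot m_E$, so the unique $\bF$-action on $F(E)$ making $F(e)$ an $\bF$-module map is $m_E$. Hence the equaliser in $\A_\bF$ is $\phi_\bF(e)$, and (d) follows from the conservativity of $\phi_\bF$, equivalently of $F$.

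The main obstacle is this preservation check for $\phi_\bF$: though a routine verification, it hinges on identifying the $\bF$-action created by $U_\bF$ on the equaliser with the free action $m_E$, which is where naturality of the monad multiplication is used.
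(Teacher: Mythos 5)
Your argument for (a)$\Leftrightarrow$(b)$\Leftrightarrow$(c) is exactly the paper's: both adjoints force $F$ to preserve all equalisers and coequalisers, the hypotheses on $\A$ supply their existence, and Beck's theorem and its dual then reduce (co)monadicity to conservativity. The only divergence is in (a)$\Leftrightarrow$(d): the paper simply cites \cite[Corollary 3.12]{Me}, whereas you prove it directly. Your direct argument is sound: the equivalence ``$\phi_\bF$ conservative iff $F$ conservative'' follows as you say from $F=U_\bF\phi_\bF$ together with $U_\bF$ reflecting isomorphisms, and the preservation check is correct --- $U_\bF$ creates limits, $F$ preserves equalisers (having a left adjoint), and naturality of $m$ identifies the created action on $F(E)$ with the free action $m_E$, so $\phi_\bF$ preserves all equalisers of $\A$ and the dual Beck theorem applies. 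What your version buys is self-containedness (no appeal to the external comonadicity criterion), at the cost of a page of routine verification that the paper outsources; note also that your route visibly uses only the existence of a left adjoint for $F$ and equalisers in $\A$ for this last equivalence, which is slightly more information than the bare citation conveys.
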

\begin{proof}
Since $F$ is a left as well as a right adjoint functor,
it preserves equalisers and coequalisers. Moreover, since $\A$ is assumed to have
both equalisers and coequalisers, it follows from Beck's monadicity theorem
(see \cite{Mc}) and its dual that $F$ is monadic or comonadic if and only if it is
conservative.

 (a)$\LRa$(d) follows from \cite[Corollary 3.12]{Me}.
 \end{proof}

\begin{thm}\label{T-mod}{\bf  $\bT$-module functors.} \em
Given a monad $\bT=(T,m,e)$ on $\A$, a functor $R : \B
\to \A$ is said to be a {\em (left) $\bT$-module} if
there exists a natural transformation $\alpha : TR \to R$ with
$\alpha\cdot eR = 1$ and $\alpha\cdot mR = \alpha \cdot T\alpha$.

This structure of a left  $\bT$-module on $R$ is equivalent to the existence of a functor
$\overline{R} : \B \to \A_\bT$ with commutative diagram
(see \cite[Proposition II.1.1]{D})
$$\xymatrix{ \B \ar[r]^\oR \ar[dr]_R & \A_\bT \ar[d]^{U_\bT} \\
           & \A.}$$

If $\overline{R}$ is such
a functor, then $\overline{R}(B)=(R(B), \alpha_{B})$ for some morphism
$\alpha_{B}: TR(B) \to R(B)$ and the collection $\{\alpha_B,\, B \in \B\}$
forms a natural transformation $\alpha:TR \to R$  making $R$ a $\bT$-module.
Conversely, if $(R,\alpha: TR \to R)$ is a $\bT$-module, then $\overline{R} : \B
\to \A_\bT$ is defined by $\overline{R}(B)=(R(B), \alpha_B)$.

 For any $\bT$-module
$(R: \B \to\A,\alpha)$ admitting an adjunction  $F\dashv R : \B \to \A$ with
unit $\eta:1\to RF$, the
composite
$$ t_{\oR}: \xymatrix{T \ar[r]^-{T \eta}& TRF \ar[r]^-{\alpha F} & RF}$$
is a monad morphism from $\bT$ to the monad $\bR \bF$ on $\A$
generated by the adjunction $F \dashv R$. This yields a functor
$\A_{t_{\overline{R}}}: \A_{\bR \bF} \to \A_\bT$

If $t_{\overline{R}}: T \to RF$ is an isomorphism (i.e. $\A_{t_{\overline{R}}}$
is an isomorphism), then $R$ is called a $\bT$-\emph{Galois module functor}.
Since $\overline{R} =  \A_{t_{\overline{R}}}\cdot K_R$ (see \ref{monadic}) we have
(dual to \cite[Theorem 4.4]{M}):
\end{thm}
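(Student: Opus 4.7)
The statement bundles together several assertions to be checked: (a) left $\bT$-module structures $\alpha:TR\to R$ on $R$ are in bijection with functors $\overline{R}:\B\to\A_\bT$ satisfying $U_\bT\overline{R}=R$; (b) the composite $t_{\oR}=\alpha F\cdot T\eta$ is a morphism of monads from $\bT$ to $\bR\bF$; (c) any monad morphism yields a comparison functor between the module categories, so $\A_{t_{\oR}}:\A_{\bR\bF}\to\A_\bT$ is well defined; and (d) $\overline{R}=\A_{t_{\oR}}\cdot K_R$. The plan is to verify each of these in turn, reducing everything to the module axioms, naturality, and the triangle identities.

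For (a), given $(R,\alpha)$ I would define $\overline{R}(B)=(R(B),\alpha_B)$ on objects and $\overline{R}(f)=R(f)$ on morphisms. The Eilenberg--Moore axioms for $(R(B),\alpha_B)$ are exactly the module axioms $\alpha\cdot eR=1$ and $\alpha\cdot mR=\alpha\cdot T\alpha$ evaluated at $B$; naturality of $\alpha$ at $f:B\to B'$ is precisely the statement that $R(f)$ is a $\bT$-algebra morphism; and $U_\bT\overline{R}=R$ is built into the definition. Conversely, if $\overline{R}$ satisfies $U_\bT\overline{R}=R$, then on objects $\overline{R}(B)=(R(B),\alpha_B)$ for a unique $\alpha_B:TR(B)\to R(B)$; functoriality of $\overline{R}$ forces the family $\{\alpha_B\}$ to be natural in $B$, and the algebra axioms for each $(R(B),\alpha_B)$ are componentwise the module axioms. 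The two assignments are visibly mutually inverse.

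For (b), write $t=t_{\oR}$. The unit axiom $t\cdot e=\eta$ follows from $\alpha F\cdot T\eta\cdot e=\alpha F\cdot eRF\cdot\eta$ (naturality of $e$) together with $\alpha\cdot eR=1$. For the multiplication axiom $t\cdot m=R\ve F\cdot tRF\cdot Tt$, I would evaluate the right-hand side at $X$: naturality of $\alpha$ at the morphism $\ve_{F(X)}:FRF(X)\to F(X)$ in $\B$ yields $R(\ve_{F(X)})\cdot\alpha_{FRF(X)}=\alpha_{F(X)}\cdot TR(\ve_{F(X)})$, and the triangle identity $R\ve F\cdot\eta RF=1$ collapses the composite to $\alpha_{F(X)}\cdot T(\alpha_{F(X)})\cdot TT(\eta_X)$. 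The left-hand side at $X$ is $\alpha_{F(X)}\cdot T(\eta_X)\cdot m_X=\alpha_{F(X)}\cdot m_{RF(X)}\cdot TT(\eta_X)$ by naturality of $m$, and the module associativity $\alpha\cdot mR=\alpha\cdot T\alpha$ at $F(X)$ rewrites this as the same expression.

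For (c), any monad morphism $\sigma:\bT\to\bT'$ induces a functor $\A_\sigma:\A_{\bT'}\to\A_\bT$ by pulling back the structure map along $\sigma$; specialising to $\sigma=t_{\oR}$ gives $\A_{t_{\oR}}(A,h)=(A,h\cdot t_{\oR,A})$. Applying this to $K_R(B)=(R(B),R(\ve_B))$ yields the structure map $R(\ve_B)\cdot\alpha_{FR(B)}\cdot T(\eta_{R(B)})$; naturality of $\alpha$ at $\ve_B$ rewrites this as $\alpha_B\cdot T(R(\ve_B)\cdot\eta_{R(B)})$, and the triangle identity collapses it to $\alpha_B$. Thus $\A_{t_{\oR}}\cdot K_R(B)=(R(B),\alpha_B)=\overline{R}(B)$, proving (d). The only real obstacle is the bookkeeping in the multiplication axiom for $t_{\oR}$; every other step is a single application of naturality or a triangle identity.
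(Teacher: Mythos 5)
Your proposal is correct: all four verifications (the bijection between module structures and lifts, the monad-morphism axioms for $t_{\overline{R}}$, the induced functor $\A_{t_{\overline{R}}}$, and the identity $\overline{R}=\A_{t_{\overline{R}}}\cdot K_R$) go through exactly as you compute them, using only naturality and the triangle identities. The paper does not spell out a proof here — it cites Dubuc's Proposition II.1.1 and standard facts about monad morphisms — and your direct verification is precisely the argument those references contain, so there is nothing to compare beyond noting that your bookkeeping in the multiplication axiom is accurate.
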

\begin{proposition}\label{P.1.2}
The functor $\overline{R}$ is an equivalence of categories if and
only if the functor $R$ is monadic and a $\bT$-Galois module functor.
\end{proposition}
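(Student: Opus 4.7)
The plan is to work with the factorization $\oR = \A_{t_{\oR}} \cdot K_R$ recorded just above the proposition statement. The sufficiency direction is essentially formal: if $R$ is monadic, then $K_R$ is an equivalence by the definition in \ref{monadic}; if $R$ is $\bT$-Galois, then $t_{\oR}$ is an isomorphism, so $\A_{t_{\oR}}$ is an isomorphism of categories. Composing an equivalence with an isomorphism yields an equivalence.

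For the necessity direction, suppose $\oR$ is an equivalence and fix a quasi-inverse $\oR^{-1}$, giving an adjunction $\oR^{-1} \dashv \oR$. Since $R = U_\bT \oR$ and $F \dashv R$, composing adjunctions produces $\oR F \dashv R\oR^{-1}$; but $R \oR^{-1} = U_\bT \oR \oR^{-1} \cong U_\bT$, so $\oR F \dashv U_\bT$. Combined with the canonical adjunction $\phi_\bT \dashv U_\bT$, uniqueness of left adjoints up to isomorphism furnishes a canonical natural isomorphism $\phi_\bT \xra{\sim} \oR F$.

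The crucial step is to identify this canonical isomorphism at $A \in \A$ with $(t_{\oR})_A \colon T(A) \to RF(A)$. For the adjunction $\phi_\bT \dashv U_\bT$ with unit $e$, the transpose of any morphism $f\colon A \to B$ (where $B = U_\bT(B,b)$) is $b \cdot T(f)\colon \phi_\bT(A) \to (B,b)$. Applying this to the unit $\eta_A \colon A \to RF(A) = U_\bT(\oR F(A))$ of $F \dashv R$, viewed as a morphism into the underlying object of $\oR F(A) = (RF(A), \alpha_{F(A)})$, the transpose becomes $\alpha_{F(A)} \cdot T(\eta_A)$, which is exactly $(t_{\oR})_A$. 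Hence $t_{\oR}$ is a natural isomorphism, so $R$ is $\bT$-Galois, $\A_{t_{\oR}}$ is an isomorphism of categories, and $K_R = \A_{t_{\oR}}^{-1} \cdot \oR$ is an equivalence, i.e., $R$ is monadic.

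The main obstacle I anticipate is the mate identification in the third paragraph, namely that the canonical isomorphism forced by uniqueness of left adjoints coincides on the nose with the explicitly defined $t_{\oR}$ rather than merely being isomorphic to it. The argument above reduces this to a single transpose calculation using how the counit of $\phi_\bT \dashv U_\bT$ evaluates on the module $(RF(A), \alpha_{F(A)})$; everything else is formal manipulation of composable adjunctions together with the elementary observation that an invertible monad morphism induces an isomorphism between the respective Eilenberg--Moore categories.
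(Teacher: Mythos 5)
Your argument is correct and follows the same route the paper indicates: it rests on the factorization $\oR = \A_{t_{\oR}}\cdot K_R$ stated just before the proposition, with the converse direction handled by identifying $t_{\oR}$ with the canonical comparison between two left adjoints of $U_\bT$ — which is precisely the content of the cited dual of \cite[Theorem 4.4]{M} that the paper does not spell out. One cosmetic remark: to obtain $\oR F\dashv R\,\oR^{-1}$ you need $\oR\dashv\oR^{-1}$ rather than $\oR^{-1}\dashv\oR$ as written; since $\oR$ is an (adjoint) equivalence both adjunctions hold, so nothing in your argument is affected.
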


 \begin{thm}\label{Gal}{\bf $\bG$-comodule functors.} \em
Given a comonad $\bG=(G,\delta,\varepsilon)$ on a category
$\A$, a functor $L: \B \to \A$ is a {\em left $\bG$-functor}
if there exists a natural transformation $\alpha: L \to GL$ with
$\ve L\cdot \alpha=1$ and $\delta L\cdot \alpha = G\alpha \cdot \alpha$.
This structure on $L$ is equivalent to
the existence of a functor $\overline{L} : \B \to \A^\bG$
with commutative diagram (dual to \ref{T-mod})
$$\xymatrix{ \B \ar[r]^{\overline{L}} \ar[dr]_L & \A^{\bG} \ar[d]^{U^{\bG}}\\
& \A .}$$

If a $\bG$-functor $(L,\alpha)$ admits a right adjoint $S: \A \to \B$, with
counit $\sigma : LS \to 1$, then
(see  Propositions II.1.1 and II.1.4 in \cite {D}) the composite
$$\xymatrix{t_{\overline{L}}:LS \ar[r]^-{\alpha S}& GLS
\ar[r]^-{G \sigma }  & G}$$
is a comonad morphism from the comonad
generated by the adjunction $L \dashv S$ to $\bG$.

 $L: \B \to \A$ is said to be a $\bG$-\emph{Galois comodule functor}
 provided $t_{\overline{L}}: LS \to G$ is an isomorphism.
\end{thm}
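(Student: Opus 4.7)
The plan is to verify the two assertions packaged into block \ref{Gal}---namely, the equivalence between a left $\bG$-functor structure on $L$ and a lift $\overline{L}:\B\to\A^\bG$ with $U^\bG\overline{L}=L$, together with (in the adjoint case) the claim that $t_{\overline{L}}=G\sigma\cdot\alpha S:LS\to G$ is a comonad morphism. Both are literal duals of the monad statements recorded in \ref{T-mod}, so the strategy is to construct the relevant assignments explicitly and to dualise the standard verifications.

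First I would construct the two mutually inverse assignments. Given a $\bG$-functor structure $\alpha:L\to GL$, define $\overline{L}$ on objects by $\overline{L}(B)=(L(B),\,\alpha_B)$. The two axioms $\ve L\cdot\alpha=1$ and $\delta L\cdot\alpha=G\alpha\cdot\alpha$ are exactly the pointwise statements that each $(L(B),\alpha_B)$ lies in $\A^\bG$. Naturality of $\alpha$ in $B$ makes every $L(f):L(B)\to L(B')$ a morphism of $\bG$-comodules, so setting $\overline{L}(f)=L(f)$ yields a functor $\overline{L}:\B\to\A^\bG$ with $U^\bG\overline{L}=L$ by construction. Conversely, any functor $\overline{L}:\B\to\A^\bG$ satisfying $U^\bG\overline{L}=L$ assigns to each $B$ a coaction $\alpha_B:L(B)\to GL(B)$; functoriality of $\overline{L}$ together with $U^\bG\overline{L}=L$ on arrows forces naturality of $\{\alpha_B\}$ in $B$, and the $\bG$-comodule axioms on each $\overline{L}(B)$ are precisely the two equations for $\alpha$. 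Both assignments merely encode/decode the same data, so their composites in either order are identities.

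Next I would handle the adjoint case. Assume $L\dashv S$ with unit $\eta$ and counit $\sigma$, so that $LS$ carries the comonad structure $(LS,\,L\eta S,\,\sigma)$. Set $t_{\overline{L}}=G\sigma\cdot\alpha S$ and verify the two comonad-morphism conditions. Counit compatibility $\ve\cdot t_{\overline{L}}=\sigma$ follows from naturality of $\ve$ at $\sigma$ and the counit axiom $\ve L\cdot\alpha=1$. Coassociativity compatibility $\delta\cdot t_{\overline{L}}=Gt_{\overline{L}}\cdot t_{\overline{L}} LS\cdot L\eta S$ is a short diagram chase assembling the coassociativity axiom $\delta L\cdot\alpha=G\alpha\cdot\alpha$, naturality of $\alpha$ at $\sigma$, and the triangle identity $\sigma L\cdot L\eta=1$.

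The main obstacle is the coassociativity verification in the last step, since it braids together three pieces (the coassociativity axiom for $\alpha$, a naturality square for $\alpha$, and a triangle identity for the adjunction $L\dashv S$). However, this is the exact dual of the routine check in \ref{T-mod} showing that $t_{\overline{R}}:T\to RF$ is a monad morphism for a module functor with a right adjoint, so the only care required is tracking on which side the coaction and (co)unit act.
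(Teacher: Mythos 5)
Your proposal is correct. The paper gives no proof of this block at all --- it simply records the statements as duals of \ref{T-mod} and cites Propositions II.1.1 and II.1.4 of Dubuc --- and your direct verification (the bijection $\alpha\mapsto\overline{L}$ by pointwise encoding of the comodule axioms, and the two comonad-morphism identities for $t_{\overline{L}}=G\sigma\cdot\alpha S$ via naturality of $\ve$ and $\delta$, naturality of $\alpha$ at $\eta S$, the coassociativity axiom, and the triangle identity $\sigma L\cdot L\eta=1$) is exactly the standard argument those references supply, dualised.
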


Dual to Proposition \ref{P.1.2} we have
(see also \cite{MW-G}, \cite{MW1}):

 \begin{proposition}\label{P.1.7}
The functor $\overline{L}$ is an equivalence of categories
if and only if the functor $L$ is comonadic and a $\bG$-Galois comodule functor.
\end{proposition}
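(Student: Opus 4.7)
The plan is to dualise the argument for Proposition \ref{P.1.2} by factoring $\overline{L}$ through the canonical comparison functor for the adjunction $L\dashv S$, and then to read off the two required properties from the two factors.

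First I would introduce the comonad $\bL\bS=(LS,\,L\eta S,\,\sigma)$ on $\A$ generated by $L\dashv S$, together with its comparison functor $K_L:\B\to \A^{\bL\bS}$, $B\mapsto (L(B),L(\eta_B))$. By \ref{Gal}, the composite $t_{\overline{L}}=G\sigma\cdot \alpha S:LS\to G$ is a morphism of comonads $\bL\bS\to \bG$, so it induces a functor $\A^{t_{\overline{L}}}:\A^{\bL\bS}\to \A^{\bG}$, $(A,\rho)\mapsto (A,t_{\overline{L},A}\cdot \rho)$, which visibly commutes with the two forgetful functors to $\A$. A short computation using the defining formula of $t_{\overline{L}}$ together with the triangle identity for $L\dashv S$ will yield the factorisation
\[
\overline{L} \;=\; \A^{t_{\overline{L}}}\cdot K_L.
\]

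Using this, the ``if'' direction will be immediate: if $L$ is comonadic then $K_L$ is an equivalence, and if $L$ is $\bG$-Galois then $t_{\overline{L}}$ is invertible, making $\A^{t_{\overline{L}}}$ an isomorphism of categories; hence their composite $\overline{L}$ is an equivalence. For the ``only if'' direction, I would first note that $U^{\bG}:\A^{\bG}\to \A$ is comonadic and that $L=U^{\bG}\overline{L}$, so $L$ is comonadic as the composite of an equivalence with a comonadic functor. Consequently $K_L$ is itself an equivalence, and therefore $\A^{t_{\overline{L}}}\simeq \overline{L}\cdot K_L^{-1}$ is an equivalence as well.

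The main obstacle is then the last step: deducing from the fact that $\A^{t_{\overline{L}}}$ is an equivalence of categories that the comonad morphism $t_{\overline{L}}$ must itself be a natural isomorphism. I would handle this by exploiting that $\A^{t_{\overline{L}}}$ commutes with the forgetful functors to $\A$, so a quasi-inverse intertwines the cofree functors $\phi^{\bG}$ and $\phi^{\bL\bS}$ (which are right adjoint to the comonadic $U^{\bG}$ and $U^{\bL\bS}$). Applying $U^{\bL\bS}$ to the resulting isomorphism of cofree functors produces a natural isomorphism $G\cong LS$, and chasing it through the adjunctions $L\dashv S$ and $U^{\bG}\dashv \phi^{\bG}$ identifies it with $t_{\overline{L}}$. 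This yields the Galois property and completes the argument.
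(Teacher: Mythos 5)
Your proof is correct and takes essentially the same route as the paper: Proposition \ref{P.1.7} is stated there as the dual of Proposition \ref{P.1.2}, whose justification is exactly the factorisation through the comonadicity comparison functor together with the induced functor of the (co)monad morphism $t_{\overline{L}}$ (citing the dual of Mesablishvili's result), i.e.\ your decomposition $\overline{L}=\A^{t_{\overline{L}}}\cdot K_L$. The one step you only sketch --- that $\A^{t_{\overline{L}}}$ being an equivalence over $\A$ forces $t_{\overline{L}}$ to be invertible --- is precisely the content of the cited results, and your outline via uniqueness of right adjoints is the standard argument.
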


\begin{thm}\label{right-adj}{\bf Right adjoint for $\overline{L}$.} \em
If the category $\B$ has equalisers of coreflexive pairs and $L\dashv S$, the
functor $\overline{L}$ (in \ref{Gal}) has a right adjoint $\overline{S}$,
which can be described as follows (e.g. \cite{D}, \cite{M}): With the composite
$$\gamma:\xymatrix{S
\ar[r]^-{\eta S}& SLS \ar[r]^{S t_{\overline{L}}}& SG,}$$
the value of $\overline{S}$ at $(A, \vartheta) \in \A^{\bG}$
is given by the equaliser
$$\xymatrix{\overline{S}(A, \vartheta)
\ar[r]^-{i_{(A, \vartheta)}}&S(A) \ar@{->}@<0.5ex>[rr]^-{S(\vartheta)}
\ar@{->}@<-0.5ex>[rr]_-{\gamma_A} & & SG(A).}$$

If $\overline{\sigma}$ denotes the counit of the adjunction
$\overline{L} \dashv \overline{S}$,
then for any $(A, \vartheta)\in \A^{\bG}$,
\begin{equation}\label{counit}
U^\bG(\overline{\sigma}_{(A, \vartheta)})=\sigma_{A} \cdot L(i_{(A,\vartheta)})\,,
\end{equation}
where $\sigma: LS \to 1$ is the counit of the adjunction $L \dashv S$.
\end{thm}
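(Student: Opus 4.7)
The plan is to establish, step by step, that $\overline{S}$ as defined is a functor right adjoint to $\overline{L}$ with the stated counit formula. I would organise the argument into four moves: existence of the equaliser, construction of the unit, the key compatibility check that makes the counit a comodule map, and the triangle identities.

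First I would check that $(S(\vartheta),\gamma_A)$ is a coreflexive pair, so that its equaliser exists by the hypothesis on $\B$. The common retraction is $S(\ve_A)$: on $S(\vartheta)$ this is the counit axiom $\ve_A\cdot\vartheta=1$, and on $\gamma_A=S(G\sigma_A\cdot\alpha_{S(A)})\cdot\eta_{S(A)}$, naturality of $\ve:G\to 1$ commutes $\ve_A$ past $G\sigma_A$, the $\bG$-functor axiom $\ve L\cdot\alpha=1$ collapses $\ve_A\cdot G\sigma_A\cdot\alpha_{S(A)}$ to $\sigma_A$, and the triangle identity $S\sigma\cdot\eta S=1$ of $L\dashv S$ finishes the computation. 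Functoriality of $\overline{S}$ on morphisms then follows from the universal property of the equaliser.

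Next I would construct the unit $\overline{\eta}_B:B\to\overline{S}(\overline{L}(B))$ as the unique factorisation of $\eta_B:B\to SL(B)$ through $i_{\overline{L}(B)}$. This requires $\eta_B$ to equalise $S(\alpha_B)$ and $\gamma_{L(B)}$, which I would verify by unfolding $\gamma_{L(B)}\cdot\eta_B=S(t_{\overline{L},L(B)})\cdot\eta_{SL(B)}\cdot\eta_B$, applying naturality of $\eta$ and of $\alpha:L\to GL$, and using the triangle identity $\sigma L\cdot L\eta=1$ to collapse $\sigma_{L(B)}\cdot L(\eta_B)$ to the identity; the resulting expression is exactly $S(\alpha_B)\cdot\eta_B$.

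The main obstacle is verifying that the prescription $U^\bG(\overline{\sigma}_{(A,\vartheta)}):=\sigma_A\cdot L(i_{(A,\vartheta)})$ lifts to a morphism in $\A^\bG$, i.e.\ the $\bG$-coaction identity
\[
\vartheta\cdot\sigma_A\cdot L(i_{(A,\vartheta)}) \;=\; G(\sigma_A\cdot L(i_{(A,\vartheta)}))\cdot \alpha_{\overline{S}(A,\vartheta)}.
\]
I plan to derive this by applying $L$ to the defining equaliser relation $S(\vartheta)\cdot i_{(A,\vartheta)}=\gamma_A\cdot i_{(A,\vartheta)}$, post-composing with $\sigma$, and then unfolding $t_{\overline{L}}=G\sigma\cdot\alpha S$ and invoking naturality of both $\sigma:LS\to 1$ and $\alpha:L\to GL$ to rearrange the terms into the desired form. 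Once this is in hand, the triangle identities for $\overline{L}\dashv\overline{S}$ reduce to those for $L\dashv S$: the counit triangle follows immediately from $U^\bG(\overline{\sigma})=\sigma\cdot L(i)$, $i\cdot\overline{\eta}=\eta$, and $\sigma L\cdot L\eta=1$; the unit triangle is obtained after post-composing with the monic $i$ and applying $S\sigma\cdot\eta S=1$.
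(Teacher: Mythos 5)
Your proof is correct; the paper gives no argument of its own for this statement (it is recalled from \cite{D} and \cite{M}), and your four steps are the standard verification, with each computation checking out: the common retraction $S(\ve_A)$ does split both legs of the pair, $\eta_B$ does equalise $S(\alpha_B)$ and $\gamma_{L(B)}$, the equaliser relation composed with $\sigma$ does yield the comodule-map identity for $\sigma_A\cdot L(i_{(A,\vartheta)})$, and the triangle identities reduce as you say. The only detail left implicit is the naturality of $\overline{\eta}$ and $\overline{\sigma}$ (needed to conclude an adjunction from the triangle identities), which follows routinely from naturality of $\eta$ and $\sigma$ together with the monomorphy of the equaliser inclusions and the faithfulness of $U^{\bG}$.
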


\begin{thm}\label{sep-func}{\bf Separable functors.} (e.g. \cite{R})
%\begin{thm}\label{sep}{\bf Separable functors.}
\em A functor $F: \A \to \B$ between any categories is said to be \emph{separable}
if the natural transformation
$$F_{-,-}: \A(-,-) \to \B(U(-),U(-))$$ is a split monomorphism.
\smallskip

{\em If $F:\A\to \B$ and $G:\B\to \D$ are functors, then
\begin{rlist}
\item if $F$ and $G$ are separable, then $GF$ is also separable;
\item if $GF$ is separable, then $F$ is separable.
\end{rlist}
}
\end{thm}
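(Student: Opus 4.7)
The key observation is that for functors $F:\A\to \B$ and $G:\B\to \D$ and any objects $X,Y\in \A$, the hom-map of the composite factors as
$$(GF)_{X,Y} \;=\; G_{F(X),F(Y)} \circ F_{X,Y} :\; \A(X,Y) \lra \D(GF(X),GF(Y)),$$
and this factorisation is natural in $X,Y$. Both assertions follow from this together with routine manipulations with split monomorphisms.

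For (i), assume $F$ and $G$ are separable with natural retractions $M^F:\B(F(-),F(-)) \to \A(-,-)$ of $F_{-,-}$ and $M^G:\D(G(-),G(-)) \to \B(-,-)$ of $G_{-,-}$. The plan is to specialise the second retraction at the pair $(F(X),F(Y))$ and compose with the first, i.e.\ to form
$$M^F \circ M^G_{F(-),F(-)} :\; \D(GF(-),GF(-)) \lra \B(F(-),F(-)) \lra \A(-,-).$$
Precomposing with $(GF)_{-,-} = G_{F(-),F(-)}\circ F_{-,-}$ and using the retraction identities $M^G\circ G_{-,-}=\mathrm{id}$ and $M^F\circ F_{-,-}=\mathrm{id}$ then collapses the composite to the identity on $\A(-,-)$.

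For (ii), assume $GF$ is separable with a natural retraction $N:\D(GF(-),GF(-)) \to \A(-,-)$ of $(GF)_{-,-}$. The natural candidate for a retraction of $F_{-,-}$ is
$$M \;:=\; N\circ G_{F(-),F(-)} :\; \B(F(-),F(-))\lra \D(GF(-),GF(-))\lra \A(-,-),$$
and one checks $M\circ F_{-,-} = N\circ G_{F(-),F(-)}\circ F_{-,-} = N\circ (GF)_{-,-} = \mathrm{id}_{\A(-,-)}$ using the factorisation above.

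In both parts the only thing to verify beyond this one line of equational reasoning is naturality, which is automatic because retractions of natural transformations compose to natural transformations and specialising a natural transformation at a functorial pair of arguments preserves naturality. There is no real obstacle; the only thing to be careful about is indexing the retraction $M^G$ correctly (at the pair $(F(X),F(Y))$ rather than at $(X,Y)$) in the proof of~(i).
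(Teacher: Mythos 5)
Your argument is correct and is exactly the standard one: the paper itself gives no proof of \ref{sep-func}, merely citing Rafael \cite{R}, and the factorisation $(GF)_{-,-}=G_{F(-),F(-)}\circ F_{-,-}$ together with whiskering of the retractions is precisely how both claims are established there. (Note in passing that the $\B(U(-),U(-))$ in the paper's statement is a typo for $\B(F(-),F(-))$, which is the codomain you correctly work with.)
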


\begin{thm}\label{sep-mon}{\bf Separable (co)monads.} (\cite[2.9]{BBW})
Let $\A$ be any category.
\begin{zlist}
\item For a monad $\bF=(F,m,e)$ on $\A$, the following are equivalent:
\begin{blist}
\item $m$ has a natural section $\omega:F \to FF$ such that
$Fm\cdot\omega F=\omega\cdot m=mF\cdot F\omega$;
\item the forgetful functor $U_\bF:\A_\bF\to \A$ is separable.
\end{blist}
\item For a comonad $\bG=(G,\delta,\ve)$ on $\A$, the following are equivalent:
\begin{blist}
\item $\delta$ has a natural retraction $\varrho:GG \to G$ such that
$\varrho G\cdot G\delta = \delta\cdot \varrho = G\varrho\cdot \delta G$;
\item the forgetful functor $U^\bG: \A^\bG\to \A$ is separable.
\end{blist}
\end{zlist}
\end{thm}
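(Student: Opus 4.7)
The plan is to apply the classical Rafael characterisation of separable functors: for an adjunction $\phi\dashv U$, the right adjoint $U$ is separable iff the counit has a natural section, and the left adjoint $\phi$ is separable iff the unit has a natural retraction. For (1) the relevant adjunction is the free-forgetful pair $\phi_\bF\dashv U_\bF$, whose counit at $(A,a)$ is $a:(F(A),m_A)\to (A,a)$; for (2), dually, it is the cofree-forgetful pair $U^\bG\dashv \phi^\bG$, whose unit at $(A,\vartheta)$ is $\vartheta:(A,\vartheta)\to (G(A),\delta_A)$. I treat (1) in detail; (2) is obtained by reversing all arrows.

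For (b)$\Rightarrow$(a), assume $U_\bF$ is separable and let $\rho:1_{\A_\bF}\to \phi_\bF U_\bF$ be a natural section of $\varepsilon_\bF$. Define $\omega_X:=\rho_{(F(X),m_X)}:F(X)\to FF(X)$. Naturality of $\rho$ along the free $\bF$-module maps $F(g):(F(X),m_X)\to (F(Y),m_Y)$ (for $g:X\to Y$ in $\A$) yields naturality of $\omega:F\to FF$; the identity $m\cdot\omega=1_F$ is $\varepsilon_\bF\cdot\rho=1$ restricted to free modules; the equality $\omega\cdot m=mF\cdot F\omega$ is the $\bF$-module-map axiom for $\rho_{(F(X),m_X)}$ into $(FF(X),m_{F(X)})$; and $\omega\cdot m=Fm\cdot\omega F$ is the naturality of $\rho$ evaluated on the $\bF$-module map $m_X:(FF(X),m_{F(X)})\to (F(X),m_X)$.

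Conversely, for (a)$\Rightarrow$(b), given $\omega$ as in (a), set
$$\rho_{(A,a)}\;:=\;A\xra{e_A} F(A)\xra{\omega_A} FF(A)\xra{F(a)} F(A).$$
One checks in sequence that (i) $a\cdot\rho_{(A,a)}=1_A$, using module associativity $a\cdot F(a)=a\cdot m_A$, the section property $m\cdot\omega=1_F$, and the module unit $a\cdot e_A=1_A$; (ii) $\rho_{(A,a)}$ is an $\bF$-module map to $(F(A),m_A)$, reducing both sides $\rho_{(A,a)}\cdot a$ and $m_A\cdot F(\rho_{(A,a)})$ to the common expression $F(a)\cdot \omega_A$ by applying naturality of $e$ and $\omega$, the module axiom, both equalities from (a), and the unit identity $m\cdot eF=1$; (iii) $\rho$ is natural in $(A,a)$, using only naturality of $e$, naturality of $\omega$, and the module-map axiom on the chosen morphism. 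This produces the required natural section of $\varepsilon_\bF$.

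The main technical obstacle is step (ii), where the naturality of $\omega$, the module axioms, and both conditions on $\omega$ must be applied in the right order; each individual move is routine but easy to misapply. Part (2) follows by full dualisation: given a retraction $\varrho:GG\to G$ of $\delta$ satisfying the stated coherences, define
$$\sigma_{(A,\vartheta)}\;:=\;G(A)\xra{G(\vartheta)} GG(A)\xra{\varrho_A} G(A)\xra{\varepsilon_A} A,$$
and run the arrow-reversed argument to show that $\sigma$ is a natural retraction of the unit $\eta^\bG$ of $U^\bG\dashv \phi^\bG$, whence $U^\bG$ is separable; the converse extracts $\varrho_X:=\sigma^{(G(X),\delta_X)}$ and uses naturality of the retraction, together with the $\bG$-comodule-map condition, to recover $\varrho\cdot\delta=1$ together with the two displayed equalities in (2)(a).
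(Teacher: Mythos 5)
Your argument is correct and complete: the paper itself gives no proof of \ref{sep-mon} (it is quoted from \cite[2.9]{BBW}), and your derivation via Rafael's criterion applied to the free--forgetful adjunction $\phi_\bF\dashv U_\bF$ (resp.\ $U^\bG\dashv\phi^\bG$) is exactly the standard argument behind the cited result; I verified in particular that both sides of your module-map identity in step (ii) reduce to $F(a)\cdot\omega_A$ using the two displayed coherence conditions on $\omega$ together with the unit axioms. Nothing to add.
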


\begin{thm}\label{adj-sep}
{\bf Separability of adjoints.} (\cite[2.10]{BBW})
Let $G:\A\to \A$ and $F:\A\to \A$ be an adjoint pair of functors
with unit $\bar\eta:1_\A \to FG$ and counit $\bar\ve:GF\to 1_\A$.
\begin{zlist}
\item $F$ is separable if and only if $\bar\eta:1_\A \to FG$ is a split monomorphism;
\item $G$ is separable if and only if $\bar\ve:GF\to 1_\A$ a split epimorphism.
\end{zlist}
Given a comonad structure $\bG$ on $G$ with corresponding monad structure $\bF$ on $F$ (see \ref{mod-comod}),
there are pairs of adjoint functors
 $$\begin{array}{rl}
\xymatrix{\A \ar[r]^{\phi_\bF} & \A_\bF},
\xymatrix{ \A_\bF \ar[r]^{U_\bF}& \A },& \quad
\xymatrix{\A^\bG \ar[r]^{U^\bG}  & \A},
\xymatrix{\A \ar[r]^{\phi^\bG}  & \A^\bG },
\end{array}$$
\begin{zlist}
\item
$\phi^\bG$ is separable if and only if $\phi_\bF$ is separable.
\item
$U^\bG$ is separable if and only if $U_\bF$ is separable
and then any object of ${\mathbb A}^G$ is injective relative to $U^G$ and every object
of ${\mathbb A}_\bF$ is projective relative to $U_\bF$.
\end{zlist}
\end{thm}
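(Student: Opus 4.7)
My plan is to combine Rafael's characterisation of separable adjoints with the mate correspondence under the adjunction $G \dashv F$.

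For (1) and (2) I would run the standard Rafael argument: the hom-set bijection of the adjunction rewrites the natural map $\A(X,Y) \to \A(F(X),F(Y))$ as pre-composition with $\bar\ve_X$ on the transformed hom-set $\A(GF(X),Y)$, and the corresponding map for $G$ as post-composition with $\bar\eta_Y$ on $\A(X,FG(Y))$; the (functor-category) Yoneda lemma then translates a natural retraction of these hom-set maps into the split-mono property of $\bar\eta$ (respectively the split-epi property of $\bar\ve$). The two cases are formally dual.

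For (3) I would apply (1) to the free--forgetful adjunction $\phi_\bF \dashv U_\bF$, whose unit is exactly $e:1_\A \to U_\bF\phi_\bF = F$, obtaining that $\phi_\bF$ is separable iff $e$ is a split monomorphism; dually, apply (2) to $U^\bG \dashv \phi^\bG$, whose counit is $\ve:U^\bG\phi^\bG = G \to 1_\A$, to obtain that $\phi^\bG$ is separable iff $\ve$ is a split epimorphism. Since by \ref{mod-comod} the unit $e$ and counit $\ve$ are mates under $G \dashv F$, a natural retraction $\rho: F \to 1_\A$ of $e$ corresponds bijectively to a natural section $\sigma: 1_\A \to G$ of $\ve$, which closes (3).

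For (4) the characterisation of separable monads and comonads recalled in \ref{sep-mon} reduces the equivalence to the following: \emph{$m$ admits a natural section $\omega:F\to FF$ satisfying $Fm\cdot\omega F = \omega\cdot m = mF\cdot F\omega$ iff $\delta$ admits a natural retraction $\varrho:GG\to G$ satisfying the three dual equations}. Since $m$ and $\delta$ are mates under $GG \dashv FF$, a natural section of $m$ corresponds bijectively to a natural retraction of $\delta$, and the three compatibility equations transfer to their duals by the standard mate calculus. This equation-by-equation transfer -- carefully tracking each associativity/coassociativity relation through the hom-set bijection -- is the main technical obstacle of the proof. The closing assertion about projectivity and injectivity is then the general fact that a separable functor $U$ forces every $U$-split epimorphism to split in the domain: applied to $U_\bF$ (resp.\ $U^\bG$) this makes every $\bF$-module projective (resp.\ every $\bG$-comodule injective) relative to $U_\bF$ (resp.\ $U^\bG$).
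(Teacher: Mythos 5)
First, a point of comparison: the paper does not prove this statement at all --- it is imported verbatim (as \ref{adj-sep} indicates) from \cite[2.10]{BBW} --- so your proposal is not competing with an argument in the text but reconstructing the standard one, and your overall strategy (Rafael's hom-set analysis for (1)--(2), mates for (3)--(4)) is exactly the right one.

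There is, however, a concrete inconsistency in your treatment of (1) and (2) that you must resolve. Your own computation is correct: with the unit and counit as displayed ($\bar\eta\colon 1_\A\to FG$, $\bar\ve\colon GF\to 1_\A$, i.e.\ $G$ left adjoint to $F$), the map $\A(X,Y)\to\A(F(X),F(Y))$ transports to precomposition with $\bar\ve_X$ and the map for $G$ to postcomposition with $\bar\eta_Y$; the Yoneda argument then yields that $F$ is separable iff $\bar\ve$ is a split \emph{epi} and $G$ is separable iff $\bar\eta$ is a split \emph{mono}. Your closing ``respectively'' clause asserts the opposite pairing, matching the lettering of the statement but contradicting the computation that precedes it. The discrepancy traces back to the statement itself: the setup of \ref{mod-comod} and the later use in Theorem \ref{Azu-mon-comon} require $F\dashv G$ (monad on the left adjoint, comonad on the right adjoint), so the unit should read $1_\A\to GF$ and the counit $FG\to 1_\A$; under that correction your computation, with $F$ and $G$ interchanged, does give (1) and (2) as stated. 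Either way, as written your proof of (1)--(2) concludes something your argument does not establish, and this needs to be fixed before the rest (which relies on (1)--(2)) is sound. Parts (3) is fine: the mate calculus (mate of a composite is the composite of the mates, mate of the identity is the identity) does carry a retraction of $e$ to a section of $\ve$. For (4), your route through \ref{sep-mon} and an equation-by-equation transfer of the three compatibility identities works, but the ``main technical obstacle'' you flag can be avoided entirely: the isomorphism of categories $\Psi\colon\A_\bF\to\A^\bG$ of (\ref{Psi}) satisfies $U^\bG\Psi=U_\bF$, and since an isomorphism of categories and its inverse are separable, \ref{sep-func} gives directly that $U_\bF$ is separable iff $U^\bG$ is. The final assertion about relative projectivity/injectivity is, as you say, the standard Maschke-type consequence of separability of a forgetful functor.
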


The following generalises  criterions for separability given in  \cite[Theorem 1.2]{R}.

\begin{proposition}\label{rafael} Let
$U:\A\to \B$ and $F:\B\to\A$ be a pair of functors.
\begin{rlist}
\item
  If there exist natural transformations $1 \xra{\kappa} FU \xra{\kappa'}1$
such that $\kappa' \cdot \kappa=1$, then both $FU$ and $U$ are separable.
\item
  If there exist natural transformations $1 \xra{\eta} UF \xra{\eta'}1$
such that $\eta' \cdot \eta=1$, then both $UF$ and $F$ are separable.
\end{rlist}
\end{proposition}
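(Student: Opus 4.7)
The plan is to give a direct construction of the splitting that defines separability, using the data $\kappa,\kappa'$ (respectively $\eta,\eta'$), and then to invoke \ref{sep-func}(ii) to pass from the separability of the composite to the separability of the individual factor.

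For part (i), I would first show that $FU$ is separable. The required splitting of the natural transformation
$$(FU)_{X,Y}:\A(X,Y)\lra \A(FU(X),FU(Y))$$
is defined by $g\mapsto \kappa'_Y\cdot g\cdot \kappa_X$. Naturality of this assignment in $X$ and $Y$ follows from the naturality of $\kappa$ and $\kappa'$: for $h:X'\to X$ and $k:Y\to Y'$ one has
$$\kappa'_{Y'}\cdot FU(k)\cdot g\cdot FU(h)\cdot\kappa_{X'}
   = k\cdot\kappa'_Y\cdot g\cdot\kappa_X\cdot h,$$
using that $\kappa'_{Y'}\cdot FU(k)=k\cdot\kappa'_Y$ and $FU(h)\cdot\kappa_{X'}=\kappa_X\cdot h$. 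That this is a one-sided inverse to $(FU)_{X,Y}$ is immediate: for $f:X\to Y$,
$$\kappa'_Y\cdot FU(f)\cdot\kappa_X = f\cdot\kappa'_X\cdot\kappa_X = f,$$
by naturality of $\kappa'$ and the hypothesis $\kappa'\cdot\kappa=1$. Hence $FU$ is separable. Since $FU=F\circ U$, the second statement in \ref{sep-func}, namely that separability of a composite implies separability of the right-hand (first applied) factor, gives at once that $U$ is separable.

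Part (ii) is entirely dual: the same conjugation recipe with $\eta$ and $\eta'$ in place of $\kappa$ and $\kappa'$ produces a natural left inverse to $(UF)_{X,Y}$, showing that $UF$ is separable, and another application of \ref{sep-func}(ii) to the composite $UF=U\circ F$ then yields the separability of $F$.

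There is really no hard step here; the only mild subtlety is to keep track of which naturality square is used where, and to notice that both conclusions follow without any further hypothesis on $\A$ or $\B$ once the separability of the composite endofunctor is in hand via the explicit conjugation formula.
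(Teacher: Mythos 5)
Your proof is correct and follows exactly the paper's own argument: the paper's one-line "inspection" is precisely your conjugation map $g\mapsto\kappa'_Y\cdot g\cdot\kappa_X$ retracting $(FU)_{-,-}$, followed by the same appeal to \ref{sep-func}(ii) to pass from $FU$ to $U$ (and dually for $F$). You have simply written out the naturality checks that the paper leaves implicit.
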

\begin{proof} (i)  Inspection shows that
$$ \A(-,-) \xra{(FU)_{-,-}} \A(FU(-),FU(-))\xra{\A(\kappa,\kappa')}\A(-,-)$$
is the identify and hence $FU$ is separable. By \ref{sep-func}, this implies that $U$ is also separable.

(ii) is shown symmetrically.
\end{proof}

\section{Azumaya monads}\label{azu}

An algebra $A$ over a commutative ring $R$ is Azumaya provided $A$ induces
an equivalence between $\M_R$ and the category $_A\M_A$ of $(A,A)$-bimodules.
The construction uses properties of the monad $A\ot_R-$ on $\M_R$
and the purpose of this section is to trace this notion back to the categorical
essentials to allow the formulation of the basic properties for monads on any category.
Throughout again $\A$ will denote any category.

\begin{thm}{\bf Definitions.}\label{def-BD} \em
Given an endofunctor $F:\A\to \A$ on $\A$, a natural transformation $\lambda:FF\to FF$
is said to satisfy the {\em Yang-Baxter equation}
provided it induces commutativity of the diagram
$$
\xymatrix{
FF F \ar[r]^{ F\la } \ar[d]_{\la F}&F F F \ar[r]^{\la F} & F F F \ar[d]^{F \la}\\
F F F  \ar[r]^{F\la}&FFF\ar[r]^{\la F}& F F F  . }
$$

For a monad $\bF=(F,m,e)$ on $\A$, a monad distributive law $\lambda:FF\to FF$
satisfying the Yang-Baxter equation
is called a {\em (monad) BD-law} (see \cite[Definition 2.2]{KaLaVi}).
\end{thm}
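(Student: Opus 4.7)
The final block in the excerpt is the definition \ref{def-BD}, not an assertion, so there is no proof obligation attached to it. The block simply fixes two pieces of terminology: first, a natural transformation $\lambda:FF\to FF$ on an endofunctor $F$ is said to satisfy the \emph{Yang-Baxter equation} if the hexagonal diagram on $FFF$ commutes; second, given a monad $\bF=(F,m,e)$, a monad distributive law $\lambda:FF\to FF$ of $\bF$ over itself (in the sense of \ref{DL}) is called a \emph{BD-law} when it additionally satisfies that hexagon. Since both conditions are stipulated by the commutativity of named diagrams, the definitional content is self-justifying and requires no argument; the citation \cite[Definition~2.2]{KaLaVi} is given to acknowledge prior appearance of the terminology.

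If one wished to turn this passage into something verifiable, the natural adjacent statement would be that the two conditions listed (Yang-Baxter plus the axioms of a monad distributive law from \ref{DL}) are independent and mutually compatible, so that nontrivial examples of BD-laws exist; but the excerpt makes no such claim. A second nearby fact, used implicitly in the Introduction, is that a BD-law on $\bF$ gives rise to an \emph{opposite monad} $\bF^\lambda=(F,\,m\cdot\lambda,\,e)$ and to a monad structure on $FF^\lambda$; this is stated later in Section~\ref{azu} and will be proved there, not here.

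In short, the proposed \textbf{proof plan} is: observe that \ref{def-BD} introduces notation only, note that both the Yang-Baxter condition and the distributive-law conditions are specified diagrammatically and hence require no verification at the moment of definition, and defer all substantive claims about BD-laws (existence of $\bF^\lambda$, the monad structure on $FF^\lambda$, the Azumaya property) to the subsequent results in Section~\ref{azu}. No calculation, and in particular no diagram chase, needs to be carried out at this point in the text.
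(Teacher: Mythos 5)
You are right: \ref{def-BD} is purely a definition and the paper attaches no proof to it, so there is nothing to verify here. Your reading matches the paper exactly, including the correct deferral of the substantive claims about $\bF^\la$ and $FF^\la$ to Proposition \ref{prop.1} in Section \ref{azu}.
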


Here the interest in the YB-condition for distributive laws lies in the fact that
it allows to define {\em opposite} monads and comonads

\begin{proposition}\label{prop.1}
Let $\bF=(F, m, e)$ be a monad on $\A$ and $\la : FF \to FF$ a BD-law.
\begin{zlist}
\item   $\FF=(\oF,\m , \e)$ is a monad on $\A$, where $\oF=F$, $\m=m \z \la$ and $\e=e$.

\item $\la$ defines a distributive law $\la:\bF^\la \bF \to \bF \bF^\la$
making $ \bF \bF^{\la} =(FF , \underline m, \underline e)$
a monad   where
$$\underline m = mm^\la \z F\la F: FF FF  \to FF, \quad
   \underline e:= ee: 1\to FF.$$

\item The composite $FFF \xra{F\la } FFF \xra{Fm} FF \xra{m} F$ defines a left $ \bF \bF^{\la}$-module
structure on the functor $F:\A \to \A$.

\item There is a comparison functor   $\overline{K}_\bF: \A \to \A_{\bF\bF^\la}$ given by
$$   A \; \mapsto \;
(F(A),\, FF F(A)\xra{F(\la_A)} FF F (A)\xra{F( m_{A})} FF(A) \xra{m_A} F(A)).
 $$
\end{zlist}
\end{proposition}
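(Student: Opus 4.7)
The plan is to verify each of the four assertions by diagram chasing, drawing on the monad axioms for $\bF$, the two multiplication pentagons of the distributive law $\la$ in \ref{DL}, and the Yang-Baxter equation exactly where it is needed.

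For part (1), the unit axioms for $(F,m\z\la,e)$ reduce immediately, via the identities $\la\z eF=Fe$ and $\la\z Fe=eF$ from \ref{DL}, to the unit laws for $m$. For associativity of $\m=m\z\la$, I would expand both $\m\z\m F$ and $\m\z F\m$ using the multiplication pentagons $\la\z mF=Fm\z\la F\z F\la$ and $\la\z Fm=mF\z F\la\z\la F$; after invoking $m\z Fm=m\z mF$, the two sides collapse to comparing $\la F\z F\la\z\la F$ with $F\la\z\la F\z F\la$, which are equal by the Yang-Baxter equation. This is the principal (and essentially only nontrivial) place where the YB condition is needed for (1).

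For part (2), the claim is that the same natural transformation $\la:FF\to FF$, viewed now as $\oF F\to F\oF$, is a distributive law of $\FF$ over $\bF$. Since $\e=e$, the two unit diagrams are identical to those of (1). The pentagon involving the multiplication of $\bF$ is one of the original pentagons of $\la$. The pentagon involving $\m=m\z\la$ unfolds, via the other original pentagon and a single application of the YB identity, to the same triple-composite equation as in (1). Once these four diagrams are established, the monad $\bF\FF=(FF,\underline m,\underline e)$ with $\underline m=mm^\la\z F\la F$ and $\underline e=ee$ is produced by the general construction of \ref{DL}.

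For part (3), I would verify the two left-action axioms for $\alpha=m\z Fm\z F\la:FFF\to F$. The unit axiom $\alpha\z\underline eF=1_F$ is immediate from $\underline e=ee$, the relation $\la\z eF=Fe$, and the unit laws of $m$. For the associativity axiom $\alpha\z FF\alpha=\alpha\z\underline mF$, both sides expand into composites built from $m$, $\la$, and their whiskerings with $F$; after applications of naturality, of the multiplication pentagons for $\la$, and of associativity of $m$, the remaining discrepancy is cancelled by another instance of the Yang-Baxter equation. Part (4) is then a formal consequence of (3) via \ref{T-mod}: the left $\bF\FF$-module structure $\alpha$ on the functor $F:\A\to\A$ corresponds to a functor $\A\to\A_{\bF\FF}$ sending $A\mapsto(F(A),\alpha_A)$, and $\alpha_A=m_A\z F(m_A)\z F(\la_A)$ is exactly the formula given for $\oK_\bF$. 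The main obstacle throughout is bookkeeping: orchestrating where the YB equation, the two multiplication pentagons, and the monad associativity each enter so that every diagram closes.
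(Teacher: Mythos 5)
Your proposal is correct and follows essentially the same route as the paper: the paper disposes of (1)--(3) by declaring them routine verifications (deferring to \cite{BS}, \cite{KaLaVi}, \cite{MW}) and derives (4) from \ref{T-mod} exactly as you do, while you supply the diagram chases the paper omits. Your accounting of where each ingredient enters --- the unit conditions $\la\cdot eF=Fe$, $\la\cdot Fe=eF$, the two multiplication pentagons, associativity of $m$, and the Yang--Baxter identity $\la F\cdot F\la\cdot\la F=F\la\cdot\la F\cdot F\la$ reconciling the two sides --- checks out in each of (1), (2) and (3).
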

\begin{proof}
(1) is easily verified (e.g. \cite[Remark 3.4]{BS}, \cite[Section 6.9]{MW}).

(2) can be seen by direct computation (e.g. \cite{BS}, \cite{KaLaVi}, and \cite{MW}).

(3) can be proved by a straightforward diagram chase.

(4) follows from \ref{T-mod} using the left $ \bF \bF^{\la}$-module structure of $F$ defined in (3).
\end{proof}

When no confusion can occur, we shall just write $\overline{K}$ instead of $\overline{K}_\bF$.

\begin{definition}\label{def-azu} \em
A monad $\bF=(F, m, e)$ on any category $\A$ is said to be \emph{Azumaya}
provided it allows for a BD-law $\la : FF \to FF$
such that the comparison functor
$\oK_\bF: \A\to \A_{\bF\bF^\la}$ is an equivalence of categories.
\end{definition}

\begin{proposition}\label{l-r-adj} If $\bF$ is an Azumaya monad on  $\A$,
then the functor $F$ admits a left adjoint.
\end{proposition}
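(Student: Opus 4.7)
The plan is to exhibit a left adjoint to $F$ by factoring $F$ through the comparison functor and then composing left adjoints. The key observation is that the description of $\overline{K}_\bF$ in Proposition \ref{prop.1}(4) (which arises, via \ref{T-mod}, from the left $\bF\bF^\lambda$-module structure on $F$ given in Proposition \ref{prop.1}(3)) shows that on objects $\overline{K}_\bF(A)=(F(A), \ldots)$ with underlying object $F(A)$, and on morphisms $\overline{K}_\bF(f)=F(f)$; in other words, the triangle expressing $\overline{R}$ as a lift of $R$ through $U_\bT$ in \ref{T-mod} gives here the identity
\[
U_{\bF\bF^\la}\circ \overline{K}_\bF \;=\; F.
\]

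Now I would exploit the two standard adjunctions in sight. First, the forgetful functor $U_{\bF\bF^\la}:\A_{\bF\bF^\la}\to \A$ has the free functor $\phi_{\bF\bF^\la}:\A\to \A_{\bF\bF^\la}$ as its left adjoint by \ref{mod-comod}. Second, since $\bF$ is Azumaya by hypothesis, the comparison functor $\overline{K}_\bF$ is an equivalence of categories, and therefore admits a quasi-inverse $\overline{K}_\bF^{-1}:\A_{\bF\bF^\la}\to \A$ which serves simultaneously as both a left and a right adjoint to $\overline{K}_\bF$.

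Composing these two left adjoints in the correct order, the functor
\[
\overline{K}_\bF^{-1}\circ \phi_{\bF\bF^\la}\;:\;\A \;\lra\; \A
\]
is left adjoint to the composite $U_{\bF\bF^\la}\circ \overline{K}_\bF$. By the displayed identity this composite is precisely $F$, so $F$ admits a left adjoint, as required.

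There is no genuine obstacle here; the only thing to be careful about is the verification that $\overline{K}_\bF$ really factors $F$ through $U_{\bF\bF^\la}$, but this is immediate from the construction of the comparison functor associated to a $\bT$-module functor recalled in \ref{T-mod}, applied to the $\bF\bF^\la$-module structure on $F$ of Proposition \ref{prop.1}(3). Everything else is the general principle that a composite of right adjoints is a right adjoint, combined with the fact that an equivalence is a right adjoint.
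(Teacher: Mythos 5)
Your proof is correct and follows exactly the paper's argument: factor $F$ as $U_{\bF\bF^\la}\circ\overline{K}_\bF$ via the commutative triangle, note that $U_{\bF\bF^\la}$ has the free functor as left adjoint and that the equivalence $\overline{K}_\bF$ has its quasi-inverse as a left adjoint, and compose. The paper states this more tersely (simply that a composite of right adjoints is a right adjoint) but the content is the same.
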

\begin{proof}  With our previous notation we have the commutative diagram
\begin{equation}\label{Azu.Mon}
\xymatrix{ \A \ar[rr]^-{\overline{K}_\bF} \ar[drr]_{F}
 & & \A_{\bF\bF^\la} \ar[d]^{U_{\bF\bF^\la}}\\
&&\A \, .}
\end{equation}

\noindent Since $U_{\bF\bF^\la}:\A_{\bF\bF^\la} \to \A$ always has a left adjoint,
and since $\overline{K}_\bF$ is an equivalence of categories, the composite
$F=U_{\bF\bF^\la} \z\overline{ K}_\bF$ has a left adjoint.
\end{proof}

This observation allows for a first characterisation of Azumaya monads.

\begin{theorem} \label{th.1.Monad} Let $\bF=(F, m, e)$ be a monad on $\A$,
 $\la : FF \to  FF$ a BD-law. The following are equivalent:
\begin{blist}
\item $\bF$ is an Azumaya monad;

\item the functor $F:\A \to \A$ is monadic
and the left $\bF \bF^\la$-module structure on $F$ defined in Proposition \ref{prop.1} is Galois;
\item the functor $F:\A \to \A$ is monadic (with some adjunction $\eta, \ve: L\dashv F$)
and the composite (as in \ref{T-mod})
$$t_{\oK}: F F \xra{FF \eta} FF FL \xra{ F\la L} FF FL
\xra{FmL} FFL \xra{mL} FL $$
is an isomorphism of monads $\bF \bF^\la \to \bT$, where $\bT$ is the monad on
$\A$ generated by this adjunction $L\dashv F$.
\end{blist}
\end{theorem}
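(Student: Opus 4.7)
The plan is to deduce the theorem from Proposition \ref{P.1.2} applied to the $\bF\bF^\la$-module functor $(F,\alpha)$, where $\alpha: \bF\bF^\la\cdot F \to F$ is the left module structure described in Proposition \ref{prop.1}(3). Under the identification of \ref{T-mod}, the comparison functor $\oK_\bF:\A\to\A_{\bF\bF^\la}$ is exactly the lift $\overline R$ associated with this module structure (both send $A$ to $F(A)$ equipped with the action $m_A\cdot F(m_A)\cdot F(\la_A)$). Thus Proposition \ref{P.1.2} tells us that $\oK_\bF$ is an equivalence if and only if $F$ is monadic and is a $\bF\bF^\la$-Galois module functor, which is essentially what has to be shown.

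For (a)$\Ra$(b) one first notes that Proposition \ref{l-r-adj} provides a left adjoint $L\dashv F$, so the Galois morphism $t_{\oK}$ is defined. Proposition \ref{P.1.2} then immediately gives that $F$ is monadic and that $(F,\alpha)$ is Galois. Conversely, (b)$\Ra$(a) is direct: monadicity of $F$ supplies the required left adjoint, and the Galois hypothesis together with \ref{P.1.2} forces $\oK_\bF$ to be an equivalence.

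For (b)$\LRa$(c), the content is simply to unfold the definition of the Galois morphism $t_{\oR}: T \to RF$ from \ref{T-mod} in our particular situation. With $T=FF$, $R=F$, and $\alpha=m\cdot Fm\cdot F\la$, the composite $FF\xra{FF\eta} FF\cdot FL \xra{\alpha L} FL$ is precisely the morphism
\[
t_{\oK}:FF\xra{FF\eta} FFFL\xra{F\la L}FFFL\xra{FmL}FFL\xra{mL}FL
\]
displayed in (c). Being an instance of $t_{\oR}$, it is automatically a morphism of monads $\bF\bF^\la\to \bF L$, so "$F$ is $\bF\bF^\la$-Galois" means exactly that $t_{\oK}$ is an isomorphism of monads.

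The only step requiring genuine verification is the identification $\oK_\bF = \overline{(F,\alpha)}$, and the matching of formulas for $t_{\oK}$; both are routine diagram chases using the axioms of the BD-law $\la$, the monad structure of $\bF$, and the triangular identities for $L\dashv F$. I do not expect any serious obstacle; the whole argument is essentially a bookkeeping reduction to the already-established Proposition \ref{P.1.2} and Proposition \ref{l-r-adj}.
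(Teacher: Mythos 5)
Your proposal is correct and follows essentially the same route as the paper: the equivalence (a)$\LRa$(b) is obtained by applying Proposition \ref{P.1.2} to the $\bF\bF^\la$-module structure on $F$ from Proposition \ref{prop.1} (the identification $\oK_\bF=\overline{(F,\alpha)}$ being already built into Proposition \ref{prop.1}(4)), and (b)$\LRa$(c) is the same unfolding of $t_{\oR}$ from \ref{T-mod} with $\alpha=m\cdot Fm\cdot F\la$ that the paper carries out.
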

\begin{proof} That (a) and (b) are equivalent follows from Proposition \ref{P.1.7}.

(b)$\LRa$(c) In both cases, $F$ is monadic and thus $F$ allows for an adjunction,
say  $L\dashv F$ with unit $\eta:1\to FL$. Write $\bT$ for the monad on $\A$ generated
by this adjunction. Since the left $\bF\bF^\la$-module structure on the functor $F$
is the composite
$$FF F \xra{F\la } FFF \xra{Fm} FF \xra{m} F,$$
it follows from \ref{T-mod} that the monad morphism $t_{\oK}: \bF\bF^\la \to \bT $
induced by the diagram
$$
\xymatrix{ \A \ar[rr]^-{\overline{K}_\bF} \ar@/_1pc/ [ddr]_{F}
 & & \A_{\bF\bF^\la} \ar@/_1pc/[ddl]|{U_{\bF\bF^\la}}\\\\
&\A \ar@/_1pc/[luu]^L \ar@/_1pc/[ruu]|{\phi_{\bF\bF^\la}} &}
$$ is the composite
$$t_{\oK}: F F \xra{FF \eta} FF FL \xra{ F \la L} FF FL
\xra{FmL} FFL \xra{mL} FL .$$
Thus $F$ is $\bF\bF^\la$-Galois if and only if $t_{\oK}$
is an isomorphism.
\end{proof}

\begin{thm}\label{iso-P}
{\bf The isomorphism $\A_{\bF \bF^{\la}} \simeq (\A_{\bF^\la})_{\widehat{\bF}}$.} \em
According to \ref{DL}, for any BD-law $\la :  F F \to F F$, the assignment
$$ (A, FF(A) \xra{\varrho} A) \; \mapsto \; ((A,F(A)\xra{e_{F(A)}}FF(A) \xra{\varrho}A),
F(A) \xra{Fe_A }FF(A)\xra{\varrho} A)$$
yields an isomorphism of categories \;
$\mathcal{P}_\la:\A_{ \bF \bF^{\la}} \lra (\A_{\bF^\la})_{\widehat{\bF}},$
where for any $((A,h),g) \in (\A_{\bF^\la})_{\widehat{\bF}}$,
$$\mathcal{P}_\la^{-1}((A,h),g)=(A, FF (A) \xra{F h} F(A) \xra{g} A).$$

{\em There is a comparison functor \;  $K=K_\bF:\A \to (\A_{\FF})_{\widehat{\bF}}$,
$$  A \; \mapsto \; ((F(A), F F (A) \xra{\la_A} FF(A) \xra{m_A} F(A)),
  F F(A) \xra{m_A} F(A)),\\
$$
with $\oK_\bF =\bP_\la^{-1}K_\bF$ and commutative diagram
$$
\xymatrix{ \A \ar[r]^-{K_\bF} \ar[dr]_{\phi_{\bF^\la}} &
 (\A_{\bF^\la})_{\widehat{\bF}}\ar[d]^{U_{\widehat{\bF}}}
  \ar[r]^{\bP_\la^{-1}} & \A_{\bF\bF^\la} \ar[d]^{U_{\bF\bF^\la}}  \\
& \A_{\bF^\la} \ar[r]_{U_{\bF^\la} } & \A \, . }
$$
}
\end{thm}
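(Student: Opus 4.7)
The plan is to recognise the entire content of \ref{iso-P} as a direct instantiation of \ref{DL} applied to the distributive law $\la:\bF^\la\bF\to \bF\bF^\la$ from Proposition \ref{prop.1}(2), combined with the left $\bF\bF^\la$-module structure on $F$ from Proposition \ref{prop.1}(3). I would first apply \ref{DL} with $\bT=\bF^\la$ and $\bS=\bF$: the assignment and inverse formulas stated for $\mathcal P_\la$ are then just the general formulas of \ref{DL} written out in this case, so the isomorphism $\A_{\bF\bF^\la}\simeq (\A_{\bF^\la})_{\widehat\bF}$ is immediate.

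Next I would define $K_\bF$ by the stated formula and check that $K_\bF(A)$ actually lies in $(\A_{\bF^\la})_{\widehat\bF}$. The two components of $K_\bF(A)$ on $F(A)$ are the free $\bF^\la$-action $m^\la_A=m_A\cdot\la_A$ and the free $\bF$-action $m_A$; each is a module action by monad associativity. What requires checking is the compatibility square \eqref{mon-mon}, which after substitution amounts to the identity
$$m_A\cdot \la_A\cdot F(m_A)\;=\;m_A\cdot F(m_A)\cdot F(\la_A)\cdot \la_{F(A)}.$$
This is obtained by combining the distributive-law axiom $\la\cdot Fm = mF\cdot F\la\cdot \la F$ (built into the BD-law hypothesis, cf.\ the bottom square of \ref{DL}) with the associativity $m\cdot Fm=m\cdot mF$ of $\bF$. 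Functoriality of $K_\bF$ follows from naturality of $m$ and $\la$.

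To verify $\oK_\bF=\mathcal P_\la^{-1}K_\bF$, I would apply the formula for $\mathcal P_\la^{-1}$ from \ref{DL} to $K_\bF(A)=((F(A),m^\la_A),m_A)$. This yields the $\bF\bF^\la$-action
$$FFF(A)\xra{F(m^\la_A)} FF(A)\xra{m_A} F(A),$$
which upon expanding $F(m^\la_A)=F(m_A)\cdot F(\la_A)$ coincides with the action defining $\oK_\bF(A)$ in Proposition \ref{prop.1}(4). Finally, the commutative diagram breaks into two triviality checks: the left-hand triangle $U_{\widehat\bF}\cdot K_\bF=\phi_{\bF^\la}$ holds because $U_{\widehat\bF}$ simply discards the $\widehat\bF$-component and leaves the free $\bF^\la$-module $(F(A),m^\la_A)=\phi_{\bF^\la}(A)$, while the right-hand square holds because both composites reduce to the underlying object in $\A$.

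There is no conceptually delicate step; the only piece of genuine computation is the verification of \eqref{mon-mon} above, and it is essentially forced by the BD-law together with monad associativity. Everything else is bookkeeping against the formulas provided in \ref{DL} and \ref{prop.1}.
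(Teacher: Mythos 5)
Your proposal is correct and follows essentially the same route as the paper: the isomorphism $\mathcal{P}_\la$ is read off from \ref{DL} applied to the distributive law $\la:\bF^\la\bF\to\bF\bF^\la$ of Proposition \ref{prop.1}(2), and the identification of the two comparison functors is a direct calculation (the paper checks $\mathcal{P}_\la\overline{K}_\bF(A)=K_\bF(A)$, you check the equivalent $\mathcal{P}_\la^{-1}K_\bF(A)=\overline{K}_\bF(A)$, which is if anything slightly cleaner since it only needs $F(m^\la_A)=F(m_A)\cdot F(\la_A)$). Your explicit verification of the compatibility square \eqref{mon-mon} via $\la\cdot Fm=mF\cdot F\la\cdot\la F$ and associativity is sound and simply makes explicit what the paper delegates to Proposition \ref{prop.1}(3).
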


\begin{proof} Direct calculation shows that
$$\mathcal{P}_\la\overline{K}_\bF(A)=((F(A), F F (A) \xra{\la_A} FF(A) \xra{m_A} F(A)),
  F F(A) \xra{m_A} F(A)),$$
for all $A \in \A$.
\end{proof}

It is obvious that $ \overline{K}_\bF: \A\to \A_{\bF\bF^\la}$ is an equivalence
(i.e. $\bF$ is Azumaya)
if and only if $K_\bF:\A\to  (\A_{\bF^\la})_{\widehat{\bF}}$ is an equivalence. To apply
Proposition \ref{P.1.2} to the functor $K_\bF$, we will need a functor left adjoint to
$\phi_{\bF^\la}$
whose existence is not a consequence of the Azumaya condition.
For this the invertibility of $\la$ plays a crucial part.

\begin{proposition}\label{iso-P-in}
Let $\bF=(F, m, e)$ be a monad on $\A$ with an invertible BD-law $\la : FF \to FF$.
\begin{zlist}
\item
   $\lambda^{-1}: \bF \bF^\la\to \bF^\la \bF$ is a distributive law inducing a monad
$(\bF^\la \bF)_{\la^{-1}}=(F F, \underline{\underline m}, \underline{\underline e})$
   where
$$  \underline{\underline m} = m^\la m \z F\la^{-1} F: F FF F \to F F, \quad
 \underline{\underline e} = e e: 1\to  F F,$$
and  $\la$ is an isomorphism of monads $ (\bF^\la \bF)_{\la^{-1}} \to (\bF \bF^\la)_\la$.

\item There is an isomorphism of categories
 $$ \varPhi:(\A_{\bF^\la})_{\widehat{\bF}_\la}  \to (\A_{\bF})_{(\widehat{\bF^\la})_{\la^{-1}}}, \quad
  ((A,h),g) \mapsto ((A,g),h).$$
\item $\la^{-1}$
induces a comparison functor \;
$K'_\bF: \A \to (\A_{\bF})_{(\widehat{\bF^\la})_{\la^{-1}}} \; ( \simeq  \A_{(\bF^\la \bF)_{\la^{-1}}})$,
$$A \; \mapsto \; ((F(A),\, FF(A)\xra{m_A} F(A)), F F(A)\xra{\la_A} FF(A) \xra{m_A}F(A)),$$
with  commutative diagrams
$$
\xymatrix{ \A \ar[r]^-{K'_\bF}\ar[dr]_{\phi_{\bF}} & (\A_{\bF})_{(\widehat{\bF^\la})_{\la^{-1}}}
\ar[d]^{U_{(\widehat{\bF^\la})_{\la^{-1}}}}\\
& \A_{\bF}; }\qquad
\xymatrix{ \A \ar[r]^-{K_\bF} \ar[dr]_{K'_\bF} & (\A_{\bF^\la})_{\widehat{\bF}_\la}
 \ar[d]^{\varPhi}\\
&    (\A_{\bF})_{(\widehat{\bF^\la})_{\la^{-1}}} .   }
$$
\end{zlist}
\end{proposition}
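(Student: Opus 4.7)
The whole proposition is an application of Proposition \ref{la-iso} to the present setting, where the two monads $\bT$ and $\bS$ of \ref{la-iso} are taken to be $\bF^\la$ and $\bF$ respectively, and the distributive law of $\bT$ over $\bS$ is the one $\la:\bF^\la\bF\to\bF\bF^\la$ supplied by Proposition \ref{prop.1}(2). My task is therefore mainly to unwind this general machinery in the concrete notation of $\bF$ and $\FF$, and to identify the resulting comparison functor with the formula displayed in (3).

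\textbf{Part (1).} Since the BD-law $\la:FF\to FF$ is assumed invertible, \ref{la-iso}(1) applied with $(\bT,\bS)=(\bF^\la,\bF)$ immediately yields that $\la^{-1}:\bF\bF^\la\to\bF^\la\bF$ is a distributive law (of $\bF$ over $\bF^\la$). The induced monad structure on $FF$ is, by the general formula recalled in \ref{DL}, given by unit $e^\la e=ee$ and multiplication $m^\la m\cdot F\la^{-1}F$, which is exactly $\underline{\underline m}$ and $\underline{\underline e}$. That $\la$ itself defines an isomorphism of monads $(\bF^\la\bF)_{\la^{-1}}\to(\bF\bF^\la)_\la$ is the content of \ref{la-iso}(2); no further verification is needed.

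\textbf{Part (2).} The isomorphism $\varPhi$ is nothing but the functor of \ref{la-iso}(3) for the present pair of monads: on objects it simply exchanges the two actions, $((A,h),g)\mapsto((A,g),h)$. I would note that the only content of this claim is that an object $((A,h),g)$ of $(\A_{\bF^\la})_{\widehat\bF_\la}$ satisfies the pentagon \eqref{mon-mon} for $\la$, and this pentagon is turned into the corresponding pentagon for $\la^{-1}$ after swapping the roles of $h$ and $g$ by composing with $\la^{-1}$ on the top edge; invertibility of $\la$ makes this equivalence reversible, so the assignment is a bijection, and its functoriality is automatic.

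\textbf{Part (3).} I would define $K'_\bF$ either as $\varPhi\cdot K_\bF$ (using $K_\bF$ from \ref{iso-P}) or, equivalently, as the functor obtained by transporting $\overline K_\bF$ along the isomorphism $\A_\la:\A_{(\bF\bF^\la)_\la}\to\A_{(\bF^\la\bF)_{\la^{-1}}}$ of \ref{la-iso}(2) and then applying $\bP_{\la^{-1}}$. Chasing $A\in\A$ through either composite gives precisely the assignment
$$A\longmapsto \bigl((F(A),\,m_A),\ FF(A)\xra{\la_A}FF(A)\xra{m_A}F(A)\bigr),$$
because the $\bF^\la$-action on $F(A)$ is by definition $m^\la_A=m_A\cdot\la_A$. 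The two commutative triangles in (3) then follow formally: the left one expresses that $U_{(\widehat{\bF^\la})_{\la^{-1}}}\cdot K'_\bF$ is the free $\bF$-module functor, which is immediate from the formula; the right one is obtained by pasting the right-hand square of \ref{la-iso}(3) with the factorisation $\overline K_\bF=\bP_\la^{-1}K_\bF$ from \ref{iso-P}.

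\textbf{Main obstacle.} There is no real difficulty; the only place where one must be careful is the bookkeeping of which of the two copies of $F$ carries the $\bF$- and which the $\bF^\la$-structure, and of the direction of $\la$ versus $\la^{-1}$. Once the identification with the setting of \ref{la-iso} is made, parts (1)--(3) are immediate translations, so I expect the write-up to consist essentially of citing \ref{la-iso} and \ref{iso-P} and recording the explicit formulas.
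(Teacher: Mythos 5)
Your proposal is correct and follows exactly the route of the paper: the paper's own proof reads ``(1), (2) follow by Proposition \ref{la-iso}, (3) is shown similarly to \ref{iso-P}'', i.e.\ it instantiates \ref{la-iso} with $(\bT,\bS)=(\bF^\la,\bF)$ and the distributive law $\la:\bF^\la\bF\to\bF\bF^\la$, then computes $K'_\bF=\varPhi\cdot K_\bF$ explicitly, just as you do. Your unwinding of the formulas (in particular $\underline{\underline m}=m^\la m\cdot F\la^{-1}F$ and the swap $((A,h),g)\mapsto((A,g),h)$) is accurate, so nothing is missing.
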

\begin{proof}
  (1), (2)  follow by Proposition \ref{la-iso},
  (3)  is shown similarly to \ref{iso-P}.
\end{proof}

For $\la$ invertible, it follows from the diagrams in the
Sections \ref{iso-P}, \ref{iso-P-in}
that $F$ is an Azumaya monad if and only if the functor
$$K'_\bF: \A \to (\A_{\bF})_{(\widehat{\bF^\la})_{\la^{-1}}}$$ is an equivalence of categories.

Note that if $\la: \bF\bF \to \bF\bF$ is a BD-law, then $\la$ can be seen as a
BD-law  $\la : \bF^\la \bF^\la \to \bF^\la \bF^\la$, and it is not hard to see
that the corresponding comparison functor $$K_{\bF^\la}:\A \to (\A_{(\bF^\la)^\la})_{(\widehat{\bF^\la})_\la}$$
takes $A \in \A$ to $$(F(A),\, FF F(A)\xra{F(\la_A)} FF F (A)\xra{F( (m^\la)_{A})} FF(A) \xra{(m^\la)_{A}} F(A)).$$

Now, if $\la^2=1$, then $\la=\la^{-1}$ and $(\bF^\la)^\la=\bF$. Thus, the category $(\A_{(\bF^\la)^\la})_{(\widehat{\bF^\la})_\la}$
can be identified with the category $(\A_{\bF})_{(\widehat{\bF^\la})_{\la^{-1}}}$.
Modulo this identification, the functor $K'_{\bF^\la}$ corresponds to the functor $K_{\bF^\la}$.
It now follows from the preceding remark:

\begin{proposition}\label{sq}Let $\bF=(F, m, e)$ be a monad on $\A$ with a BD-law $\la : FF \to FF$.
If $\la^2=1$, then the monad $\bF$ is Azumaya if and only if the monad $\bF^\la$ is so.
\end{proposition}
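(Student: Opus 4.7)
The plan is to unwind the setup to show that both Azumaya conditions translate to the same comparison functor being an equivalence, so the claim reduces to a matching exercise for the two comparison functors built from $(\bF,\la)$ and $(\bF^\la,\la)$.

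First I would record the consequences of the hypothesis $\la^2=1$. Since then $\la=\la^{-1}$, $\la$ is certainly invertible, so Proposition \ref{iso-P-in} applies. Moreover, expanding $(m^\la)^\la = m \cdot \la \cdot \la = m$ shows $(\bF^\la)^\la = \bF$ on the nose, and the same involutivity holds for the liftings: $(\widehat{\bF^\la})_{\la^{-1}} = (\widehat{\bF^\la})_\la$ as comonads on $\A_\bF$. Consequently, the category $(\A_{(\bF^\la)^\la})_{(\widehat{\bF^\la})_\la}$ in which $K_{\bF^\la}$ takes values may be identified with $(\A_\bF)_{(\widehat{\bF^\la})_{\la^{-1}}}$, the target of $K'_\bF$, as already noted in the remark preceding the proposition.

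Next I would apply the chain of equivalences collected earlier. By \ref{iso-P}, $\bF$ is Azumaya iff $K_\bF$ is an equivalence; by \ref{iso-P-in}(3) the isomorphism $\varPhi$ yields $K'_\bF = \varPhi \cdot K_\bF$, so this happens iff $K'_\bF$ is an equivalence. Applying exactly the same reasoning to the monad $\bF^\la$ equipped with the BD-law $\la$ (note that $\la$ is again a BD-law on $\bF^\la$ because the Yang--Baxter and distributive law axioms are symmetric in the two factors), $\bF^\la$ is Azumaya iff $\oK_{\bF^\la}$ is an equivalence iff $K_{\bF^\la}$ is an equivalence.

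The main step is then to verify that, under the identification above, $K'_\bF$ and $K_{\bF^\la}$ are literally the same functor. For any $A \in \A$, the $\bF$-module component of $K_{\bF^\la}(A)$ is $(F(A),(m^\la)_A \cdot \la_A) = (F(A), m_A \cdot \la_A \cdot \la_A) = (F(A), m_A)$ by $\la^2=1$, agreeing with the $\bF$-module component of $K'_\bF(A)$; and the $\widehat{\bF^\la}$-action is $(m^\la)_A = m_A \cdot \la_A$, which again matches the $\widehat{\bF^\la}_{\la^{-1}}$-action of $K'_\bF(A)$. This computation is routine but is the only genuine verification needed; everything else is bookkeeping provided by Propositions \ref{iso-P} and \ref{iso-P-in}.

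Combining, $\bF$ Azumaya $\Leftrightarrow$ $K_\bF$ equivalence $\Leftrightarrow$ $K'_\bF$ equivalence $\Leftrightarrow$ $K_{\bF^\la}$ equivalence $\Leftrightarrow$ $\bF^\la$ Azumaya. The only place I expect any friction is in not confusing the two rôles of $\la$ (as BD-law for $\bF$ and for $\bF^\la$) when comparing module structures; keeping careful track of which multiplication is $m$ and which is $m^\la$ is what makes the identification $K'_\bF = K_{\bF^\la}$ transparent.
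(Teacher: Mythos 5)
Your argument is correct and follows essentially the same route as the paper: use $\la=\la^{-1}$ and $(\bF^\la)^\la=\bF$ to identify the target categories, match the comparison functor of $\bF^\la$ with the primed comparison functor coming from $\la^{-1}$ (the paper pairs $K'_{\bF^\la}$ with $K_\bF$ where you pair $K'_\bF$ with $K_{\bF^\la}$, which is the same computation mirrored), and invoke the remark that for invertible $\la$ the Azumaya property is equivalent to the primed functor being an equivalence. The explicit check $(m^\la)_A\cdot\la_A=m_A$ is exactly the verification the paper leaves as ``not hard to see.''
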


\begin{thm}\label{Azu-ra} {\bf Azumaya monads with right adjoints.} \em
Let $\bF=(F,m,e)$ be a monad with an invertible BD-law  $\la:FF\to FF$. Assume $F$ to admit a right adjoint functor $R$,
with $\overline{\eta}, \overline{\varepsilon}: F\dashv \dF $,
inducing a comonad $\bR=(R,\delta,\ve)$ (see \ref{mod-comod}).
  Since  $\la:\bF^\la \bF\to \bF\bF^\la$ is an invertible distributive law,
there is a comonad $\widehat{\bR}=\widehat{\bR}_{(\la^{-1})_\diamond}$ on $\A_{\bF^\la}$ lifting the comonad $\bR$
 and is right adjoint
to the monad $\widehat{\bF}$ (see \ref{DisAd}) yielding a category isomorphism
$$
\Psi_{\bF^\la}:(\A_{\bF^\la})_{\widehat{\bF}_\la} \to (\A_{\bF^\la})^{\widehat{\bR}},\quad
%\mathcal{Q'}:(\A_{\bF})_{\widehat{\bF^\la}} \to (\A_{\bF})^{\widetilde{\bR^\la}}
$$
where for any $((A,h),g) \in (\A_{\bF^\la})_{\widehat{\bF}_\la}$,
\begin{center}
$\Psi_{\bF^\la}((A,h),g)=((A,h), \widetilde{g})$ \;
with \quad $\widetilde{g}:A \xra{\overline{\eta}_A} \dF  F(A) \xra{\dF  (g)}\dF  (A),$
\end{center}
and a commutative diagram (see (\ref{l-inverse}))
\begin{equation}\label{Azum.1b}
\xymatrix{ \A \ar[r]^-{K} \ar[dr]_{\phi_{\bF^\la}} &
  (\A_{\bF^\la})_{\widehat{\bF}_\la} \ar[d]^{U_{\widehat{\bF}_\la}}
        \ar[r]^{{\Psi}_{\bF^\la}} &
 (\A_{\bF^\la})^{\widehat{\bR}} \ar[d]^{U^{\widehat{\bR}}}\\
& \A_{\bF^\la} \ar[r]^= & \A_{\bF^\la}  \,.}
\end{equation}

Putting
$\underline{K}:=\A \xra{K}(\A_{\bF^\la})_{\widehat{\bF}_\la}
  \xra{\Psi_{\bF^\la}}(\A_{\bF^\la})^{\widehat{\bR}},$
one has for any $A\in  \A$,
\begin{center}
$\underline{K}(A)=((F(A),m_A \z \la_A ),\dF  (m_A )\z \overline{\eta}_{F(A)})$.
\end{center} So the  $A$-component $\alpha_A$ of the induced $\widehat{\bR}$-comodule
structure $\alpha:\phi_{\bF^\la} \to \widehat{\bR}\phi_{\bF^\la}$
on the functor $\phi_{\bF^\la}$ induced by the commutative diagram (\ref{Azum.1b}) (see \ref{Gal}), is the composite
$$\alpha_A: F(A) \xra{\overline{\eta}_{F(A)}}\dF  FF(A) \xra{\dF  (m_A )}\dF  F(A).$$
It then follows that for any $(A,h) \in \A_{\bF^\la}$, the $(A,h)$-component $t_{(A,h)}$ of the corresponding
comonad morphism  $t:\phi_{\bF^\la}U_{\bF^\la}\to \widehat{\bR}$   (see \ref{Gal})
 is the composite
\begin{equation}\label{t-comp}
t_{(A,h)}:F(A) \xra{\overline{\eta}_{F(A)}}\dF  FF(A) \xra{\dF  (m_A)}\dF  F(A)
\xra{\dF  (h)}\dF  (A).
\end{equation}
\end{thm}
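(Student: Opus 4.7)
The plan is to unfold each claim in \ref{Azu-ra} as an instance of a previously recorded general construction. The input data are an invertible BD-law $\la$ and an adjunction $\overline{\eta},\overline{\varepsilon}: F\dashv \dF$ with induced comonad $\bR$. I will treat $\la$ as an invertible distributive law $\bF^\la\bF\to\bF\bF^\la$ so that $\bF^\la$ plays the role of $\bT$ and $\bF$ that of $\bS$ in \ref{InDisAd}, and then harvest the machinery already developed.

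First, because $\la$ is invertible and $\bF$ admits the right adjoint comonad $\bR$, the construction in \ref{InDisAd} directly produces both the mixed distributive law $(\la^{-1})_\diamond:FR\to RF$ and the adjunction $\widehat{\bF}_\la \dashv \widehat{\bR}_{(\la^{-1})_\diamond}$ on $\A_{\bF^\la}$; the resulting $\widehat{\bR}$ is the sought comonad on $\A_{\bF^\la}$ lifting $\bR$. Specialising the canonical isomorphism (\ref{Psi}) to this adjunction yields the isomorphism $\Psi_{\bF^\la}: (\A_{\bF^\la})_{\widehat{\bF}_\la} \to (\A_{\bF^\la})^{\widehat{\bR}}$ with the stated formula $\widetilde{g}= \dF(g)\cdot \overline{\eta}_A$, and the commutativity of (\ref{Azum.1b}) is obtained by pasting the square (\ref{l-inverse}) of \ref{InDisAd} onto the triangle from \ref{iso-P} that defines $K_\bF$.

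Next, to compute $\underline{K}(A)$, I would combine the explicit description $K_\bF(A)=((F(A),m_A\cdot\la_A),m_A)$ from \ref{iso-P} with the description of $\Psi_{\bF^\la}$ just derived, which replaces the second coordinate $m_A$ by $\dF(m_A)\cdot\overline{\eta}_{F(A)}$. The factorisation $U^{\widehat{\bR}}\cdot\underline{K}=\phi_{\bF^\la}$ visible in (\ref{Azum.1b}), together with \ref{Gal}, then endows $\phi_{\bF^\la}$ with a left $\widehat{\bR}$-coaction $\alpha$ whose $A$-component is precisely this second coordinate, giving $\alpha_A = \dF(m_A)\cdot\overline{\eta}_{F(A)}$.

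Finally, for the closing formula (\ref{t-comp}), I would apply the recipe $t_{\overline L}=\widehat{\bR}(\sigma)\cdot\alpha U_{\bF^\la}$ from \ref{Gal}, noting that the counit $\sigma$ of the free-forgetful adjunction $\phi_{\bF^\la}\dashv U_{\bF^\la}$ has component $\sigma_{(A,h)}=h$ and that $\widehat{\bR}$ lifts $\bR$, so $U_{\bF^\la}\widehat{\bR}(\sigma_{(A,h)}) = \dF(h)$. Composing with $\alpha_A$ delivers $t_{(A,h)}=\dF(h)\cdot \dF(m_A)\cdot\overline{\eta}_{F(A)}$, which is (\ref{t-comp}). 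The main obstacle is purely bookkeeping: correctly identifying which of the two adjunctions in play—namely $\widehat{\bF}_\la\dashv\widehat{\bR}$ on $\A_{\bF^\la}$ versus $\phi_{\bF^\la}\dashv U_{\bF^\la}$—supplies each ingredient, and checking that the coherence built into $(\la^{-1})_\diamond$ via \ref{InDisAd} ensures the resulting composite really is a comonad morphism rather than a mere natural transformation.
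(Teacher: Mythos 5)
Your proposal is correct and matches the paper's own justification: the paper gives no separate proof for \ref{Azu-ra}, since the construction is exactly the assembly you describe — instantiating \ref{InDisAd} with $\bT=\bF^\la$, $\bS=\bF$ to get $\widehat{\bF}_\la\dashv\widehat{\bR}_{(\la^{-1})_\dia}$ and the isomorphism $\Psi_{\bF^\la}$ from (\ref{Psi}), pasting (\ref{l-inverse}) onto the triangle of \ref{iso-P} for (\ref{Azum.1b}), and then reading off $\alpha$ and $t$ from the recipe in \ref{Gal}. The only remark worth adding is that the comonad-morphism property of $t$ is already guaranteed by \ref{Gal} (via Dubuc), so no extra coherence check on $(\la^{-1})_\dia$ is needed at that final step.
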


These observations lead to the following characterisations of Azumaya monads.

\begin{theorem} \label{th.1} Let $\bF=(F, m, e)$ be a monad on $\A$,
 $\la : FF \to  FF$ an invertible  BD-law,  and $\bR$ a comonad right adjoint to  $\bF$
(with $\overline{\eta}, \overline{\varepsilon}: F\dashv R $).
Then the following are equivalent:
\begin{blist}
\item $\bF$ is an Azumaya monad;
\item
\begin{rlist}
\item $\phi_{\bF^\la}$ is comonadic and
\item $\phi_{\bF^\la}$ is $\widehat{\bR}$-Galois, that is,

$t_{(A,h)}$ in (\ref{t-comp})
% $$t_{(A,h)}:F^\la(A) \xra{\overline{\eta}_{F(A)}}\dF FF^\la(A)
% \xra{\dF  (m_A )}\dF  F^\la(A)\xra{\dF  (h)}R  (A)$$
is an isomorphism   for any $(A,h) \in \A_{\bF^\la}$, or the composite\\
$\chi: FF \xra{\overline{\eta} FF} \dF  FFF
\xra{\dF mF} \dF  FF
\xra{\dF  \la} \dF  FF \xra{\dF m} \dF  F$
is an isomorphism.
\end{rlist}
\end{blist}
\end{theorem}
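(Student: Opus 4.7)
The plan is to deduce the theorem from Proposition~\ref{P.1.7} applied to the functor $\underline{K}:\A \to (\A_{\bF^\la})^{\widehat{\bR}}$ constructed in \ref{Azu-ra}. Since $\bP_\la$ (from \ref{iso-P}) and $\Psi_{\bF^\la}$ (from \ref{Azu-ra}) are isomorphisms of categories with $\overline{K}_\bF = \bP_\la^{-1} K_\bF$ and $\underline{K} = \Psi_{\bF^\la} K_\bF$, all three comparison functors are simultaneously equivalences. Hence $\bF$ is Azumaya if and only if $\underline{K}$ is an equivalence, which is the shape required to feed into Proposition~\ref{P.1.7}.

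The commutative diagram (\ref{Azum.1b}) yields $U^{\widehat{\bR}}\underline{K} = \phi_{\bF^\la}$, exhibiting $\phi_{\bF^\la}$ as a $\widehat{\bR}$-comodule functor with the structure map $\alpha$ computed in \ref{Azu-ra}. Since $\phi_{\bF^\la}$ admits the right adjoint $U_{\bF^\la}$, Proposition~\ref{P.1.7} applies verbatim, and $\underline{K}$ is an equivalence precisely when (i) $\phi_{\bF^\la}$ is comonadic and (ii) the associated comonad morphism $t:\phi_{\bF^\la}U_{\bF^\la}\to \widehat{\bR}$ is an isomorphism. The component formula (\ref{t-comp}) then translates (ii) into the first formulation of (b)(ii).

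It remains to reconcile the two formulations appearing in (b)(ii). A direct inspection, using $m^\la = m\cdot \la$ and the functoriality of $R$, shows that $\chi_X = t_{(F(X),\,m^\la_X)}$, so $\chi$ is exactly the restriction of $t$ to the free $\bF^\la$-modules; the implication "$t$ iso $\Rightarrow$ $\chi$ iso" is therefore immediate. For the converse, every $(A,h)\in \A_{\bF^\la}$ is the $U_{\bF^\la}$-split coequalizer of
$$\phi_{\bF^\la}U_{\bF^\la}\phi_{\bF^\la}U_{\bF^\la}(A,h) \rightrightarrows \phi_{\bF^\la}U_{\bF^\la}(A,h),$$
both of whose terms are free $\bF^\la$-modules. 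Since $U_{\bF^\la}$-split coequalizers are absolute, they are preserved by both $\phi_{\bF^\la}U_{\bF^\la}$ and $\widehat{\bR}$ (in the latter case using $U_{\bF^\la}\widehat{\bR} = R\, U_{\bF^\la}$ from the mixed distributive law $(\la^{-1})_\diamond$ together with monadicity of $U_{\bF^\la}$). Naturality of the comonad morphism $t$ on this presentation, combined with the usual "two out of three" argument for coequalizers, then forces $t_{(A,h)}$ to be an isomorphism whenever $\chi$ is.

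The main obstacle is the descent step in the third paragraph: one must check carefully that $\widehat{\bR}$ preserves the canonical coequalizer presentation of $(A,h)$, even though the underlying comonad $\bR$ is only a right adjoint and therefore does not, a priori, preserve colimits. The point to verify is that the coequalizer is absolute at the level of $\A$, so that preservation by $R$ is automatic, and then that monadicity of $U_{\bF^\la}$ lifts this preservation from $\A$ back to $\A_{\bF^\la}$; once this is in place, the propagation from free modules to arbitrary $(A,h)$ is routine.
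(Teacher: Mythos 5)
Your proposal is correct and follows essentially the same route as the paper: both reduce the statement to Proposition~\ref{P.1.7} applied to $\underline{K}=\Psi_{\bF^\la}K_\bF$ via the diagram (\ref{Azum.1b}), and both identify $\chi_A$ with the component $t_{\phi_{\bF^\la}(A)}$ of the comonad morphism. The only difference is in the reduction from free modules to arbitrary $(A,h)$: the paper simply cites \cite[Theorem 2.12]{MW1} for the fact that $t$ is an isomorphism as soon as its components at free modules are, whereas you prove this directly by propagating along the canonical $U_{\bF^\la}$-split coequalizer presentation, using absoluteness of split coequalizers and $U_{\bF^\la}\widehat{\bR}=RU_{\bF^\la}$; that argument is sound and in effect re-proves the cited lemma in the case at hand.
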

\begin{proof} Recall first that the monad $\FF$ is of effective descent type means that
$\phi_{\bF^\la}$ is comonadic.

By Proposition \ref{P.1.7}, the functor $\underline{K}$ making the triangle
(\ref{Azum.1b}) commute
is an equivalence of categories (i.e., the monad $\bF$ is Azumaya) if and only if the monad
$\FF$ is of effective descent type and the comonad morphism $t:\phi_{\bF^\la}U_{\bF^\la}\to \widehat{\bR}$
is an isomorphism. Moreover, according to  \cite[Theorem 2.12]{MW1}, $t$ is an isomorphism if and only if
for any object $A \in \A$, the $\phi_{\FF}(A)$-component
$t_{\phi_{\FF}(A)}:F \phi_{\FF}(A) \to \dF  \phi_{\FF}(A)$ is an isomorphism.
Using now that $\phi_{\bF^\lambda}(A)=(F(A), \m_A =m_A  \z \la_A)$, it is easy to
see that $\chi_A=t_{\phi_{\FF}(A)}$ for all $A \in \A$. This completes the proof.

\end{proof}

The existence of a right adjoint of the comparison functor $\underline{K}$
can be guaranteed by conditions on the base category.

\begin{thm}\label{central} {\bf Right adjoint for $\underline{K}$.}
With the data given above, assume
 $\A$ to have  equalisers of coreflexive pairs. Then
\begin{zlist}
\item the functor
$\underline{K}:\A \to (\A_{\bF^\la})^{\widehat{\bR}}$ (see \ref{Azu-ra}) admits a right adjoint
$\underline{R}:(\A_{\bF^\la})^{\widehat{\bR}}\to \A$
whose value at $((A,h),\vartheta)\in (\A_{\bF^\la})^{\widehat{\bR}}$
is the equaliser
$$
\xymatrix{
\underline{R}((A,h),\vartheta) \ar[rr]^{\qquad i_{((A,h),\vartheta)}} &&
A \ar@/^1pc/@{->}[rr]^{\vartheta} \ar[rd]_{\overline{\eta}_A} &
 & R(A) \\
 &&& RF(A)\ar[ru]_{R(h)}& ;}
 $$

\item  for any
$A\in \A$, $\underline{R} \,\underline{K}(A)$
is the equaliser
$$
\xymatrix{
\underline{R} \,\underline{K}(A) \ar[rr]^{i_{\ol{K}(A)}} &&
F(A) \ar@/^1pc/@{->}[rrrr]^{R(m_A)\cdot\, \overline{\eta}_{ F(A)}}
 \ar[rrd]_{R(\la_A) \cdot \,\overline{\eta}_{ F(A)}} &&
 && RF(A) \\
&&&& RFF(A) \ar[rru]_{R(m_A)}& .}
 $$
\end{zlist}
\end{thm}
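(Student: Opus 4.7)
The plan is to realise both parts as direct applications of the general equaliser construction in \ref{right-adj}. Take $L = \phi_{\bF^\la}$ with right adjoint $S = U_{\bF^\la}$ (unit $e$, counit $\sigma_{(A,h)} = h$), let $\bG = \widehat{\bR}$ be the lifted comonad of \ref{Azu-ra}, and recall from \ref{Azu-ra} that the $\widehat{\bR}$-coaction on $\phi_{\bF^\la}$ has $A$-component $\alpha_A = R(m_A) \cdot \overline{\eta}_{F(A)}$, so that $\overline{L}$ is precisely $\underline{K}$. Since $\A$ has equalisers of coreflexive pairs, \ref{right-adj} immediately delivers a right adjoint $\underline{R}: (\A_{\bF^\la})^{\widehat{\bR}} \to \A$ whose value on $((A,h),\vartheta)$ is the equaliser, in $\A$, of $U_{\bF^\la}(\vartheta) = \vartheta$ and the morphism $\gamma_{(A,h)}: A \to R(A)$ coming from the recipe of \ref{right-adj}.

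To match the diagram in (1) it remains to simplify $\gamma_{(A,h)}$. Unwinding $\gamma = S t_{\overline{L}} \cdot e\, S$ and substituting the explicit formula (\ref{t-comp}) for the comonad morphism $t = t_{\underline{K}}$ yields
$$
\gamma_{(A,h)} = R(h) \cdot R(m_A) \cdot \overline{\eta}_{F(A)} \cdot e_A.
$$
Naturality of $\overline{\eta}$ applied to $e_A$ rewrites $\overline{\eta}_{F(A)} \cdot e_A$ as $RF(e_A) \cdot \overline{\eta}_A$, and then the monad unit law $m \cdot Fe = 1$ collapses the expression to $R(h) \cdot \overline{\eta}_A$, as required. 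For the equaliser to live in $\A$ the pair must be coreflexive, and a common retraction is furnished by $\ve_A: R(A) \to A$: indeed $\ve_A \cdot \vartheta = 1_A$ by the counit law for the $\widehat{\bR}$-comodule $\vartheta$, while the mate relation $\ve F \cdot \overline{\eta} = e$ recorded in \ref{mod-comod}, together with the unit axiom $h \cdot e_A = 1_A$ for the $\bF^\la$-module $(A,h)$ and naturality of $\ve$, gives $\ve_A \cdot R(h) \cdot \overline{\eta}_A = h \cdot \ve_{F(A)} \cdot \overline{\eta}_A = h \cdot e_A = 1_A$. This concludes (1).

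Part (2) then follows by plugging $((A,h),\vartheta) = \underline{K}(A) = ((F(A), m_A \cdot \la_A),\, R(m_A) \cdot \overline{\eta}_{F(A)})$ into (1): the top map $\vartheta$ reproduces $R(m_A) \cdot \overline{\eta}_{F(A)}$, while the bottom map becomes $R(h) \cdot \overline{\eta}_{F(A)} = R(m_A) \cdot R(\la_A) \cdot \overline{\eta}_{F(A)}$, matching the displayed diagram. I expect the principal bookkeeping step to be the simplification $\gamma_{(A,h)} = R(h) \cdot \overline{\eta}_A$; once that identity is in hand, both statements are essentially unfoldings of \ref{right-adj} and \ref{Azu-ra}.
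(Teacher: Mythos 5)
Your proposal is correct and follows essentially the same route as the paper: both parts are obtained by specialising \ref{right-adj} to $L=\phi_{\bF^\la}$, $S=U_{\bF^\la}$, $\bG=\widehat{\bR}$, and the key computation is exactly the paper's simplification $\gamma_{(A,h)}=R(h)\cdot R(m_A)\cdot \overline{\eta}_{F(A)}\cdot e_A=R(h)\cdot\overline{\eta}_A$ via naturality of $\overline{\eta}$ and the unit law $m\cdot Fe=1$, with part (2) being the same substitution of $\underline{K}(A)$ into part (1). Your explicit check that the pair is coreflexive (common retraction $\ve_A$) is a correct extra detail the paper leaves implicit in \ref{right-adj}.
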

\begin{proof} (1) According to \ref{right-adj}, $\overline{R}((A,h),\vartheta)$ is the
object part of the equaliser of
$$\xymatrix{A \ar@{->}@<0.5ex>[rr]^-{ \vartheta }
\ar@ {->}@<-0.5ex> [rr]_-{  \gamma_{(A,h)} }& & R(A)\,,}$$
where $\gamma$ is the
composite $U_{\bF^\la} \xra{U_{\bF^\la}e} U_{\bF^\la}\phi_{\bF^\la}U_{\bF^\la}=U_{\bF^\la}F\xra{U_{\bF^\la}t}U_{\bF^\la}\widehat{R}.$
It follows from the description of $t$ that $\gamma_{(A,h)}$ is the composite
$$A \xra{\;e_A \; } F(A)\xra{\overline{\eta}_{F(A)}}\dF  FF(A)
\xra{\dF  (m_A)}\dF  F(A) \xra{\dF  (h)}\dF  (A)$$
which is just the composite $\dF  (h) \z \overline{\eta}_{A}$
since
\begin{itemize}
  \item $ \overline{\eta}_{F(A)} \z e_A= \dF  F(e_A) \z \overline{\eta}_{A}$ by naturality of $\overline{\eta}$, and
  \item $m_A \z F(e_A)=1$ because $e$ is the unit for $\bF$.
\end{itemize}

(2) For any $A \in \A$, $\underline{K}(A)$ fits into the diagram (\ref{Azum.1b}).
\end{proof}

\begin{thm}{\bf Definition.} \em
Write $F_F$ for the subfunctor of the functor $F$ determined by the equaliser of the
diagram
$$
\xymatrix{
F \ar@/^1pc/@{->}[rrrr]^{Rm\cdot \, \overline{\eta} F} \ar[rrd]_{R\la \cdot \,\overline{\eta}F} &&
 && RF \\
&& RFF \ar[rru]_{Rm}& & . }
 $$
We call the monad $\bF$ \emph{central} if $F_F$ is (isomorphic to) the identity functor.
\end{thm}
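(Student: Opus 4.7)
The final item is framed as a definition, and the only substantive content requiring justification is that the pointwise equalisers $F_F(A)$ assemble into a genuine subfunctor $F_F\hookrightarrow F$; I treat this as the statement to verify, via three steps. First I would check coreflexivity of the parallel pair
$$R(m_A)\z\ol\eta_{F(A)},\quad R(m_A)\z R(\la_A)\z\ol\eta_{F(A)}\;:\;F(A)\;\rightrightarrows\;RF(A)$$
so that the pointwise equaliser $F_F(A)\hookrightarrow F(A)$ exists via the standing hypothesis (inherited from Theorem~\ref{central}) that $\A$ has equalisers of coreflexive pairs. I claim $\ve F:RF\to F$ is a common retraction: using naturality of $\ve$ applied to $m$ and to $\la$, the mate identity $\ve F\z\ol\eta=e$ (which follows from $\ve=\ol\ve\z eR$ combined with the triangle identity $\ol\ve F\z F\ol\eta=1_F$), and the monad unit axioms $m\z eF=1_F=m\z Fe$, the first leg post-composed with $\ve F$ collapses to $m\z eF=1_F$, while the second leg collapses via the BD-law axiom $\la\z eF=Fe$ to $m\z Fe=1_F$.

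Second, for functoriality I would note that both parallel legs are genuine natural transformations of functors $F\Rightarrow RF$ (by naturality of $\ol\eta$, $m$ and $\la$), so for any $f:A\to B$ the universal property of the equaliser at $B$ produces a unique morphism $F_F(f):F_F(A)\to F_F(B)$ commuting with the inclusions into $F$; uniqueness of equaliser factorisations then promotes $F_F$ to an endofunctor on $\A$ with a natural monic inclusion $F_F\hookrightarrow F$.

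Finally, to explain the terminology \emph{central}, I would observe that the defining diagram coincides with the one in Theorem~\ref{central}(2) computing $\underline R\,\underline K(A)$, so $F_F\cong 1_\A$ is the statement that the unit of the adjunction $\underline K\dashv\underline R$ is an isomorphism -- equivalently, that $\underline K$ is fully faithful, a necessary condition for $\bF$ to be Azumaya. In the prototype $F=A\ot_R-$ over a commutative ring $R$ with $\la$ the twist, the two parallel legs encode left and right multiplication by elements of $A$ and the equaliser is $Z(A)\ot_R-$, so $F_F\cong 1_\A$ reduces to $Z(A)=R$, recovering the classical notion of a central $R$-algebra. The only real obstacle above is bookkeeping the four interacting natural transformations $\ol\eta$, $\ve$, $m$, $\la$ through wide diagrams; once the BD-law axioms and triangle identities are marshalled correctly, no deep idea is needed.
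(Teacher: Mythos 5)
Your proposal is correct and matches the paper's (implicit) intent: the statement is a definition with no accompanying proof, and the only content to check is exactly what you verify, namely that the parallel pair is coreflexive (your common retraction $\ve F$, using $\ve F\cdot\ol\eta=e$, $\la\cdot eF=Fe$ and the unit laws, is right) so that the equaliser exists under the standing hypothesis of \ref{central}, that $F_F$ is functorial, and that $F_F\cong 1_\A$ is precisely the condition $\underline R\,\underline K\simeq 1$, i.e.\ full faithfulness of $\underline K$ as in Theorem \ref{th.2}(i). The only caveat is cosmetic: in the module-category example the equaliser is the centraliser $(A\ot_R M)^A$, which agrees with $Z(A)\ot_R M$ only under flatness-type hypotheses, so the reduction to ``$Z(A)=R$'' is heuristic rather than literal; this does not affect the argument.
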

Since $\underline{R}$ is right adjoint to the functor $\underline{K}$, $\underline{K}$
is fully faithful if and only if $\underline{R} \,\underline{K}\simeq 1$.

\begin{theorem}\label{th.2} Assume $\A$ to admit equalisers of coreflexive pairs.
Let $\bF=(F, m, e)$ be a monad on  $\A$, $\la: FF \to FF$ an invertible  BD-law,
and $\bR$ a comonad right adjoint to $\bF$.
Then the comparison functor $\underline{K}:\A \to (\A_{\bF^\la})^{\widehat{\bR}}$ is
\begin{rlist}
 \item full and faithful if and only if the monad $\bF$ is central;
\item an equivalence of categories
      if and only if the monad $\bF$ is central and the functor $\underline{R}$ is conservative.
\end{rlist}
\end{theorem}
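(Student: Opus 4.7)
The plan is to use the adjunction $\underline{K} \dashv \underline{R}$ of Theorem \ref{central}(1) together with the explicit equaliser formula for $\underline{R}\,\underline{K}(A)$ in Theorem \ref{central}(2). Since $\underline{K}$ is a left adjoint, it is fully faithful if and only if the unit $\eta : 1 \to \underline{R}\,\underline{K}$ is a natural isomorphism, so (i) reduces to identifying this unit and comparing it with the equaliser defining centrality.

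The first step is to compute $\eta_A$ concretely. Because $\underline{R}\,\underline{K}(A)$ is presented as an equaliser with inclusion $i_{\underline{K}(A)} : \underline{R}\,\underline{K}(A) \to F(A)$, the unit $\eta_A$ is the unique arrow with $i_{\underline{K}(A)} \cdot \eta_A$ equal to the canonical map $A \to F(A)$ which equalises the parallel pair of \ref{central}(2); I would show this canonical map is exactly the monad unit $e_A$. This amounts to verifying that $e_A$ equalises the pair $R(m_A)\cdot\overline{\eta}_{F(A)}$ and $R(m_A)\cdot R(\la_A)\cdot\overline{\eta}_{F(A)}$, which uses only two axioms: the right unit law $m_A \cdot F(e_A) = 1_{F(A)}$ for the monad $\bF$, and the BD-law unit axiom $\la_A\cdot F(e_A) = e_{F(A)}$ from \ref{def-BD}. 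Both composites then collapse via naturality of $\overline{\eta}$ to the single map $\overline{\eta}_A : A \to RF(A)$.

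With this identification, the equaliser in Theorem \ref{central}(2) coincides with the one in the definition of $F_F$ evaluated at $A$, so there is a canonical natural isomorphism $\underline{R}\,\underline{K} \simeq F_F$ under which $\eta$ corresponds to the canonical comparison $1 \to F_F$. Consequently $\eta$ is an isomorphism precisely when $F_F \simeq 1_\A$, i.e., when $\bF$ is central, proving (i).

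For (ii), I would invoke the standard fact that in any adjunction $L \dashv R$ with $L$ fully faithful, $L$ is an equivalence if and only if $R$ is conservative: full faithfulness makes the unit iso, the triangle identity $R\varepsilon \cdot \eta R = 1$ then forces $R\varepsilon$ to be iso, so conservativity of $R$ upgrades the counit $\varepsilon$ itself to an isomorphism (the converse being trivial). Applying this with $L=\underline{K}$ and $R=\underline{R}$ and combining with (i) yields (ii). The only non-formal step in the whole argument is the short diagram check that $e_A$ lies in the equaliser, which is the main (but quite mild) obstacle; everything else is a formal consequence of the adjunction and the description of the right adjoint in \ref{central}.
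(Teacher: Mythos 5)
Your proposal is correct and follows essentially the same route as the paper: part (i) is exactly the content the paper dismisses with ``follows from the preceding proposition'' (you usefully make explicit the identification of the unit of $\underline{K}\dashv\underline{R}$ with $e_A$ and the verification that $e_A$ equalises the pair, which is indeed the only computation needed), and your part (ii) is verbatim the paper's triangle-identity argument. The small diagram check you describe does work out as you say, since both composites reduce to $\overline{\eta}_A$ via naturality of $\overline{\eta}$, the unit laws for $m$, and $\la\cdot Fe=eF$.
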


\begin{proof} (i)  follows from the preceding proposition.

(ii) Since $\bF$ is central, the unit
$\underline{\eta}:1 \to \underline{R} \,\underline{K}$ of the adjunction
$\underline{K} \dashv \underline{R}$ is an isomorphism by (i).
If  $\underline{\varepsilon}$ is the counit
of the adjunction, then it follows from the triangular identity
$\underline{R}\,\underline{\varepsilon}\cdot \underline{\eta}\,\underline{R}=1$ that
$\underline{R}\,\underline{\varepsilon}$ is an isomorphism. Since $\underline{R}$ is assumed to be conservative
(reflects isomorphisms), this implies that $\underline{\varepsilon}$ is an isomorphism, too.
Thus $\underline{K}$ is an equivalence of categories.
\end{proof}

\section{Azumaya comonads}\label{co-azu}

Following the pattern for monads we introduce the corresponding definitions for comonads.
Again $\A$ denotes any category. The following results and definitions are dual to those
in the preceding section.

\begin{thm}{\bf Definition.}\label{def-coBD} \em
For a comonad $\bG=(G,\delta,\ve)$ on $\A$, a comonad distributive law $\kappa:GG\to GG$
(see \ref{co-DL}) satisfying the Yang-Baxter equation is called a {\em comonad BD-law}
or just a {\em BD-law} if the context is clear.
\end{thm}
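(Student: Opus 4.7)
The statement labelled \ref{def-coBD} is a \emph{definition}, not a proposition or theorem, and therefore requires no proof in the usual sense; it simply names a structure. The only thing worth checking is internal coherence: the notion of a comonad distributive law from \ref{co-DL} applies verbatim when both source and target comonads are the same $\bG$, and the Yang--Baxter hexagon of \ref{def-BD}, written there for $FF\to FF$, has precisely the same shape when dualised to $\kappa:GG\to GG$. These conditions are mutually compatible, exactly as in the monad case, and are not vacuous: the identity $GG\to GG$ satisfies both trivially.

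The real purpose of Definition \ref{def-coBD} is to open up the formal dual of the entire development of Section \ref{azu}. Concretely, one expects the paper to continue (dual to Proposition \ref{prop.1}) by constructing from $\kappa$ an \emph{opposite comonad} $\bG^\kappa$ on the same underlying functor $G$, with comultiplication $\kappa\cdot\delta$ and counit $\ve$; a comonad structure on the composite $GG$ built from $\delta$, $\delta$, and $\kappa$ via the recipe in \ref{co-DL}; a left $\bG\bG^\kappa$-comodule structure on $G:\A\to\A$ obtained by reversing every arrow in \ref{prop.1}(3), namely $G\kappa\cdot G\delta\cdot\delta:G\to GGG$; and, via the dual of \ref{T-mod} (that is, \ref{Gal}), a comparison functor $\ol K^\bG:\A\to\A^{\bG\bG^\kappa}$. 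Each of these pieces is verified by reversing every arrow in the proof of \ref{prop.1}; the comonad axioms in particular follow from the Yang--Baxter hexagon together with the naturality of $\delta$, $\ve$ and the comonad laws.

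The only real subtlety, already present in the monad case, is bookkeeping: there are two comonad structures on the functor $GG$, one for each way of pairing $\bG$ with $\bG^\kappa$, and when $\kappa$ is invertible one must show (dual to \ref{la-iso}) that $\kappa$ itself is a comonad isomorphism between them. This will then be needed in the step immediately following, where one expects an \emph{Azumaya comonad} to be defined (dual to \ref{def-azu}) as a comonad $\bG$ equipped with a BD-law $\kappa$ such that the comparison functor $\ol K^\bG$ into the Eilenberg--Moore category of $\bG\bG^\kappa$-comodules is an equivalence.
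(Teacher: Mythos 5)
You correctly identify that \ref{def-coBD} is a definition and carries no proof in the paper; your coherence check and your account of the ensuing dual development (the opposite comonad $\bG^\kappa$ with comultiplication $\kappa\cdot\delta$, the comonad structure on $GG$, the comodule structure $G\kappa\cdot G\delta\cdot\delta$ on $G$, and the comparison functor into $\A^{\bG\bG^\kappa}$) match Proposition \ref{comonads}, Proposition \ref{ka-iso} and Definition \ref{def-Azu-co} exactly. Nothing to correct.
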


\begin{proposition}\label{comonads}
Let $\bG=(G, \delta, \ve)$ be a comonad on $\A$ with BD-law $\kappa : GG \to G G$.

\begin{zlist}
\item   ${\bG}^\kappa=(G^\kappa,\delta^\kappa , \ve^\kappa)$ is a comonad on $\A$,
  where $G^\kappa=G$, $\delta^\kappa = \kappa\cdot \delta$ and $\ve^\kappa=\ve$.

\item $\kappa$ defines a comonad distributive law
$\kappa : \bG \bG^\kappa \to \bG^\kappa \bG$ making
the triple $ \bG \bG^\kappa=(G G,{\underline\delta}, {\underline\ve})$ a
comonad  with
$$ \underline \delta : G G\xra{\delta \delta^\kappa}
 GG GG \xra{G \kappa G}  G G G G, \quad
\underline\ve:G G  \stackrel{ \ve \ve}\lra 1 .$$
\item The composite $G \xra{\delta} GG \xra{G\delta}GGG \xra{G \kappa}GGG$
defines a left $ \bG \bG^\kappa$-comodule structure on the functor $G:\A \to \A$.
\item There is a comparison functor \;
 $\overline{K}_\kappa: \A \to \A^{\bG\bG^\kappa}$ \; given by
$$  A\;\mapsto \;(G(A),\,
G(A)\xra{ \,\delta_A\,} GG(A)\xra{G\delta_A} GGG(A)\xra{G\kappa_A}
 GG G(A).$$
\end{zlist}
\end{proposition}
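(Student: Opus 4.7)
The plan is to dualise the proof of Proposition \ref{prop.1}: every notion in sight (comonads, comonad distributive laws, the Yang-Baxter equation, comodule structures) is opposite to its monad counterpart, so each of the four assertions is, formally, the statement of the corresponding part of Proposition \ref{prop.1} read in $\A^{\op}$. Nevertheless, I would verify the key diagrams explicitly, since the position of the arrows changes in a way worth making precise.

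For (1), I would first dispatch the two counit conditions $\ve G\cdot\kappa\cdot\delta = 1 = G\ve\cdot\kappa\cdot\delta$, which fall out of the upper triangles in \ref{co-DL} together with counitality of $\delta$. The coassociativity of $\delta^\kappa = \kappa\cdot\delta$, namely
$$G\kappa\cdot G\delta\cdot\kappa\cdot\delta \;=\; \kappa G\cdot\delta G\cdot\kappa\cdot\delta,$$
is the main obstacle. I would rewrite each side using naturality to move one copy of $\kappa$ past a $\delta$, then apply the two pentagonal diagrams of \ref{co-DL} (expressing compatibility of $\kappa$ with the comultiplication in each factor), and finally use the Yang-Baxter equation to interchange the three $\kappa$'s produced; both sides then collapse to a common composite $G\to GGG$.

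For (2), the Yang-Baxter equation allows the same $\kappa$ to be read as a comonad distributive law $\bG\bG^\kappa\to \bG^\kappa\bG$: the counit diagrams are unchanged, while the pentagons relative to $\delta^\kappa$ unfold into the original pentagons composed with an extra copy of $\kappa$, which is absorbed by Yang-Baxter. The comonad structure $(\underline{\delta},\underline{\ve})$ on $\bG\bG^\kappa$ is then produced by the general construction recalled in \ref{co-DL}. Part (3) is verified by a short diagram chase of the same flavour: the counit axiom follows from counitality of $\bG$ together with $\kappa$, and coassociativity $G\rho\cdot\rho = \underline{\delta}\, G\cdot\rho$ (where $\rho$ denotes the given composite) reduces once more to the Yang-Baxter equation combined with the distributive law axioms. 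Part (4) is then immediate from \ref{Gal} applied to the left $\bG\bG^\kappa$-comodule structure on $G$ supplied by (3).
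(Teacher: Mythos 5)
Your proposal is correct and matches the paper's intent exactly: the paper offers no separate proof of this proposition, presenting it as the formal dual of Proposition \ref{prop.1}, whose own proof is by direct computation (using the distributive-law pentagons and the Yang--Baxter equation) for parts (1)--(3) and by the dual of \ref{T-mod}, i.e.\ \ref{Gal}, for part (4). Your explicit verifications fill in precisely the diagram chases the paper delegates to the references.
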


Comonad BD-laws are obtained from monad BD-laws by adjunctions (see \cite[7.4]{MW}):

\begin{thm}\label{mon-comon}{\bf Proposition.}
Let $\bF=(F, m, e)$ be a monad on $\A$ and $\la : F F \to F F$ a BD-law.
If $F$ has a right adjoint $R$, then there is a comonad $\bR=(R,\delta,\ve)$
(where $m\dashv \delta$, $\ve\dashv e$) with
a comonad YB-distributive law $\kappa: R R \to R R$. Moreover, $\la$ is invertible
if and only if $\kappa$ is so.
\end{thm}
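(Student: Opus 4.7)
The plan is to define $\kappa:RR\to RR$ as the mate of $\la:FF\to FF$ under the adjunction $FF\dashv RR$ induced by $F\dashv R$, and then to translate each of the defining axioms of $\la$ to the corresponding axiom of $\kappa$ via the mate calculus. The comonad $\bR=(R,\delta,\ve)$ with $m\dashv\delta$ and $\ve\dashv e$ is the one supplied by \ref{mod-comod}. Writing $\overline\eta,\overline\ve:F\dashv R$ for the unit and counit of the given adjunction, squaring yields $FF\dashv RR$ with unit $\overline\eta_2=R\overline\eta F\cdot\overline\eta$ and counit $\overline\ve_2=\overline\ve\cdot F\overline\ve R$. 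I would then set
$$
\kappa\;:=\;RR\,\overline\ve_2\cdot RR\,\la\,RR\cdot\overline\eta_2\,RR\;:\;RR\lra RR,
$$
the mate of $\la$ across $FF\dashv RR$.

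Next I would verify that $\kappa$ is a comonad distributive law of $\bR$ over $\bR$ satisfying the Yang--Baxter equation. Each of the axioms satisfied by $\la$---namely the two unit and multiplicativity diagrams of \ref{DL} (specialised to $\bT=\bS=\bF$) and the YB hexagon of \ref{def-BD}---is an equation of 2-cells in the 2-category of endofunctors of $\A$, built from whiskerings of $\la$, $m$, and $e$. The mate calculus along $F\dashv R$ sends such an equation to the one obtained by the substitution $\la\mapsto\kappa$, $m\mapsto\delta$, $e\mapsto\ve$ together with reversal of the direction of every 2-cell. Under this translation, the unit and multiplicativity axioms of \ref{DL} map to the counit and comultiplicativity axioms of \ref{co-DL}, and the YB hexagon of \ref{def-BD} maps to the YB hexagon of \ref{def-coBD}. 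Hence $\kappa$ is a comonad BD-law of $\bR$.

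For the final assertion, the mate correspondence $\mathrm{Nat}(FF,FF)\to\mathrm{Nat}(RR,RR)$, $\la\mapsto\kappa$, is an anti-isomorphism of monoids under vertical composition: the mate of the identity is the identity (by the triangle identities) and the mate of $\la_1\cdot\la_2$ equals the mate of $\la_2$ followed by the mate of $\la_1$ (a short pasting-diagram chase using naturality of $\overline\eta$ and $\overline\ve$ together with the triangle identities). Consequently $\la$ admits a two-sided inverse $\la^{-1}$ if and only if $\kappa$ admits the two-sided inverse given by the mate of $\la^{-1}$. The main obstacle is purely notational: one must draw the pasting diagrams on both sides to confirm each axiom translation, a tedious but routine verification. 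Conceptually, nothing new is required beyond the mate calculus already employed tacitly in \ref{DisAd} and \ref{InDisAd}.
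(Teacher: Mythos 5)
Your proposal is correct and is essentially the argument the paper intends: the paper offers no proof of \ref{mon-comon} beyond the citation to \cite[7.4]{MW}, and the construction there is exactly the mate of $\la$ under the composite adjunction $FF\dashv RR$, with the distributive-law, Yang--Baxter, and invertibility properties transported by the functoriality of the mate correspondence (noting only that left whiskering by $F$ passes to right whiskering by $R$ and vice versa, which the symmetry of the axioms absorbs).
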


\begin{definition}\label{def-Azu-co} \em A comonad $\bG=(G,\delta, \ve)$ on a
category $\A$ is said to be {\em Azumaya} provided it allows for
a (comonad) BD-law $\kappa : G G \to G G$
such that the comparison functor
$\overline{K}_\kappa: \A \to \A^{\bG\bG^\kappa}$ is an equivalence.
\end{definition}

\begin{proposition}\label{l-r-adj-co}
 If $\bG$ is an Azumaya comonad on $\A$, then the functor $G$ admits a right adjoint.
\end{proposition}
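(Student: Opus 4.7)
The plan is to dualize the proof of Proposition \ref{l-r-adj} verbatim. In that earlier proof the key observation was the commutative triangle
$$\xymatrix{ \A \ar[rr]^-{\overline{K}_\bF} \ar[drr]_{F} & & \A_{\bF\bF^\la} \ar[d]^{U_{\bF\bF^\la}}\\ &&\A \, ,}$$
which identifies $F$ with the composite $U_{\bF\bF^\la}\cdot \overline{K}_\bF$; since $U_{\bF\bF^\la}$ always has a left adjoint (namely $\phi_{\bF\bF^\la}$) and $\overline{K}_\bF$, being an equivalence, also has a left adjoint, their composite $F$ inherits a left adjoint.

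For the comonad statement, I would first record the dual triangle arising from Proposition \ref{comonads}(3)--(4). Specifically, the left $\bG\bG^\kappa$-comodule structure on $G$ produced by $G\kappa\cdot G\delta\cdot \delta$ yields, via the dual of \ref{T-mod}, a factorization
$$\xymatrix{ \A \ar[rr]^-{\overline{K}_\kappa} \ar[drr]_{G} & & \A^{\bG\bG^\kappa} \ar[d]^{U^{\bG\bG^\kappa}}\\ &&\A \, ,}$$
so that $G=U^{\bG\bG^\kappa}\cdot \overline{K}_\kappa$.

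The two ingredients to assemble are then immediate. First, the forgetful functor $U^{\bG\bG^\kappa}:\A^{\bG\bG^\kappa}\to \A$ out of an Eilenberg--Moore category of comodules always admits a right adjoint, the cofree functor $\phi^{\bG\bG^\kappa}$, as recalled in \ref{mod-comod}. Second, by the Azumaya hypothesis (Definition \ref{def-Azu-co}) the comparison functor $\overline{K}_\kappa$ is an equivalence of categories, hence in particular admits a right adjoint (any quasi-inverse serves). Composing the two right adjoints gives a right adjoint to $G=U^{\bG\bG^\kappa}\cdot \overline{K}_\kappa$, which is the claim.

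There is no real obstacle here: the entire argument is the formal dual of Proposition \ref{l-r-adj}, and the only thing worth double-checking is the direction of the variance of the comparison functor, i.e.\ that $\overline{K}_\kappa$ lands in $\A^{\bG\bG^\kappa}$ (comodules), so that the forgetful functor has a \emph{right} adjoint rather than a left one, producing a right adjoint for $G$ as desired.
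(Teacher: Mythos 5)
Your proposal is correct and is exactly the argument the paper intends: the paper omits the proof because it is the formal dual of Proposition \ref{l-r-adj}, and your dualization (writing $G=U^{\bG\bG^\kappa}\cdot \overline{K}_\kappa$, using the cofree right adjoint of the forgetful functor and a quasi-inverse of the equivalence $\overline{K}_\kappa$) is precisely that dual argument.
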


This leads to a first characterisation of Azumaya comonads.

\begin{theorem} \label{th.1.co-Monad} Consider a comonad $\bG=(G,\delta,\ve)$ on $\A$
with a comonad BD-law $\kappa : GG \to GG$. The following are equivalent:
\begin{blist}
\item $\bG$ is an Azumaya comonad;

\item the functor $G:\A \to \A$ is comonadic and the left $\bG\bG^\kappa$-comodule structure on
$G$ defined in Proposition \ref{comonads} is Galois;

\item the functor $G:\A \to \A$ is comonadic (for some adjunction $G\dashv R$ with
 counit $\sigma:GR\to 1$) and the composite
$$ GR \xra{\delta R} GGR  \xra{\delta GR} GGG R
\xra{G\kappa R} GGGR \xra{GG\sigma} G G $$
is an isomorphism of comonads  $\mathcal{H} \to \bG\bG^\kappa$, where
$\mathcal{H}$ is the comonad on $\A$ generated by the adjunction $G\dashv R$.
\end{blist}
\end{theorem}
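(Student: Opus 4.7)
The plan is to dualise the proof of Theorem \ref{th.1.Monad} line by line, with Proposition \ref{P.1.7} playing the role that Proposition \ref{P.1.2} played in the monad case.

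For (a)$\LRa$(b), I would observe that, by Proposition \ref{comonads}(3), the functor $G:\A\to\A$ carries a canonical left $\bG\bG^\kappa$-comodule structure, and the induced functor $\A\to\A^{\bG\bG^\kappa}$ attached to it by the recipe of \ref{Gal} is precisely the comparison functor $\overline{K}_\kappa$ of Proposition \ref{comonads}(4). Consequently, asking that $\overline{K}_\kappa$ be an equivalence is exactly the hypothesis to which Proposition \ref{P.1.7} applies, and that proposition rewrites this as the conjunction ``$G$ is comonadic'' and ``$G$ is $\bG\bG^\kappa$-Galois'', which is (b).

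For (b)$\LRa$(c), I would use that a comonadic functor is by definition a left adjoint, so in either case we may fix a right adjoint $G\dashv R$ with counit $\sigma:GR\to 1$ (the existence of such an $R$ in case (a) is also guaranteed by Proposition \ref{l-r-adj-co}). This adjunction generates a comonad $\mathcal{H}=GR$ on $\A$. Applying the comonad-morphism construction of \ref{Gal} with $L=G$, $S=R$ and with the left $\bG\bG^\kappa$-coaction $\alpha = G\kappa\cdot G\delta\cdot \delta :G\to GGG$ from Proposition \ref{comonads}(3), the induced comonad morphism $t_{\overline{G}}: \mathcal{H}\to \bG\bG^\kappa$ is $GG\sigma \cdot \alpha R$, which unwinds to exactly the five-step composite displayed in (c). Hence ``$G$ is $\bG\bG^\kappa$-Galois'' is literally ``this composite is an isomorphism of comonads'', i.e.\ (c).

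The argument is almost purely formal once the right dualisations are in place; the only point that needs genuine checking — and even there only routine diagram chasing — is that $\alpha$ really defines a $\bG\bG^\kappa$-coaction, which is the place where the Yang--Baxter hypothesis on $\kappa$ interacts with coassociativity of $\delta$. The main obstacle, if any, is bookkeeping: one must keep track of the two different comonad structures on the composite functor $GG$ (namely $\bG\bG^\kappa$ versus $\bG^\kappa\bG$) and confirm that the coaction, the comparison $\overline{K}_\kappa$, and the morphism $t_{\overline{G}}$ all live with respect to the same one; once this is arranged, the rest transfers from the monad proof of Theorem \ref{th.1.Monad} by reversing arrows.
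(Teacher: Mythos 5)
Your proposal is correct and follows essentially the same route as the paper, which proves Theorem \ref{th.1.co-Monad} only implicitly by dualising Theorem \ref{th.1.Monad}: Proposition \ref{P.1.7} gives (a)$\LRa$(b), and the comonad morphism $t_{\overline{L}}$ of \ref{Gal} applied to the coaction of Proposition \ref{comonads}(3) identifies (b) with (c). (The only cosmetic discrepancy, the second arrow appearing as $\delta GR$ rather than $G\delta R$, is resolved by coassociativity of $\delta$, as you implicitly use.)
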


\begin{thm}\label{iso-R}
{\bf The isomorphism
$\A^{\bG\bG^\kappa} \simeq (\A^{\bG^\kappa})^{\wt{\bG}}$.}
\em
Write $\wt{\bG}$ for the lifting of the comonad $\bG$
to $\A^{\bG^\kappa}$ corresponding to the distributive law
$\kappa: \bG\bG^\kappa  \to \bG^\kappa \bG$. Then (see \ref{co-DL}), the assignment
$$ (A, \,  A \xra{\rho} G G(A) ) \; \mapsto \;
((A, A\xra{\,\rho\,} G G(A) \xra{\ve_{G(A)}} G(A) ) ,\,
     A\xra{\,\rho\,} G G(A) \xra{ G (\ve_A)} G(A) )$$
yields an isomorphism of categories
$$\mathcal{Q}_\kappa:\A^{\bG\bG^\kappa} \lra (\A^{\bG^\kappa})^{\wt{\bG}},$$
where for any $((A,\theta),\vartheta) \in (\A^{\bG^\kappa})^{\wt{\bG}}$,
$$\mathcal{Q}_\kappa^{-1}((A,\theta),
\vartheta)=(A, A \xra{\;\vartheta\;} G (A) \xra{G(\theta) } G G (A)).$$

 There is a comparison functor
$ K_\kappa:\A \to (\A^{\bG^\kappa})^{\wt{\bG}}$,
$$ A\;\mapsto \;((G(A),\, G(A)\xra{\delta_A} G G(A)),G(A) \xra{\delta_A}
 G G (A) \xra{\kappa_A} GG(A)),
$$
  with  $\overline{K}_\kappa = \bQ_\kappa^{-1} K_\kappa$ and
commutative diagram
$$
\xymatrix{
\A \ar[r]^-{K_\kappa} \ar[dr]_{\phi^{\bG^\kappa }} & (\A^{\bG^\kappa})^{\wt{\bG}}
\ar[d]^{U^{\wt{\bG}}} \ar[r]^{\bQ_\kappa^{-1}} &
   \A^{\bG\bG^\kappa} \ar[d]^{U^{\bG\bG^\kappa}}  \\
& \A^{\bG^\kappa} \ar[r]_{U^{\bG^\kappa}} & \A.  }
$$
\end{thm}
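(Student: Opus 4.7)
The plan is to dualize the argument of \ref{iso-P} throughout, exploiting the fact that the comonad distributive law machinery in \ref{co-DL} already supplies the isomorphism $\mathcal{Q}_\kappa$ in general, so the real work is only to (i) identify the relevant distributive law as the one determined by $\kappa$ and (ii) verify the concrete formula for $K_\kappa = \bQ_\kappa \overline{K}_\kappa$.

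First, I would observe that by Proposition \ref{comonads}(2), $\kappa$ simultaneously plays two roles: it is a comonad BD-law $GG\to GG$, and reading it as $\kappa:\bG\bG^\kappa \to \bG^\kappa \bG$ it is a comonad distributive law between $\bG^\kappa$ (the ``inner'' comonad) and $\bG$ (the ``outer'' one, which we lift). Applying the general construction of \ref{co-DL} to this distributive law then automatically produces the lifting $\wt\bG$ of $\bG$ to $\A^{\bG^\kappa}$ and the category isomorphism $\bQ_\kappa : \A^{(\bG\bG^\kappa)_\kappa} \to (\A^{\bG^\kappa})^{\wt\bG}$, together with the stated explicit formulas for $\bQ_\kappa$ and $\bQ_\kappa^{-1}$; no extra calculation beyond that of \ref{co-DL} is needed here.

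Next, for the comparison functor, I would invoke the dual of \ref{T-mod}: a left $\bG\bG^\kappa$-comodule structure on a functor $G:\A\to\A$ is the same as a lift of $G$ through $U^{\bG\bG^\kappa}$. By Proposition \ref{comonads}(3) the map $G\xra{\delta} GG \xra{G\delta} GGG \xra{G\kappa} GGG$ gives such a structure, so it induces $\overline{K}_\kappa:\A\to \A^{\bG\bG^\kappa}$, which is exactly the comparison functor already recorded in Proposition \ref{comonads}(4). Then I define $K_\kappa := \bQ_\kappa \overline{K}_\kappa$; the commutative triangle in the statement, namely $U^{\wt\bG} K_\kappa = \phi^{\bG^\kappa}$, then follows automatically from $U^{\bG\bG^\kappa}\overline{K}_\kappa = \phi^{\bG^\kappa}$ (which holds by construction of $\overline{K}_\kappa$) together with the commutativity $U^{\bG^\kappa}U^{\wt\bG} = U^{\bG\bG^\kappa}U^{\bG\bG^\kappa}\bQ_\kappa^{-1}$ built into $\bQ_\kappa$ in \ref{co-DL}.

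The only computational step that is not formal is to check that plugging the formula for $\overline{K}_\kappa(A)$ into $\bQ_\kappa$ yields precisely
\[
K_\kappa(A) = \bigl((G(A),\, \delta_A),\; G(A)\xra{\delta_A} GG(A)\xra{\kappa_A} GG(A)\bigr).
\]
This is the dual of the unique nontrivial verification in \ref{iso-P}: one applies $\bQ_\kappa$ (whose two components are $\ve_{G(A)}\circ(-)$ and $G(\ve_A)\circ(-)$) to the comodule structure $G\kappa\cdot G\delta\cdot \delta$ from Proposition \ref{comonads}(3) and uses the counit identity $\ve_G\cdot\delta = 1 = G\ve\cdot\delta$ together with the compatibility $\ve G\cdot \kappa = G\ve$ and $G\ve\cdot \kappa = \ve G$ from \ref{def-coBD} to collapse the composites. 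I expect this is the only place where diagram-chasing is required, and it is essentially identical to the monad computation already carried out in \ref{iso-P}; the main (but still mild) obstacle is keeping the order of the two ``slots'' of $\wt\bG$-comodule structures straight, since the symmetry between $\bG$ and $\bG^\kappa$ that $\kappa$ produces can otherwise lead one to swap them.
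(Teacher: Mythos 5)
Your overall strategy is exactly the paper's: \ref{iso-R} is the formal dual of \ref{iso-P}, the isomorphism $\bQ_\kappa$ comes for free from \ref{co-DL} applied to the distributive law $\kappa:\bG\bG^\kappa\to\bG^\kappa\bG$, the functor $\overline{K}_\kappa$ comes from Proposition \ref{comonads}(3),(4) via the dual of \ref{T-mod}, and the only real content is the computation of $\bQ_\kappa\overline{K}_\kappa(A)$. Two points in your execution, however, do not hold up. First, the triangle $U^{\wt{\bG}}K_\kappa=\phi^{\bG^\kappa}$ does not follow ``automatically'' from equalities of underlying functors: knowing that $U^{\bG^\kappa}U^{\wt{\bG}}K_\kappa$ and $U^{\bG^\kappa}\phi^{\bG^\kappa}$ both equal $G$ says nothing about the $\bG^\kappa$-comodule structures, and the triangle is precisely equivalent to identifying the first slot of $K_\kappa(A)$ with the cofree comodule $\phi^{\bG^\kappa}(A)=(G(A),\delta^\kappa_A)$. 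That identification \emph{is} the computation; it cannot be bypassed.

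Second, and more seriously, the computation you describe does not yield the formula you (and the statement) display. With $\rho=G\kappa_A\cdot G\delta_A\cdot\delta_A$, applying $\ve_{GG(A)}\circ(-)$ and using naturality of $\ve$ twice together with $\ve G\cdot\delta=1$ gives $\kappa_A\cdot\delta_A=\delta^\kappa_A$ for the \emph{first} component, while applying $G(\ve_{G(A)})\circ(-)$ and using $\ve G\cdot\kappa=G\ve$ and $G\ve\cdot\delta=1$ gives $\delta_A$ for the \emph{second}; hence
$$K_\kappa(A)=\bigl((G(A),\,\kappa_A\cdot\delta_A),\ \delta_A\bigr),$$
the exact dual of $\mathcal{P}_\la\overline{K}_\bF(A)=((F(A),m_A\cdot\la_A),m_A)$ in \ref{iso-P}. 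Note that $(G(A),\delta_A)$ is in general not even a $\bG^\kappa$-comodule (coassociativity would force $\kappa_{G(A)}\cdot G\delta_A\cdot\delta_A=G\delta_A\cdot\delta_A$), and the displayed assignment contradicts the commutative diagram in the same statement. The two structure maps are interchanged in the paper's own display, which explains the confusion; but since you explicitly flagged the danger of swapping the two slots and then claimed to verify the swapped formula, the verification as you describe it would fail at exactly that point.
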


\begin{thm}\label{iso-R1}{\bf Proposition.}
Let $\bG=(G,\delta,\ve)$ be a comonad on $\A$ and $\kappa : G G \to G G$
an invertible BD-law.
\begin{zlist}
\item $\kappa^{-1}:\bG^\kappa\bG \to \bG\bG^\kappa$ is a comonad distributive law
and hence induces a comonad
$(\bG^\kappa \bG)_{\kappa^{-1}}=(G^\kappa G,\underline{\underline\delta},
 \underline{\underline\ve})$ where
$$ \underline{ \underline{\delta}} : GG \xra{\delta^\kappa\delta}
GG GG \xra{G\kappa^{-1} G}  G G G G , \quad
\underline{\underline{\ve}}:G G \stackrel{\ve \ve }\lra 1,$$
and $\kappa: \bG\bG^\kappa \to (\bG^\kappa \bG)_{\kappa^{-1}}$ is
a comonad isomorphism.
\item There is an isomorphism of categories
$$\varPhi':(\A^{\bG^\kappa})^{(\wt{\bG})_\kappa}\simeq (\A^{\bG})^{(\wt{\bG}^\kappa)_{\kappa^{-1}}},\quad
((A, \theta), \vartheta) \mapsto ((A, \vartheta), \theta).$$
\item $\kappa^{-1}$ induces a comparison functor
$$K'_\kappa:\A \to (\A^{\bG})^{(\wt{\bG^\kappa})_{\kappa^{-1}}}, \;
A\;\mapsto \;((G(A),\, G(A)\xra{\delta_A} GG (A)),
G(A)\xra{\delta^\kappa_A} GG(A)),$$
with commutative diagrams (with $K_\kappa$ from \ref{iso-R})
$$
\xymatrix{ \A \ar[r]^-{K'_\kappa} \ar[dr]_{\phi^{\bG}} &
(\A^{\bG})^{(\wt{\bG^\kappa})_{\kappa^{-1}}} \ar[d]^{U^{(\wt{\bG^\kappa})_{\kappa^{-1}}}}\\
& \A^{\bG}, } \qquad
\xymatrix{ \A \ar[r]^-{K_\kappa} \ar[dr]_{K'_\kappa} & (\A^{\bG^\kappa})^{(\wt{\bG})_\kappa}
\ar[d]^{\varPhi'}\\
&  (\A^{\bG})^{(\wt{\bG^\kappa})_{\kappa^{-1}}} . }
$$
\end{zlist}
\end{thm}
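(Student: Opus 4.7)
The plan is to derive parts (1) and (2) as direct instances of Proposition \ref{ka-iso} and to construct the comparison functor $K'_\kappa$ in (3) by composing $K_\kappa$ of \ref{iso-R} with the isomorphism $\varPhi'$ supplied by (2). The whole argument is the comonad dualisation of Proposition \ref{iso-P-in}, and no new ingredient beyond \ref{ka-iso} and \ref{iso-R} is needed.

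For (1), I apply Proposition \ref{ka-iso} to the invertible comonad distributive law $\kappa:\bG\bG^\kappa \to \bG^\kappa\bG$ established in Proposition \ref{comonads}(2), identifying the ``$\bH$'' of \ref{ka-iso} with $\bG$ and its ``$\bG$'' with $\bG^\kappa$. Part (1) of \ref{ka-iso} yields that $\kappa^{-1}:\bG^\kappa\bG \to \bG\bG^\kappa$ is again a comonad distributive law, and part (2) gives a comonad structure on $G^\kappa G$ with comultiplication $G\kappa^{-1}G \cdot \delta^\kappa\delta$ and counit $\ve\ve$ -- which is exactly $(\bG^\kappa\bG)_{\kappa^{-1}}$ as stated -- together with the fact that $\kappa$ is a comonad isomorphism $\bG\bG^\kappa \to (\bG^\kappa\bG)_{\kappa^{-1}}$. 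For (2), the same translation applied to part (3) of \ref{ka-iso} delivers the isomorphism $\varPhi':(\A^{\bG^\kappa})^{(\wt\bG)_\kappa} \to (\A^\bG)^{(\wt{\bG^\kappa})_{\kappa^{-1}}}$ acting by the announced swap of the two comodule structures; its well-definedness rests on the invertibility of $\kappa$ together with the symmetry of the compatibility square between a $\bG^\kappa$-coaction and a $\wt\bG$-coaction on an object.

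For (3), I set $K'_\kappa := \varPhi' \cdot K_\kappa$, which by construction makes the right-hand triangle of (3) commute. Unpacking $K_\kappa(A)$ from \ref{iso-R} and applying the swap $\varPhi'$ then yields the formula for $K'_\kappa(A)$ given in the statement. The left-hand triangle, involving $\phi^\bG$, commutes by direct inspection on objects: $U^{(\wt{\bG^\kappa})_{\kappa^{-1}}}(K'_\kappa(A))$ picks out the ``$\bG$-coordinate'' of the pair, which is $\phi^\bG(A)$. No step presents a genuine obstacle; the only care required is in translating between the conventions ``distributive law of $\bG$ over $\bH$'' versus ``of $\bH$ over $\bG$'' in \ref{ka-iso}, and in keeping the roles of $\bG$ and $\bG^\kappa$ straight when reading off the explicit formulas.
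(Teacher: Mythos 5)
Your proposal is correct and follows essentially the same route as the paper: parts (1) and (2) are read off from Proposition \ref{ka-iso} with the ``$\bH$'' there taken to be $\bG$ and its ``$\bG$'' taken to be $\bG^\kappa$ (via the distributive law $\kappa:\bG\bG^\kappa\to\bG^\kappa\bG$ of Proposition \ref{comonads}), and (3) is obtained by composing $K_\kappa$ with the swap $\varPhi'$, exactly dual to Proposition \ref{iso-P-in}. One caveat worth recording: the formula for $K_\kappa(A)$ as printed in \ref{iso-R} has the two coactions interchanged --- since $U^{\wt{\bG}}K_\kappa=\phi^{\bG^\kappa}$, the $\bG^\kappa$-coaction on $G(A)$ must be $\delta^\kappa_A=\kappa_A\cdot\delta_A$ and the $\wt{\bG}$-coaction must be $\delta_A$, so it is only after correcting $K_\kappa(A)$ to $((G(A),\delta^\kappa_A),\delta_A)$ that applying $\varPhi'$ literally yields the stated $K'_\kappa(A)=((G(A),\delta_A),\delta^\kappa_A)$ and both triangles commute.
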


Note that, for $\kappa$ invertible, it follows from the diagrams in the Sections
 \ref{iso-R}, \ref{iso-R1}
that $G$ is an Azumaya comonad if and only if the functor
 $$K'_\kappa:\A \to (\A^{\bG})^{(\wt{\bG^\kappa})_{\kappa^{-1}}}$$
  is an equivalence of categories.
Dualising Proposition \ref{sq} gives:

\begin{proposition}\label{sqd} Let $\bG=(G,\delta,\ve)$ be a comonad on $\A$ with
an invertible  BD-law $\kappa:GG \to GG$ and assume $\kappa^2=1$.
Then the comonad $\bG$ is Azumaya if and only if the comonad $\bG^\kappa$ is so.
\end{proposition}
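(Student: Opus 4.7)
The plan is to directly dualize the proof of Proposition \ref{sq}. The starting observation is that if $\kappa:GG\to GG$ is a comonad BD-law for $\bG$, then $\kappa$ is equally a comonad BD-law for $\bG^\kappa$. Indeed, the axioms of \ref{co-DL} together with the Yang-Baxter equation of \ref{def-coBD} are symmetric under the replacement $(\bG,\delta)\leftrightarrow(\bG^\kappa,\delta^\kappa)$, since $\delta^\kappa=\kappa\cdot\delta$ and $\kappa$ itself is unchanged.

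Applying the constructions of \ref{iso-R} and \ref{iso-R1} to the pair $(\bG^\kappa,\kappa)$ therefore yields a comparison functor $K_\kappa^{\bG^\kappa}:\A \to (\A^{(\bG^\kappa)^\kappa})^{\wt{(\bG^\kappa)}_\kappa}$ whose value at $A\in\A$ is, by the formula of \ref{iso-R}, the triple
\[
\bigl((G(A),\delta^\kappa_A),\; G(A)\xra{\delta^\kappa_A} GG(A)\xra{\kappa_A} GG(A)\bigr).
\]
Under $\kappa^2=1$ one has $\kappa^{-1}=\kappa$, $(\bG^\kappa)^\kappa=\bG$, and $\kappa_A\cdot\delta^\kappa_A=\kappa_A\cdot\kappa_A\cdot\delta_A=\delta_A$. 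Hence the target category of $K_\kappa^{\bG^\kappa}$ identifies with $(\A^\bG)^{(\wt{\bG^\kappa})_{\kappa^{-1}}}$, i.e. the codomain of the functor $K'_\kappa$ for $\bG$ from \ref{iso-R1}(3). Modulo this identification, $K_\kappa^{\bG^\kappa}$ corresponds to $K'_\kappa$.

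To conclude, by the remark following \ref{iso-R1}, $\bG$ is Azumaya if and only if $K'_\kappa$ is an equivalence of categories, whereas by Definition \ref{def-Azu-co} combined with the isomorphism of \ref{iso-R}, $\bG^\kappa$ is Azumaya if and only if $K_\kappa^{\bG^\kappa}$ is an equivalence. The identification above shows these two conditions coincide.

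The one real obstacle is the bookkeeping needed to verify that the identification of the two comparison functors is genuine: one must carefully match the coactions, the lifted comonads $\wt{\bG}$ and $\wt{(\bG^\kappa)}$, and the direction in which $\kappa$ is read as a comonad distributive law ($\bG\bG^\kappa\to\bG^\kappa\bG$ versus $\bG^\kappa\bG\to\bG\bG^\kappa$), all under the involution $\bG\leftrightarrow\bG^\kappa$. This is the exact mirror image of the verification the authors leave implicit in the proof of Proposition \ref{sq}, and poses no new difficulty.
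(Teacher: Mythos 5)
Your proposal is correct and is essentially the paper's own argument: the paper proves Proposition \ref{sqd} simply by dualising Proposition \ref{sq}, and the dualisation you carry out — observing that $\kappa$ is a BD-law for $\bG^\kappa$, that $\kappa^2=1$ forces $(\bG^\kappa)^\kappa=\bG$ and $\kappa=\kappa^{-1}$, and then identifying the comparison functor of $(\bG^\kappa,\kappa)$ with $K'_\kappa$ via the remark following \ref{iso-R1} — is exactly the mirror of the paragraph preceding Proposition \ref{sq}. The bookkeeping you flag (matching the two coactions, which is effectively the swap $\varPhi'$ of \ref{iso-R1}(2)) is left implicit in the paper as well, so no further detail is expected.
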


\begin{thm}\label{Azu-le} {\bf Azumaya comonads with left adjoints.} \em
Again let $\bG=(G,\delta,\ve)$ be a comonad on $\A$ with an invertible  BD-law
 $\kappa:GG \to GG$.
Assume now that the functor $G$ admits a left adjoint functor $L$,
 with $\overline{\eta}, \overline{\varepsilon}: L\dashv G $,
inducing a monad $\mathcal{L}=(L,m,e)$ on $\A$ (see \ref{mod-comod}).
Since $\kappa$ is invertible, $\kappa^{-1}$ can be seen as a distributive
law $\bG^\kappa \bG \to \bG \bG^\kappa$. It then follows from the dual of \ref{DisAd} that the composite
$$\omega:  LG \xra{LG \overline{\eta}} L G G L \xra{L\kappa^{-1} L} LG G  L \xra{\overline{\ve}\,G  L} G  L $$
is a mixed distributive law from the monad $\mathcal{L}$ to the comonad $\bG^\kappa$
leading to an isomorphism of categories
$$ (\A^{\bG^\kappa})^{\wt{\bG}_\kappa}\simeq (\A^{\bG^\kappa})_{\wt{\mathcal{L}}},\quad
 ((A, \theta), \vartheta)\; \mapsto\; ((A, \vartheta),
L(A) \xra{L(\theta)} LG(A)\xra{\overline{\ve}_A} A),$$
where
$\wt{\mathcal{L}}$ is the lifting of $\mathcal{L}$ to $\A^{\bG^\kappa}$
(corresponding to $\omega$).
Then the composite
$$\underline{K_\kappa}:\A \xra{K_\kappa} (\A^{\bG^\kappa})^{\wt{\bG}_\kappa} \xra{\simeq} (\A^{\bG^\kappa})_{\wt{\mathcal{L}}}$$
takes an arbitrary $A \in \A$ to
$$((G(A), \delta^\kappa_A),\, LG(A) \xra{L(\delta_A)} LGG(A)\xra{\overline{\ve}_{G(A)}} A),$$
thus inducing commutativity of the diagram
$$\xymatrix{ \A \ar[rr]^-{\underline{K_\kappa}} \ar[dr]_{\phi^{\bG^\kappa}} & & (\A^{\bG^\kappa})_{\wt{\mathcal{L}}}
\ar[ld]^{U_{\wt{\mathcal{L}}}}\\
& \A^{\bG^\kappa} & . } $$

\begin{theorem} \label{th.3} Let $\bG=(G,\delta,\ve)$ be a comonad on $\A$ with an invertible
comonad BD-law  $\kappa:GG \to GG$ and $\mathcal{L}$ a monad left adjoint to $\bG$
(with $\overline{\eta}, \overline{\varepsilon}: L\dashv G $).
Then the following are equivalent:
\begin{blist}
\item $\bG$ is an Azumaya comonad;
\item \begin{rlist}
\item
the functor $\phi^{\bG^\kappa}:\A \to \A^{\bG^\kappa}$ is monadic  and

\item $\phi^{\bG^\kappa}$ is $\wt{\mathcal{L}}$-Galois, that is,

$t_{(A,\theta)}:L(A) \xra{L(\theta)} LG(A) \xra{L(\delta_A)}LGG(A)
\xra{\overline{\ve}_{G(A)}} G(A), $ is an isomorphism
for any $(A,\theta) \in \A^{\bG^\kappa}$ or

$\chi:LG \xra{L\delta} LGG \xra{L\kappa} LGG
\xra{L\delta G} LGGG\xra{\overline{\ve}GG(A)} GG$ is an isomorphism.
\end{rlist}
\end{blist}
\end{theorem}
\begin{proof} This follows by applying the dual of Theorem \ref{th.1}
to the last diagram.
\end{proof}
\end{thm}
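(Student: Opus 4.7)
The plan is to invoke the dual of Theorem~\ref{th.1}. First I note that the composite category isomorphisms exhibited in \ref{iso-R} and \ref{Azu-le},
$$\A^{\bG\bG^\kappa}\;\simeq\;(\A^{\bG^\kappa})^{\wt{\bG}_\kappa}\;\simeq\;(\A^{\bG^\kappa})_{\wt{\mathcal{L}}},$$
carry $\oK_\kappa$ through $K_\kappa$ onto the composite $\underline{K_\kappa}$. Hence $\bG$ is Azumaya if and only if $\underline{K_\kappa}$ is an equivalence. By construction, $\underline{K_\kappa}$ lifts $\phi^{\bG^\kappa}$ along the forgetful $U_{\wt{\mathcal{L}}}$, so it presents $\phi^{\bG^\kappa}$ as a left $\wt{\mathcal{L}}$-module functor in the sense of \ref{T-mod}.

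Next I would apply Proposition~\ref{P.1.2} to this module functor: $\underline{K_\kappa}$ is an equivalence precisely when (i) $\phi^{\bG^\kappa}$ is monadic and (ii) the canonical comparison monad morphism $t:\wt{\mathcal{L}}\to\phi^{\bG^\kappa}U^{\bG^\kappa}$ from \ref{T-mod} is an isomorphism, i.e.\ $\phi^{\bG^\kappa}$ is $\wt{\mathcal{L}}$-Galois. Reading off the $\wt{\mathcal{L}}$-action $\alpha_A:LG(A)\xra{L(\delta_A)}LGG(A)\xra{\overline{\ve}_{G(A)}}G(A)$ from the explicit description of $\underline{K_\kappa}(A)$, and noting that the unit of $U^{\bG^\kappa}\dashv\phi^{\bG^\kappa}$ at $(A,\theta)$ is $\theta$ itself, the formula $t_{\oR}=\alpha F\cdot T\eta$ of \ref{T-mod} yields exactly the displayed expression for $t_{(A,\theta)}$ in (ii).

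To justify the alternative formulation via $\chi$, I would invoke the dual of \cite[Theorem~2.12]{MW1}, used analogously in the proof of Theorem~\ref{th.1}: once $\phi^{\bG^\kappa}$ is monadic, testing the natural transformation $t$ on all objects $(A,\theta)\in\A^{\bG^\kappa}$ is equivalent to testing it on the cofree objects $\phi^{\bG^\kappa}(A)=(G(A),\kappa_A\cdot\delta_A)$. Substituting $\theta=\kappa_A\cdot\delta_A$ in the formula for $t_{(A,\theta)}$ and simplifying by naturality produces precisely the composite $\chi$ displayed in the statement, giving the desired equivalence.

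The main obstacle is the bookkeeping in the two identifications of \ref{iso-R} and \ref{Azu-le}: one must confirm that the $\wt{\mathcal{L}}$-module structure on $\phi^{\bG^\kappa}$ obtained via the entwining $\omega$ constructed in \ref{Azu-le} really agrees with the one read directly from $\underline{K_\kappa}$, so that the abstract Galois morphism $t$ specialises to the explicit $\chi$ of the statement rather than to some variant involving $\kappa^{-1}$ or the inverse of the adjunction $\overline{\eta},\overline{\ve}:L\dashv G$. Once this compatibility is pinned down, everything else is a formal dualisation of the proof of Theorem~\ref{th.1}.
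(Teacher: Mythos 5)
Your proposal is correct and follows essentially the same route as the paper: the paper's proof of Theorem \ref{th.3} is literally ``apply the dual of Theorem \ref{th.1} to the last diagram,'' and your argument is exactly the unpacking of that dualisation — Proposition \ref{P.1.2} applied to the $\wt{\mathcal{L}}$-module functor $\phi^{\bG^\kappa}$ presented by $\underline{K_\kappa}$, followed by the dual of \cite[Theorem~2.12]{MW1} to reduce the Galois condition to its components on the cofree objects $\phi^{\bG^\kappa}(A)=(G(A),\kappa_A\cdot\delta_A)$, which yields $\chi$. The details you flag as needing verification (that the $\wt{\mathcal{L}}$-action is $\overline{\ve}_{G(A)}\cdot L(\delta_A)$ and that the unit of $U^{\bG^\kappa}\dashv\phi^{\bG^\kappa}$ at $(A,\theta)$ is $\theta$) do check out against the explicit formula for $\underline{K_\kappa}$ given in \ref{Azu-le}.
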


\begin{proposition} \label{cocentral} If $\A$ has coequalisers of reflexive pairs, then
$\underline{K_\kappa}:\A \to (\A^{\bG^\kappa})_{\wt{\mathcal{L}}}$ admits a left adjoint
functor $\underline{L}:(\A^{\bG^\kappa})_{\wt{\mathcal{L}}}\to \A$
whose value at $((A,\vartheta),h)\in (\A^{\bG^\kappa})_{\wt{\mathcal{L}}}$
is given as the coequaliser
$$\xymatrix{
    L(A) \ar@/^1pc/@{->}[rr]^{h} \ar[dr]_{L(\vartheta)}& & A\ar[rr]^-{ q_{((A,\vartheta), h)}} &&\underline{L}((A,\vartheta),h)\\
          &  LG (A) \ar[ru]_{\overline{\ve}_A}&&& .  } $$
\end{proposition}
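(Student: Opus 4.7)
The construction dualises \ref{right-adj}. Since $U^{\bG^\kappa} \dashv \phi^{\bG^\kappa}$, the functor $M := \phi^{\bG^\kappa}: \A \to \A^{\bG^\kappa}$ has a left adjoint $F := U^{\bG^\kappa}$, and $M$ is a $\wt{\mathcal{L}}$-module (the factorisation $\underline{K_\kappa} = \overline M$ witnesses this). Hence the situation is precisely dual to that of \ref{Gal} and \ref{right-adj}, and the dual construction produces a left adjoint $\underline L$ of $\underline{K_\kappa}$: for $((A, \vartheta), h) \in (\A^{\bG^\kappa})_{\wt{\mathcal{L}}}$, the value $\underline{L}((A, \vartheta), h)$ is the coequaliser in $\A$ of the parallel pair $h, \gamma_{(A, \vartheta)} : L(A) \to A$, where $\gamma := \overline{\ve}_{F} F \cdot F t_{\underline{K_\kappa}}$ is the composite $FT \to FMF \to F$ dual to the $\gamma$ in \ref{right-adj}, and $t_{\underline{K_\kappa}}: \wt{\mathcal{L}} \to \phi^{\bG^\kappa} U^{\bG^\kappa}$ is the monad morphism induced by the $\wt{\mathcal{L}}$-module structure on $M$.

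The next step is to verify that this $\gamma_{(A, \vartheta)}$ equals the map $\overline{\ve}_A \cdot L(\vartheta)$ displayed in the statement. Unfolding $t_{\underline{K_\kappa}, (A, \vartheta)}$ by means of the explicit $\wt{\mathcal{L}}$-action $h_A := \overline{\ve}_{G(A)} \cdot L(\delta_A) : LG(A) \to G(A)$ on the cofree object $\phi^{\bG^\kappa}(A)$ (read off from the description of $\underline{K_\kappa}(A)$), and noting that the counit of $U^{\bG^\kappa} \dashv \phi^{\bG^\kappa}$ is $\ve$, one finds $\gamma_{(A, \vartheta)} = \ve_A \cdot h_A \cdot L(\vartheta)$. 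The identity $\ve_A \cdot h_A = \overline{\ve}_A$ then yields the stated formula. This identity is the only non-routine computation: naturality of $\overline{\ve}$ applied to $\ve_A: G(A) \to A$ gives $\ve_A \cdot \overline{\ve}_{G(A)} = \overline{\ve}_A \cdot LG(\ve_A)$, after which the comonad counit law $G\ve \cdot \delta = 1_G$ collapses the remaining composite to $\overline{\ve}_A$.

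It remains to check that the pair $(h, \overline{\ve}_A \cdot L(\vartheta))$ is reflexive, so that its coequaliser exists by hypothesis on $\A$, and that the resulting $\underline L$ is left adjoint to $\underline{K_\kappa}$. The common section is $e_A: A \to L(A)$: indeed $h \cdot e_A = 1_A$ by the unit axiom for the $\wt{\mathcal{L}}$-action, and $\overline{\ve}_A \cdot L(\vartheta) \cdot e_A = \overline{\ve}_A \cdot e_{G(A)} \cdot \vartheta = \ve_A \cdot \vartheta = 1_A$ by naturality of $e$, the mate formula $\ve = \overline{\ve} \cdot eG$, and the counit axiom for $\vartheta \in \A^{\bG^\kappa}$. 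The adjunction bijection $\A(\underline L((A, \vartheta), h), A') \simeq (\A^{\bG^\kappa})_{\wt{\mathcal{L}}}(((A, \vartheta), h), \underline{K_\kappa}(A'))$ is then obtained by transporting the coequaliser condition on a map $f: A \to A'$ across $U^{\bG^\kappa} \dashv \phi^{\bG^\kappa}$ (under which $f$ corresponds to $G(f) \cdot \vartheta: (A, \vartheta) \to (G(A'), \delta^\kappa_{A'})$), reusing the same identity $\ve \cdot h_{A'} = \overline{\ve}_{A'}$ to convert the action-preservation condition into the coequaliser condition. Thus the main obstacle is the identity $\ve \cdot h_{A'} = \overline{\ve}_{A'}$; everything else is dualisation of \ref{right-adj} and standard adjunction bookkeeping.
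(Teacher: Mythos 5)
Your proof is correct and follows exactly the route the paper intends: Proposition \ref{cocentral} is stated as the dual of \ref{central}, whose proof identifies the second leg of the (co)equaliser pair via the dual of \ref{right-adj} and then simplifies $\gamma$ using naturality and a (co)unit law — precisely your computation $\gamma_{((A,\vartheta),h)}=\ve_A\cdot h_A\cdot L(\vartheta)=\overline{\ve}_A\cdot L(\vartheta)$. The only blemish is the notational slip ``$\overline{\ve}_F F$'' for the counit of $U^{\bG^\kappa}\dashv\phi^{\bG^\kappa}$ (which is $\ve$, as you correctly use afterwards); everything else, including the reflexivity check via the common section $e_A$, is sound.
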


\begin{thm}{\bf Definition.} \em
Write $G^G$ for the quotient functor of the functor $G$ determined by the coequaliser
of the diagram
$$\xymatrix{
    LG \ar@/^1pc/@{->}[rrrr]^{\overline{\varepsilon}G \cdot L\delta} \ar[drr]_{L\delta}&&&&G\\
          &&  LGG \ar[rru]_{\overline{\varepsilon}G \cdot L\kappa} && . } $$
We call the comonad $\bG$ \emph{cocentral} if $G^G$ is (isomorphic to) the identity functor.
\end{thm}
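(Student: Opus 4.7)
This final statement is a definition rather than a theorem, so strictly speaking there is nothing to prove. Still, to see that the definition is well-posed one must check that the displayed coequaliser exists and that $A \mapsto G^G(A)$ assembles into a functor; my plan is to carry out these two checks, placing the definition in the framework of the preceding Proposition~\ref{cocentral}.

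First I would show that the parallel pair $f_1 = \overline{\varepsilon}G \cdot L\delta$ and $f_2 = \overline{\varepsilon}G \cdot L\kappa \cdot L\delta$ from $LG$ to $G$ is reflexive, with common section $s = eG : G \to LG$, where $e:1 \to L$ is the unit of the monad $\mathcal{L}$. The key input is that, under the adjunction $L\dashv G$ inducing $\bG$ from $\mathcal{L}$ via \ref{mod-comod}, the comonad counit $\varepsilon$ is the mate of $e$, so that $\overline{\varepsilon}\cdot eG = \varepsilon$ as natural transformations $G \to 1$. By naturality of $e$, $f_1 \cdot s$ evaluated at $A$ rewrites as $\overline{\varepsilon}_{GA}\cdot e_{GGA}\cdot \delta_A = \varepsilon_{GA}\cdot \delta_A$, which is $1_{GA}$ by the comonad counit law. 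For $f_2$, the same naturality of $e$ together with the BD-law compatibility $\varepsilon G \cdot \kappa = G\varepsilon$ (from the diagrams in \ref{co-DL}) reduces $f_2 \cdot s$ at $A$ to $G\varepsilon_A \cdot \delta_A = 1_{GA}$ by the other counit law. Reflexivity then guarantees, under the standing hypothesis of Proposition~\ref{cocentral} that $\A$ has coequalisers of reflexive pairs, that the coequaliser defining $G^G(A)$ exists objectwise and hence in the functor category.

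Second, I would upgrade $G^G$ to a functor by the usual universal-property argument: for $f:A \to B$ the composite $G(A) \to G(B) \to G^G(B)$ coequalises the pair at $A$, because each leg of the pair is natural in $A$, so it factors uniquely through $G(A) \to G^G(A)$ to yield $G^G(f)$. Functoriality in $f$, and the fact that the universal maps assemble into a natural epimorphism $G \twoheadrightarrow G^G$, then follow from uniqueness of factorisation. The only mildly substantive step is the reflexivity verification in the first paragraph; everything else is routine bookkeeping. Once both points are settled, the cocentrality condition ``$G^G \cong 1$'' is an unambiguous property of the data $(\bG,\kappa,\mathcal{L})$, dual to the notion of centrality introduced after Theorem~\ref{th.2}.
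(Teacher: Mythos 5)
This statement is a definition, so the paper offers no proof to compare against; your well-posedness checks are correct and consistent with the paper's implicit framework, since the coequaliser you describe is exactly the value $\underline{L}\,\underline{K_\kappa}(A)$ of Proposition~\ref{cocentral} at the object $\underline{K_\kappa}(A)=((G(A),\delta^\kappa_A),\overline{\ve}_{G(A)}\cdot L(\delta_A))$, whose existence that proposition already guarantees. Your verification that $eG$ is a common section (using $\overline{\ve}\cdot eG=\ve$ and $\ve G\cdot\kappa=G\ve$) is the right computation and is the same reflexivity that Proposition~\ref{cocentral} relies on.
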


\begin{theorem}\label{th.2.dual} Assume $\A$ to admit coequalisers of reflexive pairs.
Let $\bG=(G, \delta, \ve)$ be a comonad on  $\A$, $\kappa: GG \to GG$ an
invertible comonad BD-law,
and $\bL$ a monad left adjoint to $\bG$.
Then the comparison functor $\underline{K_\kappa}:\A \to (\A^{\bG^\kappa})_{\wt{\mathcal{L}}}$ is
\begin{rlist}
 \item full and faithful if and only if the comonad $\bG$ is cocentral;

\item an equivalence of categories
      if and only if the comonad $\bG$ is cocentral and the functor $\underline{L}$ is conservative.
\end{rlist}
\end{theorem}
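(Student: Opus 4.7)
The plan is to mirror, on the comonadic side, the argument used for Theorem \ref{th.2}, with the roles of unit and counit interchanged. Since Proposition \ref{cocentral} provides $\underline{L}$ as a \emph{left} adjoint of $\underline{K_\kappa}$, the right adjoint $\underline{K_\kappa}$ is fully faithful precisely when the counit $\underline{\ve}:\underline{L}\,\underline{K_\kappa}\to 1$ is an isomorphism; so the task for part (i) reduces to computing this counit and matching it against cocentrality.

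First I would compute $\underline{L}\,\underline{K_\kappa}(A)$ for arbitrary $A\in\A$. Using the description $\underline{K_\kappa}(A)=((G(A),\delta^\kappa_A),\;\overline{\ve}_{G(A)}\cdot L(\delta_A))$ from \ref{Azu-le}, the coequaliser formula of Proposition \ref{cocentral} presents $\underline{L}\,\underline{K_\kappa}(A)$ as the coequaliser of the parallel pair $LG(A)\rightrightarrows G(A)$ whose two arrows are $\overline{\ve}_{G(A)}\cdot L(\delta_A)$ (the $\wt{\mathcal{L}}$-structure map $h$) and $\overline{\ve}_{G(A)}\cdot L(\delta^\kappa_A)=\overline{\ve}_{G(A)}\cdot L(\kappa_A)\cdot L(\delta_A)$ (coming from the $\bG^\kappa$-coaction $\vartheta=\delta^\kappa_A=\kappa_A\cdot\delta_A$). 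This is exactly the diagram defining $G^G(A)$, whence a natural isomorphism $\underline{L}\,\underline{K_\kappa}\simeq G^G$. Assertion (i) is then immediate: the right adjoint $\underline{K_\kappa}$ is fully faithful iff $\underline{\ve}$ is an isomorphism iff $G^G\cong 1$, i.e., iff $\bG$ is cocentral.

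For (ii), I would argue purely formally. If $\bG$ is cocentral, then (i) gives that the counit $\underline{\ve}$ is an isomorphism. The triangular identity $\underline{\ve}\,\underline{L}\cdot\underline{L}\,\underline{\eta}=1_{\underline{L}}$ then forces $\underline{L}\,\underline{\eta}$ to be an isomorphism, and conservativity of $\underline{L}$ lifts this to the unit $\underline{\eta}:1\to\underline{K_\kappa}\,\underline{L}$ itself being an isomorphism; hence $\underline{K_\kappa}$ is an equivalence. Conversely, if $\underline{K_\kappa}$ is an equivalence, its quasi-inverse is (up to natural isomorphism) $\underline{L}$, which is therefore an equivalence and in particular conservative, while cocentrality follows from (i).

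The only non-formal step is the identification $\underline{L}\,\underline{K_\kappa}\simeq G^G$; the main bookkeeping obstacle is to correctly unwind the isomorphism $(\A^{\bG^\kappa})^{\wt{\bG}_\kappa}\simeq(\A^{\bG^\kappa})_{\wt{\mathcal{L}}}$ of \ref{Azu-le} so that the two parallel maps fed into the coequaliser of Proposition \ref{cocentral} are recognised as exactly the pair defining $G^G$, using $L\delta^\kappa=L\kappa\cdot L\delta$. Once that natural isomorphism is in hand, the rest is pure adjunction yoga and is formally dual to the proof of Theorem \ref{th.2}.
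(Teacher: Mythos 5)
Your proposal is correct and is exactly the argument the paper intends: Theorem \ref{th.2.dual} is stated without proof as the formal dual of Theorem \ref{th.2}, and your dualisation — identifying $\underline{L}\,\underline{K_\kappa}$ with $G^G$ via the coequaliser of Proposition \ref{cocentral} for part (i), then the triangular-identity-plus-conservativity argument for part (ii) — mirrors the paper's proof of \ref{th.2} step for step.
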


The next observation shows the transfer of the Galois property to an adjoint functor.

\begin{proposition}\label{Galois-Galois} Assume $\bF=(F, m, e)$ to be a monad on $\A$
with invertible BD-law $\la :FF \to FF$, and $\ol\eta,\ol\ve: F\dashv R$ an adjunction
inducing a comonad $\bR=(R,\delta,\ve)$ with invertible BD-law
$\kappa: R R\to R R$ (see Proposition \ref{mon-comon}).
Then the functor $\phi_{\bF^\la}$ is $\widehat{\bR}$-Galois if and only if
the functor $\phi^{\bR^\kappa}$ is $\wt{\mathcal{F}}$-Galois.
\end{proposition}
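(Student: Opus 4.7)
My plan is to translate both Galois conditions into invertibility of explicit natural transformations via the characterisations already proved, and then recognise those natural transformations as mates of each other under the adjunction $F\dashv R$, so that invertibility of one forces that of the other.

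First, by Theorem~\ref{th.1}(b)(ii), $\phi_{\bF^\la}$ is $\widehat{\bR}$-Galois if and only if
$$\chi:FF\xra{\ol\eta FF}RFFF\xra{RmF}RFF\xra{R\la}RFF\xra{Rm}RF$$
is an isomorphism, while by Theorem~\ref{th.3}(b)(ii) applied with $\bG=\bR$, comonad BD-law $\kappa$, and left adjoint monad $\mathcal{F}=\bF$, $\phi^{\bR^\kappa}$ is $\wt{\mathcal{F}}$-Galois if and only if
$$\chi':FR\xra{F\delta}FRR\xra{F\kappa}FRR\xra{F\delta R}FRRR\xra{\ol\ve RR}RR$$
is an isomorphism. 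Hence it suffices to prove that $\chi$ is iso if and only if $\chi'$ is iso.

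Second, form the natural transformation
$$\chi^{*}\;:=\;RR\ol\ve\cdot R\chi R\cdot \ol\eta FR\;:\;FR\lra RR.$$
This $\chi^{*}$ is the iterated mate of $\chi$ under $F\dashv R$: first bend the leftmost $F$ in the source of $\chi$ via the adjunction to get $R\chi\cdot\ol\eta F:F\to RRF$, then bend the rightmost $F$ of the target of that transformation via the post-composition adjunction induced by $F\dashv R$. Each step is a mate bijection of natural transformation hom-sets, and mate bijections both preserve and reflect isomorphisms, so $\chi$ is iso if and only if $\chi^{*}$ is. The remaining task is a diagram chase showing $\chi^{*}=\chi'$; this uses only the naturality of $\ol\eta$ and $\ol\ve$, the triangle identities $\ol\ve F\cdot F\ol\eta=1_{F}$ and $R\ol\ve\cdot\ol\eta R=1_{R}$, the mate identifications $m\dashv\delta$ and $\ve\dashv e$ recorded in \ref{mod-comod}, and the fact from Proposition~\ref{mon-comon} that $\kappa$ is obtained from $\la$ by the same mate construction; substituting the resulting expressions for $\delta$ and $\kappa$ into $\chi'$ and collapsing should reproduce $\chi^{*}$.

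The principal obstacle is the bookkeeping in the identification $\chi^{*}=\chi'$: the occurrences of $\ol\eta,\ol\ve,m,\la$ on the one side have to be matched against those of $\delta,\kappa,\ol\ve$ on the other by repeated naturality swaps and triangle collapses. A more structural route that bypasses this explicit calculation would be to chain together the isomorphisms $\bP_\la$ of \ref{iso-P}, $\Psi_{\bF^\la}$ of \ref{Azu-ra}, its dual, and $\bQ_\kappa$ of \ref{iso-R} to produce an equivalence $(\A_{\bF^\la})^{\widehat{\bR}}\simeq(\A^{\bR^\kappa})_{\wt{\mathcal{F}}}$ under which the comparison functors $\underline K$ and $\underline{K_\kappa}$ correspond; the two Galois conditions, being intrinsic properties of these comparison functors, would then coincide automatically.
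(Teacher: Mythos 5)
Your reduction to the invertibility of the natural transformations $\chi$ and $\chi'$ is legitimate, but the central step of your argument --- the assertion that ``mate bijections both preserve and reflect isomorphisms'' --- is false, and this is a genuine gap. The adjunction bijection $\mathrm{Nat}(FX,Y)\cong\mathrm{Nat}(X,RY)$ sends $1_{FX}$ to $\ol\eta X$, which is an isomorphism only in degenerate situations. Concretely, the intermediate transformation in your two-step construction, $R\chi\cdot\ol\eta F\colon F\to RRF$, cannot be invertible for nontrivial $F$ (already for $F=A\ot_k-$ on vector spaces with $\dim A>1$ the two sides have different dimensions at each component), so the chain ``$\chi$ iso $\Leftrightarrow$ first mate iso $\Leftrightarrow$ $\chi^{*}$ iso'' breaks at the first link. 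Nor does the composite correspondence $\mathrm{Nat}(FF,RF)\cong\mathrm{Nat}(FR,RR)$ reflect or preserve isomorphisms in general: for $F=A\ot_k-$, $R=A^{*}\ot_k-$ with $\dim A=2$, the map $c\colon A\ot A\to A^{*}\ot A$, $e_i\ot e_j\mapsto e_i^{*}\ot e_j$, is invertible while its double mate $A\ot A^{*}\to A^{*}\ot A^{*}$ has rank one. So invertibility simply does not transport along your $\chi\mapsto\chi^{*}$ correspondence by abstract mate yoga; any correct argument must exploit the specific form of $\chi$.

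The paper's proof avoids this trap by working with the pointwise form of the two Galois conditions from Theorems \ref{th.1} and \ref{th.3}. Using the mate description $\delta=RR\ol\ve\cdot RRmR\cdot R\ol\eta FR\cdot\ol\eta R$ and a diagram chase, it shows that for every $(A,\theta)\in\A^{\bR^\kappa}$ the morphism $t_{(A,\theta)}$ coincides with $t_{\Psi^{-1}(A,\theta)}$, where $\Psi^{-1}(A,\theta)=(A,\ol\ve_A\cdot F(\theta))$ is the corresponding object of $\A_{\bF^\la}$; since $\Psi$ is an isomorphism of categories, ``$t$ invertible on all objects'' transfers in both directions. Note that this genuinely requires the ``all (co)modules'' version of the Galois condition rather than the $\chi$-version on (co)free objects, because $\Psi^{-1}$ of a cofree $\bR^\kappa$-comodule is not a free $\bF^\la$-module --- which is another reason your attempt to argue purely at the level of $\chi$ and $\chi'$ cannot be made to work directly. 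Your closing ``more structural route'' of transporting the comparison functors along the chain of category isomorphisms is essentially the paper's argument, but it still requires the componentwise identification of the two canonical (co)monad morphisms under that chain, which is precisely the diagram chase you were hoping to bypass.
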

\begin{proof}
By Theorems \ref{th.1} and \ref{th.3}, we have to show that,
for any $(A,h) \in \A_{\bF^\la}$, the composite
$$t_{(A,h)}:F(A) \xra{\overline{\eta}_{F(A)}}\dF  FF(A) \xra{\dF  (m_A)}\dF  F(A)
\xra{\dF  (h)}\dF  (A)$$ is an isomorphism if and only if,
 for any $(A,\theta) \in \A^{\bR^\kappa}$,
this is so for the composite
$$t_{(A,\theta)}:F(A) \xra{F(\theta)} FR(A) \xra{F(\delta_A)} FRR(A)
\xra{\overline{\ve}_{R(A)}} R(A).$$
By symmetry, it suffices to prove one implication. So suppose that the functor
$\phi_{\bF^\la}$ is $\wt{\bR}$-Galois.
Since $m \dashv \delta$, $\delta$ is the composite
$$R \xra{\overline{\eta}R} RFR \xra{R\overline{\eta}FR } RRFFR
\xra{RRmR} RRFR \xra{RR\overline{\ve}} RR.$$
Considering the diagram
$$
\xymatrix{FR(A) \ar@{=}[dr]\ar[r]^-{F\overline{\eta}_{R(A)}} &
FRFR(A)  \ar[rr]^-{FR\overline{\eta}_{FR(A)} } \ar[d]^{\overline{\ve}_{FR(A)}}&&
FR^2F^2R(A)  \ar[rr]^-{FR^2m_{R(A)}} \ar[d]^{\overline{\ve}_{RF^2R(A)}}&&
FR^2FR(A) \ar[r]^-{FR^2\overline{\ve}_A} \ar[d]^{\overline{\ve}_{RFR(A)}} &
FR^2(A) \ar[d]^{\overline{\ve}_{R(A)}}\\
F(A) \ar[dr]_{\overline{\eta}_{F(A)}}\ar[u]^{F(\theta)}&
FR(A) \ar[rr]^{\overline{\eta}_{FR(A)}}&&RF^2R(A) \ar[rr]^{Rm_{R(A)}}&&
RFR(A) \ar[r]^{R\overline{\ve}_A}& R(A)\\
& RF^2(A) \ar[rru]_{RF^2(\theta)} \ar[rr]_{Rm_A}&& RF(A) \ar[rru]_{RF(\theta)}}
$$
in which the top left triangle commutes by one of the triangular identities
for $F \dashv R$ and the other partial diagrams commute by naturality, one sees that
$t_{(A,\theta)}$
is the composite
$$F(A) \xra{\overline{\eta}_{F(A)}} RFF(A) \xra{Rm_A} RF(A)
\xra{RF(\theta)}RFR(A) \xra{R\,\overline{\ve}_A} R(A).$$
Since $(A,\theta) \in \A^{\bR^\kappa}$, the pair $(A, F(A) \xra{F(\theta)}FR(A) \xra{\overline{\ve}_A} A)$ --
being $\Psi^{-1}(A, \theta)$ (see \ref{mod-comod}) --
is an object of the category
$\A_{\bF^\la}$. It then follows that $t_{(A,\theta)}=t_{(A,\overline{\ve}_A \cdot F(\theta))}$.
Since the functor $\phi_{\bF^\la}$ is assumed to be $\wt{\bR}$-Galois,
the morphism $t_{(A,\overline{\ve}_A \cdot F(\theta))}$,
and hence also $t_{(A,\theta)}$, is an isomorphism, as desired.
\end{proof}

In view of the properties of separable functors (see \ref{adj-sep}) and
Definition \ref{def-azu},
for an Azumaya monad $\bF$, $\bF \bF^\la$ is a separable monad
if and only if $F$ is a separable functor.
In this case $\phi_{\bF^\la}$ is also a separable functor, that is, the unit $e:1\to F$
splits.

Dually, for an Azumaya comonad $\bR$, $\bR \bR^\kappa$ is separable
if and only if the functor $R$ is separable. Thus we have:

\begin{theorem}\label{Azu-mon-comon} Under the conditions of Proposition \ref{Galois-Galois},
suppose further that  $\A$ is a Cauchy complete category. Then the following are equivalent:
\begin{blist}
\item $(\bF,\la)$ is an Azumaya monad and $\bF\bF^\la$ is a separable monad;
\item $(\bF,\la)$ is an Azumaya monad and the unit $e:1\to F$ is a split monomorphism;
\item $\phi_{\bF^\la}$ is $\widehat{\bR}$-Galois and $e:1\to F$ is a split monomorphism;
\item $(\bR,\kappa)$ is an Azumaya comonad and the counit $\ve:R\to 1$ is a split epimorphism;
\item $\phi^{\bR^\kappa}$ is $\wt{\mathcal{F}}$-Galois and $\ve:R\to 1$ is a split epimorphism.
\item $\phi^{\bR^\kappa}$ is $\wt{\mathcal{F}}$-Galois and $\bR\bR^\kappa$ is a separable
   comonad.
\end{blist}
\end{theorem}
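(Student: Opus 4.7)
The plan is to split the six conditions into two parallel triples, $(a)$--$(c)$ on the monad side and $(d)$--$(f)$ on the comonad side, and to connect them by a single bridge $(c)\Leftrightarrow(e)$ supplied by Proposition \ref{Galois-Galois}. For $(a)\Leftrightarrow(b)$ I would use the remarks immediately preceding the statement of the theorem. Once $(\bF,\la)$ is assumed Azumaya, separability of $\bF\bF^\la$ is equivalent to separability of $F$. Writing $F=U_{\bF^\la}\phi_{\bF^\la}$, the rule \ref{sep-func}(ii) transfers separability to $\phi_{\bF^\la}$, which by \ref{adj-sep}(1) is equivalent to $e:1\to F$ being a split monomorphism; conversely, a natural retraction of $e$ yields separability of the composite $U_{\bF^\la}\phi_{\bF^\la}=F$ via Proposition \ref{rafael}(ii), and the quoted equivalence then restores separability of $\bF\bF^\la$.

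For $(b)\Leftrightarrow(c)$ I would invoke Theorem \ref{th.1}: $(\bF,\la)$ is Azumaya if and only if $\phi_{\bF^\la}$ is comonadic and $\widehat{\bR}$-Galois. Thus $(b)\Rightarrow(c)$ is trivial, while $(c)\Rightarrow(b)$ amounts to deducing comonadicity of $\phi_{\bF^\la}$ from the splitting of $e$. Since $\A$ is Cauchy complete and a split unit already produces a separable $\phi_{\bF^\la}$ through \ref{adj-sep}(1), this step follows from a theorem of Mesablishvili (\cite{Me}) asserting that a separable $\phi_\bT$ on an idempotent-complete category is automatically comonadic. The chain $(d)\Leftrightarrow(e)\Leftrightarrow(f)$ is then obtained by verbatim dualisation, substituting Theorem \ref{th.3} for Theorem \ref{th.1} and replacing the separability statements for $\bF\bF^\la$, $F$ and $e$ by those for $\bR\bR^\kappa$, $R$ and $\ve$, with the dual of Mesablishvili's result handling comonadicity of $\phi^{\bR^\kappa}$.

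The bridge $(c)\Leftrightarrow(e)$ then has two components. Proposition \ref{Galois-Galois} matches the two Galois hypotheses on $\phi_{\bF^\la}$ and $\phi^{\bR^\kappa}$ directly; and, because $\ve$ is the mate of $e$ under $F\dashv R$ (see \ref{mod-comod}), a natural retraction of $e:1\to F$ transports under the mate bijection to a natural section of $\ve:R\to 1$ and back, absolute splittings being preserved by the mate correspondence. The principal obstacle in this plan is precisely the step $(c)\Rightarrow(b)$: upgrading a separable $\phi_{\bF^\la}$ to a comonadic one is the only place where Cauchy completeness of $\A$ plays an essential role, and it must be imported from Mesablishvili's characterisation rather than derived formally from the earlier machinery recalled in the paper.
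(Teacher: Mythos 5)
Your proposal is correct and follows essentially the same route as the paper: the chain (a)$\Rightarrow$(b)$\Rightarrow$(c) via the remarks on separability preceding the theorem, the use of Cauchy completeness together with Mesablishvili's result (the paper cites \cite[Corollary 3.17]{Me}) to recover comonadicity of $\phi_{\bF^\la}$ from the splitting of $e$, the mate correspondence between $e$ and $\ve$, and Proposition \ref{Galois-Galois} with Theorems \ref{th.1} and \ref{th.3} to bridge the monad and comonad sides. No substantive differences.
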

\begin{proof}
(a)$\Ra$(b)$\Ra$(c) follow by the preceding remarks.

(c)$\Ra$(a) Since $\A$ is assumed to be Cauchy complete, by \cite[Corollary 3.17]{Me},
the splitting of $e$ implies that the functor $\phi_{\bF^\la}$ is comonadic.
Now the assertion follows by Theorem \ref{th.1}.

Since $\ve$ is the mate of $e$,
$\ve$ is a split epimorphism if and only if $e$ is a split monomorphism
(e.g. \cite[7.4]{MW})
and the splitting of $\ve$ implies that the functor $\phi^{\bR^\kappa}$ is monadic.

Applying now Theorems \ref{th.1}, \ref{th.3} and Proposition \ref{Galois-Galois} gives the desired
result.
\end{proof}

\section{Azumaya algebras in  braided monoidal categories}\label{azu-braid}

\begin{thm}{\bf Algebras and modules in monoidal categories.} \em
Let $(\V, \ot, I, \tau)$ be a strict monoidal category (\cite{Mc}).
An \emph{algebra} $\mathcal{A}=(A, m, e)$ in  $\V$ (or $\V$-\emph{algebra})
 consists of an object $A$ of $\V$ endowed with  multiplication $m:
A \otimes A \to A$  and unit morphism $e : I \to A$ subject to the usual identity
and associative conditions.

For a $\V$-algebra $\mathcal{A}$, a \emph{left} $\mathcal{A}$-\emph{module} is a
pair $(V, \rho_V)$, where $V$ is
an object of $\V$ and $\rho_V: A \ot V \to V$ is a morphism in $\V$,
called the \emph{left action}
(or $\mathcal{A}$-\emph{left action}) on $V$, such that
$\rho_V(m \otimes V)=\rho_V(A \ot \rho_V)$ and $\rho_V (e \ot V)=1$.

Left $\mathcal{A}$-modules are objects of a category ${_{\mathcal{A}}\V}$ whose
 morphisms between objects
$f:(V,\rho_V)\to (W,\rho_W)$ are morphism $f: V \to W$ in $\V$
such that $\rho_W (A \ot f)=f\rho_V$.
Similarly, one has the category $\V_{\mathcal{A}}$ of right $\mathcal{A}$-modules.

The forgetful functor ${_{\mathcal A}U} \colon {_{\mathcal A}\V} \to \V $,
  taking a left $\mathcal{A}$-module
$(V,\rho_V)$ to the object $V$, has a left adjoint, the {\em free ${\mathcal A}$-module functor}
$$\phi_A:\V \to _{\mathcal A}\!\!\V,\quad V \;\mapsto \;
(A \ot V, m_A \ot V).$$

There is another way of representing the category of left $\mathcal{A}$-modules
involving modules over the monad associated to the $\V$-algebra $\mathcal{A}$.

Any $\V$-algebra $\mathcal{A}=(A, m, e)$
defines a monad $\mathcal{A}_l=(T, \eta, \mu)$ on $\V$ by putting

\begin{itemize}
\item $\emph{T}(V)=A\otimes V$,
\item$\eta_V =e\otimes V : V \to A \otimes V$,
\item $\mu_V=m \otimes V : A\otimes A \otimes
V \to A\otimes V $.
\end{itemize}

The corresponding Eilenberg-Moore category
$\V_{\mathcal{A}_l}$ of $\mathcal{A}_l$-modules is exactly the category
${_\mathcal{A}\V }$ of left $\mathcal{A}$-modules, and
${_{\mathcal A}U} \dashv F$ is the familiar forgetful-free adjunction between
$\V_{\mathcal{A}_l}$ and $\V$. This gives in particular that the forgetful functor
${_{\mathcal A}U} \colon _{\mathcal A}\V \to \V $ is
monadic. Hence the functor $_{\mathcal A}U$ creates those limits that exist in $\V$.

Symmetrically, writing $\mathcal{A}_r$ for the monad on $\V$ whose functor part is
$-\ot A$,
the category $\V_\mathcal{A} $ is isomorphic to the  Eilenberg-Moore category
$\V_{\mathcal{A}_r}$ of $\mathcal{A}_r$-modules,
and the forgetful functor $U_{\mathcal A} \colon \V_{\mathcal A} \to \V $
is monadic and creates those limits that exist in $\V$.

If $\V$ admits coequalisers, $\mathcal{A}$ is a $\V$-algebra, $(V,\varrho_V) \in \V_{\mathcal{A}}$ a right
$\mathcal{A}$-module, and $(W,\rho_W) \in {_{\mathcal{A}}\V}$ a left $\mathcal{A}$-module, then their
\emph{tensor product} (\emph{over} $\mathcal{A}$) is the object part of the coequaliser
$${\xymatrix{ V \ot A \ot W \ar@{->}@<0.5ex>[rr]^-{\varrho_V \ot W} \ar@
{->}@<-0.5ex> [rr]_-{V \ot \rho_W}&& V \ot W \ar[r]& V \ot_A W.}}$$
\end{thm}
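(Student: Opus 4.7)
The claim to justify is that, given a $\V$-algebra $\mathcal{A} = (A, m, e)$ and the associated monad $\mathcal{A}_l = (T, \eta, \mu)$ with $T = A\ot -$, $\eta = e\ot -$, $\mu = m\ot -$, the category ${_{\mathcal{A}}\V}$ of left $\mathcal{A}$-modules coincides with the Eilenberg--Moore category $\V_{\mathcal{A}_l}$, that the forgetful functor ${_{\mathcal{A}}U}$ is monadic, and that it creates limits that exist in $\V$ (together with the symmetric statements for $\V_\mathcal{A}$ and $\mathcal{A}_r = -\ot A$). The strategy is a direct definition-chase that exhibits an isomorphism of categories commuting with the forgetful functors, after which monadicity and creation of limits are transported from the standard Eilenberg--Moore side.

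First I would identify the object data. An $\mathcal{A}_l$-module is a pair $(V, h\colon TV \to V)$ with $h\cdot \eta_V = 1_V$ and $h\cdot \mu_V = h\cdot T(h)$. Substituting the definitions of $T$, $\eta$, $\mu$, these axioms read $h\cdot(e\ot V) = 1_V$ and $h\cdot(m\ot V) = h\cdot(A\ot h)$, which is exactly the defining data of a left $\mathcal{A}$-module $(V, \rho_V := h)$. Second, I would identify morphisms: $f\colon (V,h)\to (W,h')$ is a $\V_{\mathcal{A}_l}$-morphism iff $f\cdot h = h'\cdot T(f) = h'\cdot (A\ot f)$, which coincides with the condition $f\cdot \rho_V = \rho_W\cdot(A\ot f)$ for morphisms of left $\mathcal{A}$-modules. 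Composition and identities are inherited from $\V$ on both sides. This gives an identity-on-underlying-objects isomorphism of categories $\Phi\colon {_{\mathcal{A}}\V}\to \V_{\mathcal{A}_l}$ with $U_{\mathcal{A}_l}\cdot \Phi = {_{\mathcal{A}}U}$, and the free functor $\phi_{\mathcal{A}_l}$ transports across $\Phi^{-1}$ to $\phi_{\mathcal{A}}(V) = (A\ot V, m\ot V)$, confirming that ${_{\mathcal{A}}U}\dashv \phi_{\mathcal{A}}$ is the forgetful--free adjunction.

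Finally, since $U_{\mathcal{A}_l}$ is monadic by the defining property of the Eilenberg--Moore construction, the commuting triangle above forces ${_{\mathcal{A}}U}$ to be monadic as well: the comparison functor $K_{_{\mathcal{A}}U}\colon {_{\mathcal{A}}\V}\to \V_{(T,\mu,\eta)}$ is canonically isomorphic to $\Phi$, hence an equivalence. Creation of those limits that exist in $\V$ is then a standard general fact for $U_{\mathcal{A}_l}$ (any monadic functor creates whatever limits its codomain possesses, by lifting limit cones pointwise and using the module axioms to equip them with a unique action), and it transports along $\Phi$ to ${_{\mathcal{A}}U}$. The symmetric assertions for right modules and $\mathcal{A}_r$ follow by the same argument applied to the monad with functor part $-\ot A$. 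No step presents a real obstacle; the only care required is the routine verification that the associativity and unit squares of an EM-algebra match the left-module axioms under the substitutions for $T$, $\eta$, $\mu$.
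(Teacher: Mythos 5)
Your proposal is correct and is exactly the routine definition-chase the paper takes for granted: the statement appears in the preliminaries without proof precisely because the identification of $\V_{\mathcal{A}_l}$ with ${_{\mathcal{A}}\V}$ is a matter of unwinding the Eilenberg--Moore axioms for $T=A\ot-$, after which monadicity and creation of limits for ${_{\mathcal{A}}U}$ follow from the standard properties of Eilenberg--Moore forgetful functors transported along the isomorphism of categories. The only point worth flagging is that creation of limits (in the strict sense) is a property of the Eilenberg--Moore forgetful functor itself rather than of an arbitrary monadic functor, but since your comparison $\Phi$ is an isomorphism of categories rather than a mere equivalence, the transport is legitimate.
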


\begin{thm}{\bf Bimodules.} \em
If $\bA$ and $\bB$ are $\V$-algebras, an object $V$ in $\V$ is called an
$(\mathcal{A},\mathcal{B})$-\emph{bimodule} if there are morphisms
$\rho_V: A \ot V \to V$ and $\varrho_V: V \ot B \to V$ in $\V$ such that
$(V, \rho_V) \in {_{\mathcal{A}}\V}$, $(V, \varrho_V) \in \V_{\mathcal{B}}$ and
$\varrho_V(\rho_V\ot B)=\rho_V (A \ot \varrho_V)$.
A morphism of $(\mathcal{A},\mathcal{B})$-bimodules is a morphism in $\V$ which is a
morphism of  left $\mathcal{A}$-modules as well as of right
$\mathcal{B}$-modules. Write ${_{\mathcal{A}}\V}_{\mathcal{B}}$ for
the corresponding category.

Let $\mathcal{I}$ be the trivial $\V$-algebra $(I, 1_I: I=I
\ot I \to I, 1_I : I \to I)$. Then, $_{\mathcal{I}}\V=\V_{\mathcal{I}}=\V$, and for any
$\V$-algebra
$\mathcal{A}$, the category ${_{\mathcal{A}}\V}_{\mathcal{I}}$ is (isomorphic to)
the category of left $\mathcal{A}$-modules
$_A\V$, while the category ${_{\mathcal{I}}\V}_{\mathcal{A}}$ is (isomorphic to)
the category of right $\mathcal{A}$-modules
$\V_A$. In particular, ${_{\mathcal{I}}\V}_{\mathcal{I}}=\V$.
\end{thm}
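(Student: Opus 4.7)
The plan is to unpack the definitions, relying crucially on strictness of $(\V,\ot,I,\tau)$. The key observation is that in a strict monoidal category one has $I\ot V = V = V\ot I$ on the nose, and the structure morphisms of the trivial algebra $\mathcal{I}$ are both $1_I$, so any $\mathcal{I}$-action is forced to be the identity.

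First I would verify that $_{\mathcal{I}}\V = \V$. An object is a pair $(V,\rho_V)$ with $\rho_V\colon I\ot V\to V$, i.e.\ an endomorphism of $V$ by strictness, subject to $\rho_V\cdot(e\ot V)=1_V$ and $\rho_V\cdot(m\ot V)=\rho_V\cdot(I\ot\rho_V)$. Since $e=1_I$ and $e\ot V=1_V$, the unit axiom forces $\rho_V=1_V$, after which the associativity axiom collapses to the tautology $1_V=1_V$. Any morphism $f\colon V\to W$ in $\V$ is then automatically $\mathcal{I}$-linear because $1_W\cdot(I\ot f)=f=f\cdot 1_V$. Hence the assignment $V\mapsto (V,1_V)$ is inverse on the nose to the forgetful functor $_{\mathcal{I}}\V\to\V$, giving an isomorphism of categories (in fact an equality once one identifies the data). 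The argument for $\V_{\mathcal{I}}=\V$ is symmetric.

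Next, for an arbitrary $\V$-algebra $\mathcal{A}$, I would show $_{\mathcal{A}}\V_{\mathcal{I}}\cong {_{\mathcal{A}}}\V$. An object is a triple $(V,\rho_V,\varrho_V)$ with $(V,\rho_V)\in{_{\mathcal{A}}}\V$, $(V,\varrho_V)\in\V_{\mathcal{I}}$, and the compatibility $\varrho_V\cdot(\rho_V\ot I)=\rho_V\cdot(A\ot\varrho_V)$. By the previous paragraph $\varrho_V=1_V$, and by strictness $\rho_V\ot I=\rho_V$ and $A\ot\varrho_V=1_{A\ot V}$, so the compatibility reduces to $\rho_V=\rho_V$. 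Thus $(V,\rho_V,\varrho_V)\mapsto(V,\rho_V)$ and $(V,\rho_V)\mapsto(V,\rho_V,1_V)$ are mutually inverse functors; the matching of morphisms is immediate since the right $\mathcal{I}$-linearity condition on any $f\colon V\to W$ is vacuous. Symmetrically $_{\mathcal{I}}\V_{\mathcal{A}}\cong\V_{\mathcal{A}}$, and the special case $\mathcal{A}=\mathcal{I}$ yields $_{\mathcal{I}}\V_{\mathcal{I}}=\V$. There is no real obstacle here: the whole statement is a direct consequence of the fact that the monoidal unit is a true unit in a strict monoidal category, so the trivial algebra admits only the trivial action; the only point worth writing out carefully is the bimodule compatibility, which is trivialised by strictness.
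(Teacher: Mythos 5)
Your verification is correct and is exactly the (implicit) reasoning behind the paper's statement, which is given without proof as an immediate consequence of strictness: the unit axiom forces any $\mathcal{I}$-action to be the identity, which trivialises the bimodule compatibility. Nothing further is needed.
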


\begin{thm}{\bf The monoidal category of bimodules.} \em Suppose now that $\V$
admits coequalisers and that the tensor product preserves these coequaliser in
both variables (i.e. all functors $V \ot -:\V \to \V$
as well as $-\ot V:\V \to \V$ for $V\in \V$ preserve coequalisers).
The last condition guarantees
that if $\mathcal{A}$, $\mathcal{B}$ and $\mathcal{C}$ are $\V$-algebras and if
$M \in {_{\mathcal{A}}\V}_{\mathcal{B}}$ and
$N \in {_{\mathcal{B}}\V}_{\mathcal{C}}$, then
\begin{itemize}
  \item $M \ot_B N \in {_{\mathcal{A}}\V}_{\mathcal{C}}$;
  \item if $\mathcal{D}$ is another $\V$-algebra and
 $P \in {_{\mathcal{C}}\V}_{\mathcal{D}}$, then the
  canonical morphism
$$(M \ot_B N) \ot_C P \to M \ot_B (N \ot_C P)$$ induced by the associativity of the tensor product,
  is an isomorphism in ${_{\mathcal{A}}\V}_{\mathcal{D}}$;
  \item $({_{\mathcal{A}}\V}_{\mathcal{A}}, -\ot_A-, \mathcal{A})$ is a monoidal category.
\end{itemize}
Note that (co)algebras in this monoidal category are called
 $\mathcal{A}$--\emph{(co)rings}.
\end{thm}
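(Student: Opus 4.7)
The plan is to exploit the universal property of the coequalisers defining $\ot_B$ together with the hypothesis that $V \ot -$ and $- \ot V$ preserve coequalisers for every $V \in \V$. Throughout, let $q_{M,N} \colon M \ot N \to M \ot_B N$ denote the canonical coequalising morphism of the parallel pair $\varrho_M \ot N$ and $M \ot \rho_N$ on $M \ot B \ot N$, and similarly for the other tensor products over algebras.

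For the first bullet, I would construct the left $\mathcal A$-action on $M \ot_B N$ as follows. Tensoring the defining coequaliser of $M \ot_B N$ on the left with $A$ yields, by the preservation hypothesis, a coequaliser diagram on $A \ot M \ot N$. The composite $q_{M,N} \cdot (\rho_M \ot N)$ coequalises the pair $A \ot \varrho_M \ot N$ and $A \ot M \ot \rho_N$: the nontrivial equation reduces to the $(\mathcal A, \mathcal B)$-bimodule identity $\varrho_M \cdot (\rho_M \ot B) = \rho_M \cdot (A \ot \varrho_M)$ on $M$. Hence the universal property produces a unique morphism $\hat\rho \colon A \ot (M \ot_B N) \to M \ot_B N$, and the module axioms for $\hat\rho$ descend from those for $\rho_M$ because $q_{M,N}$, $A \ot q_{M,N}$ and $A \ot A \ot q_{M,N}$ are epimorphisms. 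The right $\mathcal C$-action is constructed symmetrically, and its commutation with the left $\mathcal A$-action reduces, after postcomposition with the epi $q_{M,N}$, to the given bimodule identities on $M$ and $N$.

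For the second bullet, I would observe that both $(M \ot_B N) \ot_C P$ and $M \ot_B (N \ot_C P)$ arise as iterated reflexive coequalisers, and that the preservation hypothesis allows each to be identified with the coequaliser, on $M \ot N \ot P$, of the parallel pairs obtained by inserting $\mathcal B$-relations between the first two factors and $\mathcal C$-relations between the last two. The associator of $(\V, \ot)$ conjugates these two presentations into each other, so the universal property supplies mutually inverse comparison morphisms in $\V$. Their compatibility with the outer $\mathcal A$- and $\mathcal D$-actions follows by a further universal-property chase, using once more that the relevant $q$'s are epimorphisms.

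For the third bullet, I take $\mathcal A$ itself, regarded as an $(\mathcal A, \mathcal A)$-bimodule via $m$, as the tensor unit. The unit isomorphism $\mathcal A \ot_A M \xra{\cong} M$ comes from noting that $\rho_M \colon A \ot M \to M$ is a \emph{split} coequaliser of $m \ot M$ and $A \ot \rho_M$, with splittings provided by $e \ot M$ and $e \ot A \ot M$; the split coequaliser identities are precisely the left module axioms for $M$. The pentagon and triangle then descend from the coherence of $(\V, \ot, I)$ after postcomposition with the appropriate coequalising epimorphisms. The main obstacle I expect is purely organisational: one has to track several nested coequalisers simultaneously and verify at each stage that separate (not joint) preservation of coequalisers in each tensor variable suffices; once this bookkeeping is in place, every remaining verification reduces to routine diagram chases against epimorphic quotients.
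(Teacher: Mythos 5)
Your proposal is correct, and it supplies exactly the standard argument. Note that the paper itself gives \emph{no} proof of this statement: it is presented as a background construction and the facts are folklore (going back to Pareigis's non-additive Morita theory), so there is nothing to compare against beyond the assertion itself. Your three steps all check out: the induced action $\hat\rho$ exists because $q_{M,N}\cdot(\rho_M\ot N)$ coequalises the $A$-tensored pair (this uses the bimodule identity on $M$ \emph{together with} the fact that $q_{M,N}$ already coequalises $\varrho_M\ot N$ and $M\ot\rho_N$ -- worth saying explicitly); the associativity comparison is the usual interchange of (iterated) coequalisers, and since the defining coequalisers are reflexive (common section $M\ot e\ot N$), separate preservation in each variable is indeed enough; and the unit isomorphism via the split coequaliser $(e\ot M,\, e\ot A\ot M)$ of $(m\ot M,\, A\ot\rho_M)$ is the standard one, with coherence descending along the epimorphic quotients.
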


\begin{thm}\label{Coalg.comod}{\bf Coalgebras and comodules in monoidal categories.} \em
Associated to
any monoidal category $\V=(\V,\ot, I)$, there are three monoidal categories
$\V^{\rm{op}}$, $\V^r$ and $(\V^{\text{op}})^r$ obtained
from $\V$ by reversing, respectively, the morphisms, the tensor product and both
the morphisms and tensor
product, i.e., $\V^{\text{op}}=(\V^{\text{op}}, \ot, I)$,
$\V^r=(\V, \ot^r, I)$, where $V \ot^r W:=W \ot V$, and
$(\V^{\text{op}})^r=(\V^{\text{op}}, \ot^r, I)$ (see, for example, \cite{Saa}).
Note that $(\V^{\text{op}})^r=(\V^r)^{\text{op}}$.

\emph{Coalgebras} and \emph{comodules} in a monoidal category $\V=(\V,\ot, I)$
are respectively algebras and modules in  $\V^{\text{op}}=(\V^{\text{op}},\ot, I)$.
If $\mathcal{C}=(C, \delta, \varepsilon)$ is a $\V$-coalgebra, we write
$\V^\mathcal{C}$ (resp. $^\mathcal{C}\V$) for the category of right (resp. left)
$\mathcal{C}$-comodules.
Thus, $\V^\mathcal{C}=(\V^{\text{op}})_\mathcal{C}$ and
$^\mathcal{C}\V={_{\mathcal{C}}(\V^{\text{op}})}$. Moreover, if $\mathcal{C}'$ is another
$\V$-coalgebra, then the category $^\mathcal{C}\V^{\mathcal{C}'}$ of
$(\mathcal{C}, \mathcal{C}')$-bicomodules
is $_\mathcal{C} (\V^{\text{op}}) _{\mathcal{C}'}$. Writing $\mathcal{C}_l$ (resp. $\mathcal{C}_r$)
for the comonad on $\V$ with functor-part $C \ot -$ (resp. $-\ot C$), one has that
$\V^\mathcal{C}$ (resp. $^\mathcal{C}\V$) is just the category of $\mathcal{C}_l$-comodules (resp. $\mathcal{C}_r$-comodules).
\end{thm}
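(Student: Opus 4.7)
My plan is to treat the statement as a sequence of purely definitional identifications, each verified by inspection of axiom diagrams rather than any substantive construction.

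The first step is to establish that coalgebras and comodules in $\V$ are, respectively, algebras and modules in $\V^{\text{op}} = (\V^{\text{op}}, \ot, I)$. I would write out the axioms for an algebra $(A, m, e)$ in $\V$—the associativity square $m \cdot (m \ot A) = m \cdot (A \ot m)$ and the unit triangles $m \cdot (e \ot A) = 1 = m \cdot (A \ot e)$—and observe that, interpreted in $\V^{\text{op}}$ (same tensor product, reversed arrows), these become the coassociativity and counit diagrams for a triple $(C, \delta : C \to C \ot C, \varepsilon : C \to I)$, which is by definition a $\V$-coalgebra. The identical argument applied to the left module axiom $\rho \cdot (m \ot V) = \rho \cdot (A \ot \rho)$ with $\rho : A \ot V \to V$ yields, after arrow reversal, the coassociativity axiom $(\delta \ot V) \cdot \rho = (C \ot \rho) \cdot \rho$ for a morphism $\rho : V \to C \ot V$. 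This gives both $\V^\mathcal{C} = (\V^{\text{op}})_\mathcal{C}$ and $^\mathcal{C}\V = {_\mathcal{C}(\V^{\text{op}})}$. The bicomodule identification $^\mathcal{C}\V^{\mathcal{C}'} = {_\mathcal{C}(\V^{\text{op}})_{\mathcal{C}'}}$ then follows by simultaneously dualising the left $\mathcal{C}$-action and the right $\mathcal{C}'$-action, together with the compatibility square between them.

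The second step is to identify $\V^\mathcal{C}$ with the Eilenberg--Moore category of the comonad $\mathcal{C}_l$ on $\V$ with functor part $C \ot -$, mirroring the argument given earlier in the paper for algebras and the monad $\mathcal{A}_l$. I would equip the endofunctor $C \ot -$ with comultiplication $\delta \ot -$ and counit $\varepsilon \ot -$, and check that the comonad axioms for this data are obtained by tensoring the coalgebra axioms for $(C, \delta, \varepsilon)$ on the right with an arbitrary object $V \in \V$. A comodule over this comonad is by definition a pair $(V, \rho : V \to C \ot V)$ satisfying coassociativity and counitality, which is exactly the data of an object of $\V^\mathcal{C}$ under the conventions fixed earlier in the paper. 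The parallel verification for $\mathcal{C}_r = - \ot C$ and $^\mathcal{C}\V$ is symmetric.

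I do not expect any genuine obstacle; the content is bookkeeping of the symmetry $\V \leftrightarrow \V^{\text{op}}$ combined with the observation that every monad-theoretic construction recalled in the earlier subsections admits a mirror comonad-theoretic version. The only point that requires care is tracking the side conventions—in particular, which side of the tensor product the subscripts $l$ and $r$ refer to, and how this interacts with the notational distinction between $\V^\mathcal{C}$ and $^\mathcal{C}\V$—so that the stated identifications pair the correct categories. Once these conventions are fixed consistently with the algebra case already developed, each claim reduces to matching two diagrams that are literally the same.
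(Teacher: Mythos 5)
Your overall strategy is the right one, and it is essentially all there is to do: the paper offers no proof of this paragraph, treating it as a standard definitional recall (with a pointer to Saavedra) that mirrors, arrow by reversed arrow, the identification ${_{\mathcal{A}}\V}\simeq\V_{\mathcal{A}_l}$ made earlier for algebras. Writing out the (co)associativity and (co)unit diagrams and matching them against the comonads $C\ot-$ and $-\ot C$ is exactly the intended argument.

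The difficulty is that the one issue you yourself single out as ``the only point that requires care'' --- the side conventions --- is precisely where your write-up breaks, and you never resolve it. In your first step you reverse the \emph{left} module axiom and obtain a coaction $\rho:V\to C\ot V$; by your own identification this is an object of $^{\mathcal{C}}\V={_{\mathcal{C}}(\V^{\text{op}})}$. In your second step you then assert that a comodule over $\mathcal{C}_l=C\ot-$, i.e.\ again a pair $(V,\rho:V\to C\ot V)$, ``is exactly the data of an object of $\V^{\mathcal{C}}$''. These two claims are incompatible: unwinding $\V^{\mathcal{C}}=(\V^{\text{op}})_{\mathcal{C}}$ with the paper's convention that right $\mathcal{A}$-modules are actions $V\ot A\to V$ (modules over $\mathcal{A}_r=-\ot A$) produces coactions $V\to V\ot C$, that is, $\mathcal{C}_r$-comodules. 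So either the final sentence of the statement interchanges $l$ and $r$ relative to the conventions used everywhere else in the paper (compare \ref{coalg-mod}, where $^C\M:=\M_R^{C\ot-}$ is declared to be the category of \emph{left} comodules, and \ref{def-azu-com}, where $^{\bC^e}\V=\V^{(\mathcal{C}^e)_l}$), in which case your proof should say so explicitly and establish the corrected pairing ($^{\mathcal{C}}\V$ as $\mathcal{C}_l$-comodules, $\V^{\mathcal{C}}$ as $\mathcal{C}_r$-comodules); or your second step is simply false as stated. Either way, this piece of bookkeeping is the entire non-trivial content of the final claim and cannot be deferred.
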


\begin{thm}\label{duality}{\bf Duality in monoidal categories.} \em
One says that an object $V$ of $\V$ \emph{admits a left dual}, or {\em left adjoint},
 if there exist
an object $V^*$ and morphisms $\db : I \to V \ot V^*$ and $\ev :V^* \ot V \to I$
such that the composites
$$V \xra{ \db \ot V} V \ot V^* \ot V \xra{V \ot \ev  } V , \quad
 V^* \xra{V^* \ot \db} V^* \ot V \ot V^* \xra{\ev  \ot V^*} V^*,$$ yield
the identity morphisms.  $\db$ is called the \emph{unit} and
$\ev $ the \emph{counit} of the adjunction. We use the notation $(\db, \ev: V^* \dashv V)$
to indicate that $V^*$ is left adjoint to $V$ with unit $\db$ and counit $\ev$.
This terminology is justified by the fact that such an adjunction
induces an adjunction of functors
$$\db\ot \,-\, , \ev \ot \,-\,: V^* \ot \,-\, \dashv
V\ot \,-\,:\V \to \V,$$
as well as an adjunction of functors
$$\,-\,\ot \db , \,-\,\ot \ev : \,-\, \ot V \dashv \,-\, \ot V^* :\V\to \V,$$
i.e., for any $X,Y \in \V$, there are bijections
$$\V(V^* \ot X, Y)\simeq \V(X, V \ot Y)\quad \text{and} \quad \V(X \ot V, Y)\simeq \V(X, Y \ot V^*)
.$$

Any adjunction $(\db, \ev: V^* \dashv V)$, induces a $\V$-algebra and a $\V$-coalgebra,
$$\begin{array}{l}
   \mathcal{S}_{V ,V^* }=(V \ot V^* ,\,   V\ot   V^*  \ot   V \ot   V^*
\xra{V^*\ot \ev \ot V } V \ot   V^* ,\, \db:I \to V \ot   V^* ) , \\[+1mm]
   \mathfrak{C}_{V^*,V }=(V \ot V^*,\, V \ot   V^*
\xra{V^* \ot \db \ot V} V \ot   V^* \ot   V \ot   V^*, \, \ev :V^* \ot V \to I) .
\end{array} $$

Dually, one says that an object $V$ of $\V$ \emph{admits a right adjoint} if there exist
an object $V^\sharp$ and morphisms $\db' : I \to V^\sharp \ot V$ and $\ev' :V \ot V^\sharp \to I$
such that the composites
$$V^\sharp \xra{ \db \ot V^\sharp} V^\sharp \ot V \ot V^\sharp \xra{V^\sharp \ot \ev} V^\sharp , \quad
 V \xra{V \ot \db} V \ot V^\sharp \ot V \xra{\ev  \ot V} V, $$ yield
the identity morphisms.

\end{thm}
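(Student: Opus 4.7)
The statement bundles together three assertions about a left-dual datum $(\db,\ev : V^*\dashv V)$: that it yields adjunctions of endofunctors (hence hom-set bijections), an algebra $\mathcal{S}_{V,V^*}$ on $V\ot V^*$, and a coalgebra $\mathfrak{C}_{V^*,V}$ on $V\ot V^*$. The plan is to derive all three from the two triangle identities for $(\db,\ev)$ by systematic whiskering with the identity of a third object, using only the interchange law in the strict monoidal category $\V$.

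First I would establish the functor adjunction $V^*\ot -\dashv V\ot -$. Define $\eta_X := \db\ot X : X \to V\ot V^*\ot X$ and $\ve_X := \ev\ot X : V^*\ot V\ot X \to X$; naturality in $X$ is automatic from functoriality of $-\ot X$. For the triangle identities, the composite $V\ot X \xra{\db\ot V\ot X} V\ot V^*\ot V\ot X \xra{V\ot\ev\ot X} V\ot X$ equals $\bigl((V\ot\ev)\cdot(\db\ot V)\bigr)\ot X$ by interchange, which is $\mathrm{id}_V\ot X$ by the first triangle identity for $(\db,\ev)$; the other triangle identity for the functor adjunction follows symmetrically from the second triangle identity. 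The hom-bijection $\V(V^*\ot X,Y)\simeq \V(X,V\ot Y)$ is then the usual mate correspondence. The argument for $-\ot V\dashv -\ot V^*$ is entirely analogous, using the functors $-\ot V$ and $-\ot V^*$ and whiskering $\db,\ev$ on the left.

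Next I would check that $\mathcal{S}_{V,V^*}=(V\ot V^*,\,m,\,e)$ with $m = V\ot\ev\ot V^*$ and $e=\db$ is a $\V$-algebra. Unitality reduces, after interchange, to
\[
m\cdot(e\ot V\ot V^*) \;=\; (V\ot\ev)(\db\ot V)\ot V^* \;=\; \mathrm{id}_{V\ot V^*},
\]
by the first triangle identity, and dually $m\cdot(V\ot V^*\ot e) = \mathrm{id}_{V\ot V^*}$ by the second. For associativity, both $m\cdot(m\ot V\ot V^*)$ and $m\cdot(V\ot V^*\ot m)$ unfold by interchange to $V\ot\ev\ot\ev\ot V^*:V\ot V^*\ot V\ot V^*\ot V\ot V^*\to V\ot V^*$, since the two middle evaluations act on disjoint tensor factors and therefore commute. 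Dualising this argument in $\V^{\mathrm{op}}$ (equivalently, reading the same diagrams upside down) gives that $\mathfrak{C}_{V^*,V}=(V\ot V^*,\,\delta,\,\varepsilon)$ with $\delta = V\ot\db\ot V^*$ and counit built from $\ev$ is a $\V$-coalgebra: coassociativity becomes the interchange identity for two disjoint copies of $\db$, and the counit axioms become the two triangle identities read backwards.

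The only real bookkeeping obstacle is keeping the tensor bracketings and whiskerings straight in the strict monoidal setting — in particular verifying that ``$V\ot\ev\ot V^*$ applied after $m\ot V\ot V^*$'' and ``$V\ot\ev\ot V^*$ applied after $V\ot V^*\ot m$'' give the same morphism. This is nothing more than the bifunctoriality of $\ot$ (the interchange law), so the whole package follows mechanically from the two triangle identities together with the interchange law, with no additional hypotheses on $\V$ needed.
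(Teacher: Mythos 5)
The paper offers no proof of \ref{duality} at all: it is a background paragraph recording standard facts about dual objects, so there is no argument in the text to compare yours against. What you write for the two functor adjunctions and for the algebra $\mathcal{S}_{V,V^*}$ is the standard check and is correct: whiskering the two triangle identities for $(\db,\ev)$ with an identity object and invoking the interchange law yields the triangle identities for $V^*\ot-\dashv V\ot-$ and $-\ot V\dashv -\ot V^*$, the unit axioms for $m=V\ot\ev\ot V^*$, $e=\db$, and associativity via the observation that the two evaluations contract disjoint tensor factors. (You have in fact silently corrected the paper's typo $V^*\ot\ev\ot V$ to the type-correct $V\ot\ev\ot V^*$.)

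The one step that fails as written is the coalgebra. Your comultiplication $\delta=V\ot\db\ot V^*$ on $V\ot V^*$ does not type-check: it lands in $V\ot V\ot V^*\ot V^*$, which is not $(V\ot V^*)\ot(V\ot V^*)$, and in a merely monoidal (not yet braided) category there is no coalgebra structure on $V\ot V^*$ built from $\db$ and $\ev$ alone. The coendomorphism coalgebra lives on $V^*\ot V$, with comultiplication $V^*\ot\db\ot V\colon V^*\ot V\to (V^*\ot V)\ot(V^*\ot V)$ and counit $\ev\colon V^*\ot V\to I$; the counit axioms are then exactly the two triangle identities suitably whiskered, so your ``read the diagrams upside down'' strategy does go through once the underlying object is corrected. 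In fairness, the displayed formula for $\mathfrak{C}_{V^*,V}$ in the statement is itself garbled --- it names the underlying object $V\ot V^*$ while writing structure maps that only make sense on $V^*\ot V$; compare Proposition \ref{galois.m}(ii), where the same coalgebra visibly has underlying object $V^*\ot V$ --- so your slip reproduces the paper's typo rather than committing a conceptual error.
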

\begin{proposition}\label{galois.m}
Let $V\in \V$ be an object with a left dual $(V^*, \db, \ev)$.
\begin{rlist}
\item For a $\V$-algebra $\mathcal{A}$ and a left $\mathcal{A}$-module structure $\rho_V:A \ot V \to V$ on $V$,
 the morphism
$$t_{(V,\rho_V)}: A \xra{A \ot {\rm db}} A \ot V \ot V^* \xra{\rho_V \ot V^*} V \ot V^*$$
(mate of $\rho_V$ under $\V(A \ot V, V)\simeq \V(A, V \ot V^*)$)
is a morphism from the $\V$-algebra $\mathcal{A}$
to the $\V$-algebra $\mathcal{S}_{V,V^*}$.

\item For a $\V$-coalgebra $\mathcal{C}$ and a right $\mathcal{C}$-comodule
structure $\varrho_V:V \to V \ot C$, the morphism
$$t^c_{(V,\varrho_V)}: V^* \ot V \xra{V^* \ot \varrho_V} V^* \ot V \ot C
\xra{{\rm ev}\ot C} C$$
(mate of $\varrho_V$ under $ \V(V, V \ot C) \simeq \V(V^* \ot V, C)$)
is a morphism from the $\V$-coalgebra $\mathfrak{C}_{V,V^*}$
to the $\V$-coalgebra $\mathcal{C}$.
\end{rlist}
\end{proposition}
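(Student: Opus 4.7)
The plan is to verify, for each of (i) and (ii), compatibility with the (co)multiplication and (co)unit of source and target. In every case the proof is a short diagram chase appealing only to the defining properties of the (co)action on $V$ and the two triangular identities $(V \ot \ev)(\db \ot V) = 1_V$ and $(\ev \ot V^*)(V^* \ot \db) = 1_{V^*}$ of the adjunction $V^* \dashv V$. Since $\V$ is strict monoidal there are no coherence morphisms to track, and no braiding or limit hypothesis enters the argument.

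For (i), write $t = t_{(V,\rho_V)}$ and $m_{\mathcal{S}} = V \ot \ev \ot V^*$. The unit law $t \z e_A = \db$ is immediate from the module unit law $\rho_V (e_A \ot V) = 1_V$: by functoriality $(A \ot \db) \z e_A = (e_A \ot V \ot V^*) \z \db$, and then the first leg collapses under $\rho_V (e_A \ot V) = 1_V$ to leave $\db$. For the multiplicative law $t \z m_A = m_{\mathcal{S}}(t \ot t)$, the key step is the identity $(V \ot \ev)(t \ot V) = \rho_V$, which is the first triangular identity applied after expanding the definition of $t$. Using this once gives $(V \ot \ev \ot V^*)(t \ot t) = (\rho_V \ot V^*)(A \ot t)$, and a further expansion of the second $t$ plus the associativity $\rho_V(m_A \ot V) = \rho_V(A \ot \rho_V)$ of the action allows one to refold the surviving copy of $\db$ back into a single $t$, producing $t \z m_A$.

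Part (ii) is handled dually: $\ve_C \z t^c = \ev$ follows from the comodule counit law $(V \ot \ve_C)\varrho_V = 1_V$, and the comultiplicative identity $\delta_C \z t^c = (t^c \ot t^c)(V^* \ot \db \ot V)$ follows from coassociativity of $\varrho_V$ together with the second triangular identity $(\ev \ot V^*)(V^* \ot \db) = 1_{V^*}$. Alternatively, (ii) is literally an instance of (i) applied in the reversed category $\V^{\text{op}}$: the $\V$-coalgebra $\mathcal{C}$ becomes a $\V^{\text{op}}$-algebra, the right $\mathcal{C}$-comodule structure $\varrho_V$ becomes a left module structure there, and the adjunction $V^* \dashv V$ in $\V$ becomes $V \dashv V^*$ in $\V^{\text{op}}$, under which the construction $\mathcal{S}$ of (i) computed in $\V^{\text{op}}$ is precisely the $\V$-coalgebra $\mathfrak{C}_{V,V^*}$ on $V^* \ot V$. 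The only mild obstacle is bookkeeping — placing the triangular identity in the central $V^* \ot V$ slot of the four-fold tensor rather than at an end — but there is no conceptual difficulty beyond that.
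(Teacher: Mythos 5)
Your verification is correct. Both checks in (i) — the unit law $t_{(V,\rho_V)}\z e_A=\db$ and the multiplicativity $t_{(V,\rho_V)}\z m_A=(V\ot\ev\ot V^*)\z(t_{(V,\rho_V)}\ot t_{(V,\rho_V)})$ — go through exactly as you describe, with the identity $(V\ot\ev)\z(t_{(V,\rho_V)}\ot V)=\rho_V$ (i.e.\ the statement that the mate bijection is inverted by the counit of $-\ot V^*\dashv -\ot V$) carrying the argument; dually for (ii) via $(V\ot t^c_{(V,\varrho_V)})\z(\db\ot V)=\varrho_V$. Note that the paper offers no proof of this proposition at all: it is stated as a routine specialization of the general facts recorded in \ref{T-mod} and \ref{Gal} (quoted there from Dubuc's Propositions II.1.1 and II.1.4), namely that $t_{\overline R}$ is a monad morphism and $t_{\overline L}$ a comonad morphism, applied to the (co)monads $A\ot-$ and $C\ot-$ and the adjunction $V^*\ot-\dashv V\ot-$. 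So your elementary diagram chase is a self-contained substitute for an argument the authors leave implicit, and it is arguably the more transparent route since it avoids passing through functor categories. One small caveat on your parenthetical alternative for (ii): deducing it literally from (i) requires working in $(\V^{\rm op})^r$ rather than $\V^{\rm op}$ alone, since a right $\mathcal{C}$-comodule in $\V$ becomes a \emph{right} module over the $\V^{\rm op}$-algebra $C$, and one must also reverse the tensor product to convert it into a left module and to turn $\mathcal{S}$ into $\mathfrak{C}_{V,V^*}$; this does not affect your primary argument, which establishes (ii) directly. (You have also silently corrected two typos in the paper's display in \ref{duality}: the multiplication of $\mathcal{S}_{V,V^*}$ is $V\ot\ev\ot V^*$, and the underlying object of $\mathfrak{C}_{V^*,V}$ is $V^*\ot V$ with comultiplication $V^*\ot\db\ot V$ — both as your computation assumes.)
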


\begin{thm}\label{gal-def}{\bf Definition.} \em Let $V\in \V$ be an object with a left dual $(V^*, \db, \ev)$.
\begin{rlist}
  \item  For a $\V$-algebra $\mathcal{A}$, a left $\mathcal{A}$-module
$(V,\rho_V)$ is called \emph{Galois} if  %$V$ is finite and
the morphism
$t_{(V,\rho_V)}:A \to V \ot V^*$  is an isomorphism
between the $\V$-algebras $\mathcal{A}$ and $\mathcal{S}_{V,V^*}$,
and  \emph{faithfully Galois}
if, in addition, the functor $V \ot -:\V \to \V$ is conservative.
 \item  For a $\V$-coalgebra $\mathcal{C}$, a right $\mathcal{C}$-comodule
$(V,\varrho_V)$ is called \emph{Galois} if  %$V$ is finite and
the morphism
$t^c_{(V,\varrho_V)}: V^* \ot V \to C$  is an isomorphism between the $\V$-coalgebras
 $\mathfrak{C}_{V,V^*}$ and $\mathcal{C}$,
and  \emph{faithfully Galois}
if, in addition, the functor $V \ot -:\V \to \V$ is conservative.
\end{rlist}
\end{thm}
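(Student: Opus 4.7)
The ``final statement'' here is Definition \ref{gal-def}, which is purely terminological: it attaches the names \emph{Galois} and \emph{faithfully Galois} to a left $\mathcal{A}$-module $(V,\rho_V)$ (respectively a right $\mathcal{C}$-comodule $(V,\varrho_V)$) exactly when the canonical morphism $t_{(V,\rho_V)}$ (respectively $t^c_{(V,\varrho_V)}$) is an isomorphism and, for the faithful variant, when the endofunctor $V\ot -:\V\to \V$ is conservative. A definition asserts no mathematical proposition, so there is literally nothing to prove; the only thing to check is that every object and every morphism named in the definition has already been introduced and makes sense, so that the conditions ``$t_{(V,\rho_V)}$ is an isomorphism'' and ``$t^c_{(V,\varrho_V)}$ is an isomorphism'' are well-formed.

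All of that supporting apparatus is indeed in place in the excerpt. The hypothesis fixes a left dual $(V^*,\db,\ev)$, which by \ref{duality} yields the two mate bijections
\[
\V(A\ot V, V)\simeq \V(A, V\ot V^*),\qquad \V(V, V\ot C)\simeq \V(V^*\ot V, C),
\]
used to form $t_{(V,\rho_V)}$ and $t^c_{(V,\varrho_V)}$. The same paragraph defines the $\V$-algebra $\mathcal{S}_{V,V^*}$ and the $\V$-coalgebra $\mathfrak{C}_{V^*,V}$ that appear as the codomain and domain of these morphisms, and the immediately preceding Proposition \ref{galois.m} records that $t_{(V,\rho_V)}$ is a morphism of $\V$-algebras $\mathcal{A}\to \mathcal{S}_{V,V^*}$ while $t^c_{(V,\varrho_V)}$ is a morphism of $\V$-coalgebras $\mathfrak{C}_{V^*,V}\to \mathcal{C}$. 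Consequently, ``being an isomorphism of $\V$-algebras'' and ``being an isomorphism of $\V$-coalgebras'' in the sense demanded by Definition \ref{gal-def} are meaningful conditions, and no further verification is called for by the definition itself.

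If anything remains to be done at this point in the paper, it is not to \emph{prove} Definition \ref{gal-def} but rather to use it: namely, to link the Galois property of $(V,\rho_V)$ with the Galois property of the monad $\mathcal{A}_l=A\ot -$ in the sense of \ref{T-mod}, and dually to link the Galois property of $(V,\varrho_V)$ with that of the comonad $\mathcal{C}_l=C\ot -$ in the sense of \ref{Gal}. That is a separate task, to be addressed by subsequent results in Section \ref{azu-braid}, and it is not part of the content of this definition.
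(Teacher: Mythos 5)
Your assessment is correct: \ref{gal-def} is a definition, the paper supplies no proof for it, and the only verification needed is that the morphisms $t_{(V,\rho_V)}$ and $t^c_{(V,\varrho_V)}$ and the (co)algebras $\mathcal{S}_{V,V^*}$ and $\mathfrak{C}_{V^*,V}$ are already in place via \ref{duality} and Proposition \ref{galois.m}, which you check accurately. This matches the paper's own (implicit) treatment exactly.
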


\begin{thm}\label{braid-mon}{\bf Braided monoidal categories.} \em A braided monoidal category
is a quadruple $(\V, \ot, I, \tau)$, where
$(\V, \ot, I)$ is a monoidal category and $\tau$, called the \emph{braiding}, is a
collection of natural isomorphisms
$$\tau_{V,W}: V\ot W \to W\ot V, \quad V,W\in \V,$$
subject to two hexagon coherence identities (e.g. \cite{Mc}).
A \emph{symmetric} monoidal category
is a monoidal category with a braiding $\tau$ such that
$\tau_{V,W} \cdot \tau_{W,V}=1$ for all $V,W \in \V$.
If $\V$ is a braided category with braiding $\tau$, then the monoidal category
$\V^r$ becomes a braided category with braiding given by
$\overline{\tau}_{V, W}:=\tau_{W,V}$.
Furthermore, given $\V$-algebras $\mathcal{A}$ and $\mathcal{B}$, the triple
$$\mathcal{A} \ot \mathcal{B}=(A\ot B, (m_A\ot m_B)\cdot (A\ot\tau_{B,A}\ot B), e_A\ot e_B)$$
is also a $\V$-algebra, called the \emph{braided tensor product of $\mathcal{A}$ and
$\mathcal{B}$}.

The braiding also has the following consequence (e.g \cite{St}):
\smallskip
\begin{itemize}
\item[]
{\em If an object $V$ in $\V$ admits a left dual
$(V^*, \db : I \to V \ot V^*,\ev :V^* \ot V \to I),$
then $(V^*,\db',\ev')$ is right adjoint
to $V$ with unit and counit
$$\db':I \xra{\db } V\ot V^* \xra{\tau^{-1}_{V^*,V}} V^* \ot V,\quad
\ev':V \ot V^* \xra{\tau_{V,V^*}}V^* \ot V \xra{{\rm ev}}I.$$ }
\end{itemize}
\end{thm}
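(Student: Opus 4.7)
The task is to verify that the proposed unit $\db' : I \to V^* \ot V$ and counit $\ev' : V \ot V^* \to I$ satisfy the two triangle identities for a right adjunction $V \dashv V^*$, namely
$$(\ev' \ot V) \cdot (V \ot \db') = 1_V \quad \text{and} \quad (V^* \ot \ev') \cdot (\db' \ot V^*) = 1_{V^*}.$$

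I would proceed by direct calculation. Substituting $\db' = \tau^{-1}_{V^*, V} \cdot \db$ and $\ev' = \ev \cdot \tau_{V, V^*}$ turns the first identity into
$$(\ev \ot V) \cdot (\tau_{V, V^*} \ot V) \cdot (V \ot \tau^{-1}_{V^*, V}) \cdot (V \ot \db) = 1_V.$$
First, I would invoke naturality of $\tau$ with respect to $\db : I \to V \ot V^*$ to rewrite $V \ot \db$ as $\tau_{V \ot V^*, V} \cdot (\db \ot V)$ (after identifying $\tau_{I, V}$ with $1_V$ via the unit isomorphism). Second, the hexagon axiom decomposes $\tau_{V \ot V^*, V} = (\tau_{V, V} \ot V^*) \cdot (V \ot \tau_{V^*, V})$. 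After a further naturality step used to transport $\tau_{V, V}$ past a suitable morphism, the adjacent pair $\tau^{-1}_{V^*, V} \cdot \tau_{V^*, V}$ appears and cancels; a symmetric naturality argument applied to $\ev : V^* \ot V \to I$ absorbs the remaining braiding. What remains is the first zigzag identity $(V \ot \ev) \cdot (\db \ot V) = 1_V$ for the original adjunction $V^* \dashv V$. The second triangle identity is handled by the mirror argument, starting from $\ev \ot V^*$ and using the naturality of $\tau$ with respect to $\ev$.

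A more conceptual alternative is also available: the paper already records in \ref{duality} that the left duality $V^* \dashv V$ induces an adjunction $- \ot V \dashv - \ot V^*$ of endofunctors of $\V$, with unit $X \ot \db$ and counit $X \ot \ev$. The braiding provides natural isomorphisms $- \ot V \cong V \ot -$ and $- \ot V^* \cong V^* \ot -$, along which this adjunction transports to $V \ot - \dashv V^* \ot -$; this is precisely the assertion that $V^*$ is right dual to $V$, and chasing the transported unit and counit through the braiding isomorphisms recovers the stated formulas for $\db'$ and $\ev'$.

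The main obstacle is not any conceptual difficulty but bookkeeping in the direct approach: each step uses only naturality of $\tau$, the hexagon axioms, and the original zigzag identities, but the correct sequence of applications must be choreographed carefully so that pairs $\tau \cdot \tau^{-1}$ line up adjacently. The result is a classical fact about dualities in braided categories and the verification is routine once the sequence is found.
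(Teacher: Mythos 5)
The paper offers no proof of this statement: it is recalled as a classical fact about duals in braided categories and delegated to the literature (the citation to Street), so there is no in-paper argument to measure yours against. Your direct verification is the standard one, and the list of ingredients you name — naturality of $\tau$, the two hexagons, cancellation of $\tau\cdot\tau^{-1}$, and the original zigzags for $(V^*,\db,\ev)$ — is exactly what is needed; the identities do close up with the paper's formulas $\db'=\tau^{-1}_{V^*,V}\cdot\db$ and $\ev'=\ev\cdot\tau_{V,V^*}$, and the second triangle is indeed the mirror of the first.

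One caution about your choreography for the first triangle. After rewriting $V\ot\db=\tau_{V\ot V^*,V}\cdot(\db\ot V)$ and expanding $\tau_{V\ot V^*,V}=(\tau_{V,V}\ot V^*)\cdot(V\ot\tau_{V^*,V})$, the composite contains
$$(V\ot\tau^{-1}_{V^*,V})\cdot(\tau_{V,V}\ot V^*)\cdot(V\ot\tau_{V^*,V}),$$
and here $\tau_{V,V}\ot V^*$ and $V\ot\tau^{-1}_{V^*,V}$ overlap on the middle tensor factor, so no single naturality square "transports $\tau_{V,V}$ past a suitable morphism" to make $\tau^{-1}_{V^*,V}\cdot\tau_{V^*,V}$ adjacent; you must also bring in the other hexagon on the $\ev$ side before anything cancels. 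A sequence that does terminate: naturality of $\tau_{-,V}$ at $\ev$ plus one hexagon gives $\ev\ot V=(V\ot\ev)\cdot(\tau_{V^*,V}\ot V)\cdot(V^*\ot\tau_{V,V})$; the other hexagon collapses $(V^*\ot\tau_{V,V})\cdot(\tau_{V,V^*}\ot V)$ to $\tau_{V,V^*\ot V}$; naturality of $\tau_{V,-}$ at $\tau^{-1}_{V^*,V}$ turns $\tau_{V,V^*\ot V}\cdot(V\ot\tau^{-1}_{V^*,V})$ into $(\tau^{-1}_{V^*,V}\ot V)\cdot\tau_{V,V\ot V^*}$, whereupon $\tau_{V^*,V}\cdot\tau^{-1}_{V^*,V}$ cancels; finally naturality of $\tau_{V,-}$ at $\db$ reduces $\tau_{V,V\ot V^*}\cdot(V\ot\db)$ to $\db\ot V$, leaving the original zigzag $(V\ot\ev)\cdot(\db\ot V)=1_V$. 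Your conceptual alternative is also viable but not a genuine shortcut: an adjunction $V\ot-\dashv V^*\ot-$ is not yet the statement that $V^*$ is a right dual of $V$ — you still must check that the transported unit and counit are of the form $\db'\ot-$ and $\ev'\ot-$ for fixed morphisms $\db':I\to V^*\ot V$ and $\ev':V\ot V^*\to I$, and extracting those formulas uses the same hexagon manipulations as the direct route.
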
 Thus there are isomorphisms $(V^*)^\sharp \simeq V$ and $(V^\sharp)^* \simeq V$, and we have the following

\begin{thm}\label{finite-object}{\bf Definition.} \em An object $V$ of a braided monoidal
category $\V$ is said to be \emph{finite} if $V$ admits a left
(and hence also a right) dual.
\end{thm}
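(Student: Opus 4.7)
Strictly speaking, Definition \ref{finite-object} introduces terminology, so nothing in it requires proof; the nontrivial content is lodged in the parenthetical ``and hence also a right'', which invokes the claim recorded in \ref{braid-mon}: in a braided monoidal category, if $(V^*, \db, \ev)$ is a left dual of $V$, then the same underlying object $V^*$ equipped with
$\db' = \tau^{-1}_{V^*, V} \cdot \db$ and $\ev' = \ev \cdot \tau_{V, V^*}$
is a right dual of $V$. This is the statement my proposal addresses.

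The plan is to verify the two triangular identities
$(\ev' \ot V)(V \ot \db') = 1_V$ and $(V^* \ot \ev')(\db' \ot V^*) = 1_{V^*}$
by direct diagram chase. After expanding $\db'$ and $\ev'$, each identity becomes a four-term composite consisting of $\db$ or $\ev$ flanked by two braidings. I would proceed in three steps: (i) apply the naturality of $\tau$ with respect to $\db : I \to V \ot V^*$ (respectively $\ev : V^* \ot V \to I$), using $\tau_{X, I} = 1$, so that one braiding slides across the unit or counit and is absorbed; (ii) invoke a hexagon to rewrite the remaining braiding indexed by the two-object tensor as a pair of braidings against the individual factors, exposing a pair of mutually inverse braidings that cancel; and (iii) apply the original triangular identities $(V \ot \ev)(\db \ot V) = 1_V$ and $(\ev \ot V^*)(V^* \ot \db) = 1_{V^*}$ of the given left adjunction to collapse the resulting zigzag to the identity.

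The main obstacle will be bookkeeping: at each step one must pick the correct naturality square and the correct hexagon, keeping track of whether the relevant braiding is $\tau$ or $\tau^{-1}$ and which pair of objects indexes it. Graphically the argument is a Reidemeister-II-style straightening of a zigzag whose two endpoints are decorated with mutually inverse braidings, and the categorical proof is the line-by-line transcription of that picture. Once this claim is in hand, the isomorphisms $(V^*)^\sharp \simeq V$ and $(V^\sharp)^* \simeq V$ stated just before the definition follow immediately from the uniqueness of adjoints (and from symmetry, obtained by working in the reversed braided category $\V^r$), confirming that the notion ``finite object'' is indeed symmetric in left and right duals.
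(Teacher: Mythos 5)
You are right that the statement is a definition and that its only mathematical content is the claim recorded in \ref{braid-mon}: in a braided monoidal category a left dual $(V^*,\db,\ev)$ of $V$ becomes a right dual via $\db'=\tau^{-1}_{V^*,V}\cdot\db$ and $\ev'=\ev\cdot\tau_{V,V^*}$. The paper offers no proof of this fact, simply quoting it from \cite{St}, so there is nothing to compare against; your outline is the standard verification and it does go through. One small caution on step (ii): after sliding the braiding across $\ev$ (naturality plus $\tau_{I,V}=1$) and expanding $\tau_{V^*\ot V,\,V}$ by the hexagon, the ``mutually inverse braidings'' do not yet cancel --- one must first move the middle crossing past them using the Yang--Baxter relation (itself a consequence of the hexagons and naturality) before $\tau_{V^*,V}$ meets $\tau^{-1}_{V^*,V}$; after that cancellation, the second hexagon recombines the remaining crossings into $\tau_{V,V\ot V^*}$, naturality with respect to $\db$ absorbs it, and the original triangle identity finishes the computation exactly as you describe.
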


For the rest of this section, $\V=(\V, \ot, I, \tau)$ will denote  a braided
monoidal category.

\medskip

Finite objects in a braided monoidal category have the following relationship between
the related functors to be (co)monadic or conservative. Recall that a morphism $f: V \to W$ in $\V$
called a \emph{copure epimorphism} (\emph{monomorphism}) if for any $X \in \V$, the morphism $f\ot X:V \ot X \to W\ot X$
(and hence also the morphism $X\ot f:X \ot V \to X\ot W$) is a regular epimorphism (monomorphism).

\begin{proposition} \label{finite.reg} Let $\V$ be a braided monoidal category admitting
equalisers and coequalisers. For a finite object
$V \in \V$ with left dual $(V^*,  {\rm db}, {\rm ev})$,
the following are equivalent:
\begin{blist}
  \item  $V \ot -:\V \to \V$ is conservative (monadic, comonadic);
  \item ${\rm ev}:V^* \ot V \to I$ is a copure epimorphism,
   \item  $- \ot V:\V \to \V$ is conservative (monadic, comonadic);
  \item ${\rm db}:I \to V \ot V^*$ is a pure monomorphism.
\end{blist}
\end{proposition}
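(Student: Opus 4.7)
The plan is to reduce both equivalences to classical (co)monadicity criteria via Lemma~\ref{lem-mon} and Beck's theorem, and to deduce the reverse implications by direct categorical arguments that exploit the elementary fact that a regular epi which is a mono (dually, an epi which is a regular mono) is an isomorphism.

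Since $V$ is finite and $\V$ is braided, $V^*$ is simultaneously a left and a right adjoint of $V$, so $V\ot-:\V\to\V$ admits $V^*\ot-$ as both a left and a right adjoint; for the adjunction $V^*\ot-\dashv V\ot-$, the unit is $\db\ot-$ and the counit is $\ev\ot-$. Because $\V$ has equalisers and coequalisers, Lemma~\ref{lem-mon} yields the parenthetical equivalence ``conservative $\LRa$ monadic $\LRa$ comonadic'' for $V\ot-$, and the same for $-\ot V$. The braiding supplies a natural isomorphism $V\ot-\simeq-\ot V$, giving (a)$\LRa$(c).

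For (a)$\Ra$(b): once $V\ot-$ is monadic, Beck's theorem presents each counit $\ev\ot X$ as the coequaliser of the reflexive pair $(V^*\ot V\ot\ev\ot X,\,\ev\ot V^*\ot V\ot X)$, so it is a regular epi; the braiding yields the same for $X\ot\ev$, and $\ev$ is copure. For (b)$\Ra$(a): suppose every $\ev\ot X$ is a regular epi, and let $f:Y\to Z$ satisfy $V\ot f$ iso, so also $V^*\ot V\ot f$ iso. By naturality of $\ev\ot-$, $f\cdot(\ev\ot Y)=(\ev\ot Z)\cdot(V^*\ot V\ot f)$ is a regular epi composed with an iso, hence a regular epi; the cancellation lemma ``if $hg$ is regular epi and $g$ is epi, then $h$ is regular epi'' then forces $f$ to be a regular epi. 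Dually, if $fu=fv$ for $u,v:W\to Y$, then $V\ot u=V\ot v$, so $V^*\ot V\ot u=V^*\ot V\ot v$, and naturality together with $\ev\ot W$ being epic yields $u=v$; thus $f$ is a mono, and combining gives $f$ iso.

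The equivalence (c)$\LRa$(d) is proved dually, using the adjunction $-\ot V\dashv -\ot V^*$ whose unit is $X\ot\db:X\to X\ot V\ot V^*$: the forward direction applies the dual of Beck's theorem to realise $X\ot\db$ as a regular mono (equaliser of a coreflexive pair) under comonadicity of $-\ot V$, while (d)$\Ra$(c) uses the dual cancellation ``$gf$ regular mono and $g$ mono $\Ra$ $f$ regular mono'' together with the fact that epi $+$ regular mono $=$ iso. The main obstacle is not conceptual but bookkeeping: keeping the braided reformulations straight (e.g.\ $X\ot\ev$ versus $\ev\ot X$ versus $\ev\cdot\tau_{V,V^*}\ot X$) and carefully verifying the two elementary cancellation lemmas for regular (epi/mono)s.
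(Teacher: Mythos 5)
Your proof is correct and follows the same skeleton as the paper's: Lemma~\ref{lem-mon} handles the parenthetical equivalences in (a) and (c), the braiding identifies $V\ot-$ with $-\ot V$, the core is (a)$\LRa$(b), and (c)$\LRa$(d) is obtained by duality. The only real difference is that where the paper disposes of both directions of (a)$\LRa$(b) by citing \cite[Theorem 2.4]{KP}, you unpack that citation: for (a)$\Ra$(b) you use the canonical presentation of the counit as a reflexive coequaliser (which is exactly the content of the cited result for a monadic right adjoint), and for (b)$\Ra$(a) you give a self-contained verification that $f$ is a regular epimorphism (via the cancellation lemma for regular epis, which does hold and is correctly applied since $\ev\ot Y$ is epic) and a monomorphism (via naturality and $\ev\ot W$ epic), hence an isomorphism. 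This buys a proof that is independent of the external reference at the cost of the two small cancellation lemmas you flag; both lemmas are standard and your use of them is sound, so there is no gap.
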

\begin{proof}  First observe that since $V$ is assumed to admit a left dual, it
admits also a right dual (see \ref{braid-mon}).
Hence the equivalence of the properties listed in (a) (and in (c)) follows from
\ref{lem-mon}. It only remains to show the equivalence of (a) and (b), since the equivalence
of (c) and (d) will then follow by duality.

(a)$\Ra$(b) If $V \ot -:\V \to \V$ is monadic, then it follows from \cite[Theorem 2.4]{KP}
that each component of the counit of the adjunction  $V^*\ot - \dashv V \ot -$,
which is the natural transformation $\ev  \ot -$, is a regular epimorphism. Thus,
$ {\rm ev}:V^* \ot V \to I$ is a copure epimorphism.

(b)$\Ra$(a) To say that $\ev :V^* \ot V \to I$ is a copure epimorphism is to say that each component of
the counit $\ev  \ot -$ of the adjunction $V^*\ot - \dashv V \ot -$ is a regular epimorphism,
which implies (see, for example, \cite{KP}) that $V \ot -:\V \to \V$ is conservative.
\end{proof}

\begin{remark} \label{finite.reg-rem} \em In Proposition \ref{finite.reg},
if the tensor product preserves regular epimorphisms, then   (b)
is equivalent to require $\ev :V^* \ot V \to I$ to be a regular epimorphism.

If the tensor product in $\V$ preserves regular monomorphisms, then  (d)
is equivalent to require ${\rm db}:I \to V \ot V^*$ to be a regular monomorphism.
\end{remark}

\begin{thm}\label{op-alg}{\bf Opposite algebras.} \em
  For a $\V$-algebra $\mathcal{A}=(A, m, e)$, define the {\em opposite} algebra
$\bAp=(A,m^\tau,e^\tau)$ in $\V$ with multiplication
$m^\tau=m \cdot \tau_{A,A}$ and unit $e^\tau=e$.
Denote by
$\mathcal{A}^e = \mathcal{A} \ot \bAp$ and
by ${^e\!\mathcal{A}}= \bAp \ot \mathcal{A}$ the braided tensor products.

Then ${A}$ is a left $\mathcal{A}^e$-module as well as a right ${^e\!\mathcal{A}}$-module
by the structure maps
%with an action $\rho_A: A^e \ot A \to A$
%given by the composite
$$\begin{array}{l}
A\ot A^\tau \ot A \xrightarrow{A\ot \tau_{A,A}} A\ot A\ot A\xrightarrow{A\ot m}
 A\ot A \xrightarrow{m} A, \\[+1mm]
A\ot A^\tau \ot A \xrightarrow{\tau_{A,A}\ot A} A\ot A\ot A\xrightarrow{m\ot A}
 A\ot A \xrightarrow{m} A.
\end{array}$$

By properties of the braiding, the morphism
$\tau_{A,A}: A   \ot  A \to A \ot A$ induces a distributive law
from the monad $  (\mathcal{A}^\tau)_l $ to the monad $ \mathcal{A}_l $
satisfying the Yang-Baxter equation and the monad
$ \mathcal{A}_l (\mathcal{A}^\tau)_l$
is just the monad ${(\bA^e)_l}$. Thus the category of
$\mathcal{A}_l (\mathcal{A}^\tau)_l$-modules
is the category ${_{\mathcal{A}^e}\V}$ of left $\mathcal{A}^e$-modules.
Symmetrically, the category of $\mathcal{A}_r (\mathcal{A}^\tau)_r$-modules
is the category $\V_{^e\!\mathcal{A}}$ of right $^e\!\mathcal{A}$-modules.
\end{thm}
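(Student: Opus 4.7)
The claim splits into three sub-assertions: (i) $\tau_{A,A}$ is a monad distributive law from $(\bA^\tau)_l$ to $\bA_l$ satisfying the Yang--Baxter equation, (ii) the monad $\bA_l(\bA^\tau)_l$ produced from this distributive law (as in \ref{DL}) coincides, as a monad on $\V$, with $(\bA^e)_l$ where $\bA^e=\bA\ot\bA^\tau$ is the braided tensor product, and (iii) the symmetric statement for right modules. Assertion (iv), the identification of the categories of modules, is formal from (ii).

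For (i), the functor parts of the monads $\bA_l$ and $(\bA^\tau)_l$ are $A\ot-$ and $A\ot-$, so the natural transformation induced by $\tau_{A,A}$ at $V\in\V$ is the composite $A\ot A\ot V\xra{\tau_{A,A}\ot V}A\ot A\ot V$, and I need to check the four pentagons and two triangles of \ref{DL}. The two unit axioms reduce to the naturality of $\tau$ with respect to $e:I\to A$ together with $\tau_{I,A}=\tau_{A,I}=1$; this is standard. The two multiplication pentagons become, after cancelling $-\ot V$, precisely the hexagon axioms for $\tau$ applied to $(A,A,A)$, once we remember that the multiplication of $\bA^\tau$ is $m\cdot\tau_{A,A}$. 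The Yang--Baxter identity for $\tau_{A,A}$ on $A\ot A\ot A$ is one of the well-known consequences of the hexagon axioms (it is where the phrase ``BD-law'' gets its ``B'' from), so the YB-condition of \ref{def-BD} is automatic. Hence $\tau_{A,A}$ is a BD-law in the sense of \ref{def-BD}.

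For (ii), I would unfold the formula from \ref{DL}: the multiplication of $\bA_l(\bA^\tau)_l$ at $V$ is
$$A\ot A\ot A\ot A\ot V\xra{A\ot\tau_{A,A}\ot A\ot V}A\ot A\ot A\ot A\ot V\xra{(m^\tau\,m)\ot V}A\ot A\ot V,$$
where I use $m\cdot m^\tau$ with the convention of \ref{DL}. Writing $m^\tau=m\cdot\tau_{A,A}$ and using naturality of $\tau$ to slide $\tau_{A,A}$ past the outer factors, this composite rearranges into $(m\ot m)\cdot(A\ot\tau_{A,A}\ot A)\ot V$, which is exactly the left action on $V$ of the braided tensor product multiplication of $\bA^e=\bA\ot\bA^\tau$ defined in \ref{braid-mon}. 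The units agree trivially. Therefore the monads $\bA_l(\bA^\tau)_l$ and $(\bA^e)_l$ are equal, whence $\V_{\bA_l(\bA^\tau)_l}=\V_{(\bA^e)_l}={_{\bA^e}\V}$, giving (iv). The module structure map on $A$ displayed in the text is obtained by the same diagram chase applied to $V=I$.

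For (iii), one option is to repeat the argument verbatim in the braided category $\V^r=(\V,\ot^r,I,\ol\tau)$ with $\ol\tau_{V,W}=\tau_{W,V}$ (cf.\ \ref{braid-mon}); the $\V^r$-algebra obtained from $\bA$ has underlying object $A$ and its left modules in $\V^r$ are exactly its right modules in $\V$, so the $\V^r$-version of (i)--(ii) gives the monad identity $\bA_r(\bA^\tau)_r=({^e\!\bA})_r$ and the category $\V_{^e\!\bA}$ on the nose. The only mild subtlety to keep track of is the direction of the braiding that one uses to form the opposite algebra $\bA^\tau$; the paper's convention uses the same $\tau$ in both cases, and one should check that $\tau_{A,A}^{-1}$ would give an equally good opposite algebra, but the choice $\tau_{A,A}$ is precisely what makes the left and right constructions symmetric under passage to $\V^r$. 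The main obstacle is purely bookkeeping: keeping track of which tensor slot each copy of $A$ occupies while iterating the hexagons; once the right labelling is fixed the verifications are mechanical.
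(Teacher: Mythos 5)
Your proposal is correct and follows the same route the paper (implicitly) takes: a direct verification that $\tau_{A,A}\ot-$ satisfies the distributive-law triangles and pentagons of \ref{DL} via the hexagon identities together with naturality of $\tau$ with respect to $m$ and $e$, the braid relation for the Yang--Baxter condition, and a term-by-term comparison of the composite monad's multiplication with the braided tensor product multiplication of \ref{braid-mon}. One small slip in part (ii): the multiplication of $\bA\ot\bA^\tau$ is $(m\ot m^\tau)\cdot(A\ot\tau_{A,A}\ot A)$, not $(m\ot m)\cdot(A\ot\tau_{A,A}\ot A)$, and with the correct formula no ``sliding'' of $\tau$ is needed at all --- the expression $m'm\cdot S\la T$ from \ref{DL} evaluated at $V$ is literally $(m\ot m^\tau\ot V)\cdot(A\ot\tau_{A,A}\ot A\ot V)$, i.e.\ the two monad structures coincide on the nose.
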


\begin{thm}\label{azu-def}{\bf Azumaya algebras.} \em
Given a $\V$-algebra $\bA=(A,m,e)$,
by Proposition \ref{prop.1}, there are
two comparison functors
$$\overline{K}_l:\V \to \V_{\mathcal{A}_l (\mathcal{A}^\tau)_l}=
 {_{\mathcal{A}^e}\!\V}, \quad
\overline{K}_r:\V \to \V_{\mathcal{A}_r (\mathcal{A}^\tau)_r}=
 {\V_{^e\!\mathcal{A}}},$$
given by the assignments
$$\overline{K}_l:V \; \longmapsto \; (A\ot V,\, A \ot A\ot A \ot V \xra{A \ot m^\tau\ot V}
 A \ot A \ot V \xra{m \ot V} A  \ot V),$$
$$\overline{K}_r:V \; \longmapsto \; (V\ot A,\, V \ot A\ot A \ot A \xra{V\ot m^\tau\ot A}
V\ot A \ot A \xra{V\ot m} V  \ot A)$$
with commutative diagrams
\begin{equation}\label{th.1.Alg.D}%\label{th.1.Alg.D.right}
\xymatrix{ \V \ar[rr]^-{\overline{K}_l} \ar[dr]_{A\ot -}
  & &{_{\mathcal{A}^e}\V} \ar[dl]^{_{\mathcal{A}^e}U} \\
&\V\,& ,}\qquad
\xymatrix{ \V \ar[rr]^-{\overline{K}_r} \ar[dr]_{-\ot A}
 & & \V_{^e\!\mathcal{A}} \ar[dl]^{U_{_{^e\!\mathcal{A}}}} \\
&\V &.}\end{equation}

The $\V$-algebra $\bA$ is called {\em left (right) Azumaya}
provided $\mathcal{A}_l$ ($\mathcal{A}_r$) is an Azumaya monad.
\end{thm}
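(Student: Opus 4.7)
The plan is to instantiate the general machinery of Section~\ref{azu} for the monad $\mathcal{A}_l = A \ot -$ associated to the $\V$-algebra $\bA$, with BD-law induced by the braiding, and then to obtain the right-handed statement by a symmetric argument in $\V^r$.

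First I would verify that $\la := \tau_{A,A} \ot - : \mathcal{A}_l \mathcal{A}_l \to \mathcal{A}_l \mathcal{A}_l$ is a monad BD-law in the sense of Section~\ref{def-BD}. The distributive-law axioms of~\ref{DL} reduce, via naturality of $\tau$ and the fact that $e$ and $m$ are $\V$-morphisms, to standard braiding identities, and the Yang--Baxter condition is exactly one of the hexagon axioms of the braiding. Given this, Proposition~\ref{prop.1}(1) produces the opposite monad $(\mathcal{A}_l)^\la$ whose multiplication is $(m \cdot \tau_{A,A})\ot - = m^\tau \ot -$, so $(\mathcal{A}_l)^\la$ coincides with $(\mathcal{A}^\tau)_l$.

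Next I would identify the composite monad $\mathcal{A}_l (\mathcal{A}^\tau)_l$ of Proposition~\ref{prop.1}(2) with the monad $(\mathcal{A}^e)_l$ attached to the braided tensor product algebra $\mathcal{A}^e = \mathcal{A} \ot \mathcal{A}^\tau$ (see~\ref{braid-mon}). Its functor part is $A \ot A \ot -$, and unwinding $\underline{m}$ from Proposition~\ref{prop.1}(2) yields $(m \ot m^\tau) \cdot (A \ot \tau_{A,A} \ot A) \ot -$, which is exactly the multiplication of $\mathcal{A} \ot \mathcal{A}^\tau$. Hence $\V_{\mathcal{A}_l (\mathcal{A}^\tau)_l} = {_{\mathcal{A}^e}\V}$, and a direct unwinding of Proposition~\ref{prop.1}(4) in this setting gives $\oK_l$ in the stated explicit form, while the commutative triangle on the left of~(\ref{th.1.Alg.D}) is the instance of the general triangle~(\ref{Azu.Mon}) for $F = A\ot -$.

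The right-handed case follows by running the same argument in the reverse braided monoidal category $\V^r = (\V, \ot^r, I, \overline\tau)$ with $\overline\tau_{V,W} = \tau_{W,V}$; there $-\ot A$ plays the role of ``left tensoring'', the BD-law is $- \ot \tau_{A,A}$, and the enveloping algebra becomes ${^e\!\mathcal{A}}$, which identifies $\V_{\mathcal{A}_r (\mathcal{A}^\tau)_r}$ with $\V_{{^e\!\mathcal{A}}}$ and yields $\oK_r$ together with the right triangle of~(\ref{th.1.Alg.D}). The final sentence is pure terminology and requires no proof. The only real bookkeeping obstacle lies in verifying that $\tau_{A,A}$ is a monad distributive law satisfying Yang--Baxter and in matching $\underline{m}$ against the multiplication of the braided tensor product; both reduce to several applications of the hexagon axioms and the associativity of $m$.
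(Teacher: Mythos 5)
Your proposal is correct and follows essentially the same route the paper takes: the justification of \ref{azu-def} is exactly the content of \ref{op-alg} (the braiding $\tau_{A,A}$ gives a monad BD-law with $(\mathcal{A}_l)^\la=(\mathcal{A}^\tau)_l$ and $\mathcal{A}_l(\mathcal{A}^\tau)_l=(\mathcal{A}^e)_l$) combined with Proposition \ref{prop.1}(4) and diagram (\ref{Azu.Mon}), with the right-handed case obtained by passing to $\V^r$ just as the paper indicates before \ref{th.1.Alg.rev}. Your unwinding of $\underline{m}$ to the multiplication of the braided tensor product $\mathcal{A}\ot\mathcal{A}^\tau$ and of the module structure to $(m\ot V)\cdot(A\ot m^\tau\ot V)$ matches the paper's formulas.
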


\begin{remark}\label{sq.alg} \em
It follows from Remark \ref{sq} that if $\tau^2_{A,A}=1$,
 the monad $\bA_l$ (resp. $\bA_r$) is Azumaya if and only if $(\bA^\tau)_l$ (resp. $(\bA^\tau)_l)$ is.
Thus, in a symmetric monoidal category, a $\V$-algebra is left (right) Azumaya if
ond only if its opposite is so.
\end{remark}

A basic property of these algebras is the following.

\begin{proposition} \label{finite} Let $\V$ be a braided monoidal category and
 $\bA=(A,m,e)$ a $\V$-algebra. If $\bA$ is left Azumaya, then  $A$ is finite in $\V$.
% that is, $A$ admits a left dual $$(A^*, \db : I \to A \ot A^*,\ev :A^* \ot A \to I).$$
\end{proposition}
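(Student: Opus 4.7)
The plan is to extract a left dual of $A$ directly from the Azumaya equivalence. Since $\bA$ is left Azumaya, Proposition \ref{l-r-adj} gives that $F := A\ot -:\V\to \V$ admits a left adjoint. Using the factorisation $F = {_{\mathcal{A}^e}U}\circ \overline{K}_l$ from the diagram \ref{th.1.Alg.D} together with $\phi_{\mathcal{A}^e}\dashv {_{\mathcal{A}^e}U}$, this left adjoint can be taken to be
$$L := \overline{K}_l^{-1}\circ \phi_{\mathcal{A}^e}:\V\to \V,$$
and I set $A^* := L(I)\cong \overline{K}_l^{-1}(A\ot A)$ as the candidate left dual of $A$.

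The decisive technical step is to show $L(V)\cong A^*\ot V$ naturally in $V$. The category ${_{\mathcal{A}^e}\V}$ carries a right $\V$-action $(X,\rho)\cdot W := (X\ot W,\rho\ot W)$, and the explicit descriptions in \ref{azu-def} show both $\overline{K}_l(V\ot W) = \overline{K}_l(V)\cdot W$ and $\phi_{\mathcal{A}^e}(V\ot W) = \phi_{\mathcal{A}^e}(V)\cdot W$; in particular $\phi_{\mathcal{A}^e}(V)\cong \phi_{\mathcal{A}^e}(I)\cdot V$. Since $\overline{K}_l$ is an equivalence, its inverse inherits the dual compatibility $\overline{K}_l^{-1}(X\cdot W)\cong \overline{K}_l^{-1}(X)\ot W$, and chaining these yields
$$L(V) = \overline{K}_l^{-1}(\phi_{\mathcal{A}^e}(I)\cdot V) \cong \overline{K}_l^{-1}(\phi_{\mathcal{A}^e}(I))\ot V = A^*\ot V,$$
naturally in $V$.

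It then follows that $A^*\ot -\dashv A\ot -$ as endofunctors of $\V$. Both sides commute strictly with the right action of $\V$ on itself, so the unit and counit of this adjunction are $\V$-module natural transformations and must be of the form $\db\ot -$ and $\ev\ot -$ for some $\db:I\to A\ot A^*$ and $\ev:A^*\ot A\to I$. Evaluating the triangle identities of the adjunction at $V=I$ then reduces them precisely to the duality identities $(A\ot \ev)(\db\ot A) = 1_A$ and $(\ev\ot A^*)(A^*\ot \db) = 1_{A^*}$, exhibiting $(A^*,\db,\ev)$ as a left dual of $A$. Hence $A$ is finite in $\V$. The main obstacle is the identification $L\cong A^*\ot -$: it requires exploiting the $\V$-linearity of the Azumaya equivalence $\overline{K}_l$, after which the duality datum is delivered automatically by the adjunction.
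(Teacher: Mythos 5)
Your proof is correct and follows essentially the same route as the paper: both identify a left adjoint of $A\ot-$ as the composite of the free $\mathcal{A}^e$-module functor with the inverse of the Azumaya equivalence, use the $\V$-linearity ($\V$-functoriality) of that equivalence to recognise this left adjoint as being of the form $A^*\ot-$, and then extract the duality data from the adjunction $A^*\ot-\dashv A\ot-$. The only divergence is internal: where you compute $L(V)\cong L(I)\ot V$ directly from the strict compatibility of $\phi_{\mathcal{A}^e}$ and the equivalence with the right $\V$-action, the paper instead invokes Pareigis's representation theorem for colimit-preserving $\V$-functors, and it leaves the final identification of $\db$ and $\ev$ as ``not hard to see'', which you spell out via the triangle identities at $V=I$.
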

\begin{proof} It is easy to see that when $\V$ and ${_{\mathcal{A}^e}\!\V}$ are
viewed as right $\V$-categories
(in the sense of \cite{P2}), $\overline{K}_l$ is a $\V$-functor. Hence, when
$\overline{K}_l$ is an equivalence of
categories (that is, when $\bA$ is left Azumaya), its inverse equivalence
$\overline{R}$ is also a $\V$-functor. Moreover,
since $\overline{R}$ is left adjoint to $\overline{K}_l$, it preserves all
colimits that exist in $_{\mathcal{A}^e}\V$.
Obviously, the functor $\phi_{(\mathcal{A}^e)_l}:\V \to {_{\mathcal{A}^e}\V}$
is also a $\V$-functor and, moreover, being
a left adjoint, it preserves all colimits that exist in $\V$.
Consequently, the  composite
$\overline{R} \cdot \phi_{(\mathcal{A}^e)_l}:\V \to \V$ is a $\V$-functor and
preserves all colimits that exist in $\V$.
It then follows from \cite[Theorem 4.2]{P2} that there exists an object $A^*$ such that
$\overline{R} \cdot \phi_{(\mathcal{A}^e)_l}\simeq A^* \ot -$. Using now
that $\overline{R} \cdot \phi_{(\mathcal{A}^e)_l}$
is left adjoint to the functor $A \ot -:\V \to \V$, it is not hard to see that
$A^*$ is a left dual to $A$.
\end{proof}

\begin{thm}\label{th.1.Alg}{\bf Left Azumaya algebras.}
 Let $(\V,\ot, I, \tau)$ be a braided monoidal category and $\bA=(A,m,e)$
a $\V$-algebra. The following are equivalent:
\begin{blist}
\item  $\bA$ is a left Azumaya algebra;
\item the functor $A \ot -:\V \to \V$ is monadic and the left $(\mathcal{A}^e)_l$-module structure
   on it induced by the left diagram in (\ref{th.1.Alg.D}), is Galois.
\item  {\rm (i)} $A$ is finite with left dual
$(A^*, {\db}:I \to A\ot A^*,{\ev}:A^*\ot A \to I)$,
  the functor $A \ot -:\V \to \V$ is monadic and \\[+1mm]
 {\rm (ii)}   the composite
  $\overline{\chi}_0:$
$$ A\ot A \xra{A\ot A\ot \db} A\ot A\ot A \ot A^* \xra{A \ot \tau_{A, A}\ot A^*}
 A \ot A\ot A \ot A^*   \xra{m \ot  A \ot A^*} A\ot A \ot A^* \xra{m \ot A^*} A\ot A^* $$
is an isomorphism (between the $\V$-algebras $\mathcal{A}^e$ and $\mathcal{S}_{A,A^*})$;

\item
 {\rm (i)} $A$ is finite with right dual $(A^\sharp,  {\rm db}': I \to A^\sharp \ot A, {\rm ev}':A \ot A^\sharp \to I)$,
 the functor $\phi_{(\bA^\tau)_l}:\V \to \V_{(\bA^\tau)_l}=_{\bA^\tau}\!\!\V$ is comonadic and \\[+1mm]
 {\rm (ii)}   the composite
  $\overline{\chi}:$
$$ A\ot A \xra{\db'\ot A\ot A} A^\sharp\ot A\ot A \ot A \xra{A^\sharp \ot m\ot A}
 A^\sharp \ot A\ot A     \xra{A^\sharp \ot \tau_{A,A } } A^\sharp \ot A \ot A\xra{ A^\sharp\ot m} A^\sharp\ot A $$
is an isomorphism.
\end{blist}
\end{thm}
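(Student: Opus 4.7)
The plan is to deduce the three equivalences as successive specializations of the abstract results of Section~\ref{azu} to the monad $\bA_l=(A\ot-,\,m\ot-,\,e\ot-)$ on $\V$, equipped with the BD-law $\la:=\tau_{A,A}\ot-:A\ot A\ot-\to A\ot A\ot-$ (which is a BD-law because the braiding satisfies the Yang--Baxter equation). Under this identification $\bA_l^\la=(\bA^\tau)_l$ and, as recalled in \ref{op-alg}, the composite monad $\bA_l\bA_l^\la$ coincides with $(\bA^e)_l$, so the comparison functor $\oK_\bF$ of Proposition~\ref{prop.1} agrees with $\oK_l$ of \ref{azu-def}.

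First I would prove (a)$\LRa$(b) by a direct application of Theorem~\ref{th.1.Monad} to $(\bA_l,\la)$: this is essentially a relabelling, once one checks that the left $(\bA^e)_l$-module structure on $A\ot-$ given in Proposition~\ref{prop.1}(3) coincides with the one induced by the left-hand diagram in (\ref{th.1.Alg.D}). Next, for (b)$\LRa$(c), note that by Proposition~\ref{finite} the Azumaya property implies that $A$ has a left dual $A^*$, so (b) already forces (c)(i); conversely, once $A$ is finite with $A^*\ot-\dashv A\ot-$ via $\db,\ev$, the composite $t_{\oK}$ appearing in Theorem~\ref{th.1.Monad}(c) is a natural transformation between functors of the form $V\ot-$ and so, by Yoneda, is an isomorphism if and only if its component at $I$ is, and a straightforward diagram chase identifies this component with the morphism $\overline{\chi}_0$. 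The same computation shows that $t_{\oK}$ is a monad morphism $(\bA^e)_l\to\bA_l\bA_l^\eta$ precisely when $\overline{\chi}_0$ is an algebra map $\mathcal{A}^e\to\mathcal{S}_{A,A^*}$ in the sense of Proposition~\ref{galois.m}.

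For (a)$\LRa$(d) I would invoke Theorem~\ref{th.1}: the braiding is invertible, so $\la$ is an invertible BD-law, and finiteness of $A$ (by Proposition~\ref{finite} from (a), or by hypothesis in (d)) provides the right-adjoint comonad $\bR=A^\sharp\ot-$ required there, with unit $\db'\ot-$. Theorem~\ref{th.1} then says that $\bA_l$ is Azumaya iff $\phi_{(\bA^\tau)_l}$ is comonadic and the natural transformation $\chi:FF\to RF$ of Theorem~\ref{th.1}(b)(ii) is an isomorphism; as in the previous step, $\chi$ is determined by its component at $I$, and unpacking the four factors $\ol\eta FF$, $RmF$, $R\la$, $Rm$ identifies that component with the morphism $\overline{\chi}$ written in (d)(ii). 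This closes the cycle.

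The main obstacle is purely diagrammatic: verifying that the abstract maps $t_{\oK}$ of \ref{T-mod} and $\chi$ of \ref{Azu-ra} restrict at $I$ to the explicit composites $\overline{\chi}_0$ and $\overline{\chi}$. These verifications amount to rewriting the units $\ol\eta,\ol\eta'$ as $\db,\db'$, using that $\phi_{\bA_l^\la}$ and $U_{\bA_l^\la}$ are given by tensoring with $A$, and applying naturality of $\tau$ together with the triangular identities for $A^*\dashv A\dashv A^\sharp$; once these identifications are in place, everything else is a direct transcription of Theorems \ref{th.1.Monad} and \ref{th.1}.
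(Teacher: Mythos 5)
Your proposal follows the paper's proof essentially verbatim: (a)$\LRa$(b) via Proposition \ref{P.1.2} (equivalently the corresponding clause of Theorem \ref{th.1.Monad}), (a)$\LRa$(c) by first extracting finiteness from Proposition \ref{finite} and then identifying the monad morphism $t_{\overline{K}_l}$ with $\overline{\chi}_0\ot-$, and (a)$\LRa$(d) by applying Theorem \ref{th.1} with the right adjoint comonad $A^\sharp\ot-$ and identifying $\chi$ with $\overline{\chi}\ot-$. The only caveat is that your reduction to the $I$-component is not literally a Yoneda argument (an arbitrary natural transformation between functors $V\ot-$ and $W\ot-$ in a braided monoidal category need not be of the form $f\ot-$); it is valid here because $t_{\overline{K}_l}$ and $\chi$ are composites of transformations each of the form $f\ot-$, which is exactly the identification the paper makes.
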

\begin{proof}
(a)$\LRa$(b)
 This follows by Proposition \ref{P.1.2}.

(a)$\LRa$(c)
If $\bA$ is a left Azumaya algebra, then $A$ has a left dual by Proposition \ref{finite}.
Thus, in both cases, $A$ is finite, i.e. there is an adjunction $(\db , \ev : A^* \dashv A)$.
Then the functor $A^*\ot -:\V \to \V$ is left adjoint to the functor $A\ot -:\V \to \V$,
and the monad on $\V$ generated by this adjunction is $(\mathcal{S}_{A,A^*})_l$.
It is then easy to see that the monad morphism $t_{\overline{K}_l}:(\mathcal{A}^e)_l \to (\mathcal{S}_{A,A^*})_l$
corresponding to the left commutative diagram in (\ref{th.1.Alg.D}), is just $\overline{\chi}_0 \ot-$.
Thus, $t_{\overline{K}_l}$ is an isomorphism if and only if $\overline{\chi}_0$ is so.
It now follows from Theorem \ref{th.1.Monad} that (a) and (c) are equivalent.

(a)$\LRa$(d)  Any left Azumaya algebra has a left (and hence also a right) dual by Proposition \ref{finite}.
Moreover, if $A$ has a right dual $A^\sharp$, then the functor $A^\sharp \ot -$ is right adjoint to
the functor $A \ot -$. The desired equivalence now follows by applying Theorem \ref{th.1} to the monad $\bA_l$ and using that
the natural transformation $\chi$ is just $\overline{\chi} \ot -$.
\end{proof}

Each statement about a general braided monoidal category $\V$ has a counterpart statement
obtained by interpreting it in $\V^r$. We do this for Theorem \ref{th.1.Alg}.

\begin{thm}\label{th.1.Alg.rev}{\bf Right Azumaya algebras.}
 Let $(\V,\ot, I, \tau)$ be a braided monoidal category and $\bA=(A,m,e)$
a $\V$-algebra. The following are equivalent:
\begin{blist}
 \item $\bA$ is right Azumaya;

 \item the functor $- \ot A:\V \to \V$ is monadic and the right $({^e\!\mathcal{A}})_r$-module structure
   on it induced by the  right diagram in (\ref{th.1.Alg.D}), is Galois;
 \item {\rm(i)} $A$ is finite with right dual $(A^\sharp,  {\rm db}': I \to A^\sharp \ot A, {\rm ev}':A \ot A^\sharp \to I)$,
 the functor $-\ot A : \V \to \V$ is monadic and \\[+1mm]
{\rm(ii)} the composite $\overline{\chi}_1:$
$$ A\ot A \xra{\db'\ot A\ot A} A^\sharp\ot A\ot A \ot A \xra{A^\sharp \ot \tau_{A, A}\ot A}
 A^\sharp \ot A\ot A \ot A   \xra{  A^\sharp \ot m \ot A} A^\sharp\ot A \ot A \xra{A^\sharp \ot m } A^\sharp\ot A$$
is an isomorphism (between the $\V$-algebras ${^e\!\mathcal{A}}$ and $\mathcal{S}_{A^\sharp ,A})$.

  \item {\rm (i)} $A$ is finite with left dual $(A^*,  {\db}: I \to A \ot A^*, {\ev}:A^* \ot A \to I)$,
  the functor $\phi_{(\bA^\tau)_r}:\V \to \V_{(\bA^\tau)_r}=\V_{\bA^\tau}$ is comonadic and \\[+1mm]
 {\rm (ii)}   the composite $\overline{\chi}_2:$ $$ A\ot A \xra{A\ot A\ot \db} A\ot A\ot A \ot A^* \xra{A \ot m \ot A^*}
 A\ot A \ot A^*   \xra{\tau_{A, A} \ot A^*} A\ot A \ot A^* \xra{m \ot A^*} A\ot A^* $$
is an isomorphism.
\end{blist}
\end{thm}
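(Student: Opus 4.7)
The strategy is to derive Theorem \ref{th.1.Alg.rev} by applying the already-proved Theorem \ref{th.1.Alg} inside the reversed braided monoidal category $\V^r=(\V,\otimes^r,I,\overline\tau)$, where $V\otimes^r W=W\otimes V$ and $\overline\tau_{V,W}=\tau_{W,V}$. The first task is to set up the translation dictionary. The algebra $\bA=(A,m,e)$ is automatically an algebra in $\V^r$ (associativity and unitality are invariant under reversing the tensor product). Its opposite in $\V^r$ has multiplication $m\cdot\overline\tau_{A,A}=m\cdot\tau_{A,A}=m^\tau$, hence coincides with $\bA^\tau$. The monad on $\V^r$ with functor $A\otimes^r-$ is the same endofunctor as $-\otimes A$ on $\V$, and a routine check shows that as monads they coincide with $\bA_r$; similarly the braided tensor product $\bA\otimes^r\bA^\tau$ in $\V^r$ is exactly ${}^e\!\bA$ in $\V$. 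Therefore the comparison functor for the $\V^r$-algebra $\bA$ coincides with $\overline K_r$ of \ref{azu-def}, and $\bA$ is left Azumaya in $\V^r$ precisely when $\bA$ is right Azumaya in $\V$.

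Next I would translate conditions (b), (c), (d) of Theorem \ref{th.1.Alg}, applied in $\V^r$, into the statements here. A left dual $(V^*,\db,\ev)$ in $\V^r$ reads, in $\V$, as $\db:I\to V^*\otimes V$ and $\ev:V\otimes V^*\to I$, that is, a right dual $(V^\sharp=V^*,\db',\ev')$ in $\V$; dually, a right dual in $\V^r$ is a left dual in $\V$. With this, (c) of \ref{th.1.Alg} in $\V^r$ (which uses a left dual in $\V^r$) becomes (c) of \ref{th.1.Alg.rev} (using the right dual $A^\sharp$ in $\V$), and (d) of \ref{th.1.Alg} in $\V^r$ becomes (d) here (using the left dual $A^*$ in $\V$). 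Monadicity of $A\otimes^r-$ on $\V^r$ is monadicity of $-\otimes A$ on $\V$, and comonadicity of $\phi_{(\bA^\tau)_l^r}$ on $\V^r$ is comonadicity of $\phi_{(\bA^\tau)_r}$ on $\V$, so the two non-diagrammatic ingredients of (c) and (d) match.

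The only subtle step, and the main obstacle, is verifying that the composite $\overline\chi_0$ of Theorem \ref{th.1.Alg} read in $\V^r$ unpacks to $\overline\chi_1$, and that $\overline\chi$ read in $\V^r$ unpacks to $\overline\chi_2$. This is bookkeeping: one must systematically use $X\otimes^r Y=Y\otimes X$ to reverse the order in each four-fold product such as $A\otimes^r A\otimes^r A\otimes^r A^{*,r}=A^{\sharp}\otimes A\otimes A\otimes A$, use $\overline\tau_{A,A}=\tau_{A,A}$ (as an endomorphism of $A\otimes A$), and respect the identifications $X\otimes^r f=f\otimes X$, $f\otimes^r X=X\otimes f$. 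A direct term-by-term comparison shows that the first, third, fourth and fifth arrows of $\overline\chi_0$ in $\V^r$ become the arrows of $\overline\chi_1$ in $\V$, and similarly for $\overline\chi$ and $\overline\chi_2$; since the $\V$-algebra $\mathcal{S}_{A,A^{*,r}}$ in $\V^r$ is the $\V$-algebra $\mathcal{S}_{A^\sharp,A}$ in $\V$, the identification of algebra-morphisms also matches.

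Once this dictionary is fixed, the equivalences (a)$\Leftrightarrow$(b)$\Leftrightarrow$(c)$\Leftrightarrow$(d) of Theorem \ref{th.1.Alg.rev} are exactly the images under the $\V\leftrightarrow\V^r$ translation of the equivalences (a)$\Leftrightarrow$(b)$\Leftrightarrow$(c)$\Leftrightarrow$(d) of Theorem \ref{th.1.Alg}, so no further argument is needed.
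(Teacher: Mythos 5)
Your proposal is correct and is exactly the paper's intended argument: the paper introduces Theorem \ref{th.1.Alg.rev} with the remark that every statement about $\V$ has a counterpart obtained by interpreting it in $\V^r$, and gives no further proof, so your explicit dictionary (duals swapping sides, $A\otimes^r-$ becoming $-\otimes A$, $\bA\otimes^r\bA^\tau$ becoming ${}^e\!\bA$, and $\overline\chi_0,\overline\chi$ unpacking to $\overline\chi_1,\overline\chi_2$ up to an application of associativity of $m$) just makes the paper's one-line justification precise.
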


\begin{proposition}\label{l-r-azu}
In any braided monoidal category, an algebra is left (right) Azumaya if and only if
its opposite algebra is right (left) Azumaya.
\end{proposition}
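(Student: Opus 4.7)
My plan is to verify both halves of the proposition by comparing the explicit characterizations given in Theorems~\ref{th.1.Alg} and \ref{th.1.Alg.rev}. For the first assertion, ``$\bA$ left Azumaya iff $\bAp$ right Azumaya,'' I would compare condition (c) of \ref{th.1.Alg} applied to $\bA$ with condition (c) of \ref{th.1.Alg.rev} applied to $\bAp$. The finiteness hypothesis agrees in both, since by \ref{braid-mon} the existence of a left dual $(A^*,\db,\ev)$ is equivalent to the existence of a right dual $(A^\sharp,\db',\ev')$, the two being linked by $\db'=\tau_{A^*,A}^{-1}\cdot\db$ and $\ev'=\ev\cdot\tau_{A,A^*}$. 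Monadicity of $A\ot -$ and of $-\ot A$ are equivalent via the natural isomorphism $\tau_{A,-}:A\ot -\to -\ot A$, so the substance of both characterizations reduces to the invertibility of the characteristic morphisms.

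Concretely, the crucial step is to show that the composite $\overline{\chi}_0^\bA:A\ot A\to A\ot A^*$ from \ref{th.1.Alg}(c) is invertible if and only if the composite $\overline{\chi}_1^{\bAp}:A\ot A\to A^\sharp\ot A$ from \ref{th.1.Alg.rev}(c) (computed with the opposite multiplication $m^\tau=m\cdot\tau_{A,A}$) is invertible. I would prove this by a braided diagram chase: expand $\overline{\chi}_1^{\bAp}$ using $m^\tau=m\cdot\tau_{A,A}$ and $\db'=\tau_{A^*,A}^{-1}\cdot\db$, and then employ the Yang--Baxter equation together with naturality of the braiding to rewrite the result as $\overline{\chi}_0^\bA$ postcomposed with the canonical braiding isomorphism $A\ot A^*\to A^\sharp\ot A$ and precomposed with a braiding on the source $A\ot A$. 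Since both of these braidings are invertible, the two composites are invertible together.

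The second assertion, ``$\bA$ right Azumaya iff $\bAp$ left Azumaya,'' follows by applying the first assertion inside the reversed braided monoidal category $\V^r=(\V,\ot^r,I,\ol\tau)$, where $X\ot^r Y=Y\ot X$ and $\ol\tau_{V,W}=\tau_{W,V}$. Under the translation $\V\leftrightarrow\V^r$, the notions of left and right Azumaya exchange roles (since the monad $\bA_l$ on $\V$ coincides with $\bA_r$ on $\V^r$ as a monad-with-BD-law on the common underlying category), and because $\ol\tau_{A,A}=\tau_{A,A}$, the $\V^r$-opposite of a $\V$-algebra $\bB$ coincides with $\bB^\tau$; hence applying the first assertion in $\V^r$ yields precisely the second assertion in $\V$.

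The main obstacle is the braided diagram chase comparing $\overline{\chi}_0^\bA$ and $\overline{\chi}_1^{\bAp}$; all other ingredients (the duality transfer, monadicity equivalence, and the reversal argument in $\V^r$) are formal consequences of the braiding identities and of the results cited above.
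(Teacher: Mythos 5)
Your proposal is correct in substance, but it takes a genuinely different and much heavier route than the paper. The paper's entire proof is the observation that the braiding supplies isomorphisms of monads $(\tau_{-,A})^{-1}\colon A\ot - \to -\ot A^\tau$ (so $\bA_l\cong(\bA^\tau)_r$) and $(\tau_{A,-})^{-1}\colon -\ot A\to A^\tau\ot -$ (so $\bA_r\cong(\bA^\tau)_l$); since these isomorphisms are compatible with the BD-laws $\tau_{A,A}\ot-$ and $-\ot\tau_{A,A}$, the two comparison functors are identified up to an isomorphism of target categories and the Azumaya properties transfer immediately, with no finiteness, monadicity or $\overline{\chi}$-bookkeeping needed. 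Your route through conditions (c) of Theorems \ref{th.1.Alg} and \ref{th.1.Alg.rev} does work: the two module structures $m\cdot(A\ot m^\tau)$ and $m^{\tau}\cdot(m^{\tau\tau}\ot A)$ differ by the invertible braid reversing the three tensor factors, so their Galois mates $\overline{\chi}_0^{\bA}$ and $\overline{\chi}_1^{\bA^\tau}$ differ by invertible braidings on source and target (the same kind of hexagon/naturality computation the paper carries out for $\overline{\chi}=\overline{\chi}_1\cdot\tau_{A,A}$ in the proof of Theorem \ref{th.1.Alg.1}), and your $\V^r$ reduction for the second half is exactly how the paper itself derives \ref{th.1.Alg.rev} from \ref{th.1.Alg}. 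The one caveat is that the diagram chase you defer is precisely where all the braided subtleties live --- in particular $(\bA^\tau)^\tau\neq\bA$ when $\tau_{A,A}^2\neq 1$, so the composite $m^{\tau\tau}=m\cdot\tau_{A,A}^2$ must be tracked carefully --- whereas the paper's monad-isomorphism argument sidesteps this entirely; what your version buys in exchange is an explicit identity between the two Azumaya-defining morphisms, which is more information than the proposition requires.
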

\begin{proof} We just note that if $(\V,\ot, I, \tau)$ is a braided monoidal category and $\bA$ is a $\V$-algebra, then
$(\tau_{-,A})^{-1}: A \ot \,-\, \to \,-\, \ot A^\tau$
is an isomorphism of monads $\bA_l \to (\bA^\tau)_r$, while $(\tau_{A,-})^{-1}: \,-\,\ot A \to A^\tau \ot \,-\, $
is an isomorphism of monads $\bA_r \to (\bA^\tau)_l$.
\end{proof}

Under some conditions on $\V$, left Azumaya algebras are also right Azumaya and vice versa:

\begin{thm}\label{th.1.Alg.1}{\bf Theorem.}
 Let $\bA=(A,m,e)$ be a $\V$-algebra
in a braided monoidal category $(\V,\ot, I, \tau)$ with equalisers and coequalisers.
 Then the following are equivalent:
\begin{blist}
\item  $\bA$ is a left Azumaya algebra;
\item the left $\mathcal{A}^e$-module $(A, m \cdot (A \ot m^\tau))$
    is faithfully Galois;
\item $A$ is finite with right dual $(A^\sharp,  {\rm db}':I\to A^\sharp \ot A,
 {\rm ev}':A \ot A^\sharp \to I)$,
  the functor $\phi_{(\bA^\tau)_l}:\V \to \V_{(\bA^\tau)_l}=_{\bA^\tau}\!\!\V$ is comonadic,  and
  the composite $\overline{\chi}$ in \ref{th.1.Alg}{\rm(d)}
  is an isomorphism;
\item $A$ is finite with right dual
 $(A^\sharp,{\rm db}':I\to A^\sharp\ot A,{\rm ev}':A\ot A^\sharp \to I)$,
 the functor $-\ot A:\V\to \V$ is monadic,   and
 the composite $\overline{\chi}_1$ in \ref{th.1.Alg.rev}{\rm(c)}
 is an isomorphism;
\item the right ${^e\!\mathcal{A}}$-module $(A, m \cdot ( m^\tau\ot A))$
 is faithfully Galois;
\item $\bA$ is a right Azumaya algebra.
\end{blist}
\end{thm}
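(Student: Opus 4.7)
The plan is to establish the chain (a)$\Leftrightarrow$(b), (a)$\Leftrightarrow$(c), (c)$\Leftrightarrow$(d), (d)$\Leftrightarrow$(f), (e)$\Leftrightarrow$(f). The backbone equivalences (a)$\Leftrightarrow$(c) and (d)$\Leftrightarrow$(f) are already at hand: they are the equivalences (b)$\Leftrightarrow$(d) of Theorem \ref{th.1.Alg} and (b)$\Leftrightarrow$(c) of Theorem \ref{th.1.Alg.rev}, neither of which uses the equaliser/coequaliser hypothesis. What is genuinely new in Theorem \ref{th.1.Alg.1} is the coupling between the left- and right-sided conditions, and this is precisely where the standing hypothesis will enter, through Lemma \ref{lem-mon}.

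For (a)$\Leftrightarrow$(b), assume first that $\bA$ is left Azumaya. By Proposition \ref{finite}, $A$ is finite, so $A\ot-$ admits both a left adjoint $A^{\ast}\ot-$ and a right adjoint $A^\sharp\ot-$. Under the equaliser/coequaliser hypothesis, Lemma \ref{lem-mon} makes conservativity, monadicity, and comonadicity of $A\ot-$ equivalent; combined with the Galois condition delivered by Theorem \ref{th.1.Alg}(b), this says exactly that the left $\mathcal{A}^e$-module $(A,\,m\cdot(A\ot m^\tau))$ is faithfully Galois. Conversely, the Galois clause in (b) already presupposes via Definition \ref{gal-def} the existence of a left dual of $A$, so $A$ is finite; conservativity of $A\ot-$ then upgrades to monadicity by Lemma \ref{lem-mon}, and Proposition \ref{P.1.2} delivers (a). The equivalence (e)$\Leftrightarrow$(f) is obtained by the symmetric argument carried out in $\V^r$.

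For (c)$\Leftrightarrow$(d), both conditions presuppose that $A$ is finite with right dual $A^\sharp$. Under this, $A\ot-$ and $-\ot A$ each admit both a left and a right adjoint and are naturally isomorphic via the braiding $\tau_{A,-}$. Applying Lemma \ref{lem-mon} (including clause (d)) to the monads $(\bA^\tau)_l$ and $(\bA^\tau)_r$, one gets that ``$\phi_{(\bA^\tau)_l}$ is comonadic'' (used in (c)) and ``$-\ot A$ is monadic'' (used in (d)) are equivalent to each other and to monadicity of $A\ot-$. To compare the isomorphism conditions, observe that $\overline{\chi}$ and $\overline{\chi}_1$ are two realisations of the canonical Galois morphism of Proposition \ref{galois.m}, applied respectively to the left $\mathcal{A}^e$-action and the right ${}^e\!\mathcal{A}$-action on $A$ of \ref{op-alg}; a diagram chase using naturality of $\tau$ and the hexagon axioms for the braiding identifies them as mates that are conjugate by invertible $\tau$-built isomorphisms, so one is invertible if and only if the other is.

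The main obstacle is this last identification: spelling out explicitly that invertibility of $\overline{\chi}$ coincides with invertibility of $\overline{\chi}_1$ is a fiddly, though essentially routine, braiding calculation and will occupy the bulk of the verification. Once it is in place, the six conditions chain together mechanically via the backbone provided by Theorems \ref{th.1.Alg} and \ref{th.1.Alg.rev}, completing the proof.
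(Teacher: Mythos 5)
Your proposal is correct and follows essentially the same route as the paper: (a), (b), (c) are linked via Theorem \ref{th.1.Alg} together with Lemma \ref{lem-mon} and Proposition \ref{finite.reg}, (d), (e), (f) via Theorem \ref{th.1.Alg.rev}, and the bridge (c)$\Leftrightarrow$(d) combines Lemma \ref{lem-mon} for the (co)monadicity conditions with a braiding computation relating $\overline{\chi}$ and $\overline{\chi}_1$. The paper makes that last computation concrete by exhibiting a commutative diagram (using naturality, the hexagon identity, and associativity of $m$) showing $\overline{\chi}=\overline{\chi}_1\cdot\tau_{A,A}$, which is exactly the ``$\tau$-built conjugation'' you anticipated.
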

\begin{proof}
In view of Proposition \ref{finite.reg} and Remark \ref{finite.reg-rem}, (a), (b), and (c) are equivalent by \ref{th.1.Alg}
and (d), (e), and (f) are equivalent by \ref{th.1.Alg.rev}.

 (c)$\LRa$(d) The composite $\overline\chi$ is the upper path and $\overline{\chi}_1$
is the lower path in the diagram
$$ \xymatrix{
 A\ot A \ar[d]_{\tau}\ar[r]^{\db'\cdot A\cdot A\qquad}& A^\sharp \ot A\ot A \ot A
\ar[d]^{A^\sharp\cdot A \cdot\tau}
\ar[r]^{A^\sharp  \cdot m\cdot A} &
 A^\sharp  \ot A\ot A    \ar[r]^{\quad A^\sharp  \cdot \tau }& A^\sharp \ot A \ot A
\ar[r]^{\quad A^\sharp \cdot m } &
       A^\sharp \ot A \\
 A\ot A \ar[r]_{\db'\cdot A\cdot A\qquad}&
   A^\sharp \ot A\ot A \ot A \ar[r]_{A^\sharp  \cdot \tau\cdot A} &
 A^\sharp \ot A\ot A\ot A \ar[ru]^{ A^\sharp  \cdot A \cdot m}
   \ar[r]_-{ A^\sharp\cdot m\cdot A}&
 A^\sharp \ot A \ot A, \ar[ru]_{A^\sharp  \cdot m } &
} $$
where $\tau=\tau_{A,A}$ and $\cdot =\ot$. The left square is commutative by naturality,
the pentagon is commutative since $\tau$ is a braiding,
and the parallelogram commutes by the associativity of $m$.
So the diagram is commutative and hence
$\overline{\chi}=\overline{\chi}_1\cdot \tau_{A,A}$,
that is, $\overline{\chi}$ is an isomorphism if and only if $\overline{\chi}_1$ is so.
Thus, in order to show that (c) and (d) are equivalent, it is enough to show that the functor
$\phi_{(\bA^\tau)_l}:\V \to {_{\bA^\tau}\V}$ is comonadic if and only if the functor
$-\ot A:\V\to \V$ is monadic. Since $\V$ is assumed to have equalisers and
coequalisers,
this follows from Lemma \ref{lem-mon} and Proposition \ref{finite.reg}.
\end{proof}

\begin{remark} \em A closer examination of the proof of the previous theorem shows that if
a braided monoidal category $\V$ admits
\begin{itemize}
  \item coequalisers, then any left Azumaya $\V$-algebra is right Azumaya,
  \item equalisers, then any right Azumaya $\V$-algebra is left Azumaya.
\end{itemize}
\end{remark}
\smallskip

%\begin{thm} {\bf Azumaya algebras in braided closed monoidal categories.} \em
In the setting of \ref{op-alg}, by Proposition \ref{prop.1}, the assignment
$$V \; \longmapsto \; ((A\ot V,\, A\ot A  \ot V \xra{m^\tau\ot V} A \ot V),
 A \ot A  \ot V \xra{m \ot V} A  \ot V)$$
yields the comparison functor
$K:\V \to (\V_{(\mathcal{A}^\tau)_l})_{\widehat{ \mathcal{A}_l }}=
(_{\mathcal{A}^\tau}\!\V)_{\widehat{ \mathcal{A}_l }}$.
\smallskip

Now assume  $A\ot -:\V \to \V$ to have a right adjoint functor $[A,-]:\V \to \V$
with unit $\eta^A:1\to [A,A\ot-]$.
Then there is a unique comonad structure $\widehat{[A,-]}$ on  $[A,-]$
(right adjoint to $ \mathcal{A}_l $, see \ref{mod-comod})
leading to  the  commutative diagram
\begin{equation}\label{A-adj}
\xymatrix{ \V \ar[rr]^-{K} \ar[drr]_{\phi_{(\mathcal{A}^\tau)_l}} &
& ({_{\mathcal{A}^\tau}\V})_{\widehat{ \mathcal{A}_l }}
\ar[d]^{U_{\widehat{ \mathcal{A}_l }}} \ar[rr]^{\Psi}&&
({_{\mathcal{A}^\tau}\V})^{\widehat{[A,-]}} \ar[d]^{U^{\widehat{[A,-]}}}\\
&& {_{\mathcal{A}^\tau}\V} \ar[rr]^= & & {_{\mathcal{A}^\tau}\V}, }
\end{equation}
where $\Psi=\Psi_{(\bA^\tau)_l}$. This is just the diagram (\ref{Azum.1b}) and
Theorem \ref{th.1} provides characterisations of left Azumaya algebras.
%We can essentially rewrite
%the assertions of Theorem \ref{th.1.Alg}, replacing the right adjoint
%functor $A^\sharp\ot- $ by $[A,-]$.

\begin{theorem} \label{th.1a.alg}
Let $\bA=(A,m,e)$ be an algebra in a braided monoidal category $(\V, \ot,I, \tau)$
and assume $A\ot-$ to have a right adjoint $[A,-]$ (see above).
Then the following are equivalent:
\begin{blist}
\item $ \mathcal{A}$ is left Azumaya;
\item the functor $\phi_{(\mathcal{A}^\tau)_l}:\V \to {_{\mathcal{A}^\tau}\!\V}$
is comonadic and,
 for any $V\in \V$, the composite:
  $$\begin{array}{rr}
\chi_V:A \ot A  \ot V \xra{(\eta^A)_{A  \ot A  \ot V}}& [A, A \ot A  \ot A  \ot V]
\xra{[A,m \ot A  \ot V]} [A, A \ot A  \ot A] \\[+1mm]
& \xra{[A, \tau_{A,A}\ot V]} [A, A  \ot A \ot V]\xra{[A, m \ot V]}[A, A  \ot V]
\end{array}$$ is an isomorphism;
\item  $A$ is finite, the functor $\phi_{(\mathcal{A}^\tau)_l}: \V \to {_{A^\tau}\!\V}$ is comonadic,
  and the composite
$$
\chi_I:A \ot A   \xra{(\eta^A)_{A  \ot A  }} [A, A \ot A  \ot A]
\xra{[A,m \ot A  ]}[A, A \ot A  ] \xra{[A, m^\tau]}[A, A]
$$
is an isomorphism.
\end{blist}
\end{theorem}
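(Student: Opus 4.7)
The plan is to deduce this theorem as a direct application of Theorem \ref{th.1} to the monad $\mathcal{A}_l = A \ot -$ on $\V$. First, I would verify the hypotheses of that theorem: tensoring the braiding $\tau_{A,A}$ on the right with the identity yields an invertible natural transformation $\la : \mathcal{A}_l\mathcal{A}_l \to \mathcal{A}_l\mathcal{A}_l$ with components $\tau_{A,A}\ot V$. The hexagon axioms together with naturality of $\tau$ make $\la$ an invertible monad distributive law satisfying the Yang--Baxter equation, hence an invertible BD-law; moreover the induced opposite monad $\mathcal{A}_l^{\la}$ coincides with $(\mathcal{A}^\tau)_l$ in the notation of \ref{op-alg}. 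The hypothesis that $A\ot -$ admits a right adjoint $[A,-]$ supplies the right adjoint comonad $\bR$ required in Theorem \ref{th.1}.

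For (a)$\LRa$(b), I would invoke Theorem \ref{th.1} with $F = A\ot -$, $R = [A,-]$, and the $\la$ above. It gives that $\mathcal{A}_l$ is Azumaya if and only if $\phi_{\mathcal{A}_l^{\la}} = \phi_{(\mathcal{A}^\tau)_l}$ is comonadic and the natural transformation $\chi : FF \to RF$ of Theorem \ref{th.1}(b)(ii) is an isomorphism. Unpacking $\chi$ componentwise at an object $V$, using that the unit $\overline\eta$ of $A\ot -\dashv [A,-]$ is $\eta^A$ and that $R$ acts on morphisms as $[A,-]$, one recognises $\chi_V$ as exactly the four-fold composite displayed in (b). The implication (b)$\Ra$(c) is then immediate: since (b) implies $\mathcal{A}$ is left Azumaya, Proposition \ref{finite} shows that $A$ is finite.

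For (c)$\Ra$(b) I would use that when $A$ is finite with right dual $A^\sharp$, the functor $A^\sharp\ot -$ is also right adjoint to $A\ot -$, so uniqueness of right adjoints yields a natural isomorphism $[A,-]\cong A^\sharp\ot -$. Under this identification, each of the four arrows in the definition of $\chi_V$ is obtained from the corresponding arrow in $\chi_I$ by tensoring on the right with $V$: the unit $(\eta^A)_{A\ot A\ot V}$ becomes $\db'\ot A\ot A\ot V$ and each $[A, f\ot V]$ becomes $A^\sharp\ot f\ot V$. Hence $\chi_V \cong \chi_I\ot V$ naturally in $V$, so $\chi_I$ being an isomorphism forces every $\chi_V$ to be one.

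The main obstacle is the diagrammatic unpacking in the first step: verifying that the abstract composite $\chi$ constructed in Theorem \ref{th.1} via the mate of the multiplication through the adjunction $F\dashv R$ specialises to the concrete composite $\chi_V$ written in (b). This amounts to a careful but routine diagram chase tracking the mate correspondence under $A\ot - \dashv [A,-]$, and in particular identifying $R(m_{F(V)})$ with $[A, m\ot A\ot V]$ and $R(\la_V)$ with $[A,\tau_{A,A}\ot V]$.
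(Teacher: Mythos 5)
Your proposal is correct and follows essentially the same route as the paper: (a)$\LRa$(b) is Theorem \ref{th.1} applied to $\mathcal{A}_l$ with the BD-law $\tau_{A,A}\ot-$, and the passage to (c) rests on Proposition \ref{finite} together with the natural isomorphism $[A,-]\simeq A^\sharp\ot-$ identifying $(\eta^A)_V$ with $\db'\ot V$ (the paper phrases this as rewriting $\overline{\chi}$ from Theorem \ref{th.1.Alg}(d), which is the same computation as your $\chi_V\cong\chi_I\ot V$).
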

\begin{proof} (a)$\LRa$(b) follows by Theorem \ref{th.1}.

(a)$\LRa$(c)
Since $A$ turns out to be finite, there is a right dual
$(A^\sharp, \db', \ev')$ of $A$. Then
 $A^\sharp \ot \,-\, :\V \to \V$ and $[A, \,-\,] :\V \to \V$  are both right adjoint
to  $A \ot \,-\, :\V \to \V$,
and thus there is an isomorphism of functors $t: [A, \,-\,] \to A^\sharp \ot \,-\,$
inducing the commutative diagram
\begin{equation}\label{triangle}
\xymatrix{
V \ar[rrd]_{\db' \ot V}\ar[rr]^{(\eta^A)_V\quad} & & [A, A \ot V] \ar[d]^{t_{A \ot V}}\\
&& A^\sharp \ot A \ot V\,.}
\end{equation}
Rewriting the morphism $\bar\chi$ from \ref{th.1.Alg}(d) yields the morphism
$\chi_I$ in (c).
\end{proof}

Considering the symmetric situation we get:

\begin{theorem} \label{th.1a.alg.d}
Let $\bA=(A,m,e)$ be an algebra in a braided monoidal category $\V$
and assume $-\ot A$ to have a right adjoint $\{A,-\}$ with unit $\eta_A: 1 \to \{A, \,-\, \ot A\}$.
Then the following are equivalent:
\begin{blist}
\item $ \mathcal{A}$ is right Azumaya;
\item the functor $\phi_{(\mathcal{A}^\tau)_r}:\V \to \V{_{\mathcal{A}^\tau}}$ is comonadic and
 for any $V\in \V$, the composite
  $$\begin{array}{rr}
\chi'_V: V\ot A \ot A \xra{V \ot (\eta_A)_{A  \ot A }}& \{A,V \ot A \ot A  \ot A\}
\xra{\{A,V \ot m \ot A \}} \{A, V \ot A \ot A\} \\[+1mm]
& \xra{\{A, V \ot \tau_{A,A}\}} \{A, V \ot A  \ot A\}\xra{\{A, V \ot m \}}\{A, V  \ot A\}
\end{array}$$ is an isomorphism;
\item  $A$ is finite, the functor $\phi_{(\mathcal{A}^\tau)_r}: \V \to \V_{A^\tau}$
is comonadic, and the composite
$$
\chi'_I:A \ot A   \xra{(\eta_A)_{A  \ot A  }} \{A, A \ot A  \ot A\}
\xra{\{A,m \ot A  \}}\{A, A \ot A \} \xra{\{A, m^\tau\}}\{A, A\}
$$
is an isomorphism.
\end{blist}
\end{theorem}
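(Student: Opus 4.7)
The plan is to mirror the proof of Theorem \ref{th.1a.alg}, with the monad $\bA_r = -\ot A$ replacing $\bA_l = A\ot -$ throughout. (Equivalently, one could interpret Theorem \ref{th.1a.alg} in the reversed braided monoidal category $\V^r$, in which a right $\V$-algebra becomes a left $\V^r$-algebra, and $\{A,-\}$ plays the role of $[A,-]$.) By \ref{op-alg}, the braiding $\tau_{A,A}:A\ot A\to A\ot A$ is an invertible monad distributive law of $(\bA^\tau)_r$ over $\bA_r$ satisfying the Yang--Baxter equation, hence an invertible BD-law for $\bA_r$. The adjunction $-\ot A \dashv \{A,-\}$ together with \ref{mod-comod} and \ref{DisAd} produces a comonad structure on $\{A,-\}$ right adjoint to $\bA_r$, and its lifting to $\V_{(\bA^\tau)_r} = \V_{\bA^\tau}$ fits into the general diagram (\ref{Azum.1b}) of \ref{Azu-ra}.

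For the equivalence (a) $\LRa$ (b), apply Theorem \ref{th.1} to the data $(\bA_r, \tau_{A,A}, \{A,-\})$. Condition (b)(i) there is precisely the comonadicity of $\phi_{(\bA^\tau)_r}:\V\to \V_{\bA^\tau}$. The natural transformation $\chi$ of Theorem \ref{th.1}(b)(ii) is built out of the unit $\overline{\eta}$ of the adjunction $-\ot A \dashv \{A,-\}$, the multiplication $-\ot m$ of $\bA_r$, and the BD-law $-\ot \tau_{A,A}$; a direct unpacking identifies its $V$-component with the five-fold composite $\chi'_V$ displayed in the statement. Hence (a) $\LRa$ (b).

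For the equivalence (a) $\LRa$ (c), first note that if $\bA$ is right Azumaya then Proposition \ref{l-r-azu} makes $\bA^\tau$ left Azumaya, so Proposition \ref{finite} applied to $\bA^\tau$ forces $A$ to be finite. Conversely, assuming $A$ has a left dual $(A^*,\db,\ev)$, the functor $-\ot A^*$ is right adjoint to $-\ot A$, so there is a natural isomorphism $t:\{A,-\}\xra{\sim} -\ot A^*$ uniquely determined by the condition that $t_{V\ot A}\cdot (\eta_A)_V = V\ot \db$. Transporting $\chi'_V$ along $t$ and using naturality of $\tau$ and associativity of $m$, one checks that $t \cdot \chi'_V$ equals $V\ot(t\cdot \chi'_I)$, so that invertibility of $\chi'_V$ for every $V$ reduces to invertibility of $\chi'_I$. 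Together with the equivalence already established, this gives (a) $\LRa$ (c).

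The main technical obstacle is the bookkeeping in the reduction $\chi'_V \cong V\ot \chi'_I$ under the canonical isomorphism $t:\{A,-\}\simeq -\ot A^*$, since $\chi'_V$ involves two applications of $m$, an instance of $\tau_{A,A}$ and the unit $\eta_A$; once this step is carried out, the remainder is a formal consequence of Theorem \ref{th.1}, Proposition \ref{l-r-azu}, and Proposition \ref{finite}, exactly as in the proof of Theorem \ref{th.1a.alg}.
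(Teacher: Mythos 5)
Your strategy is correct and is exactly what the paper intends: Theorem \ref{th.1a.alg.d} is introduced only with ``considering the symmetric situation,'' so the proof is the mirror of that of Theorem \ref{th.1a.alg} --- apply Theorem \ref{th.1} to $(\bA_r,\tau_{A,A},\{A,-\})$ for (a)$\LRa$(b), obtain finiteness of $A$ from Propositions \ref{l-r-azu} and \ref{finite} (or the $\V^r$-version of the latter), and use the isomorphism $\{A,-\}\simeq -\ot A^*$ coming from a left dual to reduce $\chi'_V$ to $V\ot\chi'_I$ for (a)$\LRa$(c). Your reduction $t\cdot \chi'_V=V\ot(t_A\cdot\chi'_I)$ is sound, since after transport along $t$ every arrow of $\chi'_V$ has the form $V\ot(-)$.

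However, the one computation you assert without carrying out --- that unpacking the $\chi$ of Theorem \ref{th.1}(b)(ii) for $F=-\ot A$ yields ``the five-fold composite $\chi'_V$ displayed in the statement'' --- is false for the composite as printed. Theorem \ref{th.1} puts $R(mF)$ in the second slot, and for $F=-\ot A$ one has $(mF)_V=m_{V\ot A}=V\ot A\ot m$, so the correct mirror of $[A,\,m\ot A\ot V]$ in Theorem \ref{th.1a.alg}(b) is $\{A,\,V\ot A\ot m\}$, not $\{A,\,V\ot m\ot A\}$. The difference is not cosmetic: since $(\eta_A)_W$ places the new tensor factor on the right, the printed $\chi'_I$ equals $\{A,m^\tau\}\cdot(\eta_A)_A\cdot m$ by naturality of $\eta_A$, so it factors through $m:A\ot A\to A$ and can essentially never be invertible. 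This appears to be a typo in the paper's statement (propagated to Theorem \ref{faithfully}(d)); an actual ``direct unpacking'' on your part would have produced $\{A,\,V\ot A\ot m\}$ and exposed it. With that correction, your argument goes through verbatim.
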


\begin{remark} \em In \cite {VO-Z}, F. van Oystaeyen and Y. Zhang
defined {\em Azumaya algebras $\bA=(A,m,e)$ in $\V$} by
requiring $\bA$ to be left and right Azumaya in our sense (see \ref{azu-def}).
The preceding theorems \ref{th.1a.alg} and \ref{th.1a.alg.d}
correspond to the characterisation of these algebras in \cite[Theorem 3.1]{VO-Z}.
As shown in Theorem \ref{th.1.Alg.1}, if $\V$ admits equalisers and coequalisers it is
sufficient to require the Azumaya property on one side.
\end{remark}

Given an adjunction $(\db, \ve: V^* \dashv V)$ in $\V$, we know from \ref{duality} that $\mathcal{S}_{V, V^*}=V \ot V^*$ is
a $\V$-algebra. Moreover, it is easy to see that the morphism $V^* \ot V \ot V^* \xra{\ev \ot V^*} V^*$
defines a left $\mathcal{S}_{V, V^*}$-module structure on $V^*$, while the composite
$V\ot V^* \ot V \xra{V \ot \ev} V$ defines a right $\mathcal{S}_{V, V^*}$-module structure on $V$.

Recall from \cite{VO-Z} that an object $V\in \V$ with a left dual
$(V^*, \db, \ev)$ is \emph{right faithfully projective}
if the morphism $\overline{\ev}:V^* \ot_{\mathcal{S}_{V, V^*}} V \to I$
induced by $\ev: V^* \ot V \to I$  is an isomorphism. Dually, an object $V\in \V$ with a right dual $(V^\sharp, \db', \ev')$
is \emph{left faithfully projective} if the morphism
$\overline{\ev'}:V \ot_{\mathcal{S}_{V^\sharp, V}} V^\sharp \to I$ induced by
$\ev' :V \ot V^\sharp \to I$ is an isomorphism.

Since in a braided monoidal category an object is left faithfully projective
if and only if it is right faithfully projective (e.g. \cite[Proposition 4.14.]{F}), we do not have
to distinguish between left and right faithfully projective objects and we shall call them just faithfully projective.

\begin{theorem} \label{faithfully}
Let $(\V, \ot,I, \tau)$ be a braided closed monoidal category with equalisers and coequalisers. Let $\bA=(A,m,e)$ be
a $\V$-algebra such that the functor $A \ot \,-\,$ admits a right adjoint $[A,-]$ (and hence the functor
$\,-\,\ot A$ also admits a right adjoint $\{A,-\}$). Then the following are equivalent:
\begin{blist}
\item $ \mathcal{A}$ is left Azumaya;
\item $ \mathcal{A}$ is right Azumaya;
\item  $A$ is faithfully projective and the composite
$$
A \ot A   \xra{(\eta^A)_{A  \ot A  }} [A, A \ot A  \ot A]
\xra{[A,m \ot A  ]}[A, A \ot A  ] \xra{[A, m^\tau]}[A, A],
$$ where $\eta^A$ is the unit of the adjunction $A \ot \,-\, \dashv [A,-]$,
is an isomorphism.
\item  $A$ is faithfully projective and the composite
$$
A \ot A   \xra{(\eta_A)_{A  \ot A  }} \{A, A \ot A  \ot A\}
\xra{\{A,m \ot A  \}}\{A, A \ot A \} \xra{\{A, m^\tau\}}\{A, A\},
$$ where $\eta_A$ is the unit of the adjunction $\,-\,\ot A \dashv \{A,-\}$,
is an isomorphism.
\end{blist}
\end{theorem}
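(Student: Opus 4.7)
The plan is to deduce the four-way equivalence by reducing to the characterisations of left and right Azumaya algebras already established.

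First, (a)$\Leftrightarrow$(b) is immediate from Theorem \ref{th.1.Alg.1}, since $\V$ admits equalisers and coequalisers. Next, the composite displayed in (c) is precisely the composite $\chi_I$ appearing in Theorem \ref{th.1a.alg}(c), and similarly the composite in (d) coincides with $\chi'_I$ of Theorem \ref{th.1a.alg.d}(c). Consequently, via those two theorems, the problem reduces to showing that, in the closed setting considered here, the conjunction ``$A$ is finite and $\phi_{(\mathcal{A}^\tau)_l}$ is comonadic'' is equivalent to ``$A$ is faithfully projective'' (and dually on the right).

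For the implication (a)$\Rightarrow$(c): by Proposition \ref{finite}, $A$ is finite, so there is a left dual $(A^*,\db,\ev)$. Combining the Azumaya equivalence $\overline{K}_l:\V\simeq {_{\mathcal{A}^e}\V}$, which carries $I$ to $A$, with the Galois algebra isomorphism $\overline{\chi}_0:\mathcal{A}^e\cong \mathcal{S}_{A,A^*}$ supplied by Theorem \ref{th.1.Alg}(c), I obtain an equivalence $\V\simeq {_{\mathcal{S}_{A,A^*}}\V}$ that still sends $I$ to $A$. A standard Morita-type argument in the braided monoidal setting, using the closedness of $\V$ and the left dual $A^*$, identifies a quasi-inverse with the functor $A^*\ot_{\mathcal{S}_{A,A^*}}-$. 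Evaluating at $A$ yields $A^*\ot_{\mathcal{S}_{A,A^*}}A\cong I$, which is precisely faithful projectivity of $A$. The isomorphism of the displayed composite in (c) is then immediate from Theorem \ref{th.1a.alg}(c).

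For the converse (c)$\Rightarrow$(a): faithful projectivity of $A$ supplies both finiteness and that $\overline{\ev}$ is an isomorphism; in particular, $\ev$ is a regular epimorphism. Since $\V$ is closed, tensoring preserves coequalisers and so preserves regular epimorphisms, making $\ev$ a copure epimorphism. Proposition \ref{finite.reg} then yields that $A\ot-$ is conservative, whence Lemma \ref{lem-mon}(d) makes $\phi_{(\mathcal{A}^\tau)_l}$ comonadic. Combining this with the assumed isomorphism of the displayed composite (which equals $\chi_I$), Theorem \ref{th.1a.alg} concludes that $\mathcal{A}$ is left Azumaya. The equivalence (b)$\Leftrightarrow$(d) then follows by the symmetric argument using Theorem \ref{th.1a.alg.d}.

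The main obstacle is the Morita-theoretic identification of the quasi-inverse of $\overline{K}_l$ with $A^*\ot_{\mathcal{A}^e}-$ in the forward direction. This requires care in the braided setting, and it is precisely here that both the closedness of $\V$ (supplying the internal hom functors $[A,-]$ and $\{A,-\}$) and the existence of left and right duals of $A$ are decisively used; the other steps are routine applications of Proposition \ref{finite.reg}, Lemma \ref{lem-mon}, and the closedness-based preservation of regular epimorphisms by the tensor product.
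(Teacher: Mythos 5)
Your overall reduction coincides with the paper's: (a)$\LRa$(b) via Theorem \ref{th.1.Alg.1}, then matching the displayed composites with $\chi_I$ and $\chi'_I$ of Theorems \ref{th.1a.alg} and \ref{th.1a.alg.d}, so that everything hinges on the equivalence ``$A$ is finite and $\phi_{(\bA^\tau)_l}$ is comonadic'' $\LRa$ ``$A$ is faithfully projective''. Where you diverge is in how that last equivalence is established. The paper disposes of it in one line by quoting \cite[2.5.1, 2.5.2]{CF} --- $A$ is faithfully projective iff $A$ is finite and $A\ot-$ is conservative --- together with Lemma \ref{lem-mon} and Proposition \ref{finite.reg}. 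You re-prove both directions instead. Your (c)$\Ra$(a) chain (faithful projectivity $\Ra$ $\ev$ regular epi $\Ra$ copure epi, since tensoring preserves coequalisers in a closed category $\Ra$ conservativity by Proposition \ref{finite.reg} $\Ra$ comonadicity by Lemma \ref{lem-mon}) is correct and is essentially the content of Remark \ref{finite.reg-rem}; this buys self-containedness. The cost sits in your (a)$\Ra$(c) direction, which rests on an unelaborated ``standard Morita-type argument'' identifying a quasi-inverse of $\overline{K}_l$ with $A^*\ot_{\mathcal{S}_{A,A^*}}-$. That is genuinely the hardest part of what the [CF] citation covers, and as written it has a gap of rigour: an Eilenberg--Watts-type identification (in the spirit of the appeal to \cite[Theorem 4.2]{P2} in the proof of Proposition \ref{finite}) produces \emph{some} isomorphism $A^*\ot_{\mathcal{S}_{A,A^*}}A\simeq I$, whereas faithful projectivity by definition requires the \emph{specific} morphism $\overline{\ev}$ induced by $\ev$ to be invertible; so you must additionally verify that the counit of your equivalence at $I$ is computed by $\overline{\ev}$ under all the identifications involved, including the transport of the module structure along $\overline{\chi}_0$. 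This can be done, but it needs to be said; alternatively, invoking the [CF] characterisation as the paper does avoids the issue entirely, since left Azumaya already gives $A\ot-$ monadic (hence conservative) and $A$ finite by Proposition \ref{finite}.
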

\begin{proof} That (a) and (b) are equivalent follows from Theorem \ref{th.1.Alg.1}.

(a)$\LRa$(c) Since in both cases $A$ is finite and thus the functor
$A \ot \,-\,:\V \to \V$
has both left and right adjoints, in view of Proposition \ref{finite.reg},
we get from Lemma \ref{lem-mon} that the functor
$\phi_{(\mathcal{A}^\tau)_l}:\V \to {_{\mathcal{A}^\tau}\!\V}$
is comonadic if and only if
 the functor $A \ot \,-\,:\V \to \V$ is conservative.
According to \cite[2.5.1, 2.5.2]{CF}, $A$ is
 faithfully projective if and only if $A$ is finite and the functor
$A \ot \,-\,:\V \to \V$ is conservative and hence
 the equivalence of (a) and (c) follows by Theorem \ref{th.1a.alg}.

 Similarly, one proves that (b) and (d) are equivalent.
\end{proof}

\begin{thm}\label{braided.mon} {\bf Braided closed monoidal categories.} \em
A braided  monoidal category $\V$ %$(\V, \ot, I, \tau)$
 is said to be  \emph{left closed} if each functor $V\ot -: \V  \to \V$ has a
right adjoint $[V,-] : \V  \to \V$,
we write $\eta^V,\ev^V: V\ot - \dashv [V,-]$.
$\V$ is called {\em right closed}
if each functor $-\ot V: \V \to \V$ has a right adjoint $\{V,-\} : \V \to \V $,
we write $\eta_V,\ev_V:-\ot V \dashv \{V,-\}$.
$\V$ being braided left closed implies that $\V$ is also right closed.
So assume $\V$ to be closed.
\smallskip

If $\bA$ is a $\V$-algebra, and $(V,\rho_V)\in {_\bA}\V$, then for any $X \in \V$,
$$(V \ot X,\, A \ot V \ot X \xra{\rho_V \ot X} V \ot X)\in {_\bA\V},$$
and  the
assignment $X \to (V \ot X, \rho_V \ot X) $ defines a functor $V \ot \,-\, : \V \to {_\bA\V}$.
When $\V$ admits equalisers, this functor has a right adjoint ${_\bA[V, -]}:{_\bA\V} \to \V,$
where for any $(W, \rho_W) \in {_\bA\V}$,  ${_\bA}[V, W]$ is defined to be the equalizer in $\V$ of
$$\xymatrix{
 [V, W] \ar@{->}@<0.5ex>[r]\ar@{->}@<-0.5ex>[r] &
 [A \ot V,W]},$$ where one of the morphisms is  $[\varrho_V, W]$, and
the other one is the composition
$$[V,W] \xra{(A \ot \,-\,)_{V,W}}[A\ot V,A\ot W] \xra{[A\ot V,\,\rho_W]}[A\ot V,W].$$

Symmetrically, for $V,W \in \V_\bA$, one defines $\{V,W\}_\bA$.
\medskip

The functor
$\ol{K}=\Psi K:\V \to (_{\mathcal{A}^\tau}\V)^{\widehat{[A,-]}}$
(in diagram (\ref{A-adj})) has as right adjoint
$\overline{R}:(_{\mathcal{A}^\tau}\V)^{\widehat{[A,-]}} \to \V$ (see \ref{right-adj}),
and since $\Psi$
is an isomorphism of categories, the composition
$\overline{R}\,\Psi$ is right adjoint to the functor
$K:\V \to (_{\mathcal{A}^\tau}\V)_{\widehat{ \mathcal{A}_l }}$.
Using now that $\mathcal{P}$ (see \ref{iso-P})
is an isomorphism of categories, we conclude that $\overline{R}\,\Psi \mathcal{P}$ is
right adjoint to the functor $\mathcal{P}^{-1}K:\V \to {_{\mathcal{A}^e}\V}$.
For any $(V,h)\in {_{\mathcal{A}^e}\V }$, we put
$${^\mathcal{A}V}:=\overline{R}\,\Psi\mathcal{P}(V,h).$$
Taking into account the description of the functors
$\mathcal{P}$, $\Psi$ and $\overline{R}$, one gets that $^\mathcal{A}V$
can be obtained as the equaliser of the diagram
$$\xymatrix{V\ar[r]^-{(\eta^A)_V}&
[A, A\ot V] \ar@{->}@<0.5ex>[rr]^-{[A,e\ot A   \ot V]}
\ar@{->}@<-0.5ex> [rr]_-{[A, A  \ot e \ot V]} &&
[A, A \otimes A   \ot V] \ar@{->}@<0.5ex>[r]^-{[A,h]}\ar@{->}@<-0.5ex>[r]_-{[A,h]} &
 [A,V]}.$$
\end{thm}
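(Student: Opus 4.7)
The plan has two parts: (1) verify that ${_\bA[V,-]}$, defined by the displayed equaliser in $\V$, is right adjoint to the functor $V \ot -:\V \to {_\bA\V}$; (2) derive the iterated equaliser description of $^\mathcal{A}V$ by combining the general right-adjoint construction of \ref{right-adj} with the explicit forms of $\mathcal{P}$ and $\Psi$.

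For (1), I would establish the hom-set bijection ${_\bA\V}(V \ot X, W) \cong \V(X, {_\bA[V,W]})$, natural in $X \in \V$ and $(W,\rho_W) \in {_\bA\V}$. Starting from the closed-category isomorphism $\V(V \ot X, W) \cong \V(X, [V,W])$, a $\V$-morphism $f:V \ot X \to W$ lies in ${_\bA\V}$ precisely when $\rho_W \cdot (A \ot f) = f \cdot (\rho_V \ot X)$; transposing both sides across $V \ot - \dashv [V,-]$ and using the natural transformation $(A \ot -)_{V,W}:[V,W] \to [A \ot V, A \ot W]$ followed by $[A \ot V, \rho_W]$, $\bA$-linearity translates to the condition that the mate $\tilde f: X \to [V,W]$ equalises the two parallel arrows $[V,W] \rightrightarrows [A \ot V, W]$ whose equaliser was used to define ${_\bA[V,W]}$. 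Hence $\tilde f$ factors uniquely through ${_\bA[V,W]}$, giving the claimed hom-bijection; naturality in $X$ and $W$, and functoriality of ${_\bA[V,-]}$, follow from the universal property of equalisers. The symmetric assertion for $\{V,W\}_\bA$ is identical with left and right interchanged.

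For (2), I would apply \ref{right-adj} to the adjunction $\phi_{(\mathcal{A}^\tau)_l} \dashv U_{(\mathcal{A}^\tau)_l}$ and the lifted comonad $\widehat{[A,-]}$ on ${_{\mathcal{A}^\tau}\V}$. This produces $\overline{R}:({_{\mathcal{A}^\tau}\V})^{\widehat{[A,-]}} \to \V$ as right adjoint to $\overline{K} = \Psi K$, where $\overline{R}((W,\rho_W),\vartheta)$ is the equaliser of $\vartheta:W \to [A,W]$ and the composite $\gamma_{(W,\rho_W)}:W \to [A, A\ot W] \to [A,W]$ built from the canonical comonad morphism of the adjunction. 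Composing with the isomorphisms $\mathcal{P}$ and $\Psi$, one obtains $^\mathcal{A}V = \overline{R}\Psi\mathcal{P}(V,h)$. Substituting $\mathcal{P}(V,h) = ((A \ot V,\, e \ot A \ot V),\, h)$ and the $\widehat{[A,-]}$-coaction produced by $\Psi$ (which is the composite $A \ot V \xrightarrow{(\eta^A)_{A \ot V}}[A, A \ot A \ot V] \xrightarrow{[A,h]}[A, A \ot V]$), and using that the inner equaliser picking out ${_{\mathcal{A}^\tau}[A, A \ot V]}$ inside $[A, A \ot V]$ is cut out by the parallel pair $[A, e \ot A \ot V], [A, A \ot e \ot V]$, reassembles to the iterated equaliser $V \to [A, A \ot V] \rightrightarrows [A, A \ot A \ot V] \rightrightarrows [A, V]$ exactly as displayed.

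The main obstacle is bookkeeping: faithfully tracking the $\mathcal{A}^\tau$-module structure on $A \ot V$ (which involves the braiding via $m^\tau = m \cdot \tau_{A,A}$), the $\widehat{[A,-]}$-coaction transferred through $\Psi$, and matching them with the two-stage equaliser assembled from $\overline{R}$ combined with the defining equaliser of ${_{\mathcal{A}^\tau}[A, A \ot V]}$. No new categorical content is required beyond the explicit formulas already collected in Sections \ref{prel} and \ref{azu} together with routine unwinding of the closed-category adjunctions.
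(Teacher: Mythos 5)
Your overall strategy is the one the paper itself relies on: part (1) is the standard transposition argument (a morphism $f:V\ot X\to W$ in $\V$ is $\bA$-linear iff its mate $\tilde f:X\to [V,W]$ equalises $[\rho_V,W]$ and $[A\ot V,\rho_W]\cdot (A\ot -)_{V,W}$, hence factors uniquely through the defining equaliser of ${_\bA[V,W]}$), and part (2) is \ref{right-adj} applied to $\ol K=\Psi K$ followed by an unwinding of $\mathcal{P}$ and $\Psi$. Part (1) is correct as you describe it.

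In part (2), however, the explicit data you substitute are wrong, and the computation as written would not produce the displayed diagram. By \ref{iso-P}, $\mathcal{P}(V,h)$ has underlying object $V$, not $A\ot V$: it equals $\bigl((V,\,h\cdot(e\ot A\ot V)),\,h\cdot(A\ot e\ot V)\bigr)$, the first component being the $(\mathcal{A}^\tau)_l$-action and the second the $\widehat{\mathcal{A}_l}$-action; what you wrote, $((A\ot V,\,e\ot A\ot V),\,h)$, is a fragment of $K(V)$ instead. Consequently the $\widehat{[A,-]}$-coaction produced by $\Psi$ (see (\ref{Psi})) is a morphism $V\to [A,V]$, namely $[A,\,h\cdot(A\ot e\ot V)]\cdot(\eta^A)_V$; your formula $A\ot V\xra{(\eta^A)_{A\ot V}}[A,A\ot A\ot V]\xra{[A,h]}[A,A\ot V]$ does not typecheck, since $[A,h]$ lands in $[A,V]$. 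Then $\overline{R}$ from \ref{right-adj} equalises this coaction against $\gamma_{(V,\,h\cdot(e\ot A\ot V))}=[A,\,h\cdot(e\ot A\ot V)]\cdot(\eta^A)_V$ (computed as in \ref{central}(1)). Both legs share the initial factor $(\eta^A)_V$ and the terminal factor $[A,h]$ and differ only in the middle pair $[A,e\ot A\ot V]$ versus $[A,A\ot e\ot V]$ --- that is the entire content of the three-stage display. It is not obtained by splicing in ``the inner equaliser picking out ${_{\mathcal{A}^\tau}[A,A\ot V]}$'': by your own part (1) that subobject is cut out by a parallel pair into $[A\ot A,\,A\ot V]$, not by $[A,e\ot A\ot V]$ and $[A,A\ot e\ot V]$. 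With these corrections the argument closes and agrees with the paper.
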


The functor $\mathcal{P}^{-1}K:\V \to {_{\mathcal{A}^e}\V }$ is just the functor
$A\ot - : \V \to {_{\mathcal{A}^e}\V }$ and admits as a right adjoint the functor
${_{\mathcal{A}^e}}[A,-]:{_{\mathcal{A}^e}\V } \to \V$ (see \ref{braided.mon}).
As right adjoints are unique up to isomorphism, we get an alternative proof for
B. Femi\'c's \cite[Proposition 3.3]{F}:

\begin{proposition}
Let $\V$ be a braided closed monoidal category with equalisers. For any $\V$-algebra $\bA$,
the functors
$${^{\mathcal{A}}(-)},\, {_{\mathcal{A}^e}}[A,-]:{_{\mathcal{A}^e}\V } \to \V$$
 are isomorphic.
\end{proposition}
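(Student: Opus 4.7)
The plan is to show that both functors $^{\mathcal{A}}(-)$ and $_{\mathcal{A}^e}[A,-]$ are right adjoint to one and the same functor, namely $A\ot - : \V \to {_{\mathcal{A}^e}\V}$, and then invoke the uniqueness of right adjoints up to natural isomorphism.

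First I would verify that the composite $\mathcal{P}^{-1}K : \V \to {_{\mathcal{A}^e}\V}$ constructed in the paragraph preceding the proposition coincides with the comparison functor $\overline{K}_l$ from \ref{azu-def}, and hence with the functor $A\ot -$ viewed as landing in ${_{\mathcal{A}^e}\V}$. This is a direct unpacking: $K$ sends $V$ to $((A\ot V, m^\tau\ot V), m\ot V)$, and applying $\mathcal{P}^{-1}$ (described in \ref{iso-P}) yields precisely the $\mathcal{A}^e$-module $(A\ot V,\, (m\cdot (A\ot m^\tau))\ot V)$, which is $\overline{K}_l(V)$.

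Next, by construction $^\mathcal{A}(-) = \overline{R}\,\Psi\mathcal{P}$ is right adjoint to $\mathcal{P}^{-1}K = A\ot -$: indeed $\overline{R}$ is right adjoint to $\overline{K} = \Psi K$ by \ref{right-adj}, $\Psi$ and $\mathcal{P}$ are equivalences (in fact isomorphisms) of categories with the explicit inverses used in the definition, so composing adjunctions gives $A\ot - \dashv \overline{R}\,\Psi\mathcal{P}$. On the other hand, from the construction in \ref{braided.mon}, the functor $_{\mathcal{A}^e}[A,-]$ is by definition the right adjoint of $A\ot - : \V \to {_{\mathcal{A}^e}\V}$ produced from the equaliser description (the equalisers required exist by the standing hypothesis on $\V$).

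Having identified two right adjoints of the same functor, I would conclude by the usual uniqueness argument that there is a canonical natural isomorphism $^\mathcal{A}(-) \simeq {_{\mathcal{A}^e}}[A,-]$, compatible with the respective counits. The only step that requires some care, and which I regard as the main (though modest) obstacle, is the bookkeeping in the first step: one must check that the $\mathcal{A}^e$-module structure produced by the composite $\mathcal{P}^{-1}K$ is literally the structure used to define both $\overline{K}_l$ and the adjunction $A\ot - \dashv {_{\mathcal{A}^e}}[A,-]$ of \ref{braided.mon}; once this is verified, the rest of the argument is formal.
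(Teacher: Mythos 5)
Your proposal is correct and follows exactly the paper's route: the text preceding the proposition identifies $\mathcal{P}^{-1}K$ with $A\ot -:\V\to{_{\mathcal{A}^e}\V}$, exhibits $^{\mathcal{A}}(-)=\overline{R}\,\Psi\mathcal{P}$ as its right adjoint by composing the adjunction $\overline{K}\dashv\overline{R}$ with the isomorphisms $\Psi$ and $\mathcal{P}$, recalls from \ref{braided.mon} that ${_{\mathcal{A}^e}}[A,-]$ is also right adjoint to this functor, and concludes by uniqueness of right adjoints. Your extra care in checking that the $\mathcal{A}^e$-module structure produced by $\mathcal{P}^{-1}K$ agrees with the one defining $\overline{K}_l$ is exactly the bookkeeping the paper leaves implicit.
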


This isomorphism allows for further characterisations of Azumaya algebras.

\begin{theorem} \label{invariants}
Let $\V$ be a braided closed monoidal category with equalisers. Then
a $\V$-algebra $\mathcal{A}=(A,m,e)$ is left  Azumaya if and only if
\begin{rlist}
    \item  the morphism $e: I \to A$ is a pure monomorphism, and
    \item  for any $(V,h)\in {_{\mathcal{A}^e}\V } $,
with the canonical inclusion $i_V : {^\mathcal{A}V} \to V$,
 the composite
$$A\ot {^\mathcal{A}V}\xra{A\, \ot\, i_V} A \ot V
    \xra{A\ot \, e \,\ot V} A \ot A   \ot V \xra{h} V$$
  is an isomorphism.
\end{rlist}
\end{theorem}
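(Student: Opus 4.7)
The approach is to view the comparison functor $\overline{K}_l = A\ot(-):\V\to{_{\mathcal{A}^e}\V}$ as the left adjoint of ${^{\mathcal{A}}(-)}$ (the preceding proposition identifies the right adjoint with ${_{\mathcal{A}^e}}[A,-]$) and to read off the Azumaya condition as the requirement that both the unit $\eta_V:V\to{^{\mathcal{A}}(A\ot V)}$ and the counit $\varepsilon_{(V,h)}:A\ot{^{\mathcal{A}}V}\to V$ of this adjunction be isomorphisms. Conditions (ii) and (i) will correspond, respectively, to the counit and the unit being iso.

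\textbf{Step 1: identifying the counit.} Unpacking the adjoint transpose of $i_V:{^{\mathcal{A}}V}\to V$ through the natural isomorphism $\V({^{\mathcal{A}}V},{^{\mathcal{A}}V})\simeq{_{\mathcal{A}^e}\V}(A\ot{^{\mathcal{A}}V},V)$, and using the unit axiom of $\mathcal{A}$ together with the explicit $\mathcal{A}^e$-action on $A\ot{^{\mathcal{A}}V}$ inherited from $\overline{K}_l$, one identifies $\varepsilon_{(V,h)}$ with the composite $h\cdot(A\ot e\ot V)\cdot(A\ot i_V)$ displayed in (ii). Hence (ii) is equivalent to $\varepsilon$ being an isomorphism on all of ${_{\mathcal{A}^e}\V}$.

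\textbf{Step 2: the unit and condition (i).} From the equaliser construction of ${^{\mathcal{A}}(A\ot V)}$ in \ref{braided.mon}, the canonical inclusion $i_{A\ot V}$ is a regular monomorphism and $\eta_V$ is the unique morphism with $i_{A\ot V}\cdot\eta_V=e\ot V$. If $\bA$ is Azumaya then $\eta_V$ is an isomorphism, so $e\ot V$ is a regular monomorphism for every $V$, giving (i) (and the right-tensor statement by the braiding). Conversely, assuming (i) and (ii), the triangle identity $\varepsilon_{A\ot V}\cdot(A\ot\eta_V)=1_{A\ot V}$ at the standard $\mathcal{A}^e$-module structure on $A\ot V$ combined with (ii) shows that $A\ot\eta_V$ is an isomorphism. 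Purity of $e$ then upgrades this to $\eta_V$ being iso: first, $A\ot-:\V\to\V$ is faithful, since $A\ot f=A\ot g$ implies $(e\ot W)\cdot f=(e\ot W)\cdot g$ by naturality of $e\ot-$, whence $f=g$ as $e\ot W$ is monic; second, in the naturality square relating $\eta_V$, $A\ot\eta_V$, $e\ot V$ and $e\ot{^{\mathcal{A}}(A\ot V)}$, the bottom edge is iso while both verticals are regular monos, and the inverse of $A\ot\eta_V$ descends along the pure mono $e\ot V$ to produce an inverse to $\eta_V$.

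\textbf{Main obstacle.} The delicate point is the final descent step: passing from $A\ot\eta_V$ iso plus purity of $e$ to $\eta_V$ iso. Faithfulness of $A\ot-$ alone only yields $\eta_V$ mono and epi; to produce an honest inverse one must exploit that $e\ot V$ equalises a canonical pair and that, after applying $A\ot-$, this equaliser becomes identified with the defining equaliser of ${^{\mathcal{A}}(A\ot V)}$ via the iso $A\ot\eta_V$. This is where the braided closed monoidal structure and the regular-monomorphism property of $e\ot V$ genuinely enter, and it is the technical core of the proof.
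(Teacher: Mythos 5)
Your overall strategy is the same as the paper's: work with the adjunction $\ol{K}_l\dashv{^{\mathcal{A}}(-)}$, identify the counit with the composite in (ii) via formula (\ref{counit}), and relate the unit to $e\ot V$ through the equaliser presentation of ${^{\mathcal{A}}(A\ot V)}$. Step 1 is correct, and your forward derivation of (i) from $i_{A\ot V}\cdot\eta_V=e\ot V$ (regular mono composed with an iso) is a perfectly good, even more self-contained, substitute for the paper's appeal to comonadicity of $\phi_{(\bA^\tau)_l}$.

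The gap is in the converse, exactly at the step you yourself label the ``technical core.'' From (ii) and the triangle identity you correctly obtain that $A\ot\eta_V$ is an isomorphism, but neither of your proposed routes from there to $\eta_V$ being an isomorphism is valid. Faithfulness of $A\ot-$ only reflects monomorphisms and epimorphisms, so it yields $\eta_V$ mono and epi, not iso. The naturality-square argument --- bottom edge iso, both verticals regular monos, hence top edge iso --- is false in general: a commutative square with monic verticals and invertible bottom says nothing about the top edge being epi (take the top to be a proper regular subobject inclusion with identity bottom). The asserted ``descent of the inverse of $A\ot\eta_V$ along the pure mono $e\ot V$'' is precisely the missing content; to make it work you would have to prove that $e\ot V$ is itself the equaliser of the pair defining ${^{\mathcal{A}}(A\ot V)}$, which you do not do. What is actually needed is that purity of $e$ forces $A\ot-$ (equivalently $\phi_{(\bA^\tau)_l}$, equivalently $\ol{K}_l$) to be conservative; the paper gets this from \cite[Theorem 2.1(2.(i))]{Me} (which in fact gives comonadicity) and then concludes as in the proof of Theorem \ref{th.2}(ii), since a conservative left adjoint with invertible counit and invertible $\ol{K}_l(\eta)$ is an equivalence. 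So either import that conservativity/comonadicity criterion or supply the equaliser argument in full; as written, the proof is incomplete at this point.
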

\begin{proof} The $\V$-algebra $\mathcal{A}$ is left Azumaya provided the functor
$\ol{K}_l:\V \to {_{\mathcal{A}^e}\!\V}$ is an equivalence of categories.
It follows from equation (\ref{counit}) that the composite
$$h \cdot (A\ot e\ot V) \cdot (A \ot i_V): \emph{A} \ot  {^\mathcal{A}}V\to V$$
is just the $\Psi\mathcal{P}(V,h)$-component of the counit of
$\ol{K}_l \dashv \overline{R}$ and hence is an isomorphism.
Moreover, by Proposition \ref{P.1.7}, the functor
$\phi_{(\mathcal{A}^\tau)_l}:\V \to {_\mathcal{A}\V}$
is comonadic, whence the morphism $e: I \to A$ is a pure monomorphism
(e.g. \cite[Theorem 2.1(2.(i))]{Me}). This proves one direction.

For the other direction we note that, under the conditions (i) and (ii),
the counit of the adjunction
$\mathcal{P}^{-1}\ol{K}_l \dashv \overline{R}\,\Psi\mathcal{P}$
(and hence also of the adjunction $\ol{K}_l=\Psi K \dashv \overline{R}$) is an
isomorphism and the functor $\phi_{(\mathcal{A}^\tau)_l}$ (and hence also $\ol{K}_l$)
is conservative (again \cite[Theorem 2.1(2.(i))]{Me}), implying
(as in the proof of Theorem \ref{th.2} (ii)) that $\ol{K}_l$
is an equivalence of categories.
\end{proof}

Symmetrically, for any $(V,h) \in \V_{^e\V}$ defining $V^\bA$ as the equaliser of the diagram
$$\xymatrix{V\ar[r]^-{(\eta_A)_V}&
\{A, V\ot A\} \ar@{->}@<0.5ex>[rr]^-{\{A,V \ot A \ot e\}}
\ar@{->}@<-0.5ex> [rr]_-{\{A,V\, \ot e\, \ot A\}} &&
[A, V \otimes A   \ot A\} \ar@{->}@<0.5ex>[r]^-{\{A,h\}}\ar@{->}@<-0.5ex>[r]_-{\{A,h\}} &
 \{A,V\}},$$ one has an isomorphism of functors
$(\,-\,)^\bA , \{A, \,-\,\}_{^e\!\bA}:\V_{^e\!\bA} \to \V$.

Dualising the previous theorem gives:

\begin{theorem} \label{invariants.d}
Let $\V$ be a braided closed monoidal category with equalisers. Then
a $\V$-algebra $\mathcal{A}=(A,m,e)$ is right Azumaya if and only if
\begin{rlist}
    \item  the morphism $e: I \to A$ is a pure monomorphism, and
    \item  for any $(V,h)\in {\V_{^e\V}} $,
with the canonical inclusion $i_V : {V^\bA} \to V$, the composite
$$V^\bA \ot V\xra{i_V \ot\, A} V \ot A
    \xra{V\ot \, e\, \ot A} V \ot A \ot A  \xra{h} V$$
  is an isomorphism.
\end{rlist}
\end{theorem}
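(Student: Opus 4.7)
The plan is to mirror the argument for Theorem \ref{invariants} on the right-hand side, using the comparison functor $\overline{K}_r:\V \to \V_{{^e\!\bA}}$ from \ref{azu-def} in place of $\overline{K}_l$. By Definition \ref{azu-def}, $\bA$ is right Azumaya precisely when $\overline{K}_r$ is an equivalence of categories, so everything reduces to analysing the counit of a suitable adjunction $\overline{K}_r \dashv \overline{R}'$ and applying Proposition \ref{P.1.7} together with the monadicity criterion \cite[Theorem 2.1(2.(i))]{Me}.

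First I would set up, symmetrically to \ref{braided.mon}, the right-adjoint $\overline{R}'\Psi'\mathcal{P}'$ of $\mathcal{P}'^{-1}K':\V \to \V_{{^e\!\bA}}$, where $K'$, $\Psi'$, $\mathcal{P}'$ are the right-handed analogues of the functors appearing in diagram (\ref{A-adj}) (obtained by applying the left-handed constructions to $\V^r$, or equivalently by using the monad $\bA_r$ with its BD-law $\tau_{A,A}$ and the right-internal-hom $\{A,-\}$). The description of $\overline{R}'$ via \ref{right-adj} gives exactly the equaliser defining $V^{\bA}$ in the statement, so one obtains an isomorphism of functors
$(-)^{\bA}\simeq \{A,-\}_{^e\!\bA}:\V_{^e\!\bA}\to \V$
by the same uniqueness-of-right-adjoints argument used right before Theorem \ref{invariants}. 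Moreover, the formula (\ref{counit}) identifies the $\Psi'\mathcal{P}'(V,h)$-component of the counit of $\overline{K}_r \dashv \overline{R}'$ with the composite
$V^{\bA}\ot A \xra{i_V\ot A} V\ot A \xra{V\ot e\ot A} V\ot A\ot A \xra{h} V$
appearing in (ii).

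For the ``only if'' direction: if $\bA$ is right Azumaya then $\overline{K}_r$ is an equivalence, hence its counit is an isomorphism, which gives (ii); and by Proposition \ref{P.1.7} the functor $\phi_{(\bA^\tau)_r}:\V \to \V_{\bA^\tau}$ is comonadic, which by \cite[Theorem 2.1(2.(i))]{Me} forces $e:I\to A$ to be a pure monomorphism, giving (i). For the ``if'' direction: condition (ii) says exactly that the counit of $\overline{K}_r \dashv \overline{R}'$ is a (pointwise) isomorphism, while (i) together with \cite[Theorem 2.1(2.(i))]{Me} implies that $\phi_{(\bA^\tau)_r}$, and therefore also $\overline{K}_r$, is conservative. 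A conservative functor whose counit is an isomorphism is an equivalence (as in the proof of Theorem \ref{th.2}(ii) and of Theorem \ref{invariants}), so $\overline{K}_r$ is an equivalence and $\bA$ is right Azumaya.

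The only non-routine step is verifying that the right-handed analogues of $\mathcal{P}$, $\Psi$, and $\overline{R}$ really do compose to the equaliser formula given for $V^{\bA}$; this is essentially bookkeeping with the braiding $\tau_{A,A}$ and the opposite multiplication $m^\tau$, since on the right-hand side the roles of $e\ot A\ot V$ and $A\ot e\ot V$ are played by $V\ot A\ot e$ and $V\ot e\ot A$, and $[A,-]$ is replaced by $\{A,-\}$. Once this identification is in place, the rest of the argument is a verbatim transcription of the proof of Theorem \ref{invariants}.
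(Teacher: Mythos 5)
Your proposal is correct and follows essentially the same route as the paper, which disposes of this theorem simply by the remark ``Dualising the previous theorem gives:'' after recording the equaliser defining $V^{\bA}$ and the isomorphism $(-)^{\bA}\simeq \{A,-\}_{^e\!\bA}$; your write-up is just a fleshed-out transcription of the proof of Theorem \ref{invariants} in the mirrored (right-handed) setting, including the correct identification of the counit component and the use of \cite[Theorem 2.1(2.(i))]{Me} for purity of $e$ and conservativity.
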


\begin{definition} \em A $\V$-algebra $\bA$ is called \emph{left} (resp. \emph{right})
\emph{central} if there is an isomorphism $I \simeq {_{\mathcal{A}^e}}[A,-]$
(resp. $I \simeq \{A, \,-\,\}_{^e\!\bA}$). $\bA$ is called \emph{central} if it is
both left and right central.
\end{definition}

\begin{proposition}Let $\V$ be a braided closed monoidal category with equalisers. Then
\begin{rlist}
  \item   any left (resp. right) Azumaya algebra is left (resp. right) central;
  \item   if, in addition, $\V$ admits also coequalisers, then any $\V$ algebra that is Azumaya
  on either side is central.
\end{rlist}
\end{proposition}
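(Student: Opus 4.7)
The plan is to reduce both parts to results already established in the excerpt: Theorem \ref{invariants} (which identifies the right adjoint of $\overline{K}_l$ with the invariants functor $^\bA(-)\simeq {_{\bA^e}}[A,-]$) and Theorem \ref{th.1.Alg.1} (which identifies left and right Azumaya algebras once $\V$ has both limits and colimits). Concretely, I read the condition ``$I\simeq {_{\bA^e}}[A,-]$'' in the definition of left central as the statement that the unit $\bA^e$-module $A=\overline{K}_l(I)$ has trivial center ${_{\bA^e}}[A,A]\simeq I$; the proof strategy below fits either this reading or the slightly stronger reading that the unit of the adjunction $\overline{K}_l \dashv {^\bA(-)}$ is an isomorphism.

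For (i), suppose $\bA$ is left Azumaya. By definition the comparison $\overline{K}_l:\V\to {_{\bA^e}}\V$ is an equivalence, so in particular it is fully faithful and its right adjoint $\overline{R}$ is a quasi-inverse. By the identification $\overline{R}\simeq {^\bA(-)}\simeq {_{\bA^e}}[A,-]$ recorded just before Theorem \ref{invariants}, applying $\overline{R}$ to $A=\overline{K}_l(I)$ yields ${_{\bA^e}}[A,A]\simeq {^\bA A}\simeq I$, so $\bA$ is left central. (If one prefers the reading as an isomorphism of functors, one uses instead that fully faithfulness of $\overline{K}_l$ amounts to the unit $V\to {^\bA(A\otimes V)}={_{\bA^e}}[A,\overline{K}_l(V)]$ being a natural isomorphism, and this is immediate from the equivalence.) The right-handed version is completely symmetric, using $\overline{K}_r$ and the functor $\{A,-\}_{^e\bA}$ appearing just before Theorem \ref{invariants.d}.

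For (ii), suppose in addition that $\V$ has coequalisers, and assume $\bA$ is Azumaya on one side, say left Azumaya. Theorem \ref{th.1.Alg.1} then gives that $\bA$ is also right Azumaya. Applying (i) on both sides yields that $\bA$ is simultaneously left and right central, that is, central. The case where $\bA$ is assumed right Azumaya is identical.

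There is no real obstacle here beyond correctly invoking the identifications ${^\bA(-)}\simeq {_{\bA^e}}[A,-]$ and its dual, which are already proved (as an application of uniqueness of right adjoints, via \ref{braided.mon} and the proposition attributed to Femi\'c). The only subtle point to be careful about is the interpretation of the definition of ``central'': whichever reading is adopted, the argument is just that an equivalence of categories sends the unit object of the source to the distinguished object of the target and has an inverse that sends it back, combined with the two-sided Azumaya equivalence of Theorem \ref{th.1.Alg.1} to get part (ii).
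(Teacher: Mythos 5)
Your proof is correct and follows essentially the same route as the paper: part (i) is obtained from the identification of the right adjoint of $\overline{K}_l$ with ${^\bA(-)}\simeq{_{\bA^e}}[A,-]$ (Theorems \ref{invariants}, \ref{invariants.d}) together with the fact that an equivalence has invertible unit, and part (ii) combines (i) with the left/right equivalence of Theorem \ref{th.1.Alg.1}. Your explicit attention to the two possible readings of the definition of ``central'' is a reasonable clarification of a point the paper leaves implicit, but it does not change the argument.
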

\begin{proof}(i) follows by the Theorems \ref{invariants} and \ref{invariants.d}, while
(ii) follows from (i) and Theorem \ref{th.1.Alg.1}.
\end{proof}

\smallskip

Recall that for any $\V$algebra $\bA$,
an $\bA^e$-module $M$ is  {\em $U_{\bA^e}$-projective}
provided for morphisms $g:N\to L$ and $f:M\to L$ in ${_{\bA^e}\V}$ with $U_{\bA^e}(g)$ a
split epimorphism, there exists an $h:M\to N$ in ${_{\bA^e}\V}$ with $gh=f$.
This is the case if and only if $M$ is a retract of a (free) $\bA^e$-module ${\bA^e}\ot X$
with some $X\in \V$ (e.g. \cite{Sob}). This is applied in the characterisation of
separable algebras.

%We recall some characterisations of such algebras.

\begin{proposition}\label{sep.a} The following are equivalent for a $\V$-algebra
$\bA=(A,m,e)$:
\begin{blist}
  \item  $\mathcal{A}$ is a separable algebra;
  \item   $m: A\ot A \to A$ has a section $\xi: A\to A \ot A$ in $\V$ such that
$$(A \ot m)\cdot (\xi \ot A)=\xi \cdot m=(m \ot A)\cdot (A \ot \xi);$$
  \item the left $\mathcal{A}^e$-module $(A, m \cdot (A \ot m^\tau))$ is
    $_{\mathcal{A}^e}U$-projective;
  \item  the functor $_{\mathcal{A}^e}U : {_{\mathcal{A}^e}\V} \to \V$ is separable.
\end{blist}
\end{proposition}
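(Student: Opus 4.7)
The plan is to establish (a)$\LRa$(b)$\LRa$(c)$\LRa$(d), each equivalence being either an unpacking of a definition or an invocation of a general result on separable functors and monads from the preliminaries (\ref{adj-sep}, \ref{sep-mon}).

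The equivalence (a)$\LRa$(b) is the standard reformulation: a $\V$-algebra is separable precisely when $m$ splits as an $\bA^e$-module (i.e., bimodule) map, and the two displayed equations in (b) are exactly the right- and left-$A$-linearity conditions for $\xi$, which together say that $\xi$ is $\bA^e$-linear. For (b)$\LRa$(c), observe that $\bA^e$, viewed as a left $\bA^e$-module, is the free module on $I\in\V$; the multiplication $m:\bA^e\to A$ is $\bA^e$-linear with respect to the left $\bA^e$-action on $A$ described in \ref{op-alg}, using the associativity of $m$ together with the hexagon axioms for the braiding. A $\xi$ as in (b) is then an $\bA^e$-linear section of this $m$, making $A$ a retract of the free module $\bA^e$ in $_{\bA^e}\V$, hence $U_{\bA^e}$-projective. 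Conversely, since $m$ is already $\V$-split by $e\ot A$ (by the left unit law $m\cdot(e\ot A)=1_A$), $U_{\bA^e}$-projectivity of $A$ applied to the pair $f=\mathrm{id}_A$, $g=m$ produces an $\bA^e$-linear section of $m$, yielding (b).

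For (c)$\LRa$(d), the implication (d)$\Ra$(c) is immediate from \ref{adj-sep}(4): separability of $_{\bA^e}U$ makes every $\bA^e$-module projective relative to $U_{\bA^e}$, in particular $A$ itself. For the converse, I would invoke \ref{sep-mon}(1) applied to the monad $(\bA^e)_l=(\bA\bA^\tau)_l$: its forgetful functor being separable is equivalent to the monad multiplication $m_{\bA^e}\ot-$ admitting a natural coherent section, which (by evaluation at $I$ and naturality) corresponds to an $\bA^e$-bimodule section of $m_{\bA^e}:\bA^e\ot\bA^e\to\bA^e$. It then remains to produce such a section from the given bimodule section $\xi$ of $m$; a natural candidate is a composite of the form $(A\ot\tau_{A,A}\ot A)\cdot(\xi\ot\xi)$, which uses the braiding to interleave the two $\bA$- and two $\bA^\tau$-factors. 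The main obstacle in the proof is verifying that this candidate satisfies the coherence equations of \ref{sep-mon}(1); this reduces by a diagram chase to the bimodule equations of (b) for $\xi$, the naturality and hexagon axioms for $\tau$, and the associativity of $m$ — essentially the same ingredients that make the braided tensor product $\bA\ot\bA^\tau$ an associative algebra.
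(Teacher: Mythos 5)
The paper states Proposition \ref{sep.a} without proof, so there is no argument of the authors to compare yours against; judged on its own terms, your decomposition (a)$\LRa$(b)$\LRa$(c)$\LRa$(d) is the right one, and the equivalences (a)$\LRa$(b), (b)$\LRa$(c) and the implication (d)$\Ra$(c) are correctly argued. (For (d)$\Ra$(c) note only that \ref{adj-sep} is stated in the presence of an adjoint comonad, which is not assumed here; what you actually need is the general fact that a separable functor reflects split epimorphisms, whence a separable $U_{\bA^e}$ makes every module relatively projective. That fact is standard and your conclusion stands.)

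The genuine gap is in (c)$\Ra$(d). The reduction to producing a separability idempotent for the braided tensor product $\bA^e=\bA\ot\bA^\tau$ is sound, but the candidate $(A\ot\tau_{A,A}\ot A)\cdot(\xi\ot\xi)$ is not correct: it fails already to be a section of $m_{\bA^e}=(m\ot m^\tau)\cdot(A\ot\tau_{A,A}\ot A)$. Composing, one gets $(m\ot m^\tau)\cdot(A\ot\tau_{A,A}^2\ot A)\cdot(\xi\ot\xi)$, which even in the symmetric case ($\tau^2=1$) equals $(m\cdot\xi)\ot(m\cdot\tau_{A,A}\cdot\xi)=1_A\ot(m\cdot\tau_{A,A}\cdot\xi)$, and $m\cdot\tau_{A,A}\cdot\xi=1_A$ is not a consequence of the hypotheses in (b) (for $A=M_n(k)$ with the idempotent $\sum_j e_{j1}\ot e_{1j}$ one gets $n\,e_{11}\neq 1$); in a genuinely braided category $\tau_{A,A}^2\neq 1$ and the formula degrades further. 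What is needed is, first, a separability idempotent for the opposite algebra $\bA^\tau$, obtained by twisting $\xi$ by the braiding (e.g.\ $\tau_{A,A}^{-1}\cdot\xi$), and, second, an interleaving by the \emph{inverse} braiding, e.g.\ $(A\ot\tau_{A,A}^{-1}\ot A)\cdot(\xi\ot\tau_{A,A}^{-1}\xi)$, so that the $\tau_{A,A}$ occurring in $m_{\bA^e}$ cancels and the section property holds. The verification of the two Casimir identities for this corrected map (a diagram chase using naturality, the hexagons and the identities in (b)) is then the actual content of the implication, and it is exactly the part you defer; as written, the step (c)$\Ra$(d) is not established.
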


\begin{proposition}\label{ten-A}  Consider $\V$-algebras $\bA$ and
 $\mathcal{B}$ such that the unit $e : I \to B$ of $\bB$ is a split monomorphism.
 If $\mathcal{A} \ot \mathcal{B}$ is separable in $\V$, then $\mathcal{A}$ is
also separable in $\V$.
\end{proposition}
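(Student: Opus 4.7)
The plan is to construct an explicit section of $m_\bA$ in the category of $(\bA,\bA)$-bimodules and then invoke the equivalence of (a) and (b) in Proposition \ref{sep.a}.

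First, I would fix a retraction $p:B\to I$ of $e:I\to B$, so that $p\cdot e=1_I$, and set
$$j\;:=\;A\ot e:A\lra A\ot B,\qquad q\;:=\;A\ot p:A\ot B\lra A,$$
so that $q\cdot j=1_A$. A short calculation using naturality of $\tau$ and the unit axiom for $\bB$ shows that $j$ is a morphism of $\V$-algebras $\bA\to\bA\ot\bB$; hence $j$ is a morphism of $(\bA,\bA)$-bimodules when $\bA\ot\bB$ is equipped with the bimodule structure obtained by restriction of scalars along $j$. The map $q$ is \emph{not} an algebra morphism in general, which forces the care taken below.

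By hypothesis and Proposition \ref{sep.a}(b), there exists $\xi:A\ot B\to(A\ot B)\ot(A\ot B)$ splitting $m_{\bA\ot\bB}$ as a $(\bA\ot\bB,\bA\ot\bB)$-bimodule map, and a fortiori as an $(\bA,\bA)$-bimodule map along $j$. The essential ingredient is the composite
$$g:A\ot B\ot A\ot B\xra{A\ot\tau_{B,A}\ot B}A\ot A\ot B\ot B\xra{A\ot A\ot m_B}A\ot A\ot B\xra{A\ot A\ot p}A\ot A,$$
which contracts the two $B$-factors by first multiplying them and only then applying $p$. Using only naturality of $\tau$, associativity of $m_A$, and the formula $m_{\bA\ot\bB}=(m_A\ot m_B)\cdot(A\ot\tau_{B,A}\ot B)$, a routine diagram chase shows that $g$ is $(\bA,\bA)$-bilinear (with source carrying the outer actions inherited via $j$), and the same formula yields directly the identity $m_A\cdot g=q\cdot m_{\bA\ot\bB}$.

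Setting $\eta_A:=g\cdot\xi\cdot j:A\to A\ot A$, it is a composite of three $(\bA,\bA)$-bimodule morphisms, hence is itself $(\bA,\bA)$-bilinear, and
$$m_A\cdot\eta_A=(m_A\cdot g)\cdot\xi\cdot j=q\cdot m_{\bA\ot\bB}\cdot\xi\cdot j=q\cdot j=1_A,$$
so $\eta_A$ satisfies condition (b) of Proposition \ref{sep.a} and $\bA$ is separable. The main obstacle is choosing $g$ correctly: replacing it by $(A\ot p)\ot(A\ot p)$ fails because $p$ is not assumed multiplicative, so one must contract the two $B$-factors \emph{before} applying $p$ in order that the unit identity $p\cdot e=1_I$ combine with the formula for $m_{\bA\ot\bB}$ to yield $m_A\cdot g=q\cdot m_{\bA\ot\bB}$; bilinearity of $g$ then reduces to two naturality squares for $\tau$.
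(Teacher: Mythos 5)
Your proof is correct, but it takes a different route from the paper's. The paper argues structurally via the relative-projectivity characterisation (c) of Proposition \ref{sep.a}: since $p\cdot e=1_I$, the module $A$ is a retract of $A\ot B$ in ${_{\bA^e}\V}$; separability of $\bA\ot\bB$ makes $A\ot B$ a retract of $(A\ot B)^e\simeq A^e\ot B^e=\phi_{\bA^e}(B^e)$, which is free, hence ${_{A^e}U}$-projective; and retracts of relatively projectives are relatively projective, so $A$ is ${_{A^e}U}$-projective and $\bA$ is separable. You instead work with characterisation (b) and build the separability section of $m_A$ explicitly as $g\cdot\xi\cdot j$, where the crucial point is your contraction $g$ that multiplies the two $B$-factors \emph{before} applying $p$; your verifications (that $j=A\ot e$ is an algebra map, that $m_A\cdot g=q\cdot m_{\bA\ot\bB}$, and that $g$ is bilinear for the outer actions restricted along $j$ -- the last using naturality of $\tau$ together with the hexagon identities for $\tau_{B,A\ot A}$ and $\tau_{B\ot B,A}$ and $\tau_{I,A}=1$) all check out. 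The trade-off: the paper's argument is shorter and reuses the projectivity machinery already set up around Proposition \ref{sep.a}, while yours is self-contained, produces the separability idempotent of $\bA$ concretely from that of $\bA\ot\bB$, and makes transparent exactly where the splitting of $e_\bB$ and the braiding enter.
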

\begin{proof} Since $I$ is a retract of $B$ in $\V$, $A$ is a retract of $A\ot B$ in
$_{\mathcal{A}^e}\V$.
Since $A \ot B$ is assumed to be separable in $\V$, $A\ot B$ is a retract of
$(A \ot B)^e $ in $_{(A\ot B)^e}\V$, and hence also in $_{A^e}\V$. Thus $A$ is a retract of
$A^e \ot B^e \simeq (A\ot B)^e$ in $_{A^e}\V$. Since $A^e \ot B^e=\phi_{\mathcal{A}^e}(B^e)$,
it follows %from the dual of Remark \ref{inj.re}
that $A^e \ot B^e$ is  $_{A^e}U$-projective, and since
 retracts of a $_{A^e}U$-projectives are $_{A^e}U$-projective,
$A$ is $_{A^e}U$-projective and $\bA$ is separable by Proposition \ref{sep.a}.
\end{proof}

Following  \cite{P}, a finite object $V$ in $\V$ is said to be a \emph{progenerator}
if the counit morphism $\ev  :V^* \ot V \to I$ is a split epimorphism.
The following list describes some of its properties.

\begin{proposition}\label{prog} Assume $\V$ to admit equalisers and coequalisers.
For an algebra $\mathcal{A}=(A, m,e)$ in $\V$
with $A$ admitting a left adjoint $(V^*, \db, \ev)$, consider the following statements:
\begin{zlist}
  \item $A$ is a progenerator;
   \item the morphism $ {\db}:I \to A \ot A^*$ is a split monomorphism;
  \item the functor $A \ot -: \V \to \V$ is separable;
  \item the unit morphism $e:I \to A$ is a split monomorphism;
  \item the functor $A \ot -: \V \to \V$ is conservative (monadic, comonadic);
  \item $A \ot A^*$ is a separable $\V$-algebra.
  \end{zlist}

One always has
$(1)\LRa (2)\LRa (3)\LRa (4)\Ra (5)$  and $(1) \Ra (6)$.
\smallskip

If $I$ is projective (w.r.t. regular epimorphisms) in $\V$, then $(5)\Ra (1)$.
\end{proposition}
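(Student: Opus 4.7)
The plan is to prove the equivalences (1)$\LRa$(2)$\LRa$(3)$\LRa$(4) and the forward implications (4)$\Ra$(5) and (1)$\Ra$(6) purely from the adjunction and braiding machinery of Section~1, and to use the projectivity hypothesis only in the final step (5)$\Ra$(1).

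The equivalences (1)$\LRa$(2)$\LRa$(3) come from two applications of Proposition~\ref{adj-sep}. Since $A$ is finite, the braided structure gives both adjunctions $A^*\ot-\dashv A\ot-$ (unit $\db\ot-$, counit $\ev\ot-$) and $A\ot-\dashv A^\sharp\ot-$ (unit $\db'\ot-$, counit $\ev'\ot-$), with $A^\sharp=A^*$ as an object, $\db'=\tau^{-1}_{A^*,A}\cdot\db$ and $\ev'=\ev\cdot\tau_{A,A^*}$ (see~\ref{braid-mon}). Applying~\ref{adj-sep}(1) to the first adjunction identifies (3) with ``$\db\ot-$ split mono'', i.e.\ with (2); applying~\ref{adj-sep}(2) to the second identifies (3) with ``$\ev'\ot-$ split epi'', which is (1) since $\tau$ is invertible.

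For (2)$\LRa$(4) I would give explicit mutually inverse constructions of splittings. If $\pi:A\ot A^*\to I$ retracts $\db$, then $p:=\pi\cdot(m\ot A^*)\cdot(A\ot\db):A\to I$ satisfies $p\cdot e=1_I$ after using bifunctoriality to slide $e$ past $\db$ and invoking $m\cdot(e\ot A)=1_A$. Conversely, if $p\cdot e=1_I$, then $\pi:=p\cdot(A\ot\ev)\cdot(A\ot A^*\ot e):A\ot A^*\to I$ retracts $\db$: bifunctoriality rewrites $(A\ot A^*\ot e)\cdot\db$ as $(\db\ot A)\cdot e$, and the triangle identity $(A\ot\ev)\cdot(\db\ot A)=1_A$ reduces $\pi\cdot\db$ to $p\cdot e=1_I$. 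Implication (4)$\Ra$(5) is then immediate: if $A\ot f$ has inverse $g:A\ot Y\to A\ot X$, the morphism $(p\ot X)\cdot g\cdot(e\ot Y):Y\to X$ is a two-sided inverse of $f$ by naturality of $e$ together with $p\cdot e=1_I$.

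For (1)$\Ra$(6), a section $\sigma:I\to A^*\ot A$ of $\ev$ yields a Casimir-type element $\xi:=A\ot\sigma\ot A^*:A\ot A^*\to(A\ot A^*)\ot(A\ot A^*)$. The multiplication $m_{\mathcal{S}_{A,A^*}}=A\ot\ev\ot A^*$ contracts the inserted $\sigma$ back to $1_I$, giving $m_{\mathcal{S}_{A,A^*}}\cdot\xi=1_{A\ot A^*}$; the two further bimodule identities required by Proposition~\ref{sep.a}(b) reduce, via associativity of the tensor product, to the same contraction performed at the outer positions, so $\mathcal{S}_{A,A^*}$ is separable. Finally, for (5)$\Ra$(1) under projectivity of $I$, Proposition~\ref{finite.reg} shows that (5) is equivalent to $\ev$ being a copure, and hence regular, epimorphism $A^*\ot A\to I$; projectivity of $I$ with respect to regular epimorphisms then splits $\ev$. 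The only step that demands care is the verification of the bimodule identities for $\xi$ in (1)$\Ra$(6), where one must keep track of which tensor factor each copy of $\sigma$ is inserted into, but this is routine bookkeeping rather than a genuine obstacle.
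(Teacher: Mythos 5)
Your proposal is correct and follows the same overall skeleton as the paper's proof: the equivalences $(1)\LRa(2)\LRa(3)$ via the two adjunctions $A^*\ot-\dashv A\ot-$ and $A\ot-\dashv A^*\ot-$ together with \ref{adj-sep}, the Casimir morphism $A\ot\sigma\ot A^*$ for $(1)\Ra(6)$, and regular epimorphy of $\ev$ plus projectivity of $I$ for $(5)\Ra(1)$ are all exactly the paper's arguments. You deviate in two places, both to good effect. First, for $(2)\LRa(4)$ you give explicit mutually inverse splitting constructions (your $p=\pi\cdot(m\ot A^*)\cdot(A\ot\db)$ and $\pi=p\cdot(A\ot\ev)\cdot(A\ot A^*\ot e)$, both of which check out by the interchange law and the triangle identity); the paper instead proves $(4)\Ra(3)$ by Proposition \ref{rafael} and obtains $(4)$ from a section $\zeta$ of $\ev$ via the composite $\ev\cdot(A^*\ot m)\cdot(\zeta\ot A)$. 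Second, and more substantially, for the implication into $(5)$ the paper proves $(3)\Ra(5)$ by quoting an external result (a separable endofunctor on a Cauchy complete category is conservative, [Me, Prop.\ 3.16]), whereas your $(4)\Ra(5)$ is a self-contained computation exhibiting $(p\ot X)\cdot g\cdot(e\ot Y)$ as a two-sided inverse of $f$; this is more elementary and removes a citation. The one point you should state explicitly is that the three properties listed in $(5)$ (conservative, monadic, comonadic) coincide in this situation by Proposition \ref{finite.reg} (or Lemma \ref{lem-mon}), since your argument establishes only conservativity; this is harmless, as you invoke \ref{finite.reg} in the final step anyway, but the paper records it in a separate sentence.
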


\begin{proof} Since $A$ is assumed to be admit a left adjoint $(V^*, \db, \ev)$, the functor
$A^* \ot -: \V \to \V$ is  left as well as right
adjoint to the functor $A \ot -: \V \to \V$.
For any $V \in \V$, the composite
$$V \xra{\db\ot V} A \ot A^* \ot V \xra{\tau^{-1}_{A^*,A}\ot V} A^* \ot A \ot V$$
is the $V$-component of the unit of the adjunction $A\ot - \dashv A^* \ot - :\V \to \V$,
 while
the morphism $ A^* \ot A\ot V \xra{\ev \ot V} V$
is the $V$-component of the counit of the adjunction
$A^*\ot - \dashv A \ot - :\V \to \V$.
 To say that $\db: I \to A \ot A^*$ (resp. $\ev: A^* \ot A \to I$
is a split monomorphism (resp. epimorphism) is to say that the unit (resp. counit)
of the adjunction
$A\ot \,-\,\dashv A^* \ot \,-\,$ (resp. $A^*\ot \,-\,\dashv A \ot \,-\,$)
is a split monomorphism (resp. epimorphism).
From the observations in \ref{sep-func}, one gets
$(1)\LRa (2)\LRa (3)$.

By Proposition \ref{finite.reg}, the properties listed in (5) are equivalent.

Since $\V$ admits equalisers, it is Cauchy complete, and the
implication $(3)\Ra (5)$ follows from \cite[Proposition 3.16]{Me}.

If $e:I \to A$ is a split monomorphism, then the natural transformation
$$e \ot -: 1_\V \to A \ot -$$ is a split monomorphism and applying Proposition \ref{rafael} to
the pair of functors $(A \ot \,-\,, 1_\V)$  gives that the functor $A \ot -: \V \to \V$
is separable, proving  $(4)\Ra (3)$.

If $A$ is a progenerator, then  $\ev  :A^* \ot A \to I$
has a splitting $\zeta:I \to A^* \ot A$. Consider the composite
$$\phi: A \xra{\zeta \ot A} A^* \ot A \ot A \xra{A^* \ot\, m} A^* \ot  A \xra{\ev } I.$$
We claim that $\phi \cdot e=1$. Indeed, we have
$$\begin{array}{rl}
 \ev  \cdot A^* \ot m \cdot \zeta \ot A \cdot e & =\;
  \ev   \cdot A^* \ot m \cdot   A^* \ot A \ot e \cdot \zeta =\ev   \cdot \zeta=1.
\end{array}
$$ The first equality holds by naturality, the second one since $e$ is the unit for the
$\V$-algebra $\mathcal{A}$,
and the third one since $\zeta$ is a splitting for $\ev  :A^* \ot A \to I$. Thus
$(2)$ implies $(4)$.

Now, if $A$ is again a progenerator, then the morphism $\ev  :A^* \ot A \to I$
has a splitting $\zeta:I \to A^* \ot A$,
and direct inspection shows that the morphism
$$\xi=A \ot \zeta \ot A^*: A \ot A^* \to A \ot A^*\ot A \ot A^*$$ is a splitting for
the multiplication $A\ot \ev \ot A^*$ of the $\V$-algebra
$\bA \ot \bA^*$  satisfying condition (b) of Proposition \ref{sep.a}.
Thus $\bA\ot \bA^*$ is a separable $\V$-algebra, proving
the implication $(2)\Ra (6)$.

Finally, suppose that $I$ is projective (w.r.t. regular epimorphisms) in $\V$ and that
the functor $A \ot - :\V \to \V$ is monadic. Then, by \cite[Theorem 2.4]{KP}, each component of the counit
of the adjunction $A^* \ot - \dashv A \ot -$ is a regular epimorphism. Since $\ev : A^* \ot A \to I$
is the $I$-component of the counit, $\ev $ is a regular epimorphism, and hence splits, since $I$ is
assumed to be projective w.r.t. regular epimorphisms. Thus $A$ is a progenerator. This proves the implication $(5) \Ra (1).$
\end{proof}

\begin{theorem}\label{Azu-sep} Let $\V$ be a braided monoidal category with equalisers
and coequalisers.
For an algebra $\bA=(A,m,e)$ in $\V$, the following are equivalent:
\begin{blist}
  \item $\mathcal{A}$ is a separable left Azumaya $\V$-algebra;
  \item $A$ is a progenerator  and the morphism $\overline{\chi}_0:A\ot A \to A\ot A^* $
       in \ref{th.1.Alg}(c) is an isomorphism between the $\V$-algebras $\mathcal{A}^e$ and $\mathcal{S}_{A,A^*}$;
   \item $e:I \to A$ is a split monomorphism and
        $(A, m \cdot (A \ot m^\tau))\in _{\mathcal{A}^e}\!\!\V$ is a Galois module.
\end{blist}
\end{theorem}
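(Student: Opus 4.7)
The plan is to pivot around Theorem~\ref{th.1.Alg}(c) --- that $\bA$ is left Azumaya iff $A$ is finite, $A\ot -$ is monadic, and $\overline{\chi}_0$ is an isomorphism --- and to transport separability across the Azumaya equivalence using Propositions~\ref{sep.a}, \ref{prog} and \ref{ten-A}. I would prove (a)$\LRa$(b) and (a)$\LRa$(c) separately, each reducing to the same core ingredients.

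For (a)$\Ra$(b), I would first extract from the Azumaya hypothesis that $A$ is finite and $\overline{\chi}_0$ is an isomorphism. The left commutative triangle in (\ref{th.1.Alg.D}) factors $A\ot -$ as ${_{\bA^e}U}\cdot \overline{K}_l$; since $\overline{K}_l$ is an equivalence and $\bA$ separable means ${_{\bA^e}U}$ is separable (Proposition~\ref{sep.a}), the composite $A\ot -$ is separable by~\ref{sep-func}(i). Finiteness of $A$ then lets Proposition~\ref{prog} upgrade separability of $A\ot -$ to $A$ being a progenerator. For the converse (b)$\Ra$(a), I would reverse the flow: $A$ being a progenerator gives $A$ finite and $A\ot -$ separable, hence conservative; since $A$ is finite, $A\ot -$ admits both adjoints, and with $\V$ having equalisers and coequalisers Lemma~\ref{lem-mon} promotes conservativeness to monadicity --- combined with $\overline{\chi}_0$ being an iso, Theorem~\ref{th.1.Alg}(c) yields left Azumaya. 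Separability of $\bA$ then follows by transporting the separable $\V$-algebra $A\ot A^* = \mathcal{S}_{A,A^*}$ (Proposition~\ref{prog}, $(1)\Ra(6)$) via the algebra isomorphism $\overline{\chi}_0$ to make $\bA^e = \bA\ot \bA^\tau$ separable, and then applying Proposition~\ref{ten-A} --- noting that the unit of $\bA^\tau$ is $e$, which is a split monomorphism because $A$ is a progenerator --- to descend separability from $\bA^e$ to $\bA$.

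For (a)$\LRa$(c), the key observation is that the Galois morphism $t_{(A,\,\rho)}:\bA^e \to A\ot A^*$ of Proposition~\ref{galois.m}(i), associated with $\rho = m\cdot(A\ot m^\tau) = m\cdot(A\ot m)\cdot(A\ot \tau_{A,A})$, coincides verbatim with $\overline{\chi}_0$; a short diagram chase confirms this, so (given finiteness of $A$) the Galois property of $(A,\rho)$ is exactly the isomorphism condition on $\overline{\chi}_0$. Since, for finite $A$, $e:I\to A$ being a split monomorphism is equivalent to $A$ being a progenerator (Proposition~\ref{prog}, $(1)\LRa(4)$), condition~(c) translates directly into~(b), and the equivalence follows. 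The main obstacle I anticipate is the separability transfer across the Azumaya equivalence in (a)$\LRa$(b): it rests on the clean factorisation $A\ot - = {_{\bA^e}U}\cdot \overline{K}_l$ together with the descent step furnished by Proposition~\ref{ten-A}, and once these two passages are handled cleanly the rest of the argument is a bookkeeping exercise.
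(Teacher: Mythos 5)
Your proof is correct, and every lemma you invoke applies exactly as you use it. The (b)$\LRa$(c) leg coincides with the paper's, which disposes of it in one line as ``an easy consequence of Proposition \ref{prog} and Theorem \ref{th.1.Alg}'': this is precisely your identification of the Galois morphism $t_{(A,\,m\cdot(A\ot m^\tau))}$ of Proposition \ref{galois.m}(i) with $\overline{\chi}_0$ (they agree up to the associativity of $m$, as your diagram chase shows), combined with the equivalence, for finite $A$, of ``$e$ is a split monomorphism'' and ``$A$ is a progenerator'' from Proposition \ref{prog}. Where you genuinely diverge is in tying the separability of $\bA$ to the rest: the paper obtains (a)$\LRa$(c) by declaring it a special case of Theorem \ref{Azu-mon-comon} after using Proposition \ref{sep.a} to translate ``$\bA$ separable'' into ``$(\bA^e)_l$ is a separable monad'', i.e.\ it routes through the abstract monad--comonad statement whose proof rests on the right adjoint comonad, Proposition \ref{Galois-Galois} and Cauchy completeness. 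You instead prove (a)$\LRa$(b) entirely at the level of algebras in $\V$: forward, by factoring $A\ot-$ as ${_{\bA^e}U}\cdot\overline{K}_l$, composing separable functors, and applying Proposition \ref{prog} to pass from separability of $A\ot-$ to the progenerator property; backward, by transporting separability of $\mathcal{S}_{A,A^*}$ (Proposition \ref{prog}, $(1)\Ra(6)$) along the algebra isomorphism $\overline{\chi}_0$ to $\bA^e$ and descending to $\bA$ via Proposition \ref{ten-A}, the split mono $e$ being supplied by the progenerator hypothesis. Your route is more concrete and self-contained---it never leaves the braided monoidal setting and makes the role of $\mathcal{S}_{A,A^*}$ and of the descent step explicit---whereas the paper's appeal to Theorem \ref{Azu-mon-comon} is shorter and comes bundled with the dual comonad statements for free. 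Both arguments are sound.
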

\begin{proof}  (a)$\LRa$(c) In view of Proposition \ref{sep.a}, this is a special case
of \ref{Azu-mon-comon}.

(b)$\LRa$(c) is an easy consequence of Proposition \ref{prog} and Theorem \ref{th.1.Alg}.
\end{proof}

To bring back our general theory to the starting point,
let $R$ be a commutative ring with identity  and $\M_R$
the category of $R$-modules.
Then for any $M, N\in \M_R$, there is the canonical twist map
$\tau_{M,N}: M\ot_R N\to N\ot_R M$.
Putting  $[M,N]:=\Hom_R(M,N)$, then $(\M_R, -\ot_R -, R, [-,-], \tau)$ is
a symmetric monoidal closed category.
We have the canonical adjunction $\eta^M,\ve^M: M\ot_R-\dashv [M,-]$.

\begin{thm}{\bf Algebras in $\M_R$.} \em
 For any $R$-algebra  $\bA =(A,m,e)$,
%there is a ring homomorphism $R \to A$, whose image is
% contained in the center of $A$, and
$\tau_{A,A}:A\ot_R A\to A\ot_R A$ is an invertible (involutive) BD-law
allowing for the definition of the (opposite) algebra $\bA^\tau=(A,m\cdot\tau, e)$.
%and a distributive law $\tau_{A,A}: A^\tau\ot_R A \to A\ot_R A^\tau$.
The monad $A\ot_R-$ is Azumaya provided the functor
$$K : \M_R \to {_{A^e}\M},$$
$$ M \; \longmapsto \; ((A \ot_R M,\, A \ot_R A \ot_R A \ot_R M \xra{A \ot_R m^\tau \ot_R M} A \ot_R A \ot_R M
\xra{m\ot_R M} A \ot_R M),$$
is an equivalence of categories. Obviously this holds if and only if $A$ is an
Azumaya $R$-algebra in the usual sense.
 We have the commutative diagram
\begin{equation}\label{com.diag}
\xymatrix{\M_R  \ar[rr]^-{K} \ar[drr]_{\phi_{(A^\tau)_l}= A^\tau  \ot_R  -} && {_{{A}^e}\M } \ar[rr]^-{\Psi}
\ar[d]^{(e\ot_R A^\tau)^*} && (_{ A^\tau}\M)^{\widehat{[A,-] }}
\ar[d]^{U^{\widehat{[A,-]}}}  \\
&& {_{A^\tau}\M} \ar[rr]_= && {_{A^\tau}\M} }
\end{equation} where $(e\ot_R A^\tau  )^*$ is the restriction of scalars functor induced by
the ring morphism
$e\ot_R A^\tau: A^\tau  \to A\ot_R A^\tau$.

It is not hard to see that, for any $(M,h) \in {_{A^\tau}\M}$, the $(M,h)$-component
$t_{(M,h)}:A  \ot_R  M \to [A, M]$ of the comonad morphism
$t: \phi_{(A^\tau)_l}U_{(A^\tau)_l} \to \widehat{[A,-] }$
corresponding to the functor $\ol{K}= \Psi K$, takes any element $ {a}\ot_R m$ to
the map ${b} \mapsto h((ba) \ot_R m)$. Thus, writing $a\cdot m$ for $h(a\ot_R m)$,
one has for  $a,b \in A$  and  $m \in M$,
$$t_{(M,h)} (a \ot_R  m)=({b} \mapsto (ba) \cdot m).$$
In particular, for any $N \in \M_R$,
$t_{\phi_{(A^\tau)_l}(N)} (a \ot_R b \ot_R  n)=({c} \mapsto (bca) \cdot n).$
\end{thm}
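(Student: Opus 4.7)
The plan is to derive both formulas by directly instantiating the abstract formula (\ref{t-comp}) from \ref{Azu-ra} in the concrete setting $\V=\M_R$ with monad $F=A\ot_R-$ and right adjoint $[A,-]$. The unit $\eta^A$ of the canonical adjunction $A\ot_R-\dashv[A,-]$ has components $\eta^A_X\colon X\to[A,A\ot_R X]$ given on elements by $x\mapsto(b\mapsto b\ot_R x)$, and $[A,-]$ sends a morphism $f\colon Y\to Z$ to the operation of post-composition with $f$.

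First I would write out the composite (\ref{t-comp}) in the present setting as
\[
t_{(M,h)}\colon A\ot_R M \xra{\eta^A_{A\ot_R M}} [A,A\ot_R A\ot_R M]\xra{[A,m\ot_R M]}[A,A\ot_R M]\xra{[A,h]}[A,M],
\]
and then track an elementary tensor $a\ot_R m$ through each arrow. The unit produces the map $b\mapsto b\ot_R a\ot_R m$; post-composition with $m\ot_R M$ yields $b\mapsto(ba)\ot_R m$; and a final post-composition with $h$ gives $b\mapsto h((ba)\ot_R m)=(ba)\cdot m$, which is the claimed description of $t_{(M,h)}$.

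The specialization to $(M,h)=\phi_{(A^\tau)_l}(N)=(A\ot_R N,\,m^\tau\ot_R N)$ is then purely formal: since $h(x\ot_R(y\ot_R n'))=m^\tau(x\ot_R y)\ot_R n'=yx\ot_R n'$, substituting $m=b\ot_R n$ into the formula just derived gives $t_{\phi_{(A^\tau)_l}(N)}(a\ot_R b\ot_R n)=(c\mapsto(ca)\cdot(b\ot_R n))=(c\mapsto bca\ot_R n)$, which matches the stated expression under the natural reading $(bca)\cdot n=bca\ot_R n$.

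No serious obstacle arises, since the whole argument is a mechanical unwinding of (\ref{t-comp}). The one spot requiring care is the direction of the twist: because $m^\tau(x\ot_R y)=yx$, applying $h$ to $x\ot_R(y\ot_R n')$ produces $yx\ot_R n'$ rather than $xy\ot_R n'$, and this convention must be respected when multiplying $(ca)$ into the second tensor slot in the specialization step; once it is fixed, both displayed formulas drop out immediately.
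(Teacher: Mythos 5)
Your computation is correct and is exactly what the paper intends: the paper only asserts the formulas ("it is not hard to see"), and the intended justification is precisely the mechanical instantiation of the composite (\ref{t-comp}) with $F=A\ot_R-$, $R=[A,-]$, $\overline{\eta}=\eta^A$, followed by chasing an elementary tensor. Your handling of the twist in the specialization step, giving $(ca)\cdot(b\ot_R n)=m^\tau(ca\ot_R b)\ot_R n=bca\ot_R n$, matches the paper's convention $m^\tau=m\cdot\tau$ and reproduces both displayed formulas.
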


Since the canonical morphism $i: R \to A$ factorises through the center of $A$,
it follows from   of \cite[Theorem 8.11]{Me} that the functor $A\ot_R - :\M_R \to {_A\M}$
(and hence also $A^\tau  \ot_R - :\M_R \to {_{A^\tau}\M}$) is comonadic
if and only if $i$ is a pure morphism of $R$-modules. Applying Theorem \ref{th.1a.alg} and using that
 $K$ is an equivalence of categories if and only if $\ol{K}= \Psi K$ is so, we get
several characterisations of Azumaya $R$-algebra.

\begin{theorem}\label{azu-alg} An $R$-algebra $A$ is an Azumaya $R$-algebra if and only if
  the canonical morphism $i: R \to A$ is a pure morphism of $R$-modules, and one
  of the following holds:
 \begin{blist}
 \item  for any  $M \in {_{A^\tau}\M} $, there is an isomorphism
 $$A \ot_R  M \to [A, M], \quad  {a} \ot_R  m
    \mapsto [b \mapsto (ba)\cdot m];$$
 \item  for any  $N \in \M_R$, there is an isomorphism
       $$A \ot_R  A \ot_R  N \to [A, A \ot_R  N],\quad
         {a} \ot_R   {b} \ot_R  n \mapsto [c \mapsto bca \ot_R n];$$
 \item  $A_R$ is finitely generated projective  and there is an isomorphism
$$A\ot_R A \to [A, A],\quad  {a} \ot_R  b \mapsto [c \mapsto bca ];$$

\item  for any   $(A,A)$-bimodule $M$, the evaluation map is an isomorphism
    $$A \ot_R M^A\to M,\quad a \ot_R   m \mapsto a \cdot m.$$
\end{blist}
\end{theorem}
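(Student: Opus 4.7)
The plan is to recognise this theorem as the concrete realisation, in $\M_R$, of the general characterisations already obtained in Theorem \ref{th.1a.alg} and Theorem \ref{invariants}. The category $(\M_R,\ot_R,R,[-,-],\tau)$ is symmetric monoidal closed with equalisers and coequalisers, and the braiding is involutive. Thus by Theorem \ref{th.1.Alg.1} (or Remark \ref{sq.alg}) left Azumaya, right Azumaya, and the classical notion of Azumaya $R$-algebra coincide, so it suffices to produce characterisations of ``left Azumaya $A$'' in the four forms (a)--(d). Theorem \ref{th.1a.alg} already delivers the characterisation via comonadicity of $\phi_{(A^\tau)_l}=A^\tau\ot_R-$ together with the invertibility of the natural Galois transformation; the task is to identify each ingredient with the stated one in $\M_R$.

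For the comonadicity half: $A^\tau\ot_R-$ is comonadic on $\M_R$ exactly when the unit map $R\to A$ is $R$-pure. Indeed the canonical inclusion $R\to A$ factors through the centre of $A^\tau$, so the cited criterion \cite[Theorem 8.11]{Me} applies and yields the equivalence. This establishes the purity condition stated in the theorem. For the Galois half: the $(M,h)$-component of the comonad morphism $t:\phi_{(A^\tau)_l}U_{(A^\tau)_l}\to \widehat{[A,-]}$ of \ref{Azu-ra} is, under the adjunction $A\ot_R-\dashv [A,-]$, precisely the $R$-linear map $a\ot m\mapsto(b\mapsto (ba)\cdot m)$, i.e.\ the map in statement (a). So condition (a) says exactly that $t$ is an isomorphism, which is condition (b)(ii) of Theorem \ref{th.1}, and the Azumaya property of $A$ is equivalent to (a) plus purity.

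The equivalence (a)$\LRa$(b) is the observation, used in the proof of Theorem \ref{th.1} via \cite[Theorem 2.12]{MW1}, that a natural transformation between functors on $_{A^\tau}\M$ which commute with the monadic forgetful functor is an isomorphism iff it is an isomorphism on free modules $\phi_{(A^\tau)_l}(N)=A^\tau\ot_R N$; specialising (a) to $M=A\ot_R N$ gives exactly the formula in (b). The implication (b)$\Ra$(c) is specialisation to $N=R$ together with the remark that an Azumaya algebra is finite (Proposition \ref{finite}), i.e.\ $A_R$ is finitely generated projective. Conversely, when $A_R$ is f.g.\ projective both $A\ot_R-$ and $[A,-]$ preserve all colimits, and a natural transformation between two such functors from $\M_R$ is an isomorphism as soon as its value at $R$ is, so (c) recovers (b) (and hence (a)). Finally (a)$\LRa$(d) is the specialisation of Theorem \ref{invariants} to $\M_R$: in closed $\V=\M_R$, ${_{A^e}}[A,M]$ is exactly the set of invariants $M^A=\{m\in M\mid a\cdot m=m\cdot a\ \forall a\}$, and the counit evaluation of Theorem \ref{invariants}(ii) specialises to $A\ot_R M^A\to M$, $a\ot m\mapsto a\cdot m$; moreover the purity of $e:R\to A$ needed in \ref{invariants}(i) is precisely our purity hypothesis, and conversely (d) plus comonadicity gives the Azumaya property via \ref{invariants}.

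The principal technical hurdle is verifying the faithful translation of the abstract Galois morphisms $\chi$, $t_{(M,h)}$, and the evaluation of \ref{invariants} into the three concrete formulae displayed in (a), (c), (d), including tracing through the isomorphisms $\Psi$, $\mathcal{P}$ and the identification ${_{A^e}}[A,-]\simeq(-)^A$ in the commutative diagram (\ref{com.diag}). Once these identifications are made, no further computation is needed: the equivalences of (a)--(d) reduce to invocations of \ref{th.1a.alg}, \ref{invariants}, and the standard projectivity-based descent of an isomorphism from $R$ to arbitrary modules.
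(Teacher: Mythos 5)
Your proposal is correct and follows essentially the same route as the paper: the purity condition is identified with comonadicity of $A^\tau\ot_R-$ via \cite[Theorem 8.11]{Me}, (a) is read off from Theorem \ref{th.1}, (b) and (c) from Theorem \ref{th.1a.alg} (with finiteness supplied by Proposition \ref{finite}), and (d) is the translation of Theorem \ref{invariants}, exactly as in the text. You merely spell out in more detail the identification of $t_{(M,h)}$ with the displayed maps and the free-module reduction via \cite[Theorem 2.12]{MW1}, which the paper leaves implicit.
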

\begin{proof}
  (a) follows by Theorem \ref{th.1};
(b) and (c) are derived from Theorem \ref{th.1a.alg}.

(c)  An $R$-module is finite in the monoidal category $\M_R$ if and only if it is finitely
generated and projective over $R$ and  Theorem \ref{finite} applies.

(d) is a translation of Theorem \ref{invariants} into the present context.
\end{proof}

For a (von Neumann) regular ring $R$, $i:R \to A$ is always a pure $R$-module morphism,
and hence over such rings the (equivalent) properties (a) to (d) are sufficient
to characterise Azumaya algebras.

\section{Azumaya coalgebras in  braided monoidal categories}\label{azu-braid-co}

Throughout $(\V, \ot, I, \tau)$ will denote a strict monoidal braided category.
The definition of coalgebras $\mathcal C=(C,\Delta,\ve)$ in $\V$ was recalled in
\ref{Coalg.comod}.

\begin{thm}\label{bicom}{\bf The coalgebra $\bC^e$.} \em
 Let $\bC$ be a $\V$-coalgebra.
The braiding $\tau_{C,C}:C\ot C\to C\ot C$
provides a BD-law  allowing for the definition of the opposite coalgebra
$\mathcal C^\tau=(C^\tau,\Delta^\tau=\tau_{C,C} \cdot \Delta, \ve^\tau=\ve)$
and a coalgebra
 $$\bC^e:=(C \ot C^\tau, (C\ot \tau\ot C^\tau) (\Delta \ot\Delta^\tau),\ve\ot \ve ) .$$

 Writing $\tau: \bC_l (\bC^\tau)_l \to (\bC^\tau)_l \bC_l$ for the induced distributive law
of the comonad $\bC_l$ over the comonad  $(\bC^\tau)_l$, we have an isomorphism of categories
$\V^{(\bC^\tau)_l \bC_l}\simeq \V^{(\mathcal C^e)_l} ={^{\mathcal C^e}}\V$.
\end{thm}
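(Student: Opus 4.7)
The plan is to proceed in complete analogy with the algebra case treated in \ref{op-alg}, dualising each step. First, naturality of the braiding together with the two hexagon coherence axioms immediately imply that $\tau_{C,C}: C\ot C\to C\ot C$ satisfies the Yang--Baxter equation in the sense of \ref{def-coBD}, and the coassociativity and counit axioms for $\bC$, combined once more with naturality of $\tau$, show that $\tau_{C,C}\ot-$ satisfies the comonad distributive law conditions of \ref{co-DL}. Hence $\tau_{C,C}\ot-$ is a comonad BD-law on $\bC_l$.

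Next, I would invoke Proposition \ref{comonads}(1) to obtain the opposite comonad, which under the bijective correspondence between $\V$-coalgebras and left-tensor comonads on $\V$ is exactly $\bC^\tau=(C,\tau_{C,C}\cdot\Delta,\ve)$. The same BD-law, viewed as a comonad distributive law $\tau:\bC_l(\bC^\tau)_l\to(\bC^\tau)_l\bC_l$, yields by \ref{co-DL} a composite comonad on $\V$; unfolding the formula $H\kappa G\cdot\delta'\delta$ for its comultiplication and rearranging the resulting braidings via the hexagon identities, one verifies that this composite comonad coincides with $(\bC^e)_l$, up to the evident identification arising from the invertibility of $\tau$ together with \ref{ka-iso}. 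This identification of comonads immediately gives the equality of Eilenberg--Moore categories $\V^{(\bC^\tau)_l\bC_l}=\V^{(\bC^e)_l}$, and the latter coincides with ${^{\bC^e}}\V$ by the definition of left comodules recalled in \ref{Coalg.comod}.

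The main obstacle is the bookkeeping of $\tau$-factors needed to show that the induced comultiplication on $C\ot C$ agrees with the prescribed braided-tensor-product comultiplication $(C\ot\tau_{C,C}\ot C)\cdot(\Delta\ot\Delta^\tau)$ of $\bC^e$. This is the coalgebraic dual of the well-known fact, implicitly used in \ref{op-alg}, that the braided tensor product of two algebras is realised by composing the corresponding monads through the braiding distributive law; it requires care with the placement of braidings but no conceptual input beyond naturality and the hexagon axioms.
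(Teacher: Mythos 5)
Your proposal is correct and follows exactly the route the paper intends: the paper gives no explicit proof of \ref{bicom}, treating it as the dual of \ref{op-alg} and a direct instance of \ref{co-DL} and Proposition \ref{comonads}, and your unfolding of $\underline{\delta}=G\kappa G\cdot\delta\,\delta^{\kappa}$ to recover $(C\ot\tau\ot C^\tau)(\Delta\ot\Delta^\tau)$ is precisely the computation left implicit there. The only cosmetic point is that the hexagon identities (not just counitality and coassociativity) are also what make $\tau_{C,C}\ot-$ a comonad distributive law, but you invoke them elsewhere in the argument, so nothing is missing.
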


\begin{thm}\label{def-azu-com} {\bf Definition.} \em (see \ref{def-Azu-co})
A $\V$-coalgebra $\bC$ is said to be  {\em left Azumaya} provided the comonad
$\bC_l=C \ot -:\V \to \V$ is Azumaya, i.e. the comparison
functor
$$\overline{K}_\tau :\V \to {{^{\mathcal C^e}}\!\V}, \quad
V\;\longmapsto \;(C \ot V,\, C \ot V\xra{\Delta \ot V}
C\ot C \ot V \xra{C\ot\Delta ^\tau\ot V} C\ot C \ot C \ot V),$$
is an equivalence of categories. It fits into the commutative diagram
\begin{equation}\label{comp.f.com}
\xymatrix{
 {\V}\ar[r]^-{\overline{K}_\tau}\ar[dr]_{C\ot-} &
 **[r]{^{\bC^e} \V}={\V^{(\mathcal C^e)_l}} \ar[d]^{^{\mathcal C^e}\!U} \\
  & {\V} \,.}
\end{equation}

$\bC$ is said to be  {\em right Azumaya} if the corresponding conditions for $\bC_r=-\ot C$
are satisfied. Similar to \ref{finite} we have:
\end{thm}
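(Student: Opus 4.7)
The proposition at the end of the excerpt is the coalgebraic analogue of Proposition \ref{finite}: if $\bC$ is a left Azumaya $\V$-coalgebra, then $C$ is finite in $\V$. My plan is to bypass a direct dualisation of the rather delicate $\V$-functor argument in the proof of \ref{finite} by passing to the opposite category. Concretely, $\V^{\mathrm{op}}$ inherits from $\V$ the structure of a braided monoidal category (same tensor, same unit; the braiding transfers since it is invertible), a $\V$-coalgebra is exactly a $\V^{\mathrm{op}}$-algebra, and the braiding-induced BD-law $\tau_{C,C}$ on the comonad $\bC_l$ on $\V$ corresponds to the braiding-induced BD-law on the monad $\bC_l$ on $\V^{\mathrm{op}}$. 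Under this identification, the tensor coalgebra $\bC^e$ becomes the braided tensor algebra in $\V^{\mathrm{op}}$, the category $\V^{\bC^e}$ is precisely ${_{\bC^e}}(\V^{\mathrm{op}})$, and the comparison functor $\overline{K}_\tau:\V \to \V^{\bC^e}$ of diagram (\ref{comp.f.com}) coincides with the algebra comparison functor $\V^{\mathrm{op}} \to {_{\bC^e}}(\V^{\mathrm{op}})$ of \ref{azu-def} for $\bC$ viewed in $\V^{\mathrm{op}}$.

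Assuming $\bC$ is left Azumaya as a $\V$-coalgebra, this equivalence of comparison functors shows that $\bC$ is a left Azumaya algebra in the braided monoidal category $\V^{\mathrm{op}}$. Applying Proposition \ref{finite} inside $\V^{\mathrm{op}}$, the underlying object $C$ admits a left dual in $\V^{\mathrm{op}}$, i.e.\ there exist an object $C^\sharp$ and morphisms $\db': I \to C^\sharp \ot C$ and $\ev': C \ot C^\sharp \to I$ in $\V$ satisfying the triangular identities that make $(C^\sharp, \db', \ev')$ a right dual of $C$ in $\V$. Invoking \ref{braid-mon}, in a braided monoidal category the existence of a right dual is equivalent to the existence of a left dual; hence $C$ is finite in $\V$.

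The main obstacle is the careful bookkeeping of the $\V$-versus-$\V^{\mathrm{op}}$ correspondence: one needs to confirm that the Yang--Baxter condition, the distributive law making $\bC^e$ a coalgebra, and the $\bC^e$-comodule structure of $C$ all transform under opposition into their algebra counterparts exactly as required by Proposition \ref{finite}. A more direct route, avoiding the opposite category, would mimic the original proof step by step: the inverse equivalence $\overline{L}$ of $\overline{K}_\tau$ is a $\V$-functor, and as a right adjoint to $\overline{K}_\tau$ it preserves limits; composing with the right adjoint $\phi^{\bC^e}$ of $U^{\bC^e}$, one obtains a limit-preserving $\V$-functor $\overline{L}\circ \phi^{\bC^e}:\V \to \V$ which is right adjoint to $U^{\bC^e}\circ \overline{K}_\tau = C\ot -$, and the dual of \cite[Theorem 4.2]{P2} would then represent this right adjoint as $C^\sharp \ot -$, exhibiting $C^\sharp$ as a right dual of $C$. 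I would favour the $\V^{\mathrm{op}}$ approach because it reuses \ref{finite} verbatim and sidesteps the need for a dualised Pareigis-type representation theorem.
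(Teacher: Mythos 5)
Your reduction is logically sound, but it is not how the paper proves Proposition \ref{finite.d}. You pass to $\V^{\mathrm{op}}$ (which must carry the braiding whose $(V,W)$-component is $\tau_{W,V}$ read backwards, so that $\bC^e$ becomes the braided tensor product algebra and $\overline{K}_\tau$ becomes the comparison functor $\overline{K}_l$ of \ref{azu-def}) and then quote Proposition \ref{finite}; granting the bookkeeping you flag, this does produce a right dual of $C$ in $\V$, hence finiteness. The cost is that your proof inherits everything hidden in the proof of \ref{finite}: that proof rests on the Pareigis representation theorem \cite[Theorem 4.2]{P2} for colimit-preserving $\V$-functors, so invoking \ref{finite} inside $\V^{\mathrm{op}}$ implicitly demands the limit-preserving dual of that theorem --- which is exactly what your alternative ``direct route'' would also need and which is not available off the shelf. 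The paper sidesteps this entirely with a self-contained computation: by Proposition \ref{l-r-adj-co} the functor $C\ot -$ has a right adjoint $[C,-]$; one forms the canonical morphism $\sigma_V:[C,I]\ot V\to [C,V]$ (the transpose of $(\ev^C)_I\ot V$) and checks the identity $t_I\ot V=t_V\cdot (C\ot \sigma_V)$ for the Galois comonad morphism $t$ coming from diagram (\ref{comp.f.com}); since $t_I$ and $t_V$ are isomorphisms by the Azumaya hypothesis and $C\ot -$ is comonadic, hence conservative, $\sigma_V$ is an isomorphism, so $[C,-]\simeq [C,I]\ot -$ and $[C,I]$ is an explicit right dual of $C$. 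Your route buys brevity and literal reuse of \ref{finite}; the paper's buys independence from any Pareigis-type theorem in $\V^{\mathrm{op}}$ and an explicit dual object. If you keep the $\V^{\mathrm{op}}$ argument, you should at least record the chosen braiding on $\V^{\mathrm{op}}$ and verify that the Azumaya condition for the comonad $\bC_l$ on $\V$ is literally the condition of \ref{azu-def} for the algebra $\bC$ in $\V^{\mathrm{op}}$.
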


\begin{proposition}\label{finite.d}
Let $\bC=(C,\Delta,\ve)$ be a coalgebra in a braided monoidal category $\V$.
 If $\bC$ is left Azumaya, then $C$ is finite in $\V$.
\end{proposition}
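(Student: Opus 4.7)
The plan is to dualize the proof of Proposition~\ref{finite}. Assume $\bC$ is left Azumaya, so the comparison functor $\overline{K}_\tau:\V \to {}^{\bC^e}\V$ is an equivalence; denote its inverse equivalence by $\overline{L}:{}^{\bC^e}\V \to \V$. Viewing $\V$ and ${}^{\bC^e}\V$ as right $\V$-categories in the sense of \cite{P2}, $\overline{K}_\tau$, and hence $\overline{L}$, are $\V$-functors. The cofree comodule functor $\phi^{(\bC^e)_l}:\V \to {}^{\bC^e}\V$ is likewise a $\V$-functor and, being a right adjoint to ${}^{\bC^e}U$, preserves all limits that exist in $\V$; since $\overline{L}$ is an equivalence, it too preserves all limits. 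Consequently the composite $\overline{L}\cdot \phi^{(\bC^e)_l}:\V \to \V$ is a $\V$-functor that preserves all limits in $\V$.

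Next, from the commutative diagram (\ref{comp.f.com}) one reads off $C\ot - = {}^{\bC^e}U \cdot \overline{K}_\tau$. Composing the adjunctions $\overline{K}_\tau \dashv \overline{L}$ (one direction of the equivalence) and ${}^{\bC^e}U \dashv \phi^{(\bC^e)_l}$ (forgetful--cofree) in reverse order identifies the right adjoint of $C \ot -$ with $\overline{L}\cdot \phi^{(\bC^e)_l}$; alternatively, the existence of such a right adjoint already follows from Proposition~\ref{l-r-adj-co} applied to the Azumaya comonad $\bC_l$. Invoking the dual form of Pareigis's representation theorem \cite[Theorem~4.2]{P2}, there exists an object $C^\sharp \in \V$ with $\overline{L}\cdot \phi^{(\bC^e)_l} \simeq C^\sharp \ot -$. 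Thus $C^\sharp \ot -$ is right adjoint to $C \ot -$, so $C^\sharp$ is a right dual of $C$; by \ref{braid-mon}, $C$ then also admits a left dual, i.e.\ $C$ is finite.

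I expect the main obstacle to lie in the invocation of Pareigis's theorem in the continuous rather than the cocontinuous direction. In the algebra case the composite $\overline{R}\cdot \phi_{(\bA^e)_l}$ is transparently cocontinuous (both factors being left adjoints) and Pareigis applies directly; here the corresponding composite is only guaranteed to preserve limits. One may circumvent this either by appealing to the continuous version of Pareigis's representation theorem, or by reducing to Proposition~\ref{finite} itself by passing to the braided monoidal category $\V^{\mathrm{op}}$, in which $\bC$ appears as a left Azumaya algebra and the property of admitting a (left or right) dual is invariant under the passage to the opposite braided monoidal category.
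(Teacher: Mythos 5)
Your argument, in the form you finally commit to (reduction to $\V^{\mathrm{op}}$), is correct but genuinely different from the paper's proof. You are right to distrust your primary route: a ``continuous Pareigis theorem'' asserting that a limit-preserving $\V$-functor $\V\to\V$ is of the form $X\ot-$ is simply false (in $\M_R$ the functor $\Hom_R(M,-)$ preserves all limits and is $R$-linear, yet is of the form $X\ot_R-$ only when $M$ is finitely generated projective -- which is exactly the conclusion one is trying to prove). So the only viable version of your proposal is the second one: observe that a left Azumaya coalgebra in $\V$ is precisely a left Azumaya algebra in the braided monoidal category $\V^{\mathrm{op}}$ (the comparison functor $\overline{K}_\tau$ is the opposite of the comparison functor $\overline{K}_l$ for $\bC$ viewed as an algebra in $\V^{\mathrm{op}}$, with the BD-law $\tau_{C,C}$ matching up), apply Proposition \ref{finite} in $\V^{\mathrm{op}}$, and note that a left dual in $\V^{\mathrm{op}}$ is a right dual in $\V$, which by \ref{braid-mon} yields finiteness. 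This works, but it transfers the entire burden to Proposition \ref{finite} applied in $\V^{\mathrm{op}}$, i.e.\ to Pareigis's representation theorem for the monoidal category $\V^{\mathrm{op}}$, and it requires the (routine but nontrivial) verification of the dictionary between the two Azumaya notions and between left and right duals under op-ing.

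The paper instead gives a direct, self-contained argument that avoids Pareigis altogether and explicitly identifies the right dual as $[C,I]$, where $[C,-]$ is the right adjoint of $C\ot-$ guaranteed by Proposition \ref{l-r-adj-co}. Writing $t_V$ for the $V$-component of the comonad morphism attached to the diagram (\ref{comp.f.com}) and $\sigma_V:[C,I]\ot V\to [C,V]$ for the transpose of $(\ev^C)_I\ot V$, one checks the identity $t_I\ot V=t_V\cdot(C\ot\sigma_V)$; since the Azumaya (Galois) hypothesis makes both $t_I$ and $t_V$ isomorphisms, $C\ot\sigma_V$ is an isomorphism, and conservativity of the comonadic functor $C\ot-$ forces $\sigma_V$ itself to be one, so $[C,I]\ot-$ is right adjoint to $C\ot-$ and $[C,I]$ is a right dual of $C$. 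The trade-off: your route is shorter on paper but rests on an external representation theorem in the opposite category; the paper's route is longer but elementary, stays inside $\V$, and produces the dual object explicitly.
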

\begin{proof} Suppose that a $\V$-coalgebra $\bC$ is left Azumaya. Then the functor
$C \ot \,-\, :\V \to \V$ admits a right adjoint $[C,\,-\,]:\V \to \V$ by Proposition \ref{l-r-adj-co}.
Write $\vartheta$ for the composite $(C \ot \Delta^\tau)\cdot \Delta: C \to C \ot C \ot C$. Then for any
$V \in \V$, $\overline{K}_\tau(V)=(C \ot V, \vartheta \ot V)$ and thus the $V$-component of the
left ${\bC^e}$-comodule structure on the functor $C \ot \,-\, :\V \to \V$, induced
by the commutative diagram (\ref{comp.f.com}), is the morphism $\vartheta \ot V: C \ot V \to C \ot C \ot C \ot V$.
From \ref{Gal} we then see that the $V$-component $t_V$ of the comonad morphism induced by the above diagram is
the composite $$C \ot [C,V] \xra{\vartheta \ot [C,V]}C \ot C \ot C \ot [C,V]\xra{C \ot C \ot (\ev^C)_V} C \ot C \ot V,$$
where $\ev^C$ is the counit of the adjunction $C \ot \,-\, \dashv [C,\,-\,]$.

Next, let $\sigma_V: [C,I] \ot V \to [C,V]$ be the transpose of the morphism $(\ev^C)_I \ot V :C\ot [C,I] \ot V \to V$
and consider the following diagram
$$
\xymatrix{C\ot [C,I] \ot V \ar[rr]^-{\vartheta \ot [C,I] \ot V} \ar[d]_{C \ot \sigma_V}&&
 C \ot C \ot C\ot [C,I] \ot V
\ar[d]_{C \ot C \ot C\ot \sigma_V}\ar[rrd]^{C \ot C \ot (\ev^C)_I \ot V}&&\\
C\ot [C,V] \ar[rr]_-{\vartheta \ot [C,V]}&& C \ot C \ot C\ot [C,V] \ar[rr]_-{C\ot C \ot (\ev^C)_V}&& C\ot C \ot V\,.}
$$ In this diagram the rectangle is commutative by naturality of composition.
Since $\sigma_V$ is the transpose of the morphism $(\ev^C)_I \ot V$, the transpose
of $\sigma_V$ -- which is the composite $C \ot [C,I] \ot V \xra{C \ot \sigma_V} C \ot [C,V] \xra{(\ev^C)_V} V$ --
is $(\ev^C)_I \ot V$. Hence the triangle in the diagram is also commutative.
Now, since
$$(C \ot C \ot (\ev^C)_I \ot V)\cdot (\vartheta \ot [C,I] \ot V)=t_I \ot V,$$
it follows
from commutativity of the diagram that $t_I \ot V=t_V \cdot (C \ot \sigma_V)$, and since
$\bC$ is assumed to be left Azumaya, both $t_I$ and $t_V$ are isomorphisms, one concludes that $C \ot \sigma_V$
is an isomorphism. Moreover, the functor $C \ot \,-\, :\V \to \V$ is comonadic, hence conservative. It follows that
$\sigma_V: [C,I] \ot V \to [C,V]$ is an isomorphism for all $V \in \V$. Thus the functor $[C,I]\ot \,-\,:\V \to \V$
is also right adjoint to the functor $C \ot \,-\, :\V \to \V$. It is now easy to see that $[C,I]$ is right adjoint to $C$.
\end{proof}

Theorem \ref{th.1.co-Monad} provides a first characterisation of left Azumaya coalgebras.

\begin{theorem}\label{th.1.Clg} %Let $\V=(\V,\ot,I,\tau)$ be a braided monoidal category and \,
For a $\V$-coalgebra $\bC=(C,\Delta,\ve)$, the following are equivalent:
\begin{blist}
  \item  $\bC$ is a left Azumaya $\V$-coalgebra;
  \item the functor $C \ot -:\V \to \V$ is comonadic and the left $(\mathcal C^e)_l$-comodule
  structure on it, induced by the commutative diagram (\ref{comp.f.com}), is Galois;
  \item  {\rm (i)} $C$ is finite with right dual $(C^\sharp,  {\db'}: I \to C^\sharp \ot C, {\ev'}:C \ot C^\sharp \to I)$,
  the functor $C \ot \,-\,:\V \to \V$ is comonadic and \\[+1mm]
  {\rm (ii)}   the composite $\overline{\chi}_0:$
$$ C\ot C^\sharp \xra{\Delta \ot C^\sharp} C \ot C \ot C^\sharp
\xra{C \ot \Delta \ot C^\sharp}
 C \ot C \ot C \ot C^\sharp   \xra{C \ot  \tau \ot
C^\sharp} C \ot C \ot C \ot C^\sharp \xra{C \ot C \ot \ev'} C\ot C $$
is an isomorphism (between the $\V$-coalgebras $\mathfrak{S}_{C,C^\sharp}$ and ${\mathcal C}^e$);
  \item  {\rm (i)} $C$ is finite with left dual $(C^*,  {\db}: I \to C \ot C^*, {\ev}:C^* \ot C \to I)$,
  the functor $\phi_{(\bC^\tau)_l} :\V \to \V^{(\bC^\tau)_l}=\,^{\bC^\tau}\V$ is monadic and \\[+1mm]
  {\rm (ii)}   the composite $\overline{\chi}:$
  $$ C^*\ot C \xra{C^* \ot \Delta} C^* \ot C \ot C \xra{C^* \ot \tau}
 C^* \ot C \ot C   \xra{C^* \ot  \Delta \ot C} C^* \ot C \ot C \ot C
 \xra{\ev \ot C \ot C } C\ot C $$
is an isomorphism.
\end{blist}
\end{theorem}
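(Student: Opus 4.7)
The plan is to run the formal dual of the proof of Theorem \ref{th.1.Alg}, substituting the comonad-side machinery of Section \ref{co-azu} for the monad-side machinery of Section \ref{azu}. Concretely, (a)$\LRa$(b) is the specialisation of Theorem \ref{th.1.co-Monad}(a)$\LRa$(b) to the comonad $\bG=\bC_l$ with BD-law $\kappa=\tau_{C,C}\ot-$; finiteness of $C$ under (a) is supplied by Proposition \ref{finite.d}; (a)$\LRa$(c) will also come from Theorem \ref{th.1.co-Monad}; and (a)$\LRa$(d) will come from Theorem \ref{th.3}.

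After recording (a)$\LRa$(b), I would observe that Proposition \ref{finite.d} makes $C$ finite as soon as $\bC$ is left Azumaya, so a right dual $(C^\sharp,\db',\ev')$ (resp.\ left dual $(C^*,\db,\ev)$) is available; conversely (c)(i) and (d)(i) assume finiteness directly. Hence in proving the remaining equivalences I may freely use the adjunctions $C\ot-\dashv C^\sharp\ot-$ and $C^*\ot-\dashv C\ot-$ on $\V$.

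For (a)$\LRa$(c), the comonad on $\V$ generated by the adjunction $C\ot-\dashv C^\sharp\ot-$ has functor part $C\ot C^\sharp\ot-$. Applying Theorem \ref{th.1.co-Monad}(a)$\LRa$(c) reduces the problem to showing that the Galois comonad morphism
$$ GR \xra{\delta R} GGR \xra{\delta GR} GGGR \xra{G\kappa R} GGGR \xra{GG\sigma} GG,$$
specialised to $G=C\ot-$, $R=C^\sharp\ot-$, $\delta=\Delta\ot-$, $\kappa=\tau_{C,C}\ot-$, $\sigma=\ev'\ot-$, coincides with $\overline{\chi}_0\ot-$; invertibility of this natural transformation is then equivalent to that of $\overline{\chi}_0$. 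For (a)$\LRa$(d), the monad $\mathcal{L}=(\bC^*)_l$ is left adjoint to the comonad $\bC_l$. Theorem \ref{th.3}, applied to $\bG=\bC_l$ with $\kappa=\tau_{C,C}\ot-$ and $\mathcal{L}=(\bC^*)_l$, characterises Azumayaness of $\bC_l$ via the monadicity of $\phi^{(\bC^\tau)_l}$ together with invertibility of the composite $\chi:LG\to GG$ displayed in Theorem \ref{th.3}(b)(ii). Substituting $L=C^*\ot-$, $G=C\ot-$, $\delta=\Delta\ot-$, $\kappa=\tau_{C,C}\ot-$, $\overline{\ve}=\ev\ot-$ identifies $\chi$ with $\overline{\chi}\ot-$, yielding the stated equivalence.

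The main obstacle is mostly bookkeeping: identifying the abstract Galois comonad/monad morphisms of Theorems \ref{th.1.co-Monad} and \ref{th.3} with the explicit composites $\overline{\chi}_0\ot-$ and $\overline{\chi}\ot-$ of (c)(ii) and (d)(ii). Both identifications are routine diagram chases using only naturality of $\tau$ together with the triangular identities for the duality adjunctions $C\ot-\dashv C^\sharp\ot-$ and $C^*\ot-\dashv C\ot-$; no new ingredient beyond the cited results is needed.
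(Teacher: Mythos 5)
Your proposal is correct and follows essentially the same route as the paper: the paper disposes of (a)$\LRa$(b) by Theorem \ref{th.1.co-Monad} and obtains (a)$\LRa$(c), (a)$\LRa$(d) from Proposition \ref{finite.d} together with the duals of the corresponding arguments in Theorem \ref{th.1.Alg}, which is precisely the dualisation you carry out (Theorem \ref{th.1.co-Monad}(c) with $R=C^\sharp\ot-$ identifying the Galois comonad morphism with $\overline{\chi}_0\ot-$, and Theorem \ref{th.3} with $\mathcal{L}=C^*\ot-$ identifying $\chi$ with $\overline{\chi}\ot-$). Your write-up merely makes explicit the bookkeeping the paper leaves implicit.
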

\begin{proof}(a) and (b) are equivalent by Theorem \ref{th.1.co-Monad}.

The equivalences (a) $\LRa$ (c) and (a) $\LRa$ (d) follow from Proposition \ref{finite.d}
by dualising the proofs of the corresponding equivalences in Theorem \ref{th.1.Alg}.
\end{proof}

Similarly, writing out Theorem \ref{th.3} and
the dual form of Theorem \ref{th.1.Alg.rev} yields conditions for
right Azumaya coalgebras $\bC$, that is, making $\bC_r=-\ot C$ an Azumaya comonad. Dualising
Theorem \ref{th.1.Alg.1} gives:

\begin{theorem}\label{th.1.Alg.1.D}  Let $\bC=(C,\Delta,\ve)$ be a $\V$-coalgebra
in a braided monoidal category $\V$ with equalisers and coequalisers.
 Then the following are equivalent:
\begin{blist}
\item  $\bC$ is a left Azumaya coalgebra;
\item the left $\mathcal{C}^e$-comodule $(C, (C \ot \Delta^\tau) \cdot \Delta)$ is cofaithfully Galois;
\item there is an adjunction ${\rm db}', {\rm ev}':C \dashv C^\sharp$,
  the functor ${\,-\,\ot \,C}:\V\to \V$ is comonadic,  and
  the composite $\overline{\chi}$ in \ref{th.1.Clg}{\rm(c)}
  is an isomorphism;
%\item $A$ there is an adjunction ${\rm db}', {\rm ev}':C \dashv C^\sharp$,
%  the functor $-\ot C:\V\to \V$ is comonadic,  and
%  the composite $\overline{\chi}_1$ in \ref{th.1.Clg.D}{\rm(c)}
%  is an isomorphism;
\item the right $^e\mathcal{C}$-comodule $(C, (\Delta^\tau \ot C) \cdot \Delta)$ is
cofaithfully Galois;
\item $\bC$ is a right Azumaya coalgebra.
\end{blist}
\end{theorem}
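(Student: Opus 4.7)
The proof follows the pattern of Theorem \ref{th.1.Alg.1} dualised to the coalgebra setting. First, I would apply Theorem \ref{th.1.Clg} together with Proposition \ref{finite.d} to obtain the equivalence of (a), (b), and a ``left-sided'' version of (c) (namely \ref{th.1.Clg}(c), asserting that $C\ot -$ is comonadic and the composite $\overline{\chi}_0$ is an isomorphism). The equivalence (a)$\LRa$(b) comes from Proposition \ref{P.1.7} applied to the comonad $\bC_l$, once one recognises that the ``cofaithfully Galois'' property amounts to the Galois condition on the comparison functor together with conservativity of $C\ot -$. Reading the same theorem inside the braided category $\V^r$ (equivalently, passing to $\bC^\tau$) yields the symmetric chain (d)$\LRa$(e)$\LRa$ ``right-sided'' version of (c).

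The bridging step is to identify the left- and right-sided versions of (c). I would argue in two parts. First, under the finiteness of $C$ provided by Proposition \ref{finite.d}, both functors $C\ot -$ and $-\ot C$ admit a left and a right adjoint, so Lemma \ref{lem-mon} reduces comonadicity on either side to conservativity; Proposition \ref{finite.reg} then shows that this conservativity is encoded by a side-independent property of $C$ itself (that $\ev: C^\sharp\ot C\to I$ is a copure epimorphism, equivalently that $\db': I\to C^\sharp\ot C$ is a pure monomorphism). The hypothesis that $\V$ has equalisers and coequalisers is used precisely here. Second, a diagram chase analogous to the one in the proof of Theorem \ref{th.1.Alg.1} shows that the coalgebra composite $\overline{\chi}$ (built from the right dual) and its right-handed counterpart $\overline{\chi}_1$ differ by pre-composition with the braiding $\tau_{C,C}$, using only naturality of $\tau$ and coassociativity of $\Delta$. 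Hence either is an isomorphism iff the other is.

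With the bridge in place, the equivalences are assembled as (a)$\LRa$(b)$\LRa$(c-left)$\LRa$(c-right)$\LRa$(d)$\LRa$(e), the outer pairs coming from Theorem \ref{th.1.Clg} and its $\V^r$-counterpart, and the middle equivalence from the bridging step. The main obstacle I expect is executing the diagram chase that relates $\overline{\chi}$ to $\overline{\chi}_1$ cleanly: all arrows run opposite to the algebra case, and the interchange between the left dual $C^*$ and the right dual $C^\sharp$ via the braiding (as recalled in \ref{braid-mon}) must be tracked meticulously, especially since the two instances of $\tau$ and of $\Delta$ sit at different positions in the string of tensor factors. Everything else is a formal dualisation, aided by the built-in symmetry of the hypotheses on $\V$.
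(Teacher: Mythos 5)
Your proposal is correct and follows essentially the same route as the paper, which proves this theorem simply by dualising Theorem \ref{th.1.Alg.1}: the outer equivalences come from Theorem \ref{th.1.Clg} and its $\V^r$-counterpart, and the bridge between the two one-sided conditions is exactly the combination of Lemma \ref{lem-mon} with Proposition \ref{finite.reg} (to match (co)monadicity on the two sides) together with the diagram chase showing that $\overline{\chi}$ and its right-handed counterpart differ by composition with $\tau_{C,C}$. No substantive difference from the paper's argument.
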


Under suitable assumptions, the base category $\V$ may be replaced by a
comodule category over a cocommutative coalgebra. For this we consider the

\begin{thm}\label{C-Azumaya}{\bf Cotensor product.} \em
Suppose now that $\V = (\V,\ot, I, \tau)$ is a braided monoidal category with equalisers and
$\bD=(D,\Delta_\bD,\ve_\bD)$ is a coalgebra in $\V$. If
$((V,\rho^V) \in {^{\bD}\V}$ and $(W,\varrho^W)\in {\V^{\bD}}$, then their \emph{cotensor product}
(\emph{over} $\bD$) is the object part of the   equaliser
$${\xymatrix{V \ot^D W \ar[r]^{i_{V, W}} &
 V\otimes W \ar@{->}@<0.5ex>[rr]^-{\varrho^V \ot  W} \ar@
{->}@<-0.5ex> [rr]_-{V \ot  \rho^W}&& V  \ot D \ot W &}}$$

Suppose in addition that either
\begin{itemize}
%the tensor product preserves equalisers in both variables (i.e.
\item[-] for any $V\in \V$, $V \ot -:\V \to \V$
and $-\ot V:\V \to \V$  preserve equalisers, or
\item[-] $\V$ is Cauchy complete and $\bD$ is coseparable.
\end{itemize}
Each of these condition guarantee that for  $V, W, X \in {^{\bD}\V^{\bD}}$,
\begin{itemize}
  \item $V \ot^D W \in {^{\bD}\V^{\bD}}$;
  \item  the canonical morphism (induced by the associativity of the tensor product)
$$(V \ot^D W) \ot^D V \to V \ot^D (W \ot^D X)$$
  is an isomorphism in $^{\bD}\V^{\bD}$;
  \item $(^{\bD}\V^{\bD}, -\ot^D -, D, \widetilde{\tau})$, where $\widetilde{\tau}$ is the restriction of $\tau$,
  is a braided monoidal category.
\end{itemize}

When $\bD$ is cocommutative (i.e. $\tau_{D,D}\cdot \Delta=\Delta$), then for any $(V,\rho^V)\in {^{\bD}\V}$, the composite
$\rho_1^V= \tau^{-1}_{D, V}\cdot \rho^V: V  \to V\ot D$, defines  a right $\bD$-comodule structure on $V$. Conversely, if
$(W,\varrho^W)\in {\V^{\bD}}$, then $\varrho_1^W=\tau_{W, D}\cdot \varrho^W : W \to D \ot W$ defines
a left $\bD$-comodule structure on $W$. These two constructions establish an isomorphism between
$^{\bD}\V$ and $\V^{\bD}$, and thus we do not have to
distinguish between left and right $\bD$-comodules. In this case, the tensor product of two $\bD$-comodules
is another $\bD$-comodule, and cotensoring over $\bD$ makes $^{\bD}\V$ (as well as $\V^{\bD}$) a braided
monoidal category with unit $\bD$.
\end{thm}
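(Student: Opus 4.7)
The plan is to verify, via the universal property of the defining equaliser, the three asserted properties (bicomodule structure on cotensor products, associativity isomorphism, braided monoidal structure on ${}^\bD\V^\bD$) and then to deduce the cocommutative special case.

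First I would endow $V\ot^D W$ with its $\bD$-bicomodule structure. For $V,W\in {}^\bD\V^\bD$, the outer left coaction on $V$ and outer right coaction on $W$ yield maps $V\ot W\to D\ot V\ot W$ and $V\ot W\to V\ot W\ot D$ that commute with the parallel pair defining the equaliser (by bicomodule compatibility together with naturality of the inner coactions), so they descend to $V\ot^D W$ provided the equaliser is preserved by $D\ot -$ and $-\ot D$. Under the first hypothesis this preservation is direct. Under the second, coseparability of $\bD$ supplies a retraction $\varrho:D\ot D\to D$ of $\Delta_\bD$ as in \ref{sep-mon}, and a standard computation shows that $\varrho$ makes the defining fork into a split (hence absolute) equaliser, preserved by every functor; Cauchy completeness guarantees that the required split equalisers exist. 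The unit $\bD$ is a bicomodule via $\Delta_\bD$, and the counit axioms yield the unit isomorphisms $V\ot^D D\cong V\cong D\ot^D V$.

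Next I would establish the associator. A diagram chase identifies both $(V\ot^D W)\ot^D X$ and $V\ot^D(W\ot^D X)$ with the joint equaliser inside $V\ot W\ot X$ of the parallel arrows expressing compatibility with the $\bD$-coactions in the two middle positions (its existence again follows from either equaliser preservation or absoluteness). The associator of $\ot$ in $\V$ then restricts, by uniqueness from the universal property, to the required isomorphism in ${}^\bD\V^\bD$, and its naturality together with the pentagon and triangle coherences are inherited from $\V$. The braiding $\widetilde{\tau}_{V,W}:V\ot^D W\to W\ot^D V$ is induced from $\tau_{V,W}$ in the same way: naturality of $\tau$ together with the bicomodule axioms ensures that $\tau_{V,W}\cdot i_{V,W}$ equalises the pair defining $W\ot^D V$, and the hexagon identities for $\widetilde{\tau}$ follow from those for $\tau$.

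For the cocommutative case, I would first check that $\rho^V_1:=\tau^{-1}_{D,V}\cdot \rho^V$ defines a right $\bD$-coaction whenever $\rho^V$ is a left one, using naturality of $\tau$, a braiding hexagon, and $\tau_{D,D}\cdot \Delta_\bD=\Delta_\bD$ for coassociativity, with naturality yielding the counit law. The construction is involutive, giving ${}^\bD\V\cong \V^\bD$, and the two coactions on the same object induced in this fashion satisfy the bicomodule compatibility thanks once more to cocommutativity, so ${}^\bD\V$ can be identified with a full subcategory of ${}^\bD\V^\bD$. Transporting the braided monoidal structure established in the previous paragraph then gives the last claim. The main obstacle I anticipate is the bicomodule factorisation in the coseparable case when ordinary tensoring need not preserve equalisers: the key point is that coseparability provides the absolute splitting that rescues the preservation argument, and writing the splitting maps out explicitly and checking each compatibility diagram is the most delicate step.
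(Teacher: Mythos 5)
The paper gives no proof of \ref{C-Azumaya}: it is a preliminary item collecting what the authors treat as standard facts about cotensor products, so there is no argument of theirs to compare yours against step by step. Your outline is the argument one would normally supply, and its main points are sound: the bicomodule structure and the associativity isomorphism come from the universal property once the defining equalisers are preserved by tensoring, and this preservation holds either by the first hypothesis or, in the coseparable case, because the retraction of $\Delta_\bD$ from \ref{sep-mon} turns the defining fork into a contractible (hence absolute) pair, Cauchy completeness providing the splitting of the resulting idempotent. That is exactly the role the two alternative hypotheses are meant to play, and your identification of the coseparable case as the delicate one is right.

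There is, however, a genuine gap in the braiding step. The equaliser defining $V \ot^D W$ constrains the \emph{inner} coactions (the right coaction of $V$ and the left coaction of $W$), whereas naturality of $\tau$ and the hexagon identities rewrite the pair defining $W \ot^D V$, precomposed with $\tau_{V,W}$, in terms of the \emph{outer} coactions $V \to D \ot V$ and $W \to W \ot D$, about which the equaliser relation says nothing. So ``naturality plus the bicomodule axioms'' does not show that $\tau_{V,W}\cdot i_{V,W}$ equalises the pair defining $W \ot^D V$, and in general it does not: already for vector spaces with the flip and $V=W=D$ non-cocommutative, the flip sends $\sum d_{(1)}\ot d_{(2)}$ to $\sum d_{(2)}\ot d_{(1)}$, which need not lie in $D\ot^D D\simeq \mathrm{im}\,\Delta$. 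The assertion is correct precisely when $\bD$ is cocommutative, where the braiding interchanges left and right coactions and the inner and outer coactions can be identified --- which is the only case the paper actually uses (\ref{C-Azumaya.D}, \ref{def-azu-D}) and the case your final paragraph handles correctly. You should therefore either restrict the braided-monoidal claim to cocommutative $\bD$ or give a genuinely different construction of $\widetilde{\tau}$; as written, this step would fail.
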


\begin{thm}\label{C-Azumaya.D}{\bf $\bD$-coalgebras.} \em
Consider $\V$-coalgebras $\bC=(C, \Delta_\bC, \ve_\bC)$  and  $\bD=(D,\Delta_\bD,\ve_\bD)$
with $\bD$ cocommutative.
A coalgebra morphism $\gamma:\bC\to \bD$ is called {\em cocentral}
provided the diagram
$$\xymatrix{ C \ar[r]^{\Delta_\bC\quad} \ar[d]_{\Delta_\bC} & C\ot C \ar[r]^{C \ot \gamma } &
      C\ot D \ar[d]^{\tau_{C,D}}\\
C\ot  C \ar[rr]_{\gamma \ot C} && D\ot C } $$
is commutative. When this is the case, $(\bC,\gamma)$ is called a $\bD$-coalgebra.

%(Quite obviously, if $\bC$ is cocommutative too, then any coalgebra morphism
%from $\bC$ to $\bD$ is cocentral.)

To specify a $^{\bD}\V$-coalgebra structure on
an object $C \in \V$ is to give $C$ a $\bD$-coalgebra
structure $(\bC=(C, \Delta_\bC, \ve_\bC), \gamma)$.
Indeed, if $\gamma:\bC\to \bD$ is a cocentral morphism,
$\bC$ can be viewed as an object of $^{\bD}\V$ (and $\V^{\bD}$) via
$$C \xra{\Delta_C} C \ot C \xra{\gamma \ot C} D \ot C, \quad
 ( C \xra{\Delta_C} C \ot C \xra{C \ot \gamma} C \ot D \xra{\tau_{C, D}} D \ot C),$$
and $\Delta_C$ factors through the $i_{C, C}:C \ot_D C \to C \otimes C$ by some
(unique) morphism $\Delta'_\bC :C \to C \ot_D C$, that is
${\Delta_\bC} = i_{C, C}\cdot {\Delta'_\bC}$.

% inducing commutativity of the diagram
%$$\xymatrix{ C \ar[dr]_{\Delta_\bC}  \ar[r]^-{\Delta'_\bC}& C \ot^D C\ar[d]^{i_{C, C}}\\
%& C\ot C .}$$
\smallskip

 The triple $\bC_\bD =(C,
\Delta'_\bC, \gamma)$ is a coalgebra in the braided monoidal category $^{\bD}\V$.
\smallskip

Conversely, any $^{\bD}\V$-coalgebra,
$(C, \Delta'_\bC: C \to C \ot^D C, \ve_\bC: C \to D)$
induces a $\V$-coalgebra
$$\bC=(C, C \xra{\Delta'_\bC}C \ot^D C \xra{i_{C,C}} C \ot C,
C \xra{\ve_\bC} D \xra{\ve_D} I),$$
  and the pair $(\bC, \ve_\bC)$ is a $\bD$-coalgebra.

Related to any $\V$-coalgebra morphisms $\gamma: \bC\to \bD$, there is the
\emph{corestriction} functor
$$(\, - \,)_\gamma: {^\bC\V}\to {^\bD\V}, \quad (V,\varrho^V)\;\mapsto \;
      (V,(V\ot \gamma)\cdot \varrho^V),$$
and usually one writes $(V)_\gamma=V$.
If the category ${^\bC\V}$ admits equalisers, then one has  the {\em coinduction functor}
 $$ C\ot^D- : {^\bD\V}\to {^\bC\V}, \quad W\mapsto (C\ot^DW, \Delta_\bC \ot^DW),$$
defining an adjunction
$$(\,-\,)_\gamma \dashv C\ot^D-:{^\bD\V}\to {^\bC\V}.$$

Considering $\bC$ as a $(\bD,\bC)$-bicomodule by
$C\stackrel{\Delta}{\lra} C\ot_R C\stackrel{\gamma \ot C}{\lra} D\ot_R C$,
the corestriction functor is isomorphic to $C\ot^C-: {^\bC\V}\to {^\bD\V}$.

If $(\bC, \gamma)$ is a $\bD$-coalgebra, then the category
$^{\bC_\bD}(^{\bD}\V)$ can be identified with the category $^\bC\V$  and,
modulo this identification,
the functor $$C_\bD \ot^D \,-\,: {^{\bD} \V} \to {^{\bC_\bD}} ({^{\bD}\V})$$
corresponds to the coinduction functor
$C \ot^D \,-\, : {^{\bD}\V} \to {^{\bC}\V}.$
\end{thm}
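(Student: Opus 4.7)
The statement has two parts: an identification of categories ${^{\bC_\bD}({^\bD\V})} \simeq {^\bC\V}$, and, under this identification, the identification of the functor $C_\bD \ot^D -$ with the coinduction functor $C \ot^D -$. I would exhibit explicit mutually inverse functors. The easy direction is $\Psi : {^{\bC_\bD}}({^\bD\V}) \to {^\bC\V}$: an object of the domain is a left $\bD$-comodule $(V, \rho^V_\bD)$ equipped with a morphism $\rho' : V \to C \ot^D V$ in ${^\bD\V}$, coassociative and counital with respect to $\Delta'_\bC$ and $\gamma$; send it to $(V, i_{C,V} \cdot \rho')$. Since $\Delta_\bC = i_{C,C} \cdot \Delta'_\bC$ and $\ve_\bC = \ve_\bD \cdot \gamma$, the ${^\bD\V}$-axioms for $\rho'$ push down to the $\bC$-comodule axioms for $\rho^V := i_{C,V} \cdot \rho'$ in $\V$.

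The reverse functor $\Phi : {^\bC\V} \to {^{\bC_\bD}}({^\bD\V})$ requires more work. Given $(V,\rho^V)$, set $\rho^V_\bD := (\gamma \ot V) \cdot \rho^V$; this is a $\bD$-coaction because $\gamma$ is a coalgebra morphism and $\rho^V$ is a $\bC$-coaction. The key technical step is that $\rho^V$ factors through the equaliser $i_{C,V} : C \ot^D V \to C \ot V$, i.e.,
\[
 (\rho_1^C \ot V) \cdot \rho^V \;=\; (C \ot \rho^V_\bD) \cdot \rho^V,
\]
where $\rho_1^C = (C \ot \gamma) \cdot \Delta_\bC$ is the right $\bD$-coaction on $C$ (equal to $\tau^{-1}_{D,C} \cdot (\gamma \ot C) \cdot \Delta_\bC$ by cocentrality, so that $C \ot^D V$ is unambiguously defined). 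Substituting the definition of $\rho^V_\bD$ and invoking coassociativity $(\Delta_\bC \ot V) \cdot \rho^V = (C \ot \rho^V) \cdot \rho^V$, both sides become $(C \ot \gamma \ot V) \cdot (\Delta_\bC \ot V) \cdot \rho^V$. The universal property then yields a unique $\rho' : V \to C \ot^D V$ with $i_{C,V} \cdot \rho' = \rho^V$; one verifies it is a morphism in ${^\bD\V}$ and satisfies the $\bC_\bD$-coalgebra axioms in ${^\bD\V}$ by pulling back the known identities for $\rho^V$, $\Delta_\bC$, and the cocentrality of $\gamma$ across the appropriate monomorphisms $i_{-,-}$.

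Mutual inversion of $\Phi$ and $\Psi$ is immediate from uniqueness of the equaliser factorisation and from the counit axiom for $\rho^V$. For the functorial part, given $W \in {^\bD\V}$ the object $C_\bD \ot^D W$ carries the $\bC_\bD$-coaction $\Delta'_\bC \ot^D W$; applying $\Psi$ yields the $\bC$-coaction $i_{C, C \ot^D W} \cdot (\Delta'_\bC \ot^D W)$ on $C \ot^D W$, which, via the associativity isomorphism $(C \ot^D C) \ot^D W \simeq C \ot^D (C \ot^D W)$ (valid under the hypotheses of \ref{C-Azumaya}) together with $i_{C,C} \cdot \Delta'_\bC = \Delta_\bC$, equals $\Delta_\bC \ot^D W$, precisely the coaction supplied by the coinduction functor $C \ot^D - : {^\bD\V} \to {^\bC\V}$.

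The main obstacle is verifying that the factored map $\rho'$ produced by $\Phi$ is itself a morphism of $\bD$-comodules: this involves tracing the $\bD$-coaction on $C \ot^D V$ induced from the braided monoidal structure on ${^\bD\V}$ and it relies on both cocommutativity of $\bD$ and cocentrality of $\gamma$ at the appropriate points. Everything else reduces to routine diagram chases pushed across the equaliser monomorphisms $i_{-,-}$.
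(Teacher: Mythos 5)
The paper states \ref{C-Azumaya.D} as an expository construction and supplies no proof of the identification ${^{\bC_\bD}}({^\bD\V})\simeq{^\bC\V}$ or of the compatibility of $C_\bD\ot^D-$ with coinduction, so your write-up is filling a gap the authors treat as routine; your construction of the mutually inverse functors is correct and is the natural argument. In particular, the equaliser computation showing that a $\bC$-coaction $\rho^V$ satisfies $(\rho_1^C\ot V)\cdot\rho^V=(C\ot\rho^V_\bD)\cdot\rho^V$ (both sides reducing to $(C\ot\gamma\ot V)\cdot(\Delta_\bC\ot V)\cdot\rho^V$ by coassociativity) is exactly the key point, and you correctly flag the remaining checks — that the factored map $\rho'$ is a $\bD$-comodule morphism and that the associativity isomorphisms for $\ot^D$ from \ref{C-Azumaya} are needed to compare $\Delta'_\bC\ot^D W$ with $\Delta_\bC\ot^D W$ — as the places where cocommutativity of $\bD$ and cocentrality of $\gamma$ enter.
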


  \begin{thm}\label{def-azu-D} {\bf Azumaya $\bD$-coalgebras.} \em
Let $\bD$ be a cocommutative $\V$-coalgebra. Then a $\bD$-coalgebra $\bC=(C,\Delta_\bC,\ve_\bC)$
is said to be {\em left Azumaya} provided the comonad
$$\bC_l=C \ot^D -: {^\bD\V} \to {^\bD\V}$$ is Azumaya, i.e. (see \ref{def-Azu-co}),
the comparison functor
$\overline{K}_{\widetilde{\tau}} :{^\bD\V}\to {^{C\ot^D C^{\widetilde{\tau}}}\V}$
defined by
$$V\;\longmapsto \;(C \ot^D V,\, C\ot^D V \xra{\Delta_\bC \ot^D V}
C \ot^D C \ot^D V \xra{C \ot^D \Delta_\bC^{\widetilde{\tau}}\ot^D V} C\ot^D C \ot^D V)$$
is an equivalence of categories.
\end{thm}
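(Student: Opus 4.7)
The statement is a definition rather than an assertion to be proved: it names a $\bD$-coalgebra $\bC$ \emph{left Azumaya} exactly when the comonad $\bC_l = C\ot^D -$ on $^\bD\V$ is Azumaya in the sense of Definition \ref{def-Azu-co}. Accordingly, there is no theorem to establish, and my ``plan'' reduces to checking that the data displayed in the statement are well-posed, i.e.\ that $^\bD\V$ is a braided monoidal category in which $\bC_l$ lives as a comonad, that the braiding-induced BD-law $\widetilde{\tau}_{C,C}$ on $\bC_l\bC_l$ exists, and that the functor $\overline{K}_{\widetilde{\tau}}$ displayed is the one produced by Proposition \ref{comonads} applied to this BD-law in the ambient category $^\bD\V$.

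The first point is handled by the preliminaries in \ref{C-Azumaya}: under the standing hypotheses that $\V$ has equalisers and that either tensoring preserves them on both sides, or $\V$ is Cauchy complete and $\bD$ is coseparable, the cotensor product $-\ot^D-$ endows $^\bD\V$ with a braided monoidal structure whose braiding $\widetilde{\tau}$ is the restriction of $\tau$. Cocommutativity of $\bD$ is precisely what allows the identification $^\bD\V \simeq \V^\bD$, so that $C\ot^D -$ is an unambiguous endofunctor of this category. The second and third points are then a direct transfer of \ref{bicom} and \ref{def-azu-com} from $\V$ to $^\bD\V$: by \ref{C-Azumaya.D} the cocentral morphism $\gamma : \bC \to \bD$ turns $\bC$ into a $^\bD\V$-coalgebra $\bC_\bD=(C,\Delta'_\bC,\gamma)$, hence $\bC_l=C\ot^D -$ acquires a comonad structure on $^\bD\V$, and the braiding $\widetilde{\tau}_{C,C}$ of $^\bD\V$ supplies an involutive BD-law exactly as in \ref{bicom}.

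With the opposite coalgebra $C^{\widetilde{\tau}}=(C,\widetilde{\tau}_{C,C}\cdot \Delta'_\bC,\gamma)$ in $^\bD\V$ thereby defined, $C\ot^D C^{\widetilde{\tau}}$ carries the $^\bD\V$-coalgebra structure constructed in \ref{bicom}, giving the target category $^{C\ot^D C^{\widetilde{\tau}}}\!\V$ of the statement. Plugging $\kappa:=\widetilde{\tau}_{C,C}$ into Proposition \ref{comonads}(4) produces precisely the comparison functor
$$V \;\mapsto\; \bigl(C\ot^D V,\ (C\ot^D \Delta_\bC^{\widetilde{\tau}}\ot^D V)\cdot(\Delta_\bC \ot^D V)\bigr)$$
displayed above, so the definition coincides with the specialisation of Definition \ref{def-Azu-co} to the braided monoidal category $^\bD\V$. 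No further verification is required; the substantive content lies in the subsequent results that exploit this definition.
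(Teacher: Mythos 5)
This is a definition, and the paper offers no proof; your well-posedness check (braided monoidal structure on $^\bD\V$ from \ref{C-Azumaya}, the $^\bD\V$-coalgebra structure from \ref{C-Azumaya.D}, and the comparison functor from Proposition \ref{comonads} applied with $\kappa=\widetilde{\tau}_{C,C}$) is exactly the justification the paper relies on implicitly. Your account is correct and follows the same route as the paper.
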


In this setting, the results from Section \ref{co-azu} - and also
 specializing Theorem \ref{th.1.Clg} - yield  various characterisations of
Azumaya $\bD$-coalgebras.

\smallskip

Now let $R$ be again a commutative ring with identity  and $\M_R$
the category of $R$-modules. As an additional notion of interest the dual
algebra of a coalgebra comes in.

\begin{thm}\label{coalg-mod}{\bf Coalgebras in $\M_R$.} \em
 An $R$-coalgebra $\mathcal C=(C,\Delta,\ve)$ consists of an $R$-module $C$ with the
$R$-linear maps multiplication $\Delta:C\to C\ot_R C$ and counit $\ve:C\to R$
subject to coassociativity and counitality conditions.
$C\ot_R-:\M_R\to \M_R$ is a comonad and
it is customary to write $^C\M:= \M_R^{C\ot-}$ for the category of left
$\bC$-comodules.
We write $\Hom^C(M,N)$ for the comodule morphisms between $M,N\in {^C\M}$.
In general, $^C\M$ need not be a Grothendieck category unless $C_R$ is a flat $R$-module
(e.g. \cite[3.14]{BW}).
\end{thm}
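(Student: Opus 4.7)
The statement is largely definitional, so the task reduces to verifying the single non-formal assertion---that $C\ot_R-$ carries a canonical comonad structure on $\M_R$---and recording what the remaining pieces mean.

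The plan is to construct the comonad structure directly from the $R$-coalgebra data $(C,\Delta,\ve)$. I would set the comultiplication of the endofunctor $C\ot_R-$ to be the natural transformation whose $M$-component is $\Delta\ot_R M:C\ot_R M\to C\ot_R C\ot_R M$, and the counit to be the natural transformation whose $M$-component is $\ve\ot_R M:C\ot_R M\to M$. Naturality in $M$ is immediate from the bifunctoriality of $\ot_R$. The comonad coassociativity identity is obtained by applying the functor $-\ot_R M$ to the coassociativity diagram $(\Delta\ot_R C)\cdot\Delta=(C\ot_R\Delta)\cdot\Delta$ of $\bC$, and the comonad counit identities follow in the same way from the counit axioms $(\ve\ot_R C)\cdot\Delta=1_C=(C\ot_R\ve)\cdot\Delta$. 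By definition, the Eilenberg-Moore category of this comonad is $\M_R^{C\ot-}$, which is what the notation $^C\M$ abbreviates; the notation $\Hom^C(M,N)$ is purely conventional.

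The remaining claim---that $^C\M$ need not be a Grothendieck category in general, but is whenever $C_R$ is flat---is an external reference to \cite[3.14]{BW} and I would not reprove it here. The conceptual point I would record is that flatness of $C_R$ makes $C\ot_R-$ preserve kernels in $\M_R$, so that the forgetful functor $U^{C\ot-}:{^C\M}\to\M_R$ creates kernels along with all colimits it already creates as a left adjoint; this transfers enough exactness from $\M_R$ to make $^C\M$ abelian and Grothendieck. Without flatness, kernels of comodule maps need not lift, and the argument breaks down.

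Since there is essentially no theorem here beyond this one routine verification, the only potential obstacle is purely notational consistency---making sure that the variance conventions for $^C\M$ (left comodules over $\bC$ versus left $(C\ot-)$-comodules) agree. With the convention $\rho:M\to C\ot_R M$ for a left coaction this agreement is immediate, so no further work is required.
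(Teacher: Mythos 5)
Your verification is correct and matches the paper's treatment: the paper presents \ref{coalg-mod} as a definitional recollection with no proof, citing \cite[3.14]{BW} for the Grothendieck-category remark, and the comonad structure $(\Delta\ot_R -,\,\ve\ot_R -)$ on $C\ot_R-$ you construct is exactly the standard one intended. Nothing further is required.
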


The dual module $C^*=\Hom_R(C,R)$ has an $R$-algebra structure by defining
 for $f,g\in C^*$, $f*g= (g\ot f)\cdot \Delta$ %and $(C^\tau)^*= (C^*)^\tau$
(definition opposite to \cite[1.3]{BW})
and there is a faithful functor
 $$\Phi:{^C\M} \to {_{C^*}\M}, \quad (M,\varrho) \;\mapsto\;
   C^*\ot_RM \xra{C^*\ot \varrho} C^*\ot_R C \ot M \xra{\ev\ot M} M ,$$
where $\ev$ denotes the evaluation map.
The functor $\Phi$ is full if and only if for any $N\in \M_R$,
$$\alpha_N: C\ot_R N \to \Hom_R(C^*,N), \quad c\ot n \mapsto [f\mapsto f(c)n],$$
is injective and this is equivalent to
 $C_R$ being locally projective ($\alpha$-condition, e.g. \cite[4.2]{BW}).
In this case ${^C\M}$ can be identified with the full subcategory
$\sigma[{_{C^*}C}] \subset {_{C^*}\M}$ subgenerated by $C$ as  $C^*$-module.

The $R$-module structure of $C$ is of considerable relevance for the
related constructions and for convenience we recall:

\begin{thm}\label{f.g.p} {\bf Remark.} \em For $C_R$ the following are equivalent:
 \begin{blist}
\item $C_R$ is finitely generated and projective;
\item $C\ot_R-:\M_R\to\M_R$ has a left adjoint;
\item  $\Hom_R(C,-):\M_R\to\M_R$ has a right adjoint;
\item
  $C^*\ot_R - \to \Hom_R(C,-), \; f\ot_R - \mapsto (c\mapsto f(c)\cdot -),$
is a (monad) isomorphism;
\item
  $C\ot_R - \to \Hom_R(C^*,-), \; c\ot_R - \mapsto (f\mapsto f(c)\cdot -),$
is a (comonad) isomorphism;
\item $\Phi:{^C\M} \to {_{C^*}\M}$ is a category isomorphism.
\end{blist}
If this holds, there is an algebra anti-isomorphism $\End_R(C) \simeq \End_R(C^*)$
and we denote the canonical adjunction by $\eta^C,\ve^C: C\ot_R-\dashv C^*\ot_R-$.
\end{thm}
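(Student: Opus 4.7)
The plan is to treat (a)$\LRa$(d) as the classical pivot via the dual basis lemma, then derive (b), (c), (e) from it using standard adjoint functor arguments, and handle (f) through the $\alpha$-condition discussion recalled just before the proposition. The final assertion about $\End_R(C) \simeq \End_R(C^*)$ will drop out from the double-dual isomorphism $C \simeq C^{**}$ valid in the fgp case.

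For (a)$\LRa$(d), I would first show the ``only if'' direction: given a dual basis $\{(c_i, f_i)\}_{i=1}^n$ with $c = \sum_i f_i(c)\,c_i$ for all $c \in C$, the inverse of the natural map in (d) is $\phi \mapsto \sum_i f_i \ot_R \phi(c_i)$, and a direct check shows this is compatible with the monad structures coming from composition in $\Hom_R(C,-)$ and the algebra structure of $C^*$. Conversely, assuming the map of (d) is bijective already at $N=C$, I pull back the identity $1_C \in \Hom_R(C,C)$ to obtain an element $\sum_i f_i \ot_R c_i \in C^* \ot_R C$ and read off that it witnesses a dual basis, giving (a). The equivalence (a)$\LRa$(e) is then immediate: (e) is the monoidal mate of (d) under the isomorphism $C \simeq C^{**}$, and (a) for $C$ yields (a) for $C^*$ over a commutative ring, so (d) applied with $C^*$ in place of $C$ is exactly (e).

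From here the adjoint-type conditions are handled symmetrically. For (a)$\Ra$(b) one reads off from (e) that $C \ot_R -$ is naturally isomorphic to $\Hom_R(C^*,-)$, which always admits the left adjoint $C^* \ot_R -$; dually, (d) displays $\Hom_R(C,-)$ as $C^* \ot_R -$, which has the right adjoint $\Hom_R(C^*,-)$, giving (a)$\Ra$(c). The converses proceed by Eilenberg--Watts type arguments. If $L \dashv C \ot_R -$, setting $N=R$ in the hom--tensor adjunction yields a natural isomorphism $\Hom_R(L(R),M) \simeq C \ot_R M$; testing $M=R$ identifies $L(R) \simeq C^*$ and the resulting natural iso is precisely the map in (d), producing (a). Symmetrically, a right adjoint to $\Hom_R(C,-)$ forces the latter to preserve all colimits in $\M_R$; by the classical Eilenberg--Watts theorem for cocontinuous endofunctors of $\M_R$ it is then of the form $T \ot_R -$ with $T = \Hom_R(C,R) = C^*$, yielding (d).

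Finally, (a)$\LRa$(f) reduces to the $\alpha$-condition observation made just before the proposition: local projectivity of $C_R$ makes $\Phi$ fully faithful with essential image $\sigma[{_{C^*}C}] \subseteq {_{C^*}\M}$, and $\Phi$ is a full isomorphism of categories precisely when $\sigma[{_{C^*}C}] = {_{C^*}\M}$, which is equivalent to $C$ being a finitely generated $C^*$-module, hence (given local projectivity) to $C_R$ being finitely generated projective. The algebra anti-isomorphism $\End_R(C) \simeq \End_R(C^*)$ is then obtained by dualising morphisms via the evaluation pairing $C^* \ot_R C \to R$, the order-reversal accounting for the anti-isomorphism. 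The main obstacle I anticipate is (c)$\Ra$(a): a right adjoint to $\Hom_R(C,-)$ does not a priori exhibit this functor as a tensoring functor, so one must first establish its cocontinuity and then invoke Eilenberg--Watts; every other step reduces either to the dual basis lemma or to transporting an adjunction across a known natural isomorphism.
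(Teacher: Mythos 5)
The paper gives no proof of this Remark at all: it is stated as a recollection of standard facts about finitely generated projective modules, so there is nothing to compare your argument against step by step. Your sketch is essentially correct and follows the expected route (dual basis lemma for (a)$\LRa$(d), transport of adjunctions for (b), (c), (e), and the $\alpha$-condition for (f)), but two points need tightening. First, in (b)$\Ra$(a) the natural isomorphism you extract from $L\dashv C\ot_R-$ is $C\ot_R M\simeq \Hom_R(L(R),M)$, which has the shape of (e) with $L(R)$ in place of $C^*$, not "precisely the map in (d)"; the conclusion still follows, because $\Hom_R(L(R),-)$ is then cocontinuous, forcing $L(R)$ to be finitely generated projective, and $C\simeq \Hom_R(L(R),R)$ inherits this. (A shortcut worth noting: a functor $C\ot_R-$ possessing a left adjoint preserves products and monomorphisms, so $C$ is finitely presented and flat, hence finitely generated projective; dually, $\Hom_R(C,-)$ possessing a right adjoint preserves coproducts and epimorphisms, giving (c)$\Ra$(a) without Eilenberg--Watts.) Second, your intermediate criterion for (f)$\Ra$(a) --- that $\sigma[{_{C^*}C}]={_{C^*}\M}$ be equivalent to $C$ being finitely generated over $C^*$ --- is not the correct statement; the standard argument (see \cite[4.7]{BW}, which the surrounding text implicitly invokes) is that fullness of $\Phi$ forces $C_R$ locally projective, surjectivity of $\Phi$ on objects then makes $C^*$ itself a rational left $C^*$-module, and evaluating the resulting coaction at $\ve\in C^*$ produces a finite dual basis, from which finite generation and projectivity of $C_R$ follow. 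With these two repairs your proof is complete; everything else, including the anti-isomorphism $\End_R(C)\simeq\End_R(C^*)$ via $f\mapsto f^*$ and $C\simeq C^{**}$, is standard and correctly argued.
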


\begin{thm}\label{bico}{\bf The coalgebra $\bC^e$.} \em
As in \ref{bicom}, the twist map $\tau_{C,C}:C\ot_R C\to C\ot_R C$
provides an (involutive) BD-law  allowing for the
definition of the opposite coalgebra $\mathcal C^\tau=(C^\tau,\Delta^\tau, \ve^\tau)$
and a coalgebra
 $$\bC^e:=(C \ot_R C^\tau, (C\ot_R \tau\ot_R C^\tau) (\Delta \ot_R\Delta^\tau),\ve\ot_R \ve ) .$$
The category  $^{\bC^e}\M$ of left ${\bC^e}$-comodules is just the category of
$(C,C)$-bicomodules (e.g. \cite{Lar}, \cite[3.26]{BW}).
A direct verification shows that the endomorphism algebra of $C$ as $\bC^e$-comodule
is just the center of $C^*$, that is,
   $$Z(C^*)=\Hom^{\bC^e}(C,C)\subset {^{\bC}\Hom(C,C)}\simeq C^*.$$

If $C_R$ is locally projective, an easy argument shows that $C\ot_R C$
is also locally projective as $R$-module and then $^{\bC^e}\M$ is a full subcategory
of ${_{(\bC^e)^*}\M}$.
\end{thm}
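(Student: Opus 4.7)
The plan is to verify the two assertions at the close of \ref{bico}: first, that local projectivity of $C_R$ implies local projectivity of $C\ot_R C$ as an $R$-module; second, that $^{\bC^e}\M$ is then a full subcategory of ${_{(\bC^e)^*}\M}$.

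For the first part, I would use the dual-basis characterisation of local projectivity: an $R$-module $M$ is locally projective iff for every finite subset $\{x_1,\dots,x_n\}\subset M$ there exist finite families $\{m_k\}\subset M$ and $\{g_k\}\subset M^*$ with $x_i=\sum_k g_k(x_i)\,m_k$ for every $i$ (equivalent to injectivity of the $\alpha_N$ maps from \ref{coalg-mod}). Given a finite subset of $C\ot_R C$ whose elements are written as finite sums $x_i=\sum_j c_j^{(i)}\ot d_j^{(i)}$, apply this characterisation separately to the finite set $\{c_j^{(i)}\}$ of first factors and to the finite set $\{d_j^{(i)}\}$ of second factors, producing local dual systems $\{m_k\}\subset C$, $\{g_k\}\subset C^*$ and $\{n_l\}\subset C$, $\{h_l\}\subset C^*$. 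Then define $\phi_{k,l}\in(C\ot_R C)^*$ by $\phi_{k,l}(c\ot d)=g_k(c)\,h_l(d)$; a direct computation, using first $\sum_l h_l(d)n_l=d$ and then $\sum_k g_k(c)m_k=c$ inside each term of the sum defining $x_i$, yields $x_i=\sum_{k,l}\phi_{k,l}(x_i)\,(m_k\ot n_l)$. Thus the finite families $\{m_k\ot n_l\}\subset C\ot_R C$ and $\{\phi_{k,l}\}\subset(C\ot_R C)^*$ witness the dual-basis property for the original elements, establishing local projectivity of $C\ot_R C$.

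For the second part, I would invoke the general principle already recorded in \ref{coalg-mod}: whenever an $R$-coalgebra $E$ has $E_R$ locally projective, the induced functor $^E\M\to{_{E^*}\M}$ is fully faithful, identifying $^E\M$ with the full subcategory $\sigma[{_{E^*}E}]$. Applying this with $E=\bC^e$, whose underlying $R$-module $C\ot_R C^\tau=C\ot_R C$ has just been shown to be locally projective, delivers the claimed embedding $^{\bC^e}\M\hookrightarrow {_{(\bC^e)^*}\M}$ as a full subcategory.

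The only real work lies in the first step, and even there the main obstacle is bookkeeping: one must keep track of two independent dual-basis systems and verify that the naturally induced product functionals on $C\ot_R C$ satisfy the dual-basis identity; the second step is then immediate from the general theory already set up in \ref{coalg-mod}.
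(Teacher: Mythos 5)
Your argument for the two closing assertions is correct and is exactly the ``easy argument'' the paper alludes to (the paper prints no proof of this item, so there is nothing to diverge from): the product functionals $\phi_{k,l}=g_k\ot h_l$ built from two independent local dual bases do witness the local dual-basis property for any finite subset of $C\ot_R C$, and the full embedding $^{\bC^e}\M\hookrightarrow{_{(\bC^e)^*}\M}$ then follows by applying the $\alpha$-condition of \ref{coalg-mod} to the coalgebra $\bC^e$, whose underlying module is $C\ot_R C$. The computation is sound: $\phi_{k,l}(x_i)$ is representation-independent, and the rearrangement $\sum_{k,l}g_k(c)h_l(d)\,(m_k\ot n_l)=\bigl(\sum_k g_k(c)m_k\bigr)\ot\bigl(\sum_l h_l(d)n_l\bigr)$ reduces everything to the two one-variable dual-basis identities. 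The only caveat is one of scope: the statement also asserts, as a ``direct verification,'' that $\Hom^{\bC^e}(C,C)=Z(C^*)$, and your proposal does not address this (nor the coalgebra axioms for $\bC^e$ or the identification of $^{\bC^e}\M$ with $(C,C)$-bicomodules); the paper treats the latter as settled by \ref{bicom} and the references \cite{Lar}, \cite[3.26]{BW}, but the center computation is a genuine, if small, piece of the statement that a complete write-up should include.
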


\begin{thm}\label{def-azu-co} {\bf Definition.} \em
An $R$-coalgebra $\bC$ is said to be an {\em Azumaya coalgebra} provided the comonad
$\bG=C \ot _R -:\M_R \to \M_R$ is Azumaya, i.e. (see \ref{def-Azu-co}), the comparison
functor
$K :\M_R \to {^{\bC^e}\M}$ defined by
$$M\;\longmapsto \;(C \ot_R M,\, C\ot_R M\xra{\Delta \ot_R M}
C\ot_R C \ot_R M \xra{C \ot \Delta^\tau \ot_R M} C \ot C\ot_R C \ot_R M)$$
is an equivalence of categories.
 We have the commutative diagram
$$\xymatrix{ {_R\M} \ar[rr]^{K\quad} \ar[drr]_{C \ot_R-} & & {^{\bC^e}\M}
                        \ar[d]^{^{\bC^e}\!U} \\
 & & {_R\M} \, . }
$$
\end{thm}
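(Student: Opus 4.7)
The plan is to verify that the displayed definition is a genuine specialization of the comonad framework of Section \ref{co-azu}: that is, that the formula for $K(M)$ really does define a left $\bC^e$-comodule, that $K$ is functorial, and that the triangle commutes. Everything should fall out of Proposition \ref{comonads} together with the identification $\V^{(\bC^\tau)_l\bC_l}\simeq {^{\bC^e}\V}$ recorded in \ref{bicom}, applied to $\V=\M_R$.

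First I would observe that $(\M_R,\ot_R,R,\tau)$ is symmetric monoidal, so the naturality of the twist gives a natural transformation $\widetilde\tau:\bC_l\bC_l\to\bC_l\bC_l$ with $\widetilde\tau_M=\tau_{C,C}\ot_R M$. Because $\tau$ is a braiding (in fact a symmetry), $\widetilde\tau$ is a comonad distributive law of $\bC_l$ over itself which also satisfies the Yang--Baxter equation, i.e. a comonad BD-law in the sense of \ref{def-coBD}. By Proposition \ref{comonads}(1), the opposite comonad $(\bC_l)^{\widetilde\tau}$ then has functor part $C\ot_R-$ and comultiplication $\widetilde\tau\cdot\delta = (\tau_{C,C}\cdot\Delta)\ot_R - = \Delta^\tau\ot_R -$, which is exactly $(\bC^\tau)_l$ for the opposite coalgebra $\bC^\tau$ of \ref{bico}.

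Next I would compute the composite comonad $\bC_l(\bC_l)^{\widetilde\tau}$ using Proposition \ref{comonads}(2): its underlying functor is $C\ot_R C\ot_R-$ and its comultiplication is $\underline\delta=(\bC_l\widetilde\tau \bC_l)\cdot(\delta^{\bC_l}\delta^{(\bC_l)^{\widetilde\tau}})$. Unwinding this at the object $R$ and invoking naturality of $\tau$, one sees that $\underline\delta$ is precisely the coaction $(C\ot\tau\ot C^\tau)(\Delta\ot\Delta^\tau)$ tensored with the identity, i.e. the $\bC^e$-coaction from \ref{bico}. This makes the $\M_R$-instance of the isomorphism in \ref{bicom} explicit: $\M_R^{\bC_l(\bC_l)^{\widetilde\tau}}\simeq{^{\bC^e}\M}$. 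Under this identification, the comparison functor $\overline K_{\widetilde\tau}$ of Proposition \ref{comonads}(4) sends $M$ to $C\ot_R M$ equipped with $(C\ot\Delta^\tau\ot M)\cdot(\Delta\ot M)$, which is exactly the formula defining $K$. The commutativity of the displayed triangle is then immediate, since both $^{\bC^e}U$ and $U^{\bC_l(\bC_l)^{\widetilde\tau}}$ strip off the comodule structure to leave $C\ot_R M$.

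The only point needing care is the book-keeping in the second step, where one has to confirm that the abstract BD-law coaction $\underline\delta$ really matches the explicit tensor formula written in \ref{bico}; but because $\widetilde\tau_M$ collapses to $\tau_{C,C}\ot_R M$, this reduces to a short diagram chase in $\M_R$ using the associativity of $\Delta$ (and of $\Delta^\tau$) and the naturality of $\tau$. No genuine obstacle arises: the whole definition is simply the verbatim reading of Proposition \ref{comonads}(4) in the symmetric monoidal category $\M_R$.
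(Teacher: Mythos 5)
Your verification is correct and follows exactly the route the paper itself takes: the statement is a definition whose consistency rests on Proposition \ref{comonads} applied to the BD-law $\tau_{C,C}\ot_R-$, together with the identification $\V^{(\bC^\tau)_l\bC_l}\simeq{^{\bC^e}\V}$ already recorded in \ref{bicom} and \ref{bico}, and your computation that $(C\ot\tau_{C,C}\ot M)\cdot(C\ot\Delta\ot M)=C\ot\Delta^\tau\ot M$ recovers the displayed formula for $K$ is exactly the book-keeping needed. Nothing is missing; this is the verbatim specialization of Definition \ref{def-azu-com} to $\M_R$, as the paper intends.
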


By Proposition \ref{P.1.7}, the functor $K$ is an equivalence provided
\begin{rlist}
  \item the functor $C\ot_R-:{_R\M}\to {_R\M}$ is comonadic, and
  \item the induced comonad morphism
  $C\ot_R \Hom_R(C,-) \to {\bC^e} \ot_R -$\\
  is an isomorphism.

If $R\simeq \End^{\bC^e}(C)\simeq Z(C^*)$,
the morphism in (ii) characterises $C$ as a {\em $\bC^e$-Galois comodule} as defined
in \cite[4.1]{W} and
if $C_R$ is finitely generated and projective, the condition reduces to
 an $R$-coalgebra isomorphism $C  \ot_R C^* \simeq {C^e}$.
\end{rlist}

In module categories, separable coalgebras are well studied and we
recall some of their characterisations
(e.g. Section \ref{adj-sep}, \cite{Lar}, \cite{Guz}, \cite[3.29]{BW}, \cite[2.10]{BBW}).

\begin{thm}\label{cosep}{\bf Coseparable coalgebras.} \em An $R$-coalgebra
$\bC=(C, \Delta, \ve)$ is called
{\em coseparable} if any of the following equivalent conditions is satisfied:
\begin{blist}
\item $C\ot_R-:\M_R\to \M_R$ is a separable comonad;
\item $\Delta: C\to C\ot_RC$ splits in ${^{\bC^e}\M}$;
\item $C$ is $(\bC^e, R)$-injective;
\item  the forgetful functor $^C\M\to \M_R$ is separable;
\item  the forgetful functor $^{C^e}\M\to \M_R$ is separable;
\item $\Hom_R(C,-): \M_R\to \M_R$ is a separable monad.
\end{blist}
\smallskip

{\em
For any coseparable coalgebra $\bC$,  $Z(C^*)$ is a direct summand of $  C^*$.}
\medskip

\begin{proof}  Let $\omega: C\ot_R C\to C$ denote the splitting morphism for $\Delta$.
Then we obtain the splitting sequence of $Z(C^*) $-modules

\smallskip
\qquad  \qquad $ C^* \simeq \Hom^{\bC^e}(C, C\ot_R C)
  \xra{\Hom^{\bC^e}(C, \omega)}\Hom^{\bC^e}(C, C)\simeq Z(C^*).$
\end{proof}
\end{thm}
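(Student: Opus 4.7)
The plan is to transport the bicomodule-level splitting of $\Delta:C\to C\ot_R C$ afforded by coseparability through the functor $\Hom^{\bC^e}(C,-)$, and then identify the resulting split pair with the desired direct-summand decomposition of $C^*$. Concretely, I will exhibit $C^*\simeq\Hom^{\bC^e}(C,C\ot_R C)$ and $Z(C^*)\simeq\Hom^{\bC^e}(C,C)$, and use functoriality to turn $\omega\cdot\Delta=1_C$ into a splitting in $Z(C^*)$-modules.

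First, I would invoke clause (b) of \ref{cosep}: coseparability supplies a bicomodule morphism $\omega:C\ot_R C\to C$ with $\omega\cdot\Delta=1_C$, so $\Delta$ is a section and $\omega$ a retraction in $^{\bC^e}\M$. Next, I would record the two Hom-identifications. The equality $\End^{\bC^e}(C)=Z(C^*)$ is already noted in \ref{bico}. For the other one, I would regard $C\ot_R C=C\ot_R R\ot_R C$ as the cofree $\bC^e$-bicomodule on the $R$-module $R$; the cofree--forgetful adjunction then gives a natural bijection
\[
\Hom^{\bC^e}(C,C\ot_R C)\;\simeq\;\Hom_R(C,R)\;=\;C^*,
\]
whose explicit forms are $f\mapsto (\ve\ot\ve)\cdot f$ and $\varphi\mapsto (C\ot\varphi\ot C)\cdot(\Delta\ot C)\cdot\Delta$.

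With these identifications in hand, I would apply $\Hom^{\bC^e}(C,-)$ to the identity $\omega\cdot\Delta=1_C$, obtaining the split epimorphism
\[
\Hom^{\bC^e}(C,\omega):\Hom^{\bC^e}(C,C\ot_R C)\lra \Hom^{\bC^e}(C,C)
\]
with section $\Hom^{\bC^e}(C,\Delta)$. Both sides are $Z(C^*)$-modules via precomposition with elements of $\End^{\bC^e}(C)=Z(C^*)$, and both induced maps are $Z(C^*)$-linear because they are given by postcomposition with bicomodule morphisms. Translating through the isomorphisms of the previous paragraph yields the desired splitting of $Z(C^*)$ off $C^*$ as a $Z(C^*)$-module.

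The main obstacle I anticipate is the compatibility check: one must verify that the cofree-adjunction isomorphism $\Hom^{\bC^e}(C,C\ot_R C)\simeq C^*$ identifies the precomposition action of $\End^{\bC^e}(C)$ on the left with the natural action of $Z(C^*)\subseteq C^*$ on $C^*$ by convolution. Using the recovery formula $g=(\ve g\ot C)\cdot\Delta=(C\ot\ve g)\cdot\Delta$ valid for bicomodule endomorphisms $g$ of $C$, a short computation shows $(\ve\ot\ve)\cdot\tilde{\varphi}\cdot g=\varphi *(\ve g)$, which is precisely the convolution action; the rest is then a bookkeeping verification and the conclusion follows.
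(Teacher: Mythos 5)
Your proposal is correct and follows essentially the same route as the paper: the paper's proof is precisely the one-line splitting sequence $C^*\simeq \Hom^{\bC^e}(C,C\ot_R C)\xra{\Hom^{\bC^e}(C,\omega)}\Hom^{\bC^e}(C,C)\simeq Z(C^*)$ of $Z(C^*)$-modules, split by $\Hom^{\bC^e}(C,\Delta)$. You merely make explicit what the paper leaves implicit, namely the cofree-adjunction identification $\Hom^{\bC^e}(C,C\ot_R C)\simeq C^*$ and the compatibility of the $Z(C^*)=\End^{\bC^e}(C)$-actions.
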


For an Azumaya coalgebra $\bC$, the free functor
$\phi_{(\bC^\tau)_l}:\M_R  \to {^{\bC^\tau}\M}$
is monadic (see Theorem \ref{th.3}), and hence, in particular, it is conservative.
It then follows that, for each $X \in \M_R$, the morphism
$\ve \ot_R X:C \otimes_R X \to X$
 is surjective. For $X=R$ this yields that
$\ve:C  \to R $ is surjective (hence splitting). By Theorem
\ref{Azu-mon-comon} this means that $\bC$ is also a coseparable coalgebra.

It follows from the general Hom-tensor relations that the functor $K:\M_R \to {^{\bC^e}\M}$
has a right adjoint ${^{\bC^e}\Hom}(C,-):{^{\bC^e}\M}\to \M_R$ (e.g. \cite[3.9]{BW})
and we denote the unit and counit of this adjunction by
$\underline\eta$ and $\underline\ve$, respectively.

Besides the characterisations derived from Theorem \ref{th.1.Clg} we have:

\begin{thm}\label{char-azu-co} {\bf Characterisation of Azumaya coalgebras.}
For an $R$-coalgebra $\bC$ the following are equivalent:
\begin{blist}
\item $\bC$ is an Azumaya coalgebra;
\item
\begin{rlist}
\item $\underline \ve_X: C\ot_R {^{\bC^e}\Hom}(C,X)\to X$
     is an isomorphism for any $X\in {^{\bC^e}\M}$,
\item $\underline\eta_M: M \mapsto {^{\bC^e}\Hom}(C,C \ot_R M) $
 is an isomorphism for any $M\in \M_R$.
\end{rlist}
\item
  $C$ is a  $\bC^e$-Galois comodule,
  $C^*$ is a central $R$-algebra,
   and the functor
 $C \ot_R-:{_R\M} \to {_R\M}$ is comonadic;
\item $C^*$ is an Azumaya algebra.
\end{blist}
\end{thm}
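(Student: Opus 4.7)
The plan is to prove the chain of equivalences (a)$\LRa$(b), (a)$\LRa$(c), and (c)$\LRa$(d), reducing the whole theorem to the general monad/comonad theory already developed in Sections~\ref{azu}--\ref{co-azu} together with the standard duality between $R$-coalgebras with $C_R$ finitely generated projective and their dual algebras.

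The equivalence (a)$\LRa$(b) is tautological: by the very definition in~\ref{def-azu-co} and the adjunction $K\dashv {^{\bC^e}\Hom}(C,-)$ recalled just before the theorem, $K$ is an equivalence of categories precisely when both its unit $\underline\eta$ and its counit $\underline\ve$ are natural isomorphisms, which is exactly~(b).

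For (a)$\LRa$(c) I would invoke Theorem~\ref{th.1.co-Monad} (the general characterisation of Azumaya comonads) applied to the comonad $C\ot_R-$ on $\M_R$. First, Proposition~\ref{l-r-adj-co} forces the functor $C\ot_R-$ to admit a left adjoint, so by Remark~\ref{f.g.p} $C_R$ is finitely generated projective; this is the underlying finiteness needed for all the identifications below. Then Theorem~\ref{th.1.co-Monad}(b) rephrases (a) as the conjunction of: $C\ot_R-$ being comonadic, and the left $\bC^e$-comodule $C$ (with coaction $(C\ot\Delta^\tau)\cdot\Delta$) being Galois; the induced comonad morphism $C\ot_R\Hom_R(C,-)\to \bC^e\ot_R-$ is, via the isomorphism $\Hom_R(C,-)\simeq C^*\ot_R-$ from~\ref{f.g.p}, exactly the defining morphism of the $\bC^e$-Galois property of $C$. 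Centrality drops out of fully faithfulness of $K$: it gives $R=\End_{\M_R}(R)\simeq \End^{\bC^e}(K(R))=\End^{\bC^e}(C)=Z(C^*)$, where the last identification was noted in~\ref{bico}. Conversely, the three conditions in~(c) supply precisely the two ingredients of Theorem~\ref{th.1.co-Monad}, the centrality ensuring the correct identification of the Galois morphism at the unit object.

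For (c)$\LRa$(d) I would exploit the duality between coalgebras and algebras under the f.g.p.\ hypothesis that is forced on both sides (for~(d), any Azumaya $R$-algebra is finitely generated projective, hence $C^*$ f.g.p.\ together with centrality forces $C\simeq C^{**}$ to be f.g.p.\ as well, while for~(c) this was established in the previous paragraph). Then Remark~\ref{f.g.p}(f) applied to the coalgebra $\bC^e$, together with the canonical algebra isomorphism $(\bC^e)^*\simeq (C^*)^e$, gives an isomorphism of categories $\Phi:{^{\bC^e}\M}\simeq {_{(C^*)^e}\M}$. A direct dualisation shows that $\Phi\circ K$ agrees with the Azumaya comparison functor $\overline{K}_l:\M_R\to {_{(C^*)^e}\M}$ of Theorem~\ref{th.1.Alg} for the algebra $C^*$: the coaction $(C\ot\Delta^\tau)\cdot\Delta$ dualises to the standard $(C^*)^e$-bimodule action $m\cdot(C^*\ot m^\tau)$ on $C^*$, and the centrality condition $R\simeq Z(C^*)$ is the same on both sides. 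Hence $K$ is an equivalence if and only if $\overline{K}_l$ is, i.e.~\ref{def-azu-co} for $\bC$ is equivalent to left (equivalently, by Theorem~\ref{th.1.Alg.1}, two-sided) Azumayaness of $C^*$.

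The main obstacle is the verification in the last step that the $\bC^e$-comodule structure $(C\ot\Delta^\tau)\cdot\Delta$ on $C$ corresponds, under the duality $\Phi$, precisely to the bimodule structure $m\cdot (C^*\ot m^\tau)$ on $C^*$ used in defining $\overline{K}_l$; equivalently, that the $\bC^e$-Galois morphism $C\ot_R C^*\to \bC^e$ is the $R$-linear dual of the Azumaya morphism $(C^*)^e\to \End_R(C^*)$. Once this compatibility is nailed down, every other piece is routine bookkeeping with the general Azumaya theory already assembled.
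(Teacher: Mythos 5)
Your decomposition is reasonable and three of its four legs are sound, but it is not the paper's route, and it contains one step that is actually false. The paper disposes of the whole theorem by a single appeal to Theorem \ref{Azu-mon-comon}: taking the monad $C^*\ot_R-$ with right adjoint comonad $C\ot_R-$, the separability side-conditions there are automatic (an Azumaya coalgebra is coseparable, as observed just before the statement, and an Azumaya algebra over a commutative ring is separable), so the monad--comonad transfer yields (a)$\LRa$(d) directly, with (b) and (c) appearing as the intermediate reformulations. Your alternative --- (a)$\LRa$(b) from the adjunction $K\dashv{^{\bC^e}\Hom}(C,-)$, (a)$\LRa$(c) from Theorem \ref{th.1.co-Monad} together with the remark after \ref{def-azu-co}, and (c)$\LRa$(d) from the category isomorphism ${^{\bC^e}\M}\simeq{_{(C^*)^e}\M}$ identifying $K$ with $\ol{K}_l$ for $C^*$ --- avoids the separability bookkeeping entirely and is perfectly viable \emph{once $C_R$ is known to be finitely generated projective}. (A small misattribution: finiteness of $C$ under (a) comes from Proposition \ref{finite.d}, not from Proposition \ref{l-r-adj-co}; the latter only produces a right adjoint of $C\ot_R-$, which exists anyway, whereas Remark \ref{f.g.p} needs a \emph{left} adjoint.)

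The genuine gap is in the direction starting from (d). You assert that $C^*$ being finitely generated projective and central forces $C\simeq C^{**}$, hence $C$ f.g.p. This does not follow: centrality of the convolution algebra says nothing about reflexivity of $C$. Concretely, over $R=\mathbb{Z}$ let $C=\mathbb{Z}g\oplus\mathbb{Q}$ with $g$ grouplike and every $v\in\mathbb{Q}$ primitive ($\Delta(v)=g\ot v+v\ot g$, $\ve(v)=0$); this is a coassociative counital coalgebra with $C^*\simeq\mathbb{Z}$, a central, f.g.p., indeed Azumaya $\mathbb{Z}$-algebra, yet $C^{**}=\mathbb{Z}\neq C$ and $C$ is not finite in $\M_R$, so $\bC$ is not Azumaya by Proposition \ref{finite.d}. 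Since both the isomorphism $\Hom_R(C,-)\simeq C^*\ot_R-$ and the identification $\Phi\circ K=\ol{K}_l$ presuppose $C_R$ f.g.p.\ (Remark \ref{f.g.p}), your argument for (d)$\Ra$(c) collapses at this point; the finiteness of $C$ has to be imposed as a hypothesis rather than deduced. (The same issue is latent in the paper's one-line proof, which also needs the adjunction $C^*\ot_R-\dashv C\ot_R-$ to invoke Theorem \ref{Azu-mon-comon}, and is consistent with the paper's reference to Sugano, who works throughout with $C_R$ finitely generated projective.) The remaining implications, all of which start from hypotheses that do force $C$ finite via (a) and Proposition \ref{finite.d}, and the final appeal to Theorem \ref{th.1.Alg.1} to pass from left Azumaya to Azumaya, are fine.
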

\begin{proof} This is essentially Theorem \ref{Azu-mon-comon}.
\end{proof}

As shown in Proposition \ref{finite.d}, an Azumaya coalgebra $\bC$ is finite
in $\M_R$, that is, $C_R$ is finitely generated and projective (see Remark \ref{f.g.p}).

  Coalgebras $\bC$ with $C_R$ finitely generated and projective for which $C^*$
is an Azumaya $R$-algebra were investigated by K. Sugano in \cite{Sugano}.
As an easy consequence he also observed
that  an  $R$-algebra $\bA$ with $A_R$ finitely generated and projective
is Azumaya if and only if $A^*$ is an Azumaya coalgebra.
\smallskip

For vector space categories,
Azumaya $\bD$-coalgebras $\bC$ over a cocommutative coalgebra $\bD$ (over a field)
were defined and characterised in \cite[Theorem 3.14]{Tor-Brauer}.

\bigskip

{\bf Acknowledgments.} This research was partially supported by
Volkswagen Foundation (Ref.: I/85989).
The first author also gratefully acknowledges the support of the
Shota Rustaveli National Science Foundation Grant DI/12/5-103/11.

\smallskip

\noindent
{\bf Addresses:} \\[+1mm]
{A. Razmadze Mathematical Institute
of I. Javakhishvili Tbilisi State University,\\
6, Tamarashvili Str.,  {\small and} \\
 {Tbilisi Centre for Mathematical Sciences,
Chavchavadze Ave. 75, 3/35, \\
Tbilisi 0177},  Georgia.\\
    {\small bachi@rmi.ge}\\[+1mm]
{Department of Mathematics of HHU, 40225 D\"usseldorf, Germany},\\
  {\small wisbauer@math.uni-duesseldorf.de}

\end{document}